\newcommand{\Z}{\mathbb{Z}}
\newcommand{\R}{\mathbb{R}}
\newcommand{\C}{\mathbb{C}}
\newcommand{\N}{\mathbb{N}}
\newcommand{\tr}{\boldsymbol{t}_\Sigma}
\newcommand{\dom}{\textup{dom }}
\newcommand{\ran}{\textup{ran }}
\newcommand{\sign}{\textup{sign}}
\newcommand{\supp}{ \textup{supp }}
\newcommand{\abs}[1]{\lvert{#1}\rvert}
\newcommand{\norm}[1]{{\lVert{#1}\rVert}}
\theoremstyle{plain}
\newtheorem{theo}{Theorem}[section]
\newtheorem{lemma}[theo]{Lemma}
\newtheorem{cor}[theo]{Corollary}
\newtheorem{proposition}[theo]{Proposition}
\theoremstyle{definition}
\theoremstyle{remark}
\newtheorem{remark}[theo]{Remark}
\numberwithin{equation}{section}
\begin{document}

	\title[Approximation of Dirac operators with $\delta$-shell potentials]{Approximation of Dirac operators with $\boldsymbol{\delta}$-shell potentials in the norm resolvent sense, II.~Quantitative results}

	\author[J. Behrndt]{Jussi Behrndt}
	\address{Technische Universit\"{a}t Graz\\
		Institut f\"ur Angewandte Mathematik\\
		Steyrergasse 30\\
		8010 Graz, Austria}
	\email{behrndt@tugraz.at}
	
	\author[M. Holzmann]{Markus Holzmann}
	\address{Technische Universit\"{a}t Graz\\
		Institut f\"ur Angewandte Mathematik\\
		Steyrergasse 30\\
		8010 Graz, Austria}
	\email{holzmann@math.tugraz.at}

	\author[C. Stelzer-Landauer]{Christian Stelzer-Landauer}
	\address{Technische Universit\"{a}t Graz\\
		Institut f\"ur Angewandte Mathematik\\
		Steyrergasse 30\\
		8010 Graz, Austria}
	\email{christian.stelzer09@gmail.com}
	
	\maketitle
	\begin{abstract}
		This paper is devoted to the approximation of two and three-dimensional Dirac operators $H_{\widetilde{V} \delta_\Sigma}$ with combinations of electrostatic and Lorentz scalar $\delta$-shell interactions in the norm resolvent sense. Relying on results from \cite{BHS23} an explicit smallness condition on the coupling parameters is derived so that $H_{\widetilde{V} \delta_\Sigma}$ is the limit of Dirac operators with scaled electrostatic and Lorentz scalar potentials. Via counterexamples it is shown that this condition is sharp. The approximation of $H_{\widetilde{V} \delta_\Sigma}$ for larger coupling constants is achieved by adding an additional scaled magnetic term.
	\end{abstract}
	
	\section{Introduction}
	Dirac operators  describe relativistic spin 1/2 particles in a quantum mechanical framework \cite{T92} and they play an important role in the mathematical description of graphene \cite{AB08, NGM04}. In order to model interactions  that are supported in a small neighbourhood of a curve in $\R^2$ or a surface in $\R^3$ Dirac operators with $\delta$-shell potentials are used. These operators  have been studied  intensively in the recent years, see for instance \cite{AMV14, AMV15, BEHL18, BEHL19, BHOP20, BHSS22, BHT23, Ben21, BP24, CLMT21,R22a}, and  are formally given by 
	\begin{equation}\label{eq_H_V_tilde_formal}
			H_{\widetilde{V} \delta_\Sigma} =
			-i(\alpha \cdot \nabla) 
			+ m \beta + \widetilde{V} \delta_\Sigma,
	\end{equation}
	where  $\alpha \cdot \nabla ,\beta$ are as in Section~\ref{sec_not}~\eqref{it_Dirac_matrices},
	$m \in \R$ is the mass of the underlying particle,  $\delta_\Sigma$ is the $\delta$-distribution supported on a $C^2$-smooth curve $\Sigma$ in $\R^2$ or surface in $\R^3$, and $\widetilde{V}$ is a symmetric matrix-valued function which models the  interaction on $\Sigma$. 

	The main objective of the present paper -- which is a continuation of our investigations in \cite{BHS23} -- is the approximation  of $H_{\widetilde{V} \delta_\Sigma}$ by Dirac operators with strongly localized potentials;
    here we complement the qualitative analysis in \cite{BHS23} with sharp quantitative results.
	We also refer the reader to \cite{BHS23} for a detailed introduction, overview and references on the topic.  The approximation of $H_{\widetilde{V} \delta_\Sigma}$ serves to justify the viewpoint that the expression in \eqref{eq_H_V_tilde_formal} is an idealized replacement for more realistic Dirac operators describing strongly localized interactions and was first considered in one dimension in \cite{S89}, see also \cite{HT22,HT23,Hu95,H97,H99,T20}. Furthermore, in the multidimensional setting strong resolvent convergence was shown in \cite{BHT23,CLMT21,MP18,Z23} under various assumptions on $\widetilde{V}$ and $\Sigma$. Finally, in our recent paper \cite{BHS23} it was shown that $H_{\widetilde{V} \delta_\Sigma}$ can be approximated in the norm resolvent sense, if the matrix 
	function $\widetilde{V}$ 
	satisfies an implicit smallness condition. In the present paper we replace this implicit condition by an  explicit and sharp condition for a specific class of interaction matrices, namely those that model electrostatic and Lorentz scalar interactions, see~\eqref{eq_V} and~\eqref{eq_scaling} below.

	Let us recall the problem setting and the main result from \cite{BHS23}. The space dimension is denoted by $\theta \in \{ 2, 3 \}$ and we set $N=2$ for $\theta = 2$ and $N=4$ for $\theta = 3$. Assume that $\Sigma$ is the boundary of a $C^2$-domain  $\Omega_+ \subset \R^{\theta}$ as in Section~\ref{sec_not}~\eqref{it_def_Sigma} and that $\nu$ is the unit normal vector field on $\Sigma$ which points outwards of $\Omega_+$. Note that $\Omega_+$ can be any bounded $C^2$-domain, but also a wide class of unbounded domains is allowed in our assumptions. Then, define 
	\begin{equation}\label{eq_iota}
		\iota: \Sigma \times \R \to \R^\theta, \quad \iota(x_\Sigma,t):= x_\Sigma + t \nu(x_\Sigma), \quad (x_\Sigma,t) \in \Sigma \times \R,
	\end{equation}
	 and for  $\varepsilon \in (0,\infty)$  set $\Omega_\varepsilon := \iota(\Sigma \times (-\varepsilon,\varepsilon))$, which is the so-called \textit{tubular neighbourhood} of $\Sigma$. It follows from the assumptions on $\Sigma$ that one can fix an $\varepsilon_1>0$  such that   $\iota|_{ \Sigma \times (-\varepsilon_1,\varepsilon_1)}$ is injective; cf.  \cite[Proposition~2.4]{BHS23}. Next, we fix a
	 symmetric matrix-valued function
	\begin{equation}\label{eq_q1}
		V = V^*\in W^1_\infty(\Sigma;\C^{N \times N}),
		\end{equation}
		where $W^1_\infty$ is the $L^\infty$-based Sobolev space of once weakly differentiable functions, and 
		\begin{equation}\label{eq_q2}
	q \in L^{\infty}((-1,1);[0,\infty))  \,\,\text{ such that }\,\,   \int_{-1}^{1} q(s)  \, ds = 1,
	\end{equation}
	and we introduce for $\varepsilon \in (0,\varepsilon_1)$ the strongly localized potentials
	\begin{equation}\label{eq_V_eps}
		V_\varepsilon(x) := 
		\begin{cases}
			\frac{1}{\varepsilon}V(x_\Sigma)q\left(\frac{t}{\varepsilon}\right),& \text{if } x = \iota(x_\Sigma,t) \in \Omega_\varepsilon,\\
			0,& \text{if } x \notin \Omega_\varepsilon.
		\end{cases}
	\end{equation} 
	For $m \in \R$ and $\varepsilon \in (0, \varepsilon_1)$ consider the operator
	\begin{equation}\label{eq_H_V_eps}
		\begin{split}\
			H_{V_\varepsilon} u &:= -i(\alpha \cdot \nabla) u + m \beta u + V_\varepsilon  u, \quad  \dom H_{V_\varepsilon} :=  H^1(\R^\theta;\C^N),
		\end{split}
	\end{equation}
	where $H^k$ denotes the $L^2$-based Sobolev space of $k$-times weakly differentiable functions.
	Note that $ H_{V_\varepsilon} = H + V_\varepsilon$ is self-adjoint in $L^2(\mathbb{R}^\theta; \mathbb{C}^N)$ as $V_\varepsilon \in L^\infty(\mathbb{R}^\theta; \mathbb{C}^{N \times N})$ is symmetric and bounded, and the free Dirac operator $H=-i(\alpha \cdot \nabla)  + m \beta $ is self-adjoint 
	in $L^2(\mathbb{R}^\theta; \mathbb{C}^N)$; cf. Section~\ref{sec_free_Dirac}.	
	
	Next, the rigorous mathematical definition of $H_{\widetilde{V}\delta_\Sigma}$ as an operator in $L^2(\mathbb{R}^\theta; \mathbb{C}^N)$ is recalled. We
	set $\Omega_- = \R^{\theta} \setminus \overline{\Omega_+}$, write $u_\pm := u|_{\Omega_\pm}$ 
	for
	$u: \mathbb{R}^\theta \rightarrow \mathbb{C}^N$, and the Dirichlet trace operator is denoted by $\mathbf{t}_\Sigma^\pm: H^1(\Omega_\pm; \mathbb{C}^N) \rightarrow H^{1/2}(\Sigma; \mathbb{C}^N)$.
Then, for $\widetilde{V} = \widetilde{V}^* \in L^\infty(\Sigma;\C^{N \times N})$ the differential operator $H_{\widetilde{V} \delta_\Sigma}$  in $L^2(\mathbb{R}^\theta; \mathbb{C}^N)$  is defined by 
\begin{equation}\label{eq_def_H_V_tilde}
	\begin{split}
		H_{\widetilde{V}\delta_\Sigma}u & := (-i(\alpha \cdot \nabla) + m \beta) u_+ \oplus (-i(\alpha \cdot \nabla ) + m \beta) u_-, \\
		\dom H_{\widetilde{V}\delta_\Sigma}&:= \biggl\{ u \in H^1 (\Omega_+;\C^N) \oplus H^1(\Omega_-; \mathbb{C}^N): \\
		& \quad \qquad i(\alpha \cdot \nu )(\tr^+ u_+  - \tr^- u_-) +  \frac{\widetilde{V}}{2}(\tr^+ u_+  + \tr^- u_-) =0 \biggr\}.
	\end{split}
\end{equation}
	Eventually, the application of $\cos(x)$ and $\textup{sinc}(x) = \frac{1}{x} \sin(x)$ to matrices is understood via power series.
	Now, we are ready to recall the main result from \cite{BHS23} in a compressed  form; note that $H_{V_\varepsilon}$ and $H_{\widetilde{V} \delta_\Sigma}$ are denoted  in  \cite{BHS23} by $H_\varepsilon$ and $H_{\widetilde{V}}$, respectively.

	\begin{theo}[{\cite[Theorem~1.1]{BHS23}}]\label{THEO_MAIN}
		Let  $V$ and $q$ be as in \eqref{eq_q1}--\eqref{eq_q2}. Furthermore, assume that
		\begin{equation}\label{cond234}
			\cos\bigl(\tfrac{1}{2}(\alpha \cdot \nu)V\bigr)^{-1} \in W^1_\infty(\Sigma;\C^{N\times N})
		\end{equation}
		and set $\widetilde V =  V \textup{sinc}\bigl(\tfrac{1}{2}(\alpha \cdot \nu)V\bigr)	\cos\bigl(\tfrac{1}{2}(\alpha \cdot \nu)V\bigr)^{-1}$. If
		 \begin{equation*}
			\norm{V}_{W^1_\infty(\Sigma;\C^{N\times N})}   \norm{q}_{L^\infty(\R)} 
		\end{equation*}
		is sufficiently small, 	then 	$H_{\widetilde{V}\delta_\Sigma}$  is self-adjoint in $L^2(\R^{\theta};\C^N)$ and $H_{V_\varepsilon}$ converges to $H_{\widetilde{V}\delta_\Sigma}$ in the norm resolvent sense as $\varepsilon \to 0$. 
	\end{theo}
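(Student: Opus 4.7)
The plan is to localise the analysis to the tubular neighbourhood $\Omega_\varepsilon$, derive the limiting transmission condition heuristically via an ODE transfer-matrix computation in the normal variable, and then promote this heuristic to norm resolvent convergence through a Birman--Schwinger-type resolvent factorisation. Using the normal coordinates $(x_\Sigma,t)$ from \eqref{eq_iota}, the free Dirac operator inside $\Omega_\varepsilon$ decomposes, to leading order in $\varepsilon$, into a normal part $-i(\alpha\cdot\nu(x_\Sigma))\partial_t$ plus tangential and mass terms, and the equation $(H_{V_\varepsilon}-\lambda)u=f$ becomes, to the same order, a transport ODE
\begin{equation*}
\partial_t u(x_\Sigma,t)=-\tfrac{i}{\varepsilon}(\alpha\cdot\nu(x_\Sigma))V(x_\Sigma)q(t/\varepsilon)u(x_\Sigma,t).
\end{equation*}
Integrating across $(-\varepsilon,\varepsilon)$ and using the normalisation \eqref{eq_q2} gives the transfer relation $u(x_\Sigma,\varepsilon)=\exp(-i(\alpha\cdot\nu)V)\,u(x_\Sigma,-\varepsilon)$, independent of the profile $q$. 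Matching this with the transmission condition in $\dom H_{\widetilde V\delta_\Sigma}$ from \eqref{eq_def_H_V_tilde} and applying the half-angle factorisation $e^{-iA}\pm I=e^{-iA/2}(e^{-iA/2}\pm e^{iA/2})$ with $A=(\alpha\cdot\nu)V$ yields, after elementary algebra using $(\alpha\cdot\nu)^2=I$, precisely $\widetilde V=V\,\textup{sinc}\bigl(\tfrac{1}{2}(\alpha\cdot\nu)V\bigr)\cos\bigl(\tfrac{1}{2}(\alpha\cdot\nu)V\bigr)^{-1}$, with \eqref{cond234} being the invertibility that the limiting jump map requires for it to define an operator at all.

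To turn this heuristic into norm resolvent convergence, I would employ a Birman--Schwinger identity
\begin{equation*}
(H_{V_\varepsilon}-\lambda)^{-1}=(H-\lambda)^{-1}-(H-\lambda)^{-1}A_\varepsilon(I+B_\varepsilon)^{-1}C_\varepsilon(H-\lambda)^{-1},
\end{equation*}
obtained from a bounded factorisation $V_\varepsilon=A_\varepsilon C_\varepsilon$ with compound operator $B_\varepsilon=C_\varepsilon(H-\lambda)^{-1}A_\varepsilon$ on a rescaled $L^2$ space. The crux is to show, after identifying $\Omega_\varepsilon$ with $\Sigma\times(-1,1)$ via $\iota$ and rescaling, that $B_\varepsilon$ converges in operator norm to an explicit operator built from the Dirichlet trace of the free Green function on $\Sigma$ (a Cauchy-type singular integral operator) and from $V,q$. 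The factor $\cos\bigl(\tfrac{1}{2}(\alpha\cdot\nu)V\bigr)^{-1}$ then arises from the Neumann-series inversion of $I+B_\varepsilon$, and the smallness assumption on $\|V\|_{W^1_\infty}\|q\|_{L^\infty}$ is precisely what is needed for this series to converge uniformly in $\varepsilon$ and for the limiting cosine to be invertible; the limit of the right-hand side is then identified, by comparison with the Krein-type resolvent formula for $H_{\widetilde V\delta_\Sigma}$, with $(H_{\widetilde V\delta_\Sigma}-\lambda)^{-1}$. Self-adjointness of $H_{\widetilde V\delta_\Sigma}$ comes out essentially for free: symmetry is inherited from $V=V^*$ and $(\alpha\cdot\nu)^*=\alpha\cdot\nu$, while surjectivity of $H_{\widetilde V\delta_\Sigma}-\lambda$ for non-real $\lambda$ is built into the limiting resolvent formula.

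The main obstacle I expect is uniform control, in $\varepsilon$, of the curvature corrections hidden in the normal-coordinate decomposition of $H$, and, closely related, the need to measure $V$ in $W^1_\infty$ rather than $L^\infty$: tangential derivatives of $V$ and of the trace Green function generate commutator terms that must be absorbed into the error, and these commutators are exactly what determine the smallness threshold and motivate the sharp quantitative refinement that is the subject of the present paper.
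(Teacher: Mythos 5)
Your plan coincides with the approach taken in \cite{BHS23}, as recalled in Section~\ref{sec_res_conv} of the present paper: a Birman--Schwinger-type resolvent factorisation built from the tubular-coordinate integral operators $A_\varepsilon(z)$, $B_\varepsilon(z)$, $C_\varepsilon(z)$ on the Bochner space $\mathcal{B}^0(\Sigma)$, convergence of $B_\varepsilon(z)$ to its limit $B_0(z)$ (in the $\mathcal{B}^{1/2}\to\mathcal{B}^0$ norm, cf.\ Proposition~\ref{prop_conv_res}), uniform Neumann-series inversion of $I+B_\varepsilon(z)Vq$ under the smallness hypothesis, and identification of the limiting resolvent with the Krein formula for $H_{\widetilde V\delta_\Sigma}$. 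Your transfer-matrix heuristic for the rescaling $V\mapsto\widetilde V$ and for condition \eqref{cond234} is likewise the standard motivation and matches the algebra behind the inversion of $I+T(\alpha\cdot\nu)Vq$ used in the paper.
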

	
	Similar to previous approximation results a nonlinear rescaling of $V$ to $\widetilde{V}$ is observed in the limit, which is often related to Klein's paradox; cf. \cite{CLMT21,MP18,S89}.
	
	A direct application of Theorem~\ref{THEO_MAIN} to check the convergence of $H_{V_\varepsilon}$ is difficult, since it contains a nonexplicit smallness assumption on $V$. 
	The main motivation  in the present paper is to make this smallness condition explicit, if $V$ is of the specific form
	\begin{equation}\label{eq_V}
			V = \eta I_N + \tau \beta,\qquad \eta,\tau \in C^1_b(\Sigma;\R);
	\end{equation}
	here $C^1_b(\Sigma;\R)$ is the space of continuously differentiable functions with bounded derivatives 
	(see Section~\ref{sec_not}~\eqref{gehtdas}).
	In fact, such potentials are of particular physical interest since  $\eta I_N$ and $\tau \beta$ model electrostatic and Lorentz scalar interactions, respectively, and they are, in the context of $\delta$-shell potentials, studied in \cite{AMV14, AMV15,BEHL19,BHOP20,BHSS22,Ben21, BP24, R22a}. 
	Moreover, for $V$ as in~\eqref{eq_V} the condition \eqref{cond234} and the rescaling $V \mapsto \widetilde{V}$ simplify substantially, as then the anti-commutation rules for the Dirac matrices yield $((\alpha \cdot \nu) V)^2 = d I_N$, with $d = \eta^2 - \tau^2$, and thus,  the power series representations of $\cos$ and $\textup{sinc}$ lead
	to
	\begin{equation*}
		\cos\bigl(\tfrac{1}{2}(\alpha \cdot \nu)V\bigr) = \cos\bigl(\tfrac{\sqrt{d}}{2}\bigr) I_N \quad \textup{and} \quad \textup{sinc}\bigl(\tfrac{1}{2}(\alpha \cdot \nu)V\bigr) = \textup{sinc}\bigl(\tfrac{\sqrt{d}}{2}\bigr) I_N.
	\end{equation*}
	Hence, \eqref{cond234} simplifies to
	\begin{equation}\label{eq_scaling_cond}
		\inf_{x_\Sigma \in \Sigma, k \in \N_0} \abs{d(x_\Sigma)- (2k+1)^2 \pi^2} >0
	\end{equation}
	and we have
	\begin{equation}\label{eq_scaling}
		\widetilde{V} = \widetilde{\eta} I_N + \widetilde{\tau} \beta, \qquad (\widetilde{\eta},\widetilde{\tau}) = \textup{tanc}\bigl(\tfrac{\sqrt{d}}{2}\bigr)(\eta,\tau),
	\end{equation}
	where $\textup{tanc}(x) = \frac{\textup{tan}(x)}{x}$, see also Section~\ref{sec_not}~\eqref{it_tanc}.  Moreover, in the present situation $H_{\widetilde{V}\delta_\Sigma}$ is self-adjoint in $L^2(\R^\theta;\C^N)$ if 
	\begin{equation}\label{eq_d_tilde_non_crit}
		\inf_{x_\Sigma \in \Sigma} |\widetilde{d}(x_\Sigma)-4| > 0, \qquad  \widetilde{d} = \widetilde{\eta}^2 - \widetilde{\tau}^2,
	\end{equation}
	see  \cite[Section~6]{R22a}, which is called the \textit{noncritical} case.

In the main result of this paper, Theorem~\ref{theo_main_1}, we prove that if $V$ is chosen as in \eqref{eq_V}, then the  explicit smallness condition 
\begin{equation}\label{eq_d_cond_int}
	\sup_{x_\Sigma \in \Sigma} d(x_\Sigma) < \frac{\pi^2}{4}, \qquad d = \eta^2 - \tau^2,
\end{equation}
already guarantees norm resolvent convergence of $H_{V_\varepsilon}$ to $H_{\widetilde{V} \delta_\Sigma}$. 
Note that~\eqref{eq_d_cond_int} via \eqref{eq_scaling} implies 
\begin{equation}\label{supjussi}
\sup_{x_\Sigma \in \Sigma}\vert \widetilde{d}(x_\Sigma)\vert <4
\end{equation}
and hence
\eqref{eq_d_tilde_non_crit} is satisfied. This is already a first indication that it is not possible to relax the condition~\eqref{eq_d_cond_int}, as otherwise Dirac operators with critical $\delta$-shell potentials could be approximated, which have very different spectral properties as in the noncritical case.
However, to approximate $H_{\widetilde{V}\delta_\Sigma}$ also for $\widetilde{V}$ with 
\begin{equation}\label{infjussi}
\inf_{x_\Sigma \in \Sigma} |\widetilde{d}(x_\Sigma)| > 4
\end{equation}
we make use of a well-known unitary transformation.
Its approximation  
results, in a similar way as it was also described in \cite[Section~8]{CLMT21}, in an additional strongly localized magnetic potential $V_{m,\varepsilon}$ that is added to  $H_{V_\varepsilon}$. In Theorem~\ref{theo_main_2} we then show that $H_{V_\varepsilon} + V_{m,\varepsilon}$  also converges in the norm resolvent sense to a Dirac operator with a combination of electrostatic and Lorentz scalar $\delta$-shell potentials, where the rescaling of $\eta$ and $\tau$ is different.
In fact, as a consequence of this result and Theorem~\ref{theo_main_1} we obtain Corollary~\ref{cor_main_1}, which states that every Dirac operator with a given $\delta$-shell potential $\widetilde{V} \delta_\Sigma$, $\widetilde{V} = \widetilde{\eta} I_N + \widetilde{\tau} \beta$, $\widetilde{\eta},\widetilde{\tau} \in C_b^1(\Sigma;\R)$, and $\widetilde{d} = \widetilde{\eta}^2 -\widetilde{\tau}^2$ fulfilling \eqref{supjussi} or \eqref{infjussi}
can  be approximated by a sequence of Dirac operators with strongly localized potentials in the norm resolvent sense.

In Section~\ref{sec_counter} we show that the condition \eqref{eq_d_cond_int} is sharp, i.e. that $H_{V_\varepsilon}$ does, in general, not converge to $H_{\widetilde{V} \delta_\Sigma}$, if \eqref{eq_d_cond_int} is not satisfied. To do so, we provide suitable counterexamples under the assumption $\eta,\tau \in \R$. In Theorem~\ref{theo_main_counter_critical} we consider the case $\widetilde{d} =4 $ for a compact surface $\Sigma \subset \R^\theta$ and in Theorem~\ref{theo_main_3} the situation $\widetilde{d} > 4$ for $\Sigma = \{0\} \times \R$. In both cases we are able to show that if \eqref{eq_d_cond_int} is not fulfilled, then norm resolvent convergence would yield contradictory spectral implications for the limit operator $H_{\widetilde{V}\delta_\Sigma}$.

Finally, let us describe the structure of the paper. We start by introducing various notations and conventions in Section~\ref{sec_not}. Then, in Section~\ref{sec_expl_cond} we prove the main results of this paper on the approximation of $H_{\widetilde{V} \delta_\Sigma}$ by Dirac operators with squeezed potentials in the norm resolvent sense, and we provide explicit conditions on the coefficients such that the associated convergence results hold. The proof of Theorem~\ref{theo_main_1}  relies on the same statement for the special case where $\Sigma$ is a rotated graph and the technically complicated long proof of this special case is outsourced to Section~\ref{sec_C^2_b_graphs}.   After Section~\ref{sec_expl_cond}, we complement in  Section~\ref{sec_counter} the results of Section~\ref{sec_expl_cond} by providing counterexamples which show that \eqref{eq_d_cond_int} is sharp.

\subsection{Notations and assumptions}\label{sec_not}
	\newcounter{not_list}
	In this section we introduce frequently used notations and assumptions.
	Let us start with a few general conventions.
	\begin{enumerate}[(i)]
		\item The letter $C>0$ always denotes a generic constant which may change in-between lines.
		\item The branch of the square root is fixed by $\text{Im}\, \sqrt{w} > 0$ for $w \in \C \setminus [0, \infty)$.
		\item \label{it_tanc} For $w\in \C \setminus \{k\pi + \tfrac{\pi}{2} : k \in \Z \}$ we define the function 	
		\begin{equation*}
			\textup{tanc}(w) :=
			\begin{cases}
				\tfrac{\tan(w)}{w}, &  w \in \C \setminus \big(\{0\} \cup \{ k\pi + \tfrac{\pi}{2} : k \in \Z \}\big),\\
				1, & w = 0.
			\end{cases}
		\end{equation*}
		For $ x \in \R \setminus \{ 0 \}$ the equation $\textup{tanc}(i x) = \frac{\tanh(x)}{x}$ is valid, which can be extended by continuity to $x=0$.
		\item The symbol $| \cdot|$ is used for the absolute value, the Euclidean vector norm, or the Frobenius norm of a number, vector, or matrix, respectively.
		We write $\langle \cdot, \cdot \rangle$ for the Euclidean scalar product  in $\C^n$, $n \in \N$, which is anti-linear in the second argument. 
		\item  Let $\mathcal{H}$ and $\mathcal{G}$ be Hilbert spaces and  $A$ be a linear operator from $\mathcal{H}$ to $\mathcal{G}$. The domain, kernel, and range of $A$ are denoted by $\dom A$, $\ker A$, and $\ran A$, respectively. If $A$ is bounded and everywhere defined, then we write $\| A \|_{\mathcal{H} \rightarrow \mathcal{G}}$ for its operator norm.  The expression $[\cdot,\cdot]$ denotes the commutator of two operators. If $\mathcal{H} = \mathcal{G}$ and $A$ is a closed operator, then the resolvent set,  the spectrum, and the point spectrum of $A$ are denoted by $\rho(A)$, $\sigma(A)$, and $\sigma_{\textup{p}}(A)$, respectively. In the case that $A$ is self-adjoint, we denote the essential and discrete spectrum of $A$ by $\sigma_{\textup{ess}}(A)$ and $\sigma_{\textup{disc}}(A)$, respectively.
		\setcounter{not_list}{\value{enumi}}
	\end{enumerate}
	Next, we fix the space dimension, introduce Dirac matrices and define related notations.
	\begin{enumerate}[(i)]
		\setcounter{enumi}{\value{not_list}}
		\item\label{it_Dirac_matrices} By $\theta \in \{ 2, 3 \}$ we denote the space dimension and we set $N=2$ for $\theta=2$ and $N=4$ for $\theta = 3$. 
		With the help of the  Pauli spin matrices
		\begin{equation*}
			\sigma_1 = \begin{pmatrix} 0 & 1\\ 1 & 0 \end{pmatrix}, \quad\sigma_2 = \begin{pmatrix} 0 & -i\\ i & 0 \end{pmatrix}, \quad \text{and} \quad \sigma_3 =\begin{pmatrix} 1 & 0\\ 0 & -1 \end{pmatrix}
		\end{equation*} 
		we define the Dirac matrices $\alpha_1, \dots, \alpha_\theta, \beta \in \mathbb{C}^{N \times N}$  for $\theta=2$ by
		\begin{equation} \label{def_Dirac_matrices_2d}
			\alpha_1 := \sigma_1, \quad \alpha_2 := \sigma_2, \quad \text{and} \quad \beta := \sigma_3,
		\end{equation}
		and  for $\theta = 3$ by
		\begin{equation} \label{def_Dirac_matrices_3d}
			\alpha_j := \begin{pmatrix} 0 &
				\sigma_j \\ \sigma_j & 0
			\end{pmatrix}\text{ for } j=1,2,3  \quad \text{and} \quad \beta := \begin{pmatrix}
				I_2 & 0 \\ 0 & -I_2
			\end{pmatrix},
		\end{equation}
		where $I_n$ is the $n \times n$-identity matrix, $n \in \mathbb{N}$. The Dirac matrices satisfy
		\begin{equation}\label{eq_dirac_matrices}
			\alpha_j \alpha_k + \alpha_k \alpha_j = 2 I_N \delta_{jk} \quad \text {and} \quad \alpha_j \beta + \beta \alpha_j=0 \quad j,k \in \{1,\dots,\theta\},
		\end{equation}
		where $\delta_{jk}$ denotes the Kronecker delta.
		Vectors in $\C^\theta$ are denoted by  $x = (x_1,\dots,x_\theta)$ and	we will often make use of the notations
		\begin{equation*}
			\alpha \cdot \nabla := \sum_{j = 1}^\theta \alpha_j \partial_j \quad \text{and} \quad \alpha \cdot x := \sum_{j = 1}^\theta \alpha_j x_j, \quad x = (x_1, \dots, x_\theta) \in \C^\theta.
		\end{equation*}
		Finally, we use the notation $x = (x',x_\theta)$ with $x' \in \C^{\theta-1}$ and $x_\theta \in \C$.
		\setcounter{not_list}{\value{enumi}}
	\end{enumerate}
	In the upcoming item we introduce a class of $C^2$-hypersurfaces which is convenient for the definition of trace and extension operators, as well as tubular neighbourhoods.
	\begin{enumerate}[(i)]
		\setcounter{enumi}{\value{not_list}}
		\item  \label{it_def_Sigma} We assume that $\Sigma$ is  the boundary of an open set $\Omega_+ \subset \R^{\theta}$ which satisfies the following: There exist open sets  $W_1, \dots ,W_p \subset \mathbb{R}^\theta$, mappings $\zeta_1,\dots,\zeta_p \in C^{2}_b(\R^{\theta-1}; \mathbb{R})$ (see \eqref{gehtdas} below), rotation matrices $\kappa_1, \dots ,\kappa_p \in \R^{\theta \times \theta}$, and $\varepsilon_0 >0$ such that
		\begin{itemize}
			\item[(i)] $\Sigma \subset \bigcup_{l=1}^p W_l$;
			\item[(ii)] if $x \in \partial \Omega_+ = \Sigma$, then there exists $l \in \{1, \dots ,p\}$ such that $B(x,\varepsilon_0) \subset W_l$;
			\item[(iii)] $W_l \cap \Omega_+
			= W_l \cap \Omega_l$, where $\Omega_l = \{ \kappa_l (x', x_\theta): x_\theta < \zeta_l(x'), \, (x',x_\theta) \in \R^\theta \}$, for $l \in \{1,\dots ,p\}$.
		\end{itemize}
		Furthermore, we set $\Sigma_l := \partial \Omega_l = \{ \kappa_l (x', \zeta_l(x')): x' \in \mathbb{R}^{\theta-1} \}$,  $\Omega_- := \R^\theta \setminus \overline{\Omega_+}$, and denote the unit normal vector field at $\Sigma$ that is pointing outwards of $\Omega_+$ by $\nu$. Moreover, for $u: \mathbb{R}^\theta \rightarrow \mathbb{C}^N$ we define $u_\pm := u|_{\Omega_\pm}$.
		\setcounter{not_list}{\value{enumi}}
	\end{enumerate}
	Now, we turn to the introduction of various function spaces.
	\begin{enumerate}[(i)]
		\setcounter{enumi}{\value{not_list}}
		\item\label{gehtdas} If $n \in \N$ and $U \subset \R^n$ is open, then  $H^r(U)$ and $W^r_\infty(U)$ denote the $L^2$ and $L^\infty$ based Sobolev spaces of order $r$, respectively; cf. \cite[Chaper~3]{M00}. Moreover, if $k \in \N \cup \{\infty\}$, then we write $C_b^k(U)$ for the space which contains all $f \in C^k(U)$ such that $f$ and all partial derivatives of $f$ up to order $k$ are bounded. Vector or matrix valued function spaces are defined in the natural way, i.e.  component-wise. Moreover, for function spaces with $\C^{n \times l}$-valued functions, $n,l \in \N$, we use the notations $H^r(U;\C^{n\times l})$, $W^r_\infty(U;\C^{n\times l})$, and $C_b^k(U;\C^{n \times l}).$

		\item The trace spaces $H^r(\Sigma)$, $r \in [-2,2]$,  $W^1_\infty(\Sigma)$, and $C^k_b(\Sigma)$, $k \in \{1,2\}$, are defined via local coordinates, a partition of unity and the corresponding function spaces on open sets; cf. \cite[Section~2]{BHS23}.
		Moreover, the well-defined and bounded Dirichlet trace operator is denoted by
		\begin{equation*}
			\tr^{\pm}:  H^r(\Omega_\pm) \to H^{r-1/2}(\Sigma)  \quad \textup{and} \quad  \tr : H^{r}(\R^{\theta}) \to H^{r-1/2}(\Sigma),
		\end{equation*}
		$r \in (\tfrac{1}{2},\tfrac{5}{2})$; cf. \cite[Theorem~2]{M87}, where we use for 
		$$u=u_+\oplus u_- \in H^r(\R^{\theta} \setminus \Sigma) = H^r(\Omega_+;\C^N) \oplus H^r(\Omega_-;\C^N)$$ 
		the shortened notation $\tr^\pm u$ for $\tr^\pm u_\pm$. Vector or matrix valued trace spaces and trace operators are defined  component-wise.

		\item\label{it_Bochner} For a Hilbert space $\mathcal{H}$ the usual $L^2((-1,1))$ based Bochner Lebesgue space of $\mathcal{H}$-valued
		functions is denoted by $L^2((-1,1);\mathcal{H})$; cf. \cite[Section~2.2]{BHS23}. In the case $\mathcal{H}=H^r(S;\C^N)$ with $S \in \{\Sigma,\R^{\theta-1}\}$, we write $\mathcal{B}^r(S)$  instead of $L^2((-1,1);H^r(S;\C^N))$, respectively. We also write
		$\norm{\cdot}_r$ for the norm in $\mathcal{B}^r(S)$. In a similar way, we define
		\begin{equation*}\begin{aligned} 
				\norm{\cdot}_{r \to r'} &:=\norm{\cdot}_{\mathcal{B}^r(S) \to \mathcal{B}^{r'}(S)},\\
				\norm{\cdot}_{r \to \mathcal{H}} &:= \norm{\cdot}_{\mathcal{B}^r(S) \to \mathcal{H}},\\
				\norm{\cdot}_{\mathcal{H} \to r'} &:= \norm{\cdot}_{\mathcal{H} \to \mathcal{B}^{r'}(S) }.
			\end{aligned}
		\end{equation*}
		We  will  use  the bounded embedding
		\begin{equation*}
			\mathfrak{J}:H^r(S;\C^N) \to \mathcal{B}^{r}(S),\quad
			\mathfrak{J}\varphi(t) := \varphi,
		\end{equation*}
		and its adjoint
		\begin{equation*}
			\mathfrak{J}^* :\mathcal{B}^{r}(S) \to H^r(S;\C^N),\quad
			\mathfrak{J}^*f = \int_{-1}^1 f(t) \,dt.
		\end{equation*} 
		
		We also use the following convenient identification: Let
		$\mathcal{Q} \in L^\infty((-1,1))$ and  $\mathcal{A}$ be bounded operator in $H^r(S;\C^N)$, $r \in [-2,2]$. Then, we identify 
		\begin{equation*}
			\mathcal{M}_{\mathcal{Q}}: \mathcal{B}^r(S)\to \mathcal{B}^r(S),\quad \!\!
			(\mathcal{M}_{\mathcal{Q}}f)(t) := \mathcal{Q}(t) f(t), 
		\end{equation*}
		and 
		\begin{equation*}
			\mathcal{M}_{\mathcal{A}} : \mathcal{B}^r(S)\to \mathcal{B}^r(S) ,\quad
			(\mathcal{M}_{\mathcal{A}}f)(t) :=  \mathcal{A}(f(t)), 
		\end{equation*}
		with $\mathcal{Q}$ and $\mathcal{A}$, respectively. Note  that the norms  $\norm{\mathcal{M}_{\mathcal{Q}}}_{r \to r}$ and $\norm{\mathcal{M}_{\mathcal{A}}}_{r \to r}$ are equal  to $\norm{\mathcal{Q}}_{L^\infty((-1,1))}$ and $\norm{\mathcal{A}}_{H^r(S;\C^N)\to H^r(S;\C^N)}$, respectively. 
		\setcounter{not_list}{\value{enumi}}
	\end{enumerate}
	Finally, we fix notations and state simple properties regarding Fourier transforms.
	\begin{enumerate}[(i)]
		\setcounter{enumi}{\value{not_list}}
		\item \label{it_Fourier}  The expression $\mathcal{F}$ denotes the Fourier transform in $\R^{\theta-1}$. Moreover, $\mathcal{F}_1$ and $\mathcal{F}_2$ denote the partial Fourier transforms in $\R^{\theta}$  with respect to the first $\theta-1$ variables and the $\theta$-th variable, respectively. These transforms are given for $\psi \in \mathcal{S}(\R^{\theta-1})$ and $u \in  \mathcal{S}(\R^\theta)$ by
		\begin{equation*}
			\begin{aligned}
				\hspace{23 pt}\mathcal{F}\psi(\xi') &= \frac{1}{\sqrt{(2\pi)^{\theta-1}}}\int_{\R^{\theta-1}} \psi(x') e^{-i\langle x',\xi'\rangle}dx',&& \xi' \in \R^{\theta-1},\\
				\mathcal{F}_1u(\xi) &= \frac{1}{\sqrt{(2\pi)^{\theta-1}}}\int_{\R^{\theta-1}} u(x',\xi_\theta) e^{-i\langle x',\xi'\rangle}dx',   && \xi=(\xi',\xi_{\theta}) \in \R^{\theta},\\
				\mathcal{F}_2u(\xi) &= \frac{1}{\sqrt{2\pi}}\int_{\R} u(\xi',x_\theta) e^{-i x_\theta \xi_\theta}dx_\theta, && \xi=(\xi',\xi_{\theta}) \in \R^{\theta},
			\end{aligned}
		\end{equation*}
		and can be uniquely extended 
		to bounded operators in $\mathcal{S}'(\R^{\theta-1})$  and $\mathcal{S}'(\R^{\theta})$, where $\mathcal{S}'$ denotes the space of tempered distributions; cf. \cite[Chapter IX]{RS75}. Moreover, the application of the Fourier transform to vector and matrix-valued functions or distributions  is defined component-wise. The (usual) Fourier transform in $\R^\theta$ with respect to all variables is given by $\mathcal{F}_{1,2} := \mathcal{F}_1 \mathcal{F}_2 = \mathcal{F}_2 \mathcal{F}_1$.
	\end{enumerate}

\subsection*{Acknowledgements}  
This research was funded by the Austrian Science Fund (FWF) 10.55776/P 33568-N. For the purpose of open access, the author has applied a CC BY public copyright licence to any Author Accepted Manuscript version arising from this submission.

	\section{Explicit conditions for the approximation of Dirac operators with $\delta$-shell potentials in the norm resolvent sense}\label{sec_expl_cond}

	 In the first main theorem of this paper we replace the nonexplicit smallness condition from Theorem~\ref{THEO_MAIN} by the simple and explicit condition \eqref{eq_d_cond} below. This condition will also turn out to be optimal later.
	
		\begin{theo}\label{theo_main_1}
		Let $q\in L^\infty((-1,1);[0,\infty))$ with $\int_{-1}^1 q(s) \,ds =1$ and assume that
		$\eta, \tau \in  C^1_b(\Sigma;\R)$ satisfy the condition 
		\begin{equation}\label{eq_d_cond}
		\sup_{x_\Sigma \in \Sigma} d(x_\Sigma) < \frac{\pi^2}{4}, \qquad d = \eta^2 - \tau^2.
	\end{equation}
		 Let $V$ and $V_\varepsilon$ be  as in \eqref{eq_V} and  \eqref{eq_V_eps}, and define $\widetilde V$ by \eqref{eq_scaling}.
			Then, for all $z \in \C\setminus\R$  and $r \in (0,\tfrac{1}{2} )$ there exist $C>0$ and $\varepsilon' \in (0,\varepsilon_1)$ such that 
		\begin{equation*} 
			\norm{(H_{V_\varepsilon}-z)^{-1} - (H_{\widetilde{V}\delta_\Sigma}-z)^{-1}}_{L^2(\R^\theta;\C^N) \to L^2(\R^\theta;\C^N)} \leq C \varepsilon^{1/2-r}, \quad  \varepsilon \in (0, \varepsilon').
		\end{equation*} 
		In particular, $H_{V_\varepsilon}$ converges to $H_{\widetilde{V}\delta_\Sigma}$ in the norm resolvent sense as $\varepsilon \to 0$. 
	\end{theo}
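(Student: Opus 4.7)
My plan is to first exploit the special algebraic structure afforded by \eqref{eq_V}, then reduce to the graph case via a partition of unity, and finally apply the quantitative graph-case estimate developed in Section~\ref{sec_C^2_b_graphs}. Using the anti-commutation relations \eqref{eq_dirac_matrices} one verifies $((\alpha\cdot\nu)V)^2 = dI_N$ with $d = \eta^2 - \tau^2$, so the matrix $\cos\bigl(\tfrac{1}{2}(\alpha\cdot\nu)V\bigr)$ collapses to the scalar multiple $\cos(\sqrt d/2)I_N$. The assumption \eqref{eq_d_cond} keeps $\cos(\sqrt d/2)$ strictly positive and bounded away from zero on $\Sigma$ (with the identification $\cos(iy)=\cosh(y)$ handling the region where $d<0$), so the hypothesis \eqref{cond234} of Theorem~\ref{THEO_MAIN} holds with explicit quantitative bounds. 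Consequently $\weta,\wtau$ defined via \eqref{eq_scaling} belong to $C^1_b(\Sigma;\R)$; a short computation yields $\sup_{x_\Sigma\in\Sigma}|\wid(x_\Sigma)|<4$, and hence by \eqref{eq_d_tilde_non_crit} the limit operator $H_{\widetilde{V}\delta_\Sigma}$ is self-adjoint.

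For the quantitative resolvent estimate I would choose a $C_b^\infty$ partition of unity $\{\chi_l\}_{l=1}^p$ subordinate to the cover $W_1,\dots,W_p$ of $\Sigma$ from Section~\ref{sec_not}~\eqref{it_def_Sigma}, and use it to localize the Birman-Schwinger-type operators that mediate the comparison of the two resolvents. On each patch $\Sigma\cap W_l$ the interaction surface matches a piece of the rotated graph $\Sigma_l$, so the graph-case estimates from Section~\ref{sec_C^2_b_graphs} apply to each local operator-theoretic contribution and yield the rate $O(\varepsilon^{1/2-r})$. Summing over $l$ and controlling the commutators $[H,\chi_l]=-i(\alpha\cdot\nabla\chi_l)$ (bounded multiplication operators of order zero, since both $V_\varepsilon$ and $\widetilde V$ act by scalars that commute with $\chi_l$) through standard resolvent manipulations then produces the global estimate with the same rate, uniformly in $\varepsilon$, because the commutator errors are supported away from the interaction surfaces of the localized operators.

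The main obstacle is the graph case itself, which is why its proof is postponed to Section~\ref{sec_C^2_b_graphs}. On a rotated graph, partial Fourier transform $\mathcal{F}_1$ in the tangential variables diagonalizes the free Dirac resolvent, reducing the analysis to a Birman-Schwinger-type operator acting on the Bochner scales $\mathcal{B}^r(\R^{\theta-1})$ from Section~\ref{sec_not}~\eqref{it_Bochner}. The identity $((\alpha\cdot\nu)V)^2 = dI_N$ then makes the Neumann-series argument underlying Theorem~\ref{THEO_MAIN} quantitatively tractable: invertibility of the perturbed operator reduces to a scalar criterion whose critical threshold is precisely $\pi^2/4$, below which one obtains uniform invertibility with explicit constants via the same power series identities as in \eqref{eq_scaling}. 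The rate $\varepsilon^{1/2-r}$ then arises from trading a small fractional Sobolev exponent when passing from the Bochner norms $\norm{\cdot}_r$ to the $L^2$ operator norm, which is standard for approximations of this type.
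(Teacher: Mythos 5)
Your plan follows the paper's own route: exploit $((\alpha\cdot\nu)V)^2=dI_N$ to collapse the rescaling to the scalar \eqref{eq_scaling}, localize with a partition of unity of the type in Lemma~\ref{lem_part_unity_phi_l}, and then invoke the rotated-graph estimate (Theorem~\ref{theo_rotated_graph}) patch by patch before assembling the global estimate from the commutator-resolvent formulas (Lemmas~\ref{lem_H_V_eps_formula} and \ref{lem_H_Vtilde}). That is precisely the structure of Section~\ref{sec_expl_cond}, and deferring the graph case to Section~\ref{sec_C^2_b_graphs} is also what the paper does.

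One justification in the localization step does not hold as stated: the commutator errors $(\alpha\cdot\nabla\varphi_l)$ are \emph{not} supported away from the interaction surfaces of the localized operators, and that is not the mechanism that preserves the rate. The gradients $\nabla\varphi_l$ are nonzero precisely where the members of the partition of unity overlap, which happens in a full neighbourhood of $\Sigma\cap W_l\subset\Sigma_l$. The correct reason the commutator contributions are harmless is that the resolvents $(H_{V_{\varepsilon,l}}-z)^{-1}$ and $(H_{\widetilde V_l\delta_{\Sigma_l}}-z)^{-1}$ are bounded by $|\mathrm{Im}\,z|^{-1}$ uniformly in $\varepsilon$, while $(\alpha\cdot\nabla\varphi_l)$ is a fixed, $\varepsilon$-independent bounded multiplication operator; choosing $|\mathrm{Im}\,z|>\sum_l\|\alpha\cdot\nabla\varphi_l\|_{L^\infty}$ makes the operator $I+\mathfrak S_\varepsilon$ in the resolvent formula uniformly invertible by a Neumann series, and one then transfers the estimate to arbitrary $z\in\C\setminus\R$ with the second resolvent identity. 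Replacing the "supported away" claim by this uniform-resolvent-bound argument closes the step; everything else in your outline is consistent with the paper.
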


	The strategy to prove Theorem~\ref{theo_main_1} is to reduce it to the case where $\Sigma$ is a rotated $C^2_b$-graph, which is shown as a separate result in Section~\ref{sec_C^2_b_graphs}. The actual proof of Theorem~\ref{theo_main_1} makes use of three preparatory results, which relate the resolvents of $H_{\widetilde{V}\delta_\Sigma}$ and $H_{V_\varepsilon}$ to similar operators with potentials supported on the $C^2_b$-graphs $\Sigma_l$ and tubular neighbourhoods of $\Sigma_l$ (see \eqref{it_def_Sigma} in Section~\ref{sec_not}), respectively, via a suitable partition of unity.
	We also remark that the condition \eqref{eq_d_cond} implies 
	\begin{equation*}
		\sup_{x_\Sigma \in \Sigma} \widetilde d(x_\Sigma)= \sup_{x_\Sigma \in \Sigma} (\widetilde\eta^{\,2}(x_\Sigma) - \widetilde\tau^{\,2}(x_\Sigma))= \sup_{x_\Sigma \in \Sigma}4 \tan^2\biggl(\frac{\sqrt{d(x_\Sigma)}}{2}\biggr) < 4,
	\end{equation*} 
	so that $\widetilde V$ in \eqref{eq_scaling}
	is noncritical; cf. \eqref{eq_d_tilde_non_crit}.

    The first preparatory lemma  is a slight variation of \cite[Lemma~B.2]{BHS23} and allows us to construct a suitable partition of unity for $\Sigma$. 

	\begin{lemma}\label{lem_part_unity_phi_l} 
		Let $\Sigma \subset \mathbb{R}^\theta$, $\theta  \in \{ 2,3  \}$, be a set as described in Section~\ref{sec_not}~\eqref{it_def_Sigma}. Then, there exists a partition of unity $\varphi_1,\dots,\varphi_p \in C^\infty_b(\R^{\theta};\R)$  for $\Sigma$ subordinate to the open cover $W_1,\dots,W_p$ of $\Sigma$. This partition of unity can be chosen such that $\varphi_1,\dots,\varphi_{p}$ is also a partition of unity for $\Sigma + B(0,\delta)$ for some $\delta>0$ and $\supp \varphi_l + B(0,\delta) \subset W_l$ for all $l \in \{1,\dots,p\}$.
	\end{lemma}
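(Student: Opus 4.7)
The construction is a standard quantitative partition-of-unity argument; the only nontrivial ingredient is the uniform ball condition in item~(ii) of Section~\ref{sec_not}~\eqref{it_def_Sigma}. That condition furnishes a constant $\varepsilon_0 > 0$ such that every $x \in \Sigma$ admits some $l$ with $B(x, \varepsilon_0) \subset W_l$, and this uniformity is precisely what allows a single $\delta$ to serve both for the enlargement of $\Sigma$ and the support buffer around $W_l$ even when $\Sigma$ is unbounded.

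Fix $\delta := \varepsilon_0 / 8$ and, for each $l \in \{1,\dots,p\}$, introduce the nested open sets
\begin{equation*}
V_l := \{x \in \R^\theta : \textup{dist}(x, \R^\theta \setminus W_l) > 4\delta\}, \quad V_l' := \{x \in \R^\theta : \textup{dist}(x, \R^\theta \setminus W_l) > 5\delta\}.
\end{equation*}
A direct application of item~(ii) yields $\Sigma + B(0, 2\delta) \subset \bigcup_{l=1}^p V_l'$: for $y = x_\Sigma + z$ with $x_\Sigma \in \Sigma$ and $|z| < 2\delta$, selecting $l$ with $B(x_\Sigma, \varepsilon_0) \subset W_l$ gives $\textup{dist}(y, \R^\theta \setminus W_l) \geq \varepsilon_0 - 2\delta = 6\delta > 5\delta$. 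Next, for each $l$ construct by standard mollification a bump $\psi_l \in C^\infty_b(\R^\theta; [0,1])$ with $\psi_l \equiv 1$ on $\overline{V_l'}$ and $\supp \psi_l \subset V_l$; this yields both the desired buffer $\supp \psi_l + B(0, \delta) \subset W_l$ and the pointwise lower bound $\sum_{j=1}^p \psi_j \geq 1$ on $\Sigma + B(0, 2\delta)$.

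To turn the $\psi_l$'s into an honest partition of unity on $\Sigma + B(0, \delta)$, choose a cutoff $\chi \in C^\infty_b(\R^\theta; [0,1])$ with $\chi \equiv 1$ on $\Sigma + B(0, \delta)$ and $\supp \chi \subset \Sigma + B(0, 2\delta) \subset \{\sum_{j=1}^p \psi_j \geq 1\}$, and set $\varphi_l := \chi \, \psi_l / \sum_{j=1}^p \psi_j$, extended by $0$ on the closed set where the denominator vanishes. Since $\chi$ is identically zero on the open set $\{\sum_{j} \psi_j < 1\}$, this extension lies in $C^\infty_b(\R^\theta; \R)$. The inclusions $\supp \varphi_l \subset \supp \psi_l \subset V_l$ and the identity $\sum_l \varphi_l = \chi$ then yield $\supp \varphi_l + B(0, \delta) \subset W_l$ and $\sum_l \varphi_l \equiv 1$ on $\Sigma + B(0, \delta)$, as required.

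The only technical obstacle is the bookkeeping of the radii: the competing constraints $\supp \psi_l + B(0, \delta) \subset W_l$ and $\Sigma + B(0, 2\delta) \subset \bigcup_l V_l'$ have to hold for the same $\delta$, and this can only be arranged because the lower bound $\varepsilon_0$ in item~(ii) is uniform over all points of $\Sigma$; the concrete choice $\delta = \varepsilon_0/8$ is simply one convenient way to make both inclusions simultaneously true.
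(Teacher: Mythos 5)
Your proof is correct, and it takes a genuinely different route from the paper's. The paper invokes Shubin's construction (\cite[Appendix~A, Lemmas~1.2 and~1.3]{S92}): one starts from a uniformly locally finite cover $(B(x_n,\delta))_{n\in\N}$ of all of $\R^\theta$ together with a subordinate partition of unity $(\phi_n)_n$ with uniformly bounded derivatives, discards the indices whose balls stay away from $\Sigma$, shows that each surviving ball $B(x_n,2\delta)$ lands in some $W_l$ by item~(ii), and then groups the $\phi_n$ accordingly into $\varphi_l = \sum_{n\in I_l}\phi_n$. Your construction avoids the external reference entirely: you build the buffered sublevel sets $V_l' \subset V_l \subset W_l$ of the distance function to $\R^\theta\setminus W_l$, use item~(ii) to show $\Sigma+B(0,2\delta)\subset\bigcup_l V_l'$, mollify indicators to obtain the bumps $\psi_l$ and the cutoff $\chi$ with uniform derivative bounds (the mollification radius is the fixed $\delta/4$, so unboundedness of the $W_l$ is harmless), and then renormalize $\varphi_l := \chi\psi_l/\sum_j\psi_j$. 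The quotient is well-defined and $C^\infty_b$ because $\chi$ vanishes on the open set $\{\sum_j\psi_j<1\}$, which covers the zero set of the denominator; on the complement the denominator is $\geq 1$, so the quotient rule gives uniform derivative bounds. Both proofs hinge on exactly the same input — the uniform radius $\varepsilon_0$ from item~(ii) — but yours is more self-contained and more elementary, at the minor cost of tracking a few more explicit radius constants, whereas the paper's is shorter once Shubin's lemmas are taken as a black box.
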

	\begin{proof}
	According to \cite[Appendix A, Lemmas~1.2 and~1.3]{S92} there exists a sequence $(x_n)_{n \in \N} \subset \mathbb{R}^\theta$, $M \in \N$, $0<\delta < \frac{\varepsilon_0}{4}$, with $\varepsilon_0$ from Section~\ref{sec_not}~\eqref{it_def_Sigma}, and a sequence of real-valued $C^\infty$-functions $(\phi_n)_{n \in \N}$  such that  $(B(x_n,\delta))_{n \in \N}$ is an open cover of $\R^\theta$, $(\phi_n)_{n \in \N}$  is a partition of unity for $\mathbb{R}^\theta$, $\supp \phi_n \subset B(x_n,\delta)$ for all $n \in \N$, every point $x \in \R^\theta$ is contained in at most $M$ of the sets $B(x_n,\delta)$, and the derivatives (of any order) of the functions $\phi_n$ are uniformly bounded.  Next, we define the set 
	$$Y := \{ x_n: B(x_n,2\delta) \cap \Sigma \neq \emptyset \}.$$ 
	By construction, for all $x_n \notin Y$ one has $B(x_n,\delta) \cap (\Sigma + B(0,\delta)) = \emptyset$.
	Moreover, note that for all $x_n \in Y$ there exists an $l \in \{1,\dots, p\}$ such that $B(x_n,2\delta) \subset W_l $. In fact, as $B(x_n,2\delta) \cap \Sigma \neq \emptyset$, there exists  a $y_\Sigma \in B(x_n,2\delta) \cap \Sigma$ and thus item~\eqref{it_def_Sigma} in Section~\ref{sec_not} implies $B(y_\Sigma,\varepsilon_0) \subset W_l$ for an $l \in \{1,\dots,p\}$. Hence, for any $ y \in B(x_n,2\delta)$ one has
	\begin{equation*}
		\abs{y-y_\Sigma} \leq \abs{y -x_n} + \abs{x_n - y_\Sigma} < 4\delta < \varepsilon_0, 
	\end{equation*}
	which shows $B(x_n,2\delta) \subset W_l$. Define the sets 
	$$I_1 := \{ n: x_n \in Y, B(x_n, 2\delta) \subset W_1 \}$$ 
	and for $l \in \{ 2, \dots, p \}$
	$$I_l := \{ n: x_n \in Y, B(x_n, 2\delta) \subset W_l, B(x_n,2\delta) \not\subset  W_k, k \in \{1,\dots, l-1\} \}.$$ 
	Then, it is not difficult to see that
	\begin{equation*}
		\varphi_l = 
		\sum_{n \in I_l}\phi_n 
	\end{equation*}
	is a partition of unity having the claimed properties. Moreover, the construction of $\varphi_l$, $l \in \{1,\dots,p\}$, also implies $\supp \varphi_l + B(0,\delta) \subset W_l$.
	\end{proof}

In the following we use a partition of unity as in Lemma~\ref{lem_part_unity_phi_l} to connect Dirac operators with strongly localized potentials 
supported in $\Omega_\varepsilon$ with Dirac operators with strongly localized potentials supported in
the tubular neighbourhoods $\Omega_{\varepsilon,l}$ of the rotated $C^2_b$-graphs
\begin{equation*}
	\Sigma_l =  \{\kappa (x',\zeta_l(x')) : x'\in\R^{\theta-1} \},\qquad l \in \{1,\dots,p\};
\end{equation*}
cf. Section~\ref{sec_not}~\eqref{it_def_Sigma}. Note that  $V$ is only defined on $\Sigma$ and hence $V$ is a priori only defined on a subset of $\Sigma_l$. Thus, to be able to define a strongly localized potential $V_{\varepsilon,l}$ in $\Omega_{\varepsilon,l}$ in the same way as $V_\varepsilon$ in \eqref{eq_V_eps}, we first construct suitable extensions $V_l$ of $V$ to $\Sigma_l$. To do so, we choose  $\rho_\nu \in C^1(\R;\R)$ with $0\leq \rho_\nu \leq 1 $, $\rho_\nu(0) = 1$ and compact support in $(-\varepsilon_1, \varepsilon_1)$, where $\varepsilon_1$ is the number specified below \eqref{eq_iota}. Since $\Sigma$ is assumed to satisfy \eqref{it_def_Sigma} in Section~\ref{sec_not}, it is not difficult to show that for $ \omega \in \{V,\eta,\tau\}$ the function
\begin{equation}\label{eq_interaction_extensions}
	\omega_{\rm ext}(x) =
	\begin{cases}
		\omega(x_\Sigma)\rho_\nu(t), & \text{if } x = x_\Sigma + t \nu(x_\Sigma) \in \Omega_{\varepsilon_1} ,\\
		0,& \text{if } x \notin \Omega_{\varepsilon_1},
	\end{cases}
\end{equation}
is a  $C_b^1$-extension of $\omega$ to $\R^\theta$ which is supported in $\Omega_{\varepsilon_1}$. 
We then define the functions 
\begin{equation} \label{def_V_l}
  V_l:=V_{\rm ext}|_{\Sigma_l} \in C^1_b(\Sigma;\C^{N \times N}),\,\,\, \eta_l :=  \eta_{\rm ext} |_{\Sigma_l},\,\,\, \tau_l :=  \tau_{\rm ext}|_{\Sigma_l} \in C^1_b(\Sigma_l;\R),
\end{equation}
and note that $V_l=\eta_l I_N+\tau_l\beta$ for $l \in \{1,\dots,p\}$; cf. \eqref{eq_V}. Clearly, 
\begin{equation*}
	V_l|_{\Sigma_l \cap \Sigma} = V|_{\Sigma_l \cap \Sigma}.
\end{equation*}
Moreover, we mention  that  $d_l = \eta_l^2 - \tau_l^2$ satisfies by construction
\begin{equation}\label{eq_cond_d_l}
	\sup_{x_{\Sigma_l} \in \Sigma_l} d_l(x_{\Sigma_l}) < \frac{\pi^2}{4}, \qquad  l \in \{1,\dots,p\},
\end{equation}
as $d = \eta^2-\tau^2$ satisfies \eqref{eq_d_cond}.
With this extension at hand, we define $V_{\varepsilon,l}$ as $V_\varepsilon$ in \eqref{eq_V_eps} (with $\Sigma$ and $V$ replaced by $\Sigma_l$ and $V_l$)
and in the same way as $H_{V_\varepsilon} = H + V_\varepsilon$ in \eqref{eq_H_V_eps} we define
$$
H_{V_{\varepsilon,l}} = H+ V_{\varepsilon,l}, \quad  \dom H_{V_{\varepsilon,l}} :=  H^1(\R^\theta;\C^N),\qquad  l \in \{1,\dots,p\},
$$
where $H$ is the free Dirac operator (see \eqref{eq_def_free_Dirac}). Note that the operators $H_{V_{\varepsilon,l}}$ are self-adjoint in $L^2(\R^\theta;\C^N)$.

In the next lemma we express the resolvent of $H_{V_\varepsilon}$ in terms of the resolvents of $H_{V_{\varepsilon,l}}$. Here we fix  
the partition of unity $\varphi_1,\dots,\varphi_{p} \in C^\infty_b(\R^{\theta};\R)$ from Lemma~\ref{lem_part_unity_phi_l} and  we set $\varphi_{p+1} := 1-\sum_{l=1}^p \varphi_l$.

	\begin{lemma}\label{lem_H_V_eps_formula}
		Let the functions $V_\varepsilon, V_{\varepsilon,l}$ and the self-adjoint operators $H_{V_\varepsilon} = H + V_\varepsilon$, $H_{V_{\varepsilon,l}} = H+ V_{\varepsilon,l}$ for $l \in \{1,\dots,p\}$
		be as above, and set $H_{V_{\varepsilon,p+1}} := H$. Then, for $z \in \C$ such that $\abs{\textup{Im } z} > \sum_{l=1}^{p+1} \|\alpha \cdot\nabla \varphi_l\|_{L^\infty(\R^{\theta};\C^{N \times N})}$
		the operator  $$I +\sum_{l=1}^{p+1}  i(H_{V_{\varepsilon,l}}-z)^{-1} (\alpha \cdot \nabla \varphi_l) $$ is continuously invertible in $L^2(\R^{\theta};\C^N)$ and the resolvent formula
		\begin{equation*}
			(H_{V_\varepsilon}-z)^{-1} = \Bigg(I +\sum_{l=1}^{p+1}  i(H_{V_{\varepsilon,l}}-z)^{-1}(\alpha \cdot \nabla \varphi_l)  \Bigg)^{-1}\Bigg(\sum_{l=1}^{p+1} (H_{V_{\varepsilon,l}}-z)^{-1}\varphi_l \Bigg)
		\end{equation*}
		is valid for all $\varepsilon \in (0,\min\{\varepsilon_1,\delta\})$, where $\varepsilon_1>0$ and $\delta>0$ are chosen as below \eqref{eq_iota} and  as in Lemma~\ref{lem_part_unity_phi_l}, respectively.
	\end{lemma}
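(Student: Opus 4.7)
The plan is to reduce the identity to a local-to-global argument built on the commutator $[H,\varphi_l]=-i(\alpha\cdot\nabla\varphi_l)$ and the compatibility of the potentials $V_\varepsilon$ and $V_{\varepsilon,l}$ on $\supp\varphi_l$. The key preliminary observation I would record first is that, for $\varepsilon<\delta$, one has $\varphi_l(V_\varepsilon-V_{\varepsilon,l})=0$ for every $l\in\{1,\dots,p+1\}$. For $l\le p$ this follows because any $x\in\supp\varphi_l\cap\Omega_\varepsilon$ has an associated foot point $x_\Sigma\in\Sigma$ with $|x-x_\Sigma|<\varepsilon<\delta$, so $x_\Sigma\in B(x,\delta)\subset W_l$; then $x_\Sigma\in\Sigma\cap W_l=\Sigma_l\cap W_l$, the outward normals to $\Omega_+$ and $\Omega_l$ agree at $x_\Sigma$, and $V(x_\Sigma)=V_l(x_\Sigma)$ by the construction \eqref{def_V_l} of $V_l$, so $V_\varepsilon(x)=V_{\varepsilon,l}(x)$ on $\supp\varphi_l$. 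For $l=p+1$ it is even simpler: $\varphi_{p+1}=1-\sum_{k=1}^p\varphi_k$ vanishes on $\Sigma+B(0,\delta)$, which contains $\supp V_\varepsilon$ since $\varepsilon<\delta$, so $\varphi_{p+1}V_\varepsilon=0=\varphi_{p+1}V_{\varepsilon,p+1}$.

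Next I would establish the invertibility of $\tilde{I}:=I+\sum_{l=1}^{p+1}i(H_{V_{\varepsilon,l}}-z)^{-1}(\alpha\cdot\nabla\varphi_l)$ by a Neumann series argument. Since each $H_{V_{\varepsilon,l}}$ is self-adjoint in $L^2(\R^\theta;\C^N)$, the resolvent bound $\|(H_{V_{\varepsilon,l}}-z)^{-1}\|\le 1/|\textup{Im }z|$ holds, and $\alpha\cdot\nabla\varphi_l$ acts as bounded multiplication with operator norm at most $\|\alpha\cdot\nabla\varphi_l\|_{L^\infty(\R^\theta;\C^{N\times N})}$. Hence the sum is bounded by $|\textup{Im }z|^{-1}\sum_{l=1}^{p+1}\|\alpha\cdot\nabla\varphi_l\|_{L^\infty}<1$ under the hypothesis, so $\tilde{I}$ is continuously invertible.

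To verify the resolvent formula, I would fix $f\in L^2(\R^\theta;\C^N)$, set $u:=(H_{V_\varepsilon}-z)^{-1}f\in H^1(\R^\theta;\C^N)$, and compute $(H_{V_{\varepsilon,l}}-z)(\varphi_l u)$ for each $l$. Using multiplicativity of $V_\varepsilon,V_{\varepsilon,l}$ with $\varphi_l$ and the standard commutator identity $[-i\alpha\cdot\nabla,\varphi_l]=-i(\alpha\cdot\nabla\varphi_l)$, one gets
\begin{equation*}
(H_{V_{\varepsilon,l}}-z)(\varphi_l u)=\varphi_l(H_{V_\varepsilon}-z)u+\varphi_l(V_{\varepsilon,l}-V_\varepsilon)u-i(\alpha\cdot\nabla\varphi_l)u.
\end{equation*}
The preliminary observation kills the middle term, and the first equals $\varphi_l f$, hence
\begin{equation*}
\varphi_l u=(H_{V_{\varepsilon,l}}-z)^{-1}\bigl(\varphi_l f-i(\alpha\cdot\nabla\varphi_l)u\bigr).
\end{equation*}
Summing over $l=1,\dots,p+1$ and using $\sum_l\varphi_l=1$ yields $\tilde{I}u=\sum_{l=1}^{p+1}(H_{V_{\varepsilon,l}}-z)^{-1}\varphi_l f$, which after applying $\tilde{I}^{-1}$ gives exactly the claimed formula.

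The only genuinely delicate point is the first step — the geometric matching $\varphi_l(V_\varepsilon-V_{\varepsilon,l})=0$ — since it requires that the foot-point map on $\Omega_\varepsilon$ stays inside $W_l$ when starting from $\supp\varphi_l$, and that the normals and extensions $V_l$ coincide with their $\Sigma$-counterparts on $\Sigma\cap W_l$; the ordering $\varepsilon<\min\{\varepsilon_1,\delta\}$ is precisely what guarantees both. Once this identity is in hand the remainder is a routine commutator/Neumann-series calculation.
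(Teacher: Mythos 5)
Your proof is correct and follows essentially the same route as the paper: the key observation $\varphi_l(V_\varepsilon - V_{\varepsilon,l}) = 0$ (which the paper records as \eqref{eq_V_eps_phi_l}), the commutator identity $[H,\varphi_l] = -i(\alpha\cdot\nabla\varphi_l)$ combined with $\sum_l\varphi_l = 1$, and a Neumann series bound using $\|(H_{V_{\varepsilon,l}}-z)^{-1}\|\le |\textup{Im }z|^{-1}$. The only cosmetic difference is organizational: you solve directly for $u=(H_{V_\varepsilon}-z)^{-1}f$, whereas the paper applies $\sum_l(H_{V_{\varepsilon,l}}-z)^{-1}\varphi_l$ to $(H_{V_\varepsilon}-z)u$ and simplifies; these are equivalent. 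One small point worth adding for completeness in the geometric step: you treat $x\in\supp\varphi_l\cap\Omega_\varepsilon$, but one should also note that if $x\in\supp\varphi_l\cap\Omega_{\varepsilon,l}$ then the same argument (with $\Sigma$ and $\Sigma_l$ swapped, using $\Sigma\cap W_l=\Sigma_l\cap W_l$ and the agreement of normals there) forces $x\in\Omega_\varepsilon$, so that on $\supp\varphi_l\setminus\Omega_\varepsilon$ both potentials vanish — the paper states this case explicitly.
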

	
	\begin{proof}
	Our first goal is to verify that for $\varphi_l \in C^\infty_b(\R^{\theta};\R)$ from the fixed partition of unity the identity 
	\begin{equation}\label{jadochbitte}
	 H_{V_\varepsilon} \varphi_l u=H_{V_{\varepsilon,l}} \varphi_l u,\qquad l \in \{1,\dots,p\},
	\end{equation}
holds for $u \in \dom H_{V_\varepsilon} = \dom H_{V_{\varepsilon,l}} = H^1(\R^{\theta};\C^N)$ and $\varepsilon \in (0,\min\{\delta,\varepsilon_1\})$. For this it suffices to show 
		 \begin{equation}\label{eq_V_eps_phi_l}
		 	V_\varepsilon(x)\varphi_l(x) = V_{\varepsilon,l}(x) \varphi_l(x), \qquad  x \in \R^{\theta}.
		 \end{equation}
	 	 In fact, it is obvious that $V_\varepsilon(x) \varphi_l(x) = V_{\varepsilon,l}(x) \varphi_l(x)$ for $x \notin \supp \varphi_l$. Next, consider $x \in \Omega_\varepsilon \cap \supp \varphi_l$. Then, there exists $(x_{\Sigma},t) \in  \Sigma\times(-\varepsilon,\varepsilon)$ such that $x = x_{\Sigma} + t \nu(x_{\Sigma})$. The inclusion $\supp \varphi_l + B(0,\delta) \subset W_l $ implies $x_\Sigma \in W_l \cap \Sigma$. Hence, by Section~\ref{sec_not}~\eqref{it_def_Sigma} $x_{\Sigma} \in \Sigma_l$ and therefore $x = x_\Sigma + \nu_l(x_\Sigma) \in \Omega_{\varepsilon,l}$, where $\nu_l$ is the unit normal vector field at $\Sigma_l$ which has the same orientation on $\Sigma_l \cap \Sigma$ as $\nu$. In turn, we have
		 \begin{equation*}
		 	V_{\varepsilon}(x) \varphi_l(x) = V(x_\Sigma) \frac{q\bigl(\tfrac{t}{\varepsilon})}{\varepsilon} \varphi_l(x) = V_l(x_\Sigma) \frac{q\bigl(\tfrac{t}{\varepsilon})}{\varepsilon} \varphi_l(x) = V_{\varepsilon,l}(x) \varphi_l(x).
		 \end{equation*}
	 	If $x \in \Omega_{\varepsilon,l} \cap \supp \varphi_l$ one shows \eqref{eq_V_eps_phi_l} in the same way. It remains to treat the case $ x \in \Omega_{\varepsilon}^c \cap \Omega_{\varepsilon,l}^c \cap \supp \varphi_l$. However, then both sides of \eqref{eq_V_eps_phi_l} are zero. Therefore, \eqref{eq_V_eps_phi_l}
	 	and hence  \eqref{jadochbitte} are true.

	Applying the product rule and using \eqref{jadochbitte} yields for $u \in \dom H_{V_\varepsilon} = \dom H_{V_{\varepsilon,l}}$, $l \in \{1,\dots,p\}$, and $\varepsilon \in (0,\min\{\delta,\varepsilon_1\})$
			\begin{equation}\label{eq_H_V_eps_2_H_V_eps_l}
			\varphi_l H_{V_\varepsilon}u = H_{V_\varepsilon} \varphi_l u+ i(\alpha \cdot\nabla \varphi_l) u = H_{V_{\varepsilon,l}}\varphi_l  u + i(\alpha \cdot\nabla \varphi_l)u.
		\end{equation}
		As $\varphi_1,\dots,\varphi_p$ also form a partition of unity for $\Sigma + B(0,\delta) \supset \Omega_\varepsilon$, $\varepsilon \in (0,\delta)$, we have $V_\varepsilon \varphi_{p+1} =0$ for all $\varepsilon \in (0,\min\{\varepsilon_1,\delta\})$. Hence, the product rule and the convention  $H = H_{V_{\varepsilon,p+1}}$ show that \eqref{eq_H_V_eps_2_H_V_eps_l} remains valid for $l = p+1$ and $\varepsilon \in (0,\min\{\varepsilon_1,\delta\})$. 
		For $z \in \C \setminus \R$ and $u \in \dom H_{V_\varepsilon} = H^1(\R^{\theta};\C^N)$ it follows from \eqref{eq_H_V_eps_2_H_V_eps_l} that
			\begin{equation}\label{eq_res_formular_V_eps_0}
			\begin{aligned}
				\Bigg(\sum_{l=1}^{p+1} &(H_{V_{\varepsilon,l}}-z)^{-1}\varphi_l\Bigg)(H_{V_\varepsilon}-z) u \\
				&= \sum_{l=1}^{p+1} \Big((H_{V_{\varepsilon,l}}-z)^{-1}(H_{V_{\varepsilon,l}}-z) \varphi_lu + i(H_{V_{\varepsilon,l}}-z)^{-1}(\alpha \cdot \nabla \varphi_l)u \Big)\\
				&=\sum_{l=1}^{p+1} \Big(\varphi_l u + i(H_{V_{\varepsilon,l}}-z)^{-1}(\alpha \cdot \nabla \varphi_l) u \Big)\\
				&= \left(I +\sum_{l=1}^{p+1}  i(H_{V_{\varepsilon,l}}-z)^{-1}(\alpha \cdot \nabla \varphi_l) \right) u.
			\end{aligned}
		\end{equation}
		In particular, if $|\textup{Im }z| > \sum_{l=1}^{p+1} \|\alpha \cdot\nabla \varphi_l\|_{L^\infty(\R^{\theta};\C^{N \times N})}$ holds, then it is clear that the operator $I +\sum_{l=1}^{p+1}  i(H_{V_{\varepsilon,l}}-z)^{-1} (\alpha \cdot \nabla \varphi_l) $ is continuously invertible in $L^2(\R^{\theta};\C^N)$ and therefore
		\begin{equation*}
			(H_{V_\varepsilon}-z)^{-1} = \Bigg(I +\sum_{l=1}^{p+1}  i(H_{V_{\varepsilon,l}}-z)^{-1}(\alpha \cdot \nabla \varphi_l)  \Bigg)^{-1}\Bigg(\sum_{l=1}^{p+1} (H_{V_{\varepsilon,l}}-z)^{-1}\varphi_l \Bigg).
		\end{equation*}
	\end{proof}

	Next, we provide an analogous formula as in the previous lemma for the resolvents of Dirac operators with $\delta$-shell potentials. For this, recall first from 
	\eqref{eq_V} that 
	$V = \eta I_N + \tau \beta$ with $\eta,\tau \in C^1_b(\Sigma;\R)$, assume that $d = \eta^2-\tau^2$ satisfies \eqref{eq_d_cond}, and let $\widetilde{V} = \textup{tanc}\bigl(\tfrac{\sqrt{d}}{2}\bigr)V$
	be as in \eqref{eq_scaling}. Furthermore, with $V_l= \eta_l I_N + \tau_l \beta$ and $ \eta_l,\tau_l \in C^1_b(\Sigma;\R)$, $l \in \{1,\dots,p\}$, as described below \eqref{eq_interaction_extensions} we set (as in \eqref{eq_scaling})
	\begin{equation*}
		\widetilde{V}_l = \widetilde{\eta}_l I_N + \widetilde{\tau}_l \beta, \qquad (\widetilde{\eta}_l,\widetilde{\tau}_l) = \textup{tanc}\bigl(\tfrac{\sqrt{d_l}}{2}\bigr) (\eta_l,\tau_l), \qquad d_l = \eta^2_l -\tau^2_l.
	\end{equation*} 
	In the third preparatory lemma we shall again make use of 
	the partition of unity $\varphi_1,\dots,\varphi_{p} \in C^\infty_b(\R^{\theta};\R)$ from Lemma~\ref{lem_part_unity_phi_l} and as before we set 
	$\varphi_{p+1} = 1-\sum_{l=1}^p \varphi_l$.

	\begin{lemma}\label{lem_H_Vtilde}
	Let the functions $\widetilde{V}$ and $\widetilde V_l$ be as above, let the self-adjoint operator $H_{\widetilde{V} \delta_\Sigma}$ be defined as in \eqref{eq_def_H_V_tilde} and define
	in the same way (with $\widetilde{V}$ and $\Sigma$ replaced by $\widetilde{V}_l$ and $\Sigma_l$) the self-adjoint operators 
	$$H_{\widetilde{V}_l \delta_{\Sigma_l}},\,\,\, l \in \{1,\dots,p\},\qquad\text{and}\qquad H_{\widetilde{V}_{p+1} \delta_{\Sigma_{p+1}}} :=H.$$ 
	Then, for $z \in \C$ such that $\abs{\textup{Im } z} > \sum_{l=1}^{p+1} \|\alpha \cdot\nabla \varphi_l\|_{L^\infty(\R^{\theta};\C^{N \times N})}$ the operator  $$I +\sum_{l=1}^{p+1}  i(H_{\widetilde{V}_{l} \delta_{\Sigma_l}}-z)^{-1} (\alpha \cdot \nabla \varphi_l) $$ is continuously invertible in $L^2(\R^{\theta};\C^N)$ and the resolvent formula 
	\begin{equation*}
		(H_{\widetilde{V}\delta_\Sigma}-z)^{-1} = \Bigg(I +\sum_{l=1}^{p+1}  i(H_{\widetilde{V}_l \delta_{\Sigma_l}}-z)^{-1}(\alpha \cdot \nabla \varphi_l)  \Bigg)^{-1}\Bigg(\sum_{l=1}^{p+1} (H_{\widetilde{V}_l \delta_{\Sigma_l}}-z)^{-1}\varphi_l \Bigg)
	\end{equation*}
	is valid.
	\end{lemma}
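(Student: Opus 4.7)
My plan is to mirror the proof of Lemma~\ref{lem_H_V_eps_formula} almost verbatim, with the main extra work being to verify that multiplication by $\varphi_l$ respects the domain of $H_{\widetilde V_l\delta_{\Sigma_l}}$ and that its action agrees locally with that of $H_{\widetilde V\delta_\Sigma}$. The key geometric fact is that, by Section~\ref{sec_not}~\eqref{it_def_Sigma}, $\Omega_+\cap W_l=\Omega_l\cap W_l$, so $\Sigma\cap W_l=\Sigma_l\cap W_l$ and the outward unit normals coincide on this set; moreover, by the construction below \eqref{eq_interaction_extensions}, $V_l$ coincides with $V$ on $\Sigma_l\cap\Sigma$, and hence $\widetilde V_l=\widetilde V$ on $\Sigma_l\cap\Sigma\cap W_l$.

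First, I would fix $l\in\{1,\dots,p\}$ and $u\in\dom H_{\widetilde V\delta_\Sigma}$ and show $\varphi_l u\in\dom H_{\widetilde V_l\delta_{\Sigma_l}}$. Since $\supp\varphi_l+B(0,\delta)\subset W_l$, the function $\varphi_l u_{\pm}$ vanishes in a neighbourhood of $\partial W_l\cap\Omega_\pm$, so its zero-extension to $\Omega_{l,\pm}:=\Omega_l,\,\R^\theta\setminus\overline{\Omega_l}$ lies in $H^1(\Omega_{l,\pm};\C^N)$. On $\Sigma_l\cap\supp\varphi_l\subset\Sigma\cap W_l$ the transmission condition for $H_{\widetilde V_l\delta_{\Sigma_l}}$ is just $\varphi_l$ times the corresponding transmission condition for $H_{\widetilde V\delta_\Sigma}$, which is satisfied; on $\Sigma_l\setminus\supp\varphi_l$ the traces of $\varphi_l u$ vanish, so the condition holds trivially. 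For $l=p+1$, note that $\varphi_{p+1}=1-\sum_{l=1}^p\varphi_l$ vanishes on a neighbourhood of $\Sigma$ (since $\varphi_1,\dots,\varphi_p$ form a partition of unity for $\Sigma+B(0,\delta)$), hence $\varphi_{p+1}u\in H^1(\R^\theta;\C^N)=\dom H$.

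Next, the product rule applied to $(-i(\alpha\cdot\nabla)+m\beta)(\varphi_l u_\pm)$, combined with the fact that both $H_{\widetilde V\delta_\Sigma}$ and $H_{\widetilde V_l\delta_{\Sigma_l}}$ act as the free Dirac operator on the regular parts, yields
\begin{equation*}
 \varphi_l\, H_{\widetilde V\delta_\Sigma}u \;=\; H_{\widetilde V_l\delta_{\Sigma_l}}(\varphi_l u)\;+\;i(\alpha\cdot\nabla\varphi_l)u,\qquad l\in\{1,\dots,p+1\},
\end{equation*}
in complete analogy with \eqref{eq_H_V_eps_2_H_V_eps_l}. Summing this identity over $l$ and using $\sum_l\varphi_l=1$, and then applying $(H_{\widetilde V_l\delta_{\Sigma_l}}-z)^{-1}$ from the left followed by summation, reproduces the computation \eqref{eq_res_formular_V_eps_0} with $H_{V_\varepsilon}$ replaced by $H_{\widetilde V\delta_\Sigma}$ and $H_{V_{\varepsilon,l}}$ replaced by $H_{\widetilde V_l\delta_{\Sigma_l}}$.

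Finally, since each $H_{\widetilde V_l\delta_{\Sigma_l}}$ is self-adjoint, we have the norm bound $\|(H_{\widetilde V_l\delta_{\Sigma_l}}-z)^{-1}\|_{L^2\to L^2}\leq|\textup{Im}\,z|^{-1}$, and the assumption $|\textup{Im}\,z|>\sum_{l=1}^{p+1}\|\alpha\cdot\nabla\varphi_l\|_{L^\infty}$ yields that $I+\sum_{l=1}^{p+1}i(H_{\widetilde V_l\delta_{\Sigma_l}}-z)^{-1}(\alpha\cdot\nabla\varphi_l)$ is invertible by a Neumann series argument. Inverting it in the summed identity gives the claimed resolvent formula. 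The only genuinely nontrivial step is the domain verification in the first paragraph; once the transmission conditions are matched via the local identity of $\Sigma$ and $\Sigma_l$ inside $W_l$, the remainder is a routine commutator-and-partition-of-unity computation parallel to Lemma~\ref{lem_H_V_eps_formula}.
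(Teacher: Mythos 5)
Your proposal is correct and mirrors the paper's own proof essentially step by step: establish that $\varphi_l u$ lies in $\dom H_{\widetilde V_l\delta_{\Sigma_l}}$ by exploiting the coincidence of $\Omega_\pm$ and $(\Omega_l)_\pm$ inside $W_l$ and the agreement $\widetilde V_l=\widetilde V$ on $\Sigma\cap W_l$, derive the commutator identity $\varphi_l H_{\widetilde V\delta_\Sigma}u = H_{\widetilde V_l\delta_{\Sigma_l}}\varphi_l u + i(\alpha\cdot\nabla\varphi_l)u$ via the product rule, and then reproduce the algebraic manipulation of Lemma~\ref{lem_H_V_eps_formula} together with the Neumann-series invertibility. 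The only cosmetic difference is that you phrase the domain check in terms of a zero-extension of $\varphi_l u_\pm$, whereas the paper restricts $\varphi_l u$ to $(\Omega_l)_\pm$ directly; both arguments rely on the same support and locality observations, so there is no substantive gap.
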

	
	\begin{proof}
		Similar as in the proof of the previous lemma we start by showing for $u \in \dom H_{\widetilde{V} \delta_\Sigma}$ and $l \in \{1,\dots,p\}$ that
		\begin{equation}\label{zuerstdas}
		 H_{\widetilde{V}\delta_\Sigma}\varphi_l u = H_{\widetilde{V}_l \delta_{\Sigma_l}} \varphi_l u.
		\end{equation}
		First we argue that for $u \in \dom H_{\widetilde{V} \delta_\Sigma}$ we 
		have 
		\begin{equation}\label{bittedas}
		\varphi_l u \in \dom H_{\widetilde{V} \delta_\Sigma} \cap \dom H_{\widetilde{V}_l \delta_{\Sigma_l}}.\end{equation}
		In fact, since 
		$ \dom H_{\widetilde{V} \delta_\Sigma}\subset H^1(\R^\theta \setminus \Sigma;\C^N)$ and $\varphi_l \in C^\infty_b(\R^\theta;\R)$, we have $\varphi_l u \in H^1(\R^{\theta}\setminus \Sigma;\C^N)$ for $u \in \dom H_{\widetilde{V} \delta_\Sigma}$ as well as  
		\begin{equation*}
			\begin{aligned}
			&\Bigg(i(\alpha\cdot\nu)(\boldsymbol{t}_{\Sigma}^+ - \boldsymbol{t}_{\Sigma}^-) + \frac{\widetilde{V}}{2}(\boldsymbol{t}_{\Sigma}^+ + \boldsymbol{t}_{\Sigma}^-)\Bigg) \varphi_l u \\
			&\hspace{100 pt}= (\varphi_l|_\Sigma)\Big(i(\alpha\cdot\nu)(\boldsymbol{t}_{\Sigma}^+ - \boldsymbol{t}_{\Sigma}^-) + \frac{\widetilde{V}}{2}(\boldsymbol{t}_{\Sigma}^+ + \boldsymbol{t}_{\Sigma}^-)\Big)  u = 0,
			\end{aligned}
		\end{equation*}
		that is, $\varphi_l u \in \dom H_{\widetilde{V} \delta_\Sigma}$. For \eqref{bittedas} it remains to verify $\varphi_l u\in \dom H_{\widetilde{V}_l \delta_{\Sigma_l}}$. Here it is convenient to introduce $(\Omega_l)_+ := \Omega_l$ and $(\Omega_l)_- := \R^\theta \setminus \overline{\Omega_l}$. Since we already have $ \varphi_l u \in \dom H_{\widetilde{V}\delta_\Sigma} \subset H^1(\R^{\theta}\setminus \Sigma)$ it is clear that $( \varphi_l u)|_{\Omega_\pm} \in H^1(\Omega_\pm;\C^N)$. Furthermore, as $\supp \varphi_l \subset W_l $ and $\Omega_\pm \cap W_l = (\Omega_l)_\pm \cap W_l$, see Section~\ref{sec_not}~\eqref{it_def_Sigma}, it follows that $(\varphi_l u)|_{(\Omega_l)_\pm} \in  H^1((\Omega_l)_{\pm};\C^N)$.   Thus, we can apply the trace operator to $(\varphi_l u)|_{(\Omega_l)_\pm}$ and obtain 
		\begin{equation*}
			\begin{aligned}
				\Bigg(i&(\alpha\cdot\nu)(\boldsymbol{t}_{\Sigma_l}^+ - \boldsymbol{t}_{\Sigma_l}^-) + \frac{\widetilde{V}_l}{2}(\boldsymbol{t}_{\Sigma_l}^+ + \boldsymbol{t}_{\Sigma_l}^-)\Bigg) \varphi_l u  \\
				&= \begin{cases} \Big(i(\alpha\cdot\nu)(\boldsymbol{t}_{\Sigma_l}^+ - \boldsymbol{t}_{\Sigma_l}^-) + \frac{\widetilde{V}_l}{2}(\boldsymbol{t}_{\Sigma_l}^+ + \boldsymbol{t}_{\Sigma_l}^-)\Big)  \varphi_l u  & \textup{ on } \Sigma_l \cap W_l \\ 0 & \textup{ on } \Sigma_l \setminus W_l
				\end{cases}\\
				&= \begin{cases} \Big(i(\alpha\cdot\nu)(\tr^+ - \tr^-) + \frac{\widetilde{V}}{2}(\tr^+ + \tr^-)\Big)  \varphi_l u & \textup{ on } \Sigma \cap W_l \\ 0 & \textup{ on } \Sigma \setminus W_l
				\end{cases}\\
				&=0,
			\end{aligned}
		\end{equation*} 
		where we used $ \varphi_l u \in \dom H_{\widetilde{V}\delta_\Sigma}$, $\supp \varphi_l \subset W_l$, and $\widetilde{V}_l = \widetilde{V}$ on $\Sigma_l \cap W_l = \Sigma \cap W_l$. Hence, $\varphi_l u \in H^1(\R^{\theta}\setminus \Sigma_l)$ and $\varphi_l u$ fulfils the boundary condition
		\begin{equation*}
			\Bigg(i(\alpha\cdot\nu)(\boldsymbol{t}_{\Sigma_l}^+ - \boldsymbol{t}_{\Sigma_l}^-) + \frac{\widetilde{V}_l}{2}(\boldsymbol{t}_{\Sigma_l}^+ + \boldsymbol{t}_{\Sigma_l}^-)\Bigg) \varphi_l u  = 0,
		\end{equation*}
		that is, $\varphi_l u  \in \dom H_{\widetilde{V}_l \delta_{\Sigma_l}}$ and hence we have 
		\eqref{bittedas}. Moreover, 
		\begin{equation*}
			\begin{aligned}
				H_{\widetilde{V}\delta_\Sigma} \varphi_l u &= \begin{cases} (-i(\alpha \cdot\nabla) + m \beta)(\varphi_l u )|_{\Omega_\pm \cap W_l} & \textup{in } \Omega_\pm \cap W_l\\
					0 & \textup{else}
				\end{cases}\\
				&= \begin{cases} (-i(\alpha \cdot\nabla) + m \beta)(\varphi_l u)|_{(\Omega_l)_\pm \cap W_l} & \textup{in } (\Omega_l)_\pm \cap W_l\\
					0 & \textup{else}\\
				\end{cases}\\
				&= H_{\widetilde{V}_l  \delta_{\Sigma_l}} \varphi_l u,
			\end{aligned}
		\end{equation*} 
		and hence also \eqref{zuerstdas} is valid.
		
	Applying the product rule and using \eqref{zuerstdas} yields for $u \in \dom H_{\widetilde{V}\delta_\Sigma }$ and any $l \in \{1,\dots,p\}$
		\begin{equation}\label{eq_res_formula_V_tilde_0}
			\varphi_l H_{\widetilde{V}\delta_\Sigma}u  =H_{\widetilde{V}\delta_\Sigma} \varphi_l u +  i 	(\alpha \cdot \nabla \varphi_l) u = H_{\widetilde{V}_l \delta_{\Sigma_l}} \varphi_l u  + i (\alpha \cdot \nabla \varphi_l) u.
		\end{equation}
		Since $\varphi_{p+1} = 0$ on $\Sigma + B(0,\delta)$, we get for $u \in \dom H_{\widetilde{V} \delta_\Sigma} \subset H^1(\R^{\theta}\setminus \Sigma;\C^N)$ that 
		 $\varphi_{p+1}  u \in H^1(\R^{\theta};\C^N)$ and 
		 $$H_{\widetilde{V} \delta_\Sigma} \varphi_{p+1} u= H \varphi_{p+1} u =H_{\widetilde{V}_{p+1} \delta_{\Sigma_{p+1}}} \varphi_{p+1} u.$$  
		 Thus,  the product rule implies that \eqref{eq_res_formula_V_tilde_0} is also valid for $ l= p+1$.  Next, let us mention that  since $d = \eta^2-\tau^2$ and $d_l = \eta_l^2 - \tau_l^2$, $l \in \{1,\dots,p\}$,  satisfy \eqref{eq_d_cond} and \eqref{eq_cond_d_l},  respectively, $\widetilde{d}  = 4\tan^2\bigl(\tfrac{\sqrt{d}}{2}\bigr)$ and $\widetilde d_l=4\tan^2\bigl(\tfrac{\sqrt{d_l}}{2}\bigr) $, $l \in \{1,\dots,p\}$, fulfil \eqref{eq_d_tilde_non_crit}. Hence, $H_{\widetilde{V}\delta_\Sigma}$ and $H_{\widetilde{V}_l \delta_{\Sigma_l}}$ are self-adjoint operators in $L^2(\R^{\theta};\C^N)$. Now, one can show in the same way as in \eqref{eq_res_formular_V_eps_0} 
		for $z \in \C \setminus \R$  and $u \in \dom H_{\widetilde{V}\delta_\Sigma}$ that
			\begin{equation*}
				\Bigg( \sum_{l=1}^{p+1} (H_{\widetilde{V}_l \delta_{\Sigma_l}}-z)^{-1}\varphi_l \Bigg) (H_{\widetilde{V}\delta_\Sigma}-z)u = \Bigg(I +\sum_{l=1}^{p+1}  i(H_{\widetilde{V}_l \delta_{\Sigma_l}}-z)^{-1}(\alpha \cdot \nabla \varphi_l) \Bigg) u.
			\end{equation*}
			In particular, if $|\textup{Im }z| > \sum_{l=1}^{p+1} \|\alpha \cdot\nabla \varphi_l\|_{L^\infty(\R^{\theta};\C^{N \times N})}$, then it is clear that the operator $I +\sum_{l=1}^{p+1}  i(H_{\widetilde{V}_l \delta_{\Sigma_l}}-z)^{-1}(\alpha \cdot \nabla \varphi_l)$ is continuously invertible in $L^2(\R^{\theta};\C^N)$ and therefore
		\begin{equation*}
			(H_{\widetilde{V}\delta_\Sigma}-z)^{-1} = \Bigg(I +\sum_{l=1}^{p+1}  i(H_{\widetilde{V}_l \delta_{\Sigma_l}}-z)^{-1}(\alpha \cdot \nabla \varphi_l)  \Bigg)^{-1}\Bigg(\sum_{l=1}^{p+1} (H_{\widetilde{V}_l 	\delta_{\Sigma_l}}-z)^{-1}\varphi_l \Bigg).
		\end{equation*}
	\end{proof}
	
	With the help of the previous lemmas we are now in the  position to prove Theorem~\ref{theo_main_1}.
	\begin{proof}[Proof of Theorem~\ref{theo_main_1}]
	Let $V_l= \eta_l I_N + \tau_l \beta$, $l \in \{1,\dots,p\}$, be defined by~\eqref{def_V_l} and consider $z \in \C$ such that 
	$\abs{\textup{Im } z} > \sum_{l=1}^{p+1} \|\alpha \cdot\nabla \varphi_l\|_{L^\infty(\R^{\theta};\C^{N \times N})}$ and $r \in (0,\tfrac{1}{2})$.
		Since  $d_l = \eta_l^2 - \tau_l^2$ satisfies \eqref{eq_cond_d_l} we can apply Theorem~\ref{theo_rotated_graph}, which implies 
		that there exists an $\varepsilon' \in (0,\min\{\varepsilon_1,\delta,\varepsilon_{3,1},\dots,\varepsilon_{3,p}\})$, where $\varepsilon_{3,l}$, $l \in \{1,\dots,p\}$, corresponds to $\varepsilon_3$ from Theorem~\ref{theo_rotated_graph} for $\Sigma_l$, such that 
		\begin{equation}\label{eq_conv_H_V_l}
			\norm{(H_{V_{\varepsilon,l}}-z)^{-1} - (H_{\widetilde{V}_l \delta_{\Sigma_l}}-z)^{-1}}_{L^2(\R^\theta;\C^N) \to L^2(\R^\theta;\C^N)} \leq C \varepsilon^{1/2-r}
		\end{equation} 
		for $l \in \{1,\dots,p\}$ and $\varepsilon \in (0, \varepsilon')$. Our conventions from Lemma~\ref{lem_H_V_eps_formula} and  Lemma~\ref{lem_H_Vtilde} ensure that  \eqref{eq_conv_H_V_l} is also true for $l = p+1$.
		
	Now, we define  for $\varepsilon \in (0,\varepsilon')$
	\begin{equation*}
		\begin{aligned}
		\mathfrak{R}_{\varepsilon} &= \sum_{l=1}^{p+1}  (H_{V_{\varepsilon,l}}-z)^{-1} \varphi_l , &\mathfrak{S}_\varepsilon &= \sum_{l=1}^{p+1}  i(H_{V_{\varepsilon,l}}-z)^{-1}(\alpha \cdot \nabla \varphi_l),  \\
		\mathfrak{R}_{0} &= \sum_{l=1}^{p+1}  (H_{\widetilde{V}_l \delta_{\Sigma_l}}-z)^{-1} \varphi_l ,&\mathfrak{S}_0 &= \sum_{l=1}^{p+1}  i(H_{\widetilde{V}_l \delta_{\Sigma_l}}-z)^{-1}(\alpha \cdot \nabla \varphi_l).
		\end{aligned}
	\end{equation*}
 Thus, \eqref{eq_conv_H_V_l} and $\varphi_l \in C^{\infty}_b(\R^{\theta};\R)$, $l \in \{1,\dots,p+1\}$, imply 
	\begin{equation*}
	\lVert \mathfrak{S}_\varepsilon - \mathfrak{S}_0\rVert_{L^2(\R^{\theta};\C^N) \to L^2(\R^{\theta};\C^N)}, \, 	\lVert \mathfrak{R}_\varepsilon - \mathfrak{R}_0\rVert_{L^2(\R^{\theta};\C^N) \to L^2(\R^{\theta};\C^N)} \leq C \varepsilon^{1/2-r}.
	\end{equation*} 
	Furthermore,  since
	\begin{equation*} 
		\begin{aligned}
		&\max\Big\{ \sup_{\varepsilon \in (0,\varepsilon')}	\lVert \mathfrak{S}_\varepsilon \rVert_{L^2(\R^{\theta};\C^N) \to L^2(\R^{\theta};\C^N)},  	\lVert \mathfrak{S}_0\rVert_{L^2(\R^{\theta};\C^N) \to L^2(\R^{\theta};\C^N)} \Big\} \\
		&\hspace{180 pt}\leq \frac{\sum_{l=1}^{p+1} \|\alpha \cdot\nabla \varphi_l\|_{L^\infty(\R^{\theta};\C^{N \times N})}}{\abs{\textup{Im }z}} <1,
		\end{aligned}
	\end{equation*}
	we conclude with the help of Lemma~\ref{lem_H_V_eps_formula} and Lemma~\ref{lem_H_Vtilde} 
	\begin{equation*}
		\begin{aligned}
		&\lVert (H_{V_\varepsilon} -z)^{-1} - (H_{\widetilde{V}\delta_\Sigma} -z)^{-1} \rVert_{L^2(\R^{\theta};\C^N) \to L^2(\R^{\theta};\C^N)} \\
		&\hspace{50 pt}= \lVert (I+\mathfrak{S}_\varepsilon)^{-1}\mathfrak{R}_\varepsilon - (1+\mathfrak{S}_0)^{-1}\mathfrak{R}_0\rVert_{L^2(\R^{\theta};\C^N) \to L^2(\R^{\theta};\C^N)} \leq C \varepsilon^{1/2-r},
		\end{aligned}
	\end{equation*}
	which proves the claim if $|\textup{Im }z| > \sum_{l=1}^{p+1} \|\alpha \cdot \nabla \varphi_l\|_{L^\infty(\R^{\theta};\C^{N \times N})}$. 
	Using the identity 
	\begin{equation*}
		\begin{aligned}
			(H_{\widetilde{V}\delta_\Sigma} -w)^{-1} -(H_{V_\varepsilon} -w)^{-1} 
			= (I+(w-z)(H_{\widetilde{V}\delta_\Sigma} -w)^{-1})&\\
			\cdot\big((H_{\widetilde{V}\delta_\Sigma} -z)^{-1} - (H_{V_\varepsilon} -z)^{-1}\big)(I + (w-z)(H_{V_\varepsilon} -w)^{-1})&, \quad  w \in \C\setminus\R,
		\end{aligned}
	\end{equation*}
	it follows that the claim is true for all $w \in \C\setminus\R$.
	This completes the proof of Theorem~\ref{theo_main_1}.
	\end{proof}

	In order to obtain the operator $H_{\widetilde{V}\delta_\Sigma}$ as a limit operator of Dirac operators with squeezed potentials also in the case 
	\begin{equation*}   
\inf_{x_\Sigma \in \Sigma} |\widetilde{d}(x_\Sigma)| > 4
	\end{equation*}
	we use an approach that is inspired by the last paragraph of  \cite[Section~8]{CLMT21} and now  add  a strongly localized magnetic potential 
	$V_{m,\varepsilon}$ to $H_{V_\varepsilon}$ with the interaction strength $\pi$. It turns out that in this case $H_{V_\varepsilon}+V_{m,\varepsilon}$  also converges in the norm resolvent sense,  see the following Theorem~\ref{theo_main_2}. 
	However, by the specific choice of $\pi$ as the magnetic interaction strength,  the magnetic term disappears 
	in the limit. Hence, we end up with a limit operator $H_{\widetilde{V}\delta_\Sigma}$ which is again a Dirac operator with $\delta$-shell potential and only electrostatic and Lorentz-scalar interactions; however, we emphasize that here a
	different rescaling than \eqref{eq_scaling} in Theorem~\ref{theo_main_1} appears. 
	
	\begin{theo}\label{theo_main_2}
		Let $q\in L^\infty((-1,1);[0,\infty))$ with $\int_{-1}^1 q(s) \,ds =1$, let $\eta, \tau \in  C^1_b(\Sigma;\R)$  and assume that $d = \eta^2-\tau^2$
		satisfies the condition \eqref{eq_d_cond} and  $\inf_{x_\Sigma \in \Sigma} |d(x_\Sigma)| >0$.
		 Let $V$ and $V_\varepsilon$ be  as in \eqref{eq_V} and  \eqref{eq_V_eps},
        let the strongly localized magnetic potential $V_{m,\varepsilon} \in L^\infty(\R^{\theta};\C^{N\times N})$, ${\varepsilon \in (0,\varepsilon_1)}$, be defined by
		\begin{equation}\label{eq_V^m_eps}
			V_{m,\varepsilon}(x) := \begin{cases} \pi (\alpha \cdot \nu(x_\Sigma)) \frac{1}{\varepsilon} q\bigl(\tfrac{t}{\varepsilon}\bigr)& \textup{for } x= \iota(x_\Sigma,t)\in \Omega_\varepsilon, \\
				0&  \textup{else},
			\end{cases}
		\end{equation}
		and set 
		\begin{equation}\label{eq_magnetic_scaling}
			\widetilde{V} = \widetilde{\eta} I_N + \widetilde{\tau} \beta,\qquad (\widetilde{\eta},\widetilde{\tau}) = \frac{-2 }{\sqrt{d}\tan\bigl(\tfrac{\sqrt{d}}{2}\bigr)}(\eta,\tau),\qquad d = \eta^2-\tau^2.
		\end{equation} 
		Then, for all $z \in \C\setminus\R$  and $r \in (0,\tfrac{1}{2} )$ there exist $C>0$ and $\varepsilon' \in (0,\varepsilon_1)$ such that 
		\begin{equation*}
			\norm{(H_{V_\varepsilon} + V_{m,\varepsilon} -z)^{-1} - (H_{\widetilde{V}\delta_\Sigma}-z)^{-1}}_{L^2(\R^\theta;\C^N) \to L^2(\R^\theta;\C^N)} \leq C \varepsilon^{1/2-r}, \,\,  \varepsilon \in (0, \varepsilon').
		\end{equation*} 
		In particular, $H_{V_\varepsilon} + V_{m,\varepsilon}$ converges to $H_{\widetilde{V} \delta_\Sigma}$ in the norm resolvent sense as $\varepsilon \to 0$. 
	\end{theo}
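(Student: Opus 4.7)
The plan is to reduce Theorem~\ref{theo_main_2} to Theorem~\ref{theo_main_1} via a scalar unitary gauge transformation which exactly eliminates the magnetic term $V_{m,\varepsilon}$ for each $\varepsilon>0$. Define the transverse profile
\begin{equation*}
\phi_\varepsilon(x):= \int_{-\varepsilon}^{t}\tfrac{1}{\varepsilon}q(s/\varepsilon)\,ds\quad\text{for } x = \iota(x_\Sigma,t) \in \Omega_{\varepsilon_1},
\end{equation*}
extended by $0$ on $\Omega_-\setminus \Omega_{\varepsilon}$ and by $1$ on $\Omega_+\setminus \Omega_{\varepsilon}$, and introduce the scalar unitary multiplication operator $U_\varepsilon := e^{-i\pi\phi_\varepsilon}$ on $L^2(\R^\theta;\C^N)$. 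By construction $\nabla\phi_\varepsilon = \tfrac{1}{\varepsilon}q(t/\varepsilon)\,\nu$ is supported in $\Omega_\varepsilon$, $U_\varepsilon$ preserves $H^1(\R^\theta;\C^N)$, and $U_\varepsilon$ converges pointwise to the step unitary $U_0$ given by $U_0 = 1$ on $\Omega_-$ and $U_0 = -1$ on $\Omega_+$.

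Since $U_\varepsilon$ is scalar it commutes pointwise with $m\beta$, $V_\varepsilon$, and $V_{m,\varepsilon}$, and a direct differentiation yields
\begin{equation*}
U_\varepsilon^*(-i\alpha\cdot\nabla)U_\varepsilon = -i\alpha\cdot\nabla - \pi(\alpha\cdot\nabla\phi_\varepsilon) = -i\alpha\cdot\nabla - V_{m,\varepsilon},
\end{equation*}
so the exact operator identity $U_\varepsilon^*(H_{V_\varepsilon} + V_{m,\varepsilon})U_\varepsilon = H_{V_\varepsilon}$ holds on $H^1(\R^\theta;\C^N)$; equivalently,
\begin{equation*}
(H_{V_\varepsilon} + V_{m,\varepsilon} - z)^{-1} = U_\varepsilon (H_{V_\varepsilon} - z)^{-1}U_\varepsilon^* \quad \text{for all } z \in \C\setminus\R.
\end{equation*}
Because $d = \eta^2 - \tau^2$ satisfies \eqref{eq_d_cond}, Theorem~\ref{theo_main_1} delivers $\|(H_{V_\varepsilon}-z)^{-1} - (H_{\widetilde V_1\delta_\Sigma} - z)^{-1}\|_{L^2\to L^2} \leq C\varepsilon^{1/2-r}$, where $\widetilde V_1$ is obtained from $V$ via the scaling \eqref{eq_scaling}. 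To translate this into a bound on $U_\varepsilon(H_{V_\varepsilon}-z)^{-1}U_\varepsilon^* - U_0(H_{\widetilde V_1\delta_\Sigma}-z)^{-1}U_0^*$ I exploit that $U_\varepsilon - U_0$ is supported in $\Omega_\varepsilon$, a set of Lebesgue measure $O(\varepsilon)$, together with the Sobolev embedding of $\dom H_{\widetilde V_1\delta_\Sigma} \subset H^1(\Omega_+;\C^N)\oplus H^1(\Omega_-;\C^N)$ into $L^p(\R^\theta;\C^N)$ for some $p>2$; H\"older's inequality then gives $\|(U_\varepsilon - U_0)(H_{\widetilde V_1\delta_\Sigma}-z)^{-1}\|_{L^2\to L^2} = O(\varepsilon^{1/2-r})$, and the same rate controls the other mismatch term.

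It then remains to identify $U_0(H_{\widetilde V_1\delta_\Sigma}-z)^{-1}U_0^* = (H_{\widetilde V\delta_\Sigma}-z)^{-1}$ with $\widetilde V$ prescribed by \eqref{eq_magnetic_scaling}. Setting $v = U_0 u$ in the boundary condition of \eqref{eq_def_H_V_tilde} and using $\tr^+ v = -\tr^+ u$ and $\tr^- v = \tr^- u$, a short manipulation shows that $v$ satisfies $(iA + \widetilde V_1/2)\tr^+ v + (iA - \widetilde V_1/2)\tr^- v = 0$ on $\Sigma$, where $A = \alpha\cdot\nu$. Left-multiplying by $2iA\widetilde V_1^{-1}$ (legitimate since the hypothesis $\inf_\Sigma|d|>0$ guarantees $\widetilde V_1$ is invertible) recasts the condition into the standard form of \eqref{eq_def_H_V_tilde} with interaction matrix $\widetilde V_* := -4A\widetilde V_1^{-1}A$. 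Using the Dirac anticommutation relations \eqref{eq_dirac_matrices} one computes $A\beta A = -\beta$ and $A^2 = I_N$, hence $A\widetilde V_1^{-1}A = \widetilde V_1/\widetilde d_1$ with $\widetilde d_1 = 4\tan^2(\sqrt d/2)$, and a short calculation reduces $\widetilde V_* = -4\widetilde V_1/\widetilde d_1$ to precisely the matrix in \eqref{eq_magnetic_scaling}. The extension of the resolvent estimate from large $|\textup{Im}\,z|$ to arbitrary $z\in\C\setminus\R$ proceeds by a resolvent identity as in the last step of the proof of Theorem~\ref{theo_main_1}.

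The main obstacle I anticipate is the quantitative control of $(U_\varepsilon - U_0)(H_{\widetilde V_1\delta_\Sigma}-z)^{-1}$ at the sharp rate $\varepsilon^{1/2-r}$: the pointwise convergence $U_\varepsilon\to U_0$ alone is too weak, and the argument relies crucially on the shrinking support of $U_\varepsilon - U_0$ combined with the additional $L^p$-regularity of the resolvent of the limit $\delta$-shell operator. The algebraic identification $-4A\widetilde V_1^{-1}A = \widetilde V$ is the clean reason why the specific rescaling \eqref{eq_magnetic_scaling} appears, matching the subcritical regime $\widetilde d_1 < 4$ with the supercritical regime $\widetilde d > 4$ via the identity $\widetilde d_1 \cdot \widetilde d = 16$.
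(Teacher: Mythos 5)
Your overall strategy is exactly the paper's: conjugate $H_{V_\varepsilon}+V_{m,\varepsilon}$ by the gauge unitary $U_\varepsilon$ built from the transverse antiderivative of $q$, observe $U_\varepsilon^*(H_{V_\varepsilon}+V_{m,\varepsilon})U_\varepsilon=H_{V_\varepsilon}$, invoke Theorem~\ref{theo_main_1}, and show that conjugation by the limit step unitary $U_0=\chi_{\Omega_+}-\chi_{\Omega_-}$ converts the subcritical shell operator $H_{\widetilde V_1\delta_\Sigma}$ into the supercritical $H_{\widetilde V\delta_\Sigma}$. Your algebraic identification $\widetilde V_*=-4A\widetilde V_1^{-1}A$ and the computation via $A\beta A=-\beta$, $A^2=I_N$, yielding $\widetilde d_1\widetilde d=16$, is correct and matches the identity $H_{\widetilde V\delta_\Sigma}=UH_{(-4/\widetilde d)\widetilde V\delta_\Sigma}U$ that the paper cites from the literature.

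However, there is a genuine gap in your quantitative error estimate. You control $\|(U_\varepsilon-U_0)(H_{\widetilde V_1\delta_\Sigma}-z)^{-1}\|_{L^2\to L^2}$ by combining the small Lebesgue measure of $\Omega_\varepsilon$ with Sobolev embedding of $H^1(\Omega_\pm)$ into $L^p$ and H\"older's inequality. In $\theta=2$ you may take $p$ arbitrarily large and get $\varepsilon^{1/2-\delta}$ for any $\delta>0$, but in $\theta=3$ the Sobolev embedding of $H^1$ saturates at $L^6$, and H\"older then yields only $|\Omega_\varepsilon|^{1/3}\sim\varepsilon^{1/3}$, which is strictly weaker than the claimed $\varepsilon^{1/2-r}$ for $r<1/6$. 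A ``small measure'' argument cannot reach the sharp exponent because it ignores the anisotropy of $\Omega_\varepsilon$. The paper instead proves a trace-type estimate (Lemma~\ref{lem_int_u^2_estimate}): it slices $\Omega_\varepsilon$ by the tubular coordinate $t\in(-\varepsilon,\varepsilon)$, and on each slice bounds the $L^2(\Sigma)$-norm of $u(\cdot+t\varepsilon\nu)$ uniformly in $t$ by the trace theorem from $H^s$, $s>1/2$; the factor $\varepsilon$ then comes solely from the thin normal direction. This gives $\int_{\Omega_\varepsilon}|u|^2\leq C\varepsilon\|u\|_{H^1(\R^\theta\setminus\Sigma)}^2$ in all dimensions, hence the rate $\varepsilon^{1/2}$ for the gauge-mismatch term, which the Theorem~\ref{theo_main_1} error $\varepsilon^{1/2-r}$ then dominates. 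Your approach needs to be replaced by such a trace estimate to obtain the stated result. (A minor further point: as written, your extension of $\phi_\varepsilon$ by $0$ on $\Omega_-\setminus\Omega_\varepsilon$ and $1$ on $\Omega_+\setminus\Omega_\varepsilon$ is discontinuous across $\partial\Omega_\varepsilon$; the integral formula interpolates from $0$ at $t=-\varepsilon$ to $1$ at $t=+\varepsilon$, so the roles of $\Omega_\pm$ should be swapped. This is cosmetic.)
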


	We note that the function
	\begin{equation*}
		(-\infty,\tfrac{\pi^2}{4}) \ni w \mapsto 4\tan^2\bigl(\tfrac{\sqrt{w}}{2}\bigr)  = \begin{cases}
			\tan^2\bigl(\tfrac{\sqrt{w}}{2}\bigr) \in [0,1),& w \in [0,\tfrac{\pi^2}{4}), \\\
			-\tanh^2\bigl(\tfrac{\sqrt{-w}}{2}\bigr) \in (-1,0), & w \in (-\infty,0),
		\end{cases}
	\end{equation*}
	is continuous, monotonically increasing and has its only root in zero. Hence, the assumption $\inf_{x_\Sigma \in \Sigma} \abs{d(x_\Sigma)} >0$, \eqref{eq_d_cond}, and the boundedness of $\eta$ and $\tau$ imply 
	\begin{equation*}
		\inf_{x_\Sigma \in \Sigma}  \Bigl|\tan^2\Bigr(\tfrac{\sqrt{d(x_\Sigma)}}{2}\Bigl)\Bigr| >0 \quad \textup{and} \quad\sup_{x_\Sigma \in \Sigma}  \Bigl|\tan^2\Bigr(\tfrac{\sqrt{d(x_\Sigma)}}{2}\Bigr)\Bigr|  <1.
	\end{equation*}
	 In particular,  $\widetilde{d} = \widetilde{\eta}^2 - \widetilde{\tau}^2 = 4/\tan^2\bigl(\tfrac{\sqrt{d}}{2}\bigr)$ is well-defined and
	\begin{equation*}
		\inf_{x_\Sigma \in \Sigma} \abs{\widetilde{d}(x_\Sigma)} = \frac{4}{\sup_{x_\Sigma \in \Sigma}  \Bigl|\tan^2\Bigl(\tfrac{\sqrt{d(x_\Sigma)}}{2}\Bigr)\Bigr| } > 4.
	\end{equation*}
	 Thus, 
	$\widetilde V$ in \eqref{eq_magnetic_scaling}
	is noncritical; cf. \eqref{eq_d_tilde_non_crit}. 
	We point out again that the rescaling \eqref{eq_magnetic_scaling} in Theorem~\ref{theo_main_2} differs 
	from the rescaling \eqref{eq_scaling} in Theorem~\ref{theo_main_1}.

	The following preparatory lemma is an essential ingredient in the proof of Theorem~\ref{theo_main_2} and also of independent interest.
	
	\begin{lemma}\label{lem_int_u^2_estimate}
		Let $\varepsilon_1$ be chosen as below \eqref{eq_iota}, $\varepsilon \in (0, \varepsilon_1)$, and let $\Omega_\varepsilon$ be the tubular neighbourhood of $\Sigma$. Then, for all
		$s \in (\tfrac{1}{2},\infty)$ there exists $C>0$ such that  
		\begin{equation*}
			\int_{\Omega_\varepsilon} \lvert u(x) \rvert^2 \,dx \leq C \varepsilon \lVert  u \rVert_{ H^s(\R^{\theta}\setminus \Sigma;\C^N)}^2,\qquad 
			u \in H^s(\R^{\theta} \setminus \Sigma;\C^N).
		\end{equation*}
	\end{lemma}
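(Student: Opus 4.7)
The plan is to reduce the estimate to a one-parameter family of trace inequalities on the parallel surfaces $\Sigma_t := \iota(\Sigma \times \{t\})$ and then integrate the parameter $t$ over $(-\varepsilon,\varepsilon)$ to gain the factor~$\varepsilon$.

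First I would reduce to the case $u \in H^s(\R^\theta; \C^N)$. Splitting $u = u_+ \oplus u_-$ with $u_\pm \in H^s(\Omega_\pm; \C^N)$ and using a bounded Sobolev extension $E_\pm: H^s(\Omega_\pm;\C^N) \to H^s(\R^\theta;\C^N)$ (which exists since, by Section~\ref{sec_not}~\eqref{it_def_Sigma}, $\Omega_\pm$ has a $C^2$ boundary with bounded geometry), I can estimate
\begin{equation*}
\int_{\Omega_\varepsilon} |u(x)|^2 \,dx \le \int_{\Omega_\varepsilon} |E_+ u_+(x)|^2\,dx + \int_{\Omega_\varepsilon}|E_- u_-(x)|^2\,dx,
\end{equation*}
so it suffices to prove $\int_{\Omega_\varepsilon} |w|^2 \le C\varepsilon \|w\|_{H^s(\R^\theta;\C^N)}^2$ for $w\in H^s(\R^\theta;\C^N)$ with $s>\tfrac{1}{2}$.

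Next I would apply the tubular coordinate change. By the discussion below \eqref{eq_iota}, $\iota$ is a $C^1$-diffeomorphism from $\Sigma \times (-\varepsilon_1,\varepsilon_1)$ onto $\Omega_{\varepsilon_1}$, and its Jacobian $J(x_\Sigma, t)$ is bounded above and below uniformly in $(x_\Sigma,t)\in\Sigma\times(-\varepsilon_1,\varepsilon_1)$ (this is where $\Sigma\in C^2$ with bounded second fundamental form enters, via the local graphs $\zeta_l\in C^2_b$ from \eqref{it_def_Sigma}). Consequently,
\begin{equation*}
\int_{\Omega_\varepsilon} |w(x)|^2 \,dx \;=\; \int_{-\varepsilon}^{\varepsilon} \!\int_\Sigma |w(\iota(x_\Sigma,t))|^2 \, J(x_\Sigma,t)\, d\sigma(x_\Sigma)\,dt \;\le\; C \int_{-\varepsilon}^{\varepsilon}\!\int_\Sigma |w(\iota(x_\Sigma,t))|^2 \,d\sigma(x_\Sigma)\,dt.
\end{equation*}
For each fixed $t\in(-\varepsilon_1,\varepsilon_1)$, the inner integral is the $L^2(\Sigma_t)$-norm squared of the trace of $w$ on the parallel hypersurface $\Sigma_t$, transported back to $\Sigma$ via $\iota(\cdot,t)$. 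Since $s>\tfrac{1}{2}$, the standard trace theorem yields $\|w|_{\Sigma_t}\|_{L^2(\Sigma_t)} \le C_t\|w\|_{H^s(\R^\theta;\C^N)}$, and I would argue that $C_t$ is uniform in $t$: in each chart $W_l$ the surface $\Sigma_t$ is again a $C^2_b$-graph, obtained from $\zeta_l$ by a shift whose $C^2_b$-seminorms are uniformly bounded for $|t|<\varepsilon_1$, so a partition of unity via Lemma~\ref{lem_part_unity_phi_l} combined with the standard trace estimate for graphs produces the $t$-independent constant. Putting this together and integrating in $t$ over $(-\varepsilon,\varepsilon)$ yields the factor $\varepsilon$.

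The main obstacle is the uniform-in-$t$ trace constant on the family $\{\Sigma_t\}_{|t|<\varepsilon_1}$. Without the $C^2_b$ assumption on the local graphs $\zeta_l$ one could not control the $C^2$-geometry of $\Sigma_t$ uniformly, and the trace bound might degenerate as $t$ varies. Here the hypothesis \eqref{it_def_Sigma} is exactly what is needed: the finitely many charts, the uniform radius $\varepsilon_0$, and $\zeta_l \in C^2_b$ guarantee that the pullback of the trace inequality to $\R^{\theta-1}$ only involves uniformly bounded smooth changes of variables, so a single constant $C$ depending only on $s$, the $\zeta_l$'s, and the partition of unity works for all $t$.
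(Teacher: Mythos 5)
Your proposal is correct but follows a route that is in a sense dual to the one taken in the paper, and the two differ in which auxiliary estimate carries the load. You fix the function $w$ and vary the hypersurface: you read $\int_\Sigma |w(\iota(x_\Sigma,t))|^2\,d\sigma(x_\Sigma)$ as (essentially) $\|w|_{\Sigma_t}\|_{L^2(\Sigma_t)}^2$ and then invoke a trace inequality with a constant that is uniform over the family of parallel surfaces $\{\Sigma_t\}_{|t|<\varepsilon_1}$. The paper instead fixes the surface and translates the function: after passing to $u_\pm\in\mathcal{D}(\overline{\Omega_\pm};\C^N)$ by density, rescaling $t\mapsto t\varepsilon$ to extract the factor $\varepsilon$, and applying Stein's extension $E$, it identifies $x_\Sigma\mapsto u_+(\iota(x_\Sigma,t\varepsilon))$ with the trace on the \emph{fixed} surface $\Sigma$ of $x\mapsto (Eu_+)(x+t\varepsilon\nu_{\rm ext}(x))$, where $\nu_{\rm ext}$ is a $C^1_b$-extension of $\nu$ to $\R^\theta$, and then uses that the composition operator $v\mapsto v((\cdot)+t\varepsilon\nu_{\rm ext})$ is uniformly bounded on $H^s(\R^\theta)$ with respect to $t$ and $\varepsilon$ (this is \cite[Proposition~3.3]{BHS23}). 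The advantage of the paper's route is that it only ever uses the trace theorem on the single surface $\Sigma$, so the delicate $t$-uniformity is transferred to a family of translation operators on $H^s(\R^\theta)$, which is more elementary to control. Your route can be made to work, but you should sharpen one point: $\Sigma_t$ is only $C^1$, not $C^2_b$ as you write, since $\nu$ has one derivative fewer than $\zeta_l$; moreover the normal translation of a graph $x'\mapsto\kappa(x',\zeta_l(x'))$ is a parametrized surface and not directly a graph over the same base, so you need to justify the re-parametrization and check that its Lipschitz constants are uniform for $|t|<\varepsilon_1$. The $C^2_b$ bound on $\zeta_l$ does supply exactly this uniformity, so with that extra step spelled out your argument closes; it is just slightly heavier geometrically than the paper's translation-of-the-function device.
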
 
	\begin{proof}
		Since $H^{s'}(\R^{\theta}\setminus \Sigma;\C^N)$ is continuously embedded in $H^{s}(\R^{\theta}\setminus \Sigma;\C^N)$ for $s < s'$, it is no restriction to assume $ s \in (\tfrac{1}{2},1]$. We start by considering $u = u_+ \oplus u_-$ with $u_\pm \in \mathcal{D}(\overline{\Omega_\pm};\C^N)$, where 
		$$\mathcal{D}(\overline{\Omega_\pm};\C^N) := \{ u|_{\Omega_\pm}: u \in \mathcal{D}(\mathbb{R}^\theta; \mathbb{C}^N) \}.$$
		Moreover, by \cite[Proposition~2.4 and Corollary~A.3]{BHS23} and  our choice of $\varepsilon_1>0$, see the lines below \eqref{eq_iota},  we obtain
		\begin{equation*}
			\begin{aligned}
				\int_{\Omega_\varepsilon} |u(x)|^2 \,dx & \leq C  \int_{-\varepsilon}^\varepsilon \int_{\Sigma} |u(x_\Sigma + t \nu(x_\Sigma))|^2 \, d\sigma(x_\Sigma)  \,dt \\
				&=  {C}{\varepsilon} \bigg(\int_{-1}^0 \int_{\Sigma} |u_+(x_\Sigma + t\varepsilon  \nu(x_\Sigma))|^2 \, d\sigma(x_\Sigma)  \,dt   \\
				&\quad \quad +\int_{0}^1 \int_{\Sigma} |u_-(x_\Sigma +  t\varepsilon \nu(x_\Sigma))|^2 \, d\sigma(x_\Sigma)\,dt \bigg).
			\end{aligned}
		\end{equation*}
		Next, we estimate the term $\int_{-1}^0 \int_{\Sigma} |u_+(x_\Sigma + t \varepsilon\nu(x_\Sigma))|^2 \,d\sigma(x_\Sigma) \,dt$. Note that the smoothness of $u$ implies that for  $t \in (-1,0)$ the function 
		\begin{equation*}
			\Sigma \ni x_\Sigma \mapsto u_+(x_\Sigma + t\varepsilon \nu(x_\Sigma))
		\end{equation*}
		coincides with  the trace of the function
		\begin{equation*}
			\R^\theta \ni x  \mapsto \widetilde{u}_+(x + t\varepsilon \nu_{\rm ext}(x)),
		\end{equation*}
		where $\nu_{\rm ext}$ is a $C^1_b$-extension of the unit normal vector $\nu$ to $\R^{\theta}$, which can be constructed in the same way as the extensions of $\eta$, $\tau$, and $V$ in \eqref{eq_interaction_extensions}, and $\widetilde{u}_+ = E u_+ \in H^1(\R^{\theta};\C^N)$ is the extension of $u_+$ to $\R^{\theta}$ defined by Stein's continuous extension operator
		\begin{equation*}
			E:H^s(\Omega_+;\C^N) \mapsto H^s(\R^{\theta};\C^N);
		\end{equation*}
		cf. \cite[Chapter~6,~Section~3,~Theorem~5]{S70}. Then,
		\begin{equation*}
			\int_{-1}^0 \int_{\Sigma} |u_+(x_\Sigma + t\varepsilon \nu(x_\Sigma))|^2 \,d\sigma(x_\Sigma)\,dt = \int_{-1}^0 \int_{\Sigma} \big|\tr \big(\widetilde{u}_+( (\cdot) + t \varepsilon\nu_{\rm ext})\big)(x_\Sigma)\big|^2 \,d\sigma(x_\Sigma)\,dt
		\end{equation*}
		and we can  estimate this term  by
		\begin{equation*}
			\begin{aligned}
				\int_{-1}^0 \int_{\Sigma} |\tr \big(\widetilde{u}_+( (\cdot) + &t \varepsilon\nu_{\rm ext})\big)(x_\Sigma)|^2 \,d\sigma(x_\Sigma)\,dt \\
				&=\int_{-1}^0 \norm{\tr (E u_+)((\cdot) + t\varepsilon \nu_{\rm ext})}_{L^2(\Sigma;\C^N)}^2   \,dt \\
				&\leq \int_{-1}^{0} \norm{\tr (E u_+)((\cdot) + t\varepsilon \nu_{\rm ext})}_{H^{s-1/2}(\Sigma;\C^N)}^2 \,dt\\
				&\leq C\int_{-1}^{0}\norm{ (E u_+)((\cdot) + t\varepsilon \nu_{\rm ext})}_{H^s(\R^{\theta};\C^N)}^2 \,dt.
			\end{aligned}
		\end{equation*}
		According to \cite[Proposition~3.3]{BHS23} the term $\norm{ (E u_+)((\cdot) + t\varepsilon\nu_{\rm ext})}_{H^s(\R^{\theta};\C^N)}$ is uniformly bounded with respect to $t\in(0,1)$ and $\varepsilon \in (0,\varepsilon_1)$ by $C \norm{ E u_+}_{H^s(\R^{\theta};\C^N)}$, which is in turn bounded by $C \norm{u_+}_{H^s(\Omega_+;\C^N)}$. Therefore,
		\begin{equation*}
			\int_{-1}^0 \int_{\Sigma} |u_+(x_\Sigma + t \nu(x_\Sigma))|^2 \,d\sigma(x_\Sigma)\,dt \leq C \norm{u_+}_{H^s(\Omega_+;\C^N)}^2;
		\end{equation*}
		in the same way one gets
		\begin{equation*}
			\int_{0}^1 \int_{\Sigma} |u_-(x_\Sigma + t \nu(x_\Sigma))|^2 \,d\sigma(x_\Sigma)\,dt \leq C \norm{u_-}_{H^s(\Omega_-;\C^N)}^2.
		\end{equation*} 
		This implies the assertion for $u \in \mathcal{D}(\overline{\Omega_+};\C^N) \oplus \mathcal{D}(\overline{\Omega_-};\C^N)$, which is a dense subspace of $H^s(\R^{\theta}\setminus\Sigma;\C^N) = H^s(\Omega_+;\C^N) \oplus H^s(\Omega_-;\C^N)$, see, e.g., \cite[Chapter~3]{M00}. Therefore, the assertion of the lemma follows for all $u\in  H^s(\R^{\theta}\setminus\Sigma;\C^N)$.
	\end{proof}

	\begin{proof}[Proof of Theorem~\ref{theo_main_2}]
		The proof is based on an idea that is also described below the proof of Theorem~2.6 in \cite[Section~8]{CLMT21} and the equality  
		\begin{equation}\label{eq_H_tilde_V_unitary_equivalence}
			H_{\widetilde{V}\delta_\Sigma} = U H_{(-4/\widetilde d)\widetilde{V} \delta_\Sigma} U,
		\end{equation} 
		where $U$ is the self-adjoint unitary multiplication operator induced by the function $w = \chi_{\Omega_+} - \chi_{\Omega_-}$ in $L^2(\R^\theta;\C^N)$. Recall that $\widetilde{V} = \widetilde{\eta} I_N + \widetilde{\tau} \beta$ is as in~\eqref{eq_magnetic_scaling}, $\widetilde{d} = \widetilde{\eta}^2-\widetilde{\tau}^2$, and by our assumptions we have $\inf_{x_\Sigma \in \Sigma} |\widetilde{d}(x_\Sigma)| > 4$, so that all terms in~\eqref{eq_H_tilde_V_unitary_equivalence} are well-defined. Equation \eqref{eq_H_tilde_V_unitary_equivalence} can be proven as in, e.g., \cite[Lemma~5.11]{BHSS22}, \cite[Section~4]{CLMT21} or \cite[Theorem~1.1]{M17}. Furthermore, according to \eqref{eq_magnetic_scaling} we have
		\begin{equation*}
			\widetilde{V}  =\frac{-2}{\sqrt{d}\tan\bigl(\tfrac{\sqrt{d}}{2}\bigr)} V \quad \textup{and} \quad \widetilde{d} = \widetilde{\eta}^2 - \widetilde{\tau}^2 = \frac{4}{\tan^2\bigl(\tfrac{\sqrt{d}}{2}\bigr)}. 
		\end{equation*}
		Set $ \widetilde{V}_{\textup{cl}} := \textup{tanc}\bigl(\tfrac{\sqrt{d}}{2}\bigr) V $ and observe
		\begin{equation*}
			\frac{-4}{\widetilde d}\widetilde{V} = -\tan^2\bigl(\tfrac{\sqrt{d}}{2}\bigr) \frac{-2}{\sqrt{d}\tan\bigl(\tfrac{\sqrt{d}}{2}\bigr)} V 
			= \frac{\tan\bigl(\tfrac{\sqrt{d}}{2}\bigr)}{\frac{\sqrt{d}}{2}}  V
			= \widetilde{V}_{\textup{cl}}. 
		\end{equation*}
		In particular, 
		\begin{equation}\label{eq_H_tilde_V_cl}
		H_{\widetilde{V} \delta_\Sigma}= UH_{\widetilde{V}_{\textup{cl}}\delta_\Sigma}U,
		\end{equation}
		and since $\widetilde{V}_{\textup{cl}} = \textup{tanc}\bigl(\tfrac{\sqrt{d}}{2}\bigr) V$, which is the rescaling from Theorem~\ref{theo_main_1}, it follows from Theorem~\ref{theo_main_1} that 
		$H_{V_\varepsilon}$ converges to $H_{\widetilde{V}_{\textup{cl}} \delta_\Sigma}$ in the norm resolvent sense.
		We then proceed in a similar way as in \cite{CLMT21}, that is, we provide unitary multiplication operators $W_\varepsilon$ such that 
		\begin{equation}\label{herrje33}
			W_\varepsilon^*H_{V_\varepsilon} W_\varepsilon = H_{V_\varepsilon} + V_{m,\varepsilon},
		\end{equation}
		where $V_{m,\varepsilon}$ is the strongly localized magnetic potential introduced in \eqref{eq_V^m_eps}, and $W_\varepsilon \to U$ for $\varepsilon \to 0$, so that the left hand side of \eqref{herrje33} converges in the norm resolvent sense to 
		$U H_{\widetilde{V}_{\textup{cl}} \delta_\Sigma} U=H_{\widetilde{V} \delta_\Sigma}$.
		
		We now start the main part of the proof by defining for $\varepsilon \in (0,\varepsilon_1)$ the function 
		\begin{equation*}
			w_\varepsilon: \R^{\theta} \to \C, \qquad w_\varepsilon(x):= \begin{cases}1, & x \in \Omega_+ \setminus \Omega_\varepsilon, \\ e^{i \pi \int_{-1}^{t/\varepsilon} q(s) \,ds}, & x = \iota(x_\Sigma,t)\in \Omega_\varepsilon, \\ -1, & x  \in \Omega_- \setminus \Omega_\varepsilon.
			\end{cases}
		\end{equation*}
		This function is well-defined according to the text below \eqref{eq_iota}. We define $W_\varepsilon$ to be the unitary multiplication operator in $L^2(\R^{\theta};\C^N)$ induced by $w_\varepsilon$. Using $\int_{-1}^1 q(s) \,ds = 1$ and tubular coordinates one shows that $w_\varepsilon \in W^1_\infty(\R^{\theta})$. Moreover, by \cite[eq. (2.11)]{Z23} and \cite[eq. (3.10)]{CLMT21} we have 
		$\nabla (\iota^{-1}(x))_2 = \nu(x_\Sigma) $ for ${x = \iota(x_\Sigma,t)  \in \Omega_\varepsilon}$ ($(\iota^{-1}(x))_2 = t$) and therefore the chain rule implies
		\begin{equation*}
			\nabla w_\varepsilon(x) =\begin{cases}
				\nu(x_\Sigma) \frac{i\pi q\bigl(\tfrac{t}{\varepsilon}\bigr)}{\varepsilon}w_\varepsilon(x), &  x = \iota(x_\Sigma,t) \in \Omega_\varepsilon,\\
				0, & \textup{else}.
			\end{cases}
		\end{equation*}
		These considerations and the definition of $V_{m,\varepsilon}$ show  
		\begin{equation}\label{eq_H_V_eps_W_eps}
			W_\varepsilon^*H_{V_\varepsilon} W_\varepsilon = H_{V_\varepsilon} - i \overline{w_\varepsilon}(\alpha \cdot \nabla w_\varepsilon) = H_{V_\varepsilon} +  V_{m,\varepsilon};
		\end{equation}
		cf. \cite[Section~8, below the proof of Theorem~2.6]{CLMT21}. We note that $w_\varepsilon$ converges pointwise to $w = \chi_{\Omega_+} - \chi_{\Omega_-}$ and therefore $W_\varepsilon$ converges in the strong sense to the operator $U$. In addition,  Lemma~\ref{lem_int_u^2_estimate} shows that for $\varepsilon \in (0,\varepsilon_1)$ the estimate 
		\begin{equation}\label{eq_diff_W_eps_U_H^1}
			\begin{aligned}
				\norm{(W_\varepsilon^* -U)u}_{L^2(\R^{\theta};\C^N)}^2&= \int_{\Omega_\varepsilon} |(\overline{w_\varepsilon(x)}- \chi_{\Omega_+}(x)+\chi_{\Omega_-}(x))u(x)|^2 \,dx\\
				&\leq 4\int_{\Omega_\varepsilon} |u(x)|^2 \,dx\\
			 & \leq C \varepsilon  \norm{u}_{H^1(\R^\theta\setminus \Sigma;\C^N)}^2, \qquad  u \in H^1(\R^{\theta}\setminus \Sigma),
			 \end{aligned}
		\end{equation}
		is also valid. Moreover, since $(H_{\widetilde{V}_{\textup{cl}}\delta_\Sigma} -z)^{-1}$ is closed as an operator 
        from $L^2(\R^{\theta};\C^N)$ to $H^1(\R^{\theta}\setminus \Sigma;\C^N)$ and defined on $L^2(\R^{\theta};\C^N)$, it is also bounded
        from $L^2(\R^{\theta};\C^N)$ to $H^1(\R^{\theta}\setminus \Sigma;\C^N)$. Thus, \eqref{eq_diff_W_eps_U_H^1} implies for $\varepsilon \in (0,\varepsilon_1)$
		\begin{equation}\label{hurra12}
			\|(W_\varepsilon^* -U)(H_{\widetilde{V}_{\textup{cl}}\delta_\Sigma}-z)^{-1}\|_{L^2(\R^{\theta};\C^N) \to L^2(\R^{\theta};\C^N)} \leq C \varepsilon^{1/2}.
		\end{equation}
		We use \eqref{eq_H_V_eps_W_eps} and \eqref{eq_H_tilde_V_cl} and estimate for $\varepsilon \in (0,\varepsilon')$ (with $\varepsilon'>0$ from Theorem~\ref{theo_main_1})
		\begin{equation*}
			\begin{aligned}
				\|(H_{V_\varepsilon} +& V_{m,\varepsilon} - z)^{-1} - (H_{\widetilde{V} \delta_\Sigma} - z)^{-1}\|_{L^2(\R^{\theta};\C^N) \to L^2(\R^{\theta};\C^N)} \\
				&=\| (W_\varepsilon^* H_{V_\varepsilon} W_\varepsilon - z)^{-1}- (UH_{\widetilde{V}_{\textup{cl}}\delta_\Sigma}U-z)^{-1}\|_{L^2(\R^{\theta};\C^N) \to L^2(\R^{\theta};\C^N)}\\
				&=\| W_\varepsilon^* (H_{V_\varepsilon}-z)^{-1} W_\varepsilon - U(H_{\widetilde{V}_{\textup{cl}}\delta_\Sigma}-z)^{-1}U\|_{L^2(\R^{\theta};\C^N) \to L^2(\R^{\theta};\C^N)}\\
				&\leq  \|W_\varepsilon^*((H_{V_\varepsilon}-z)^{-1}  -(H_{\widetilde{V}_{\textup{cl}}\delta_\Sigma}-z)^{-1})W_\varepsilon\|_{L^2(\R^{\theta};\C^N) \to L^2(\R^{\theta};\C^N)} \\
				&\hspace{20 pt}+ \|(W_\varepsilon^*-U)(H_{\widetilde{V}_{\textup{cl}}\delta_\Sigma}-z)^{-1}W_\varepsilon\|_{L^2(\R^{\theta};\C^N) \to L^2(\R^{\theta};\C^N)}\\
				&\hspace{20 pt}+ \|U(H_{\widetilde{V}_{\textup{cl}}\delta_\Sigma}-z)^{-1}(W_\varepsilon - U)\|_{L^2(\R^{\theta};\C^N) \to L^2(\R^{\theta};\C^N)}.
			\end{aligned}
		\end{equation*}
		Furthermore, since $U$ is unitary and self-adjoint and $W_\varepsilon$ is unitary we can 
		continue the above estimate for $r \in (0,\tfrac{1}{2})$ and obtain
		\begin{equation*}
			\begin{aligned}
				\|(H_{V_\varepsilon} +& V_{m,\varepsilon} - z)^{-1} - (H_{\widetilde{V} \delta_\Sigma} - z)^{-1}\|_{L^2(\R^{\theta};\C^N) \to L^2(\R^{\theta};\C^N)} \\
				&\leq \|(H_{V_\varepsilon}-z)^{-1}  -(H_{\widetilde{V}_{\textup{cl}}\delta_\Sigma}-z)^{-1}\|_{L^2(\R^{\theta};\C^N) \to L^2(\R^{\theta};\C^N)} \\
				&\hspace{20 pt}+ \|(W_\varepsilon^*-U)(H_{\widetilde{V}_{\textup{cl}}\delta_\Sigma}-z)^{-1}\|_{L^2(\R^{\theta};\C^N) \to L^2(\R^{\theta};\C^N)}\\
				&\hspace{20 pt}+ \|(W_\varepsilon^* - U)(H_{\widetilde{V}_{\textup{cl}}\delta_\Sigma}-\overline{z})^{-1}\|_{L^2(\R^{\theta};\C^N) \to L^2(\R^{\theta};\C^N)}.\\
				&\leq C \varepsilon^{1/2-r} + C \varepsilon^{1/2} \\
				&\leq C \varepsilon^{1/2-r},
			\end{aligned}
		\end{equation*}
		where the norm resolvent convergence of $H_{V_\varepsilon}$ to $H_{\widetilde{V}_{\textup{cl}} \delta_\Sigma}$ (see Theorem~\ref{theo_main_1}) and 
		\eqref{hurra12} were used in the penultimate estimate.
	\end{proof}
	
	 In the next corollary we observe that every Dirac operator with a given $\delta$-shell potential $\widetilde{V} \delta_\Sigma$, $\widetilde{V} = \widetilde{\eta} I_N + \widetilde{\tau} \beta$, $\widetilde{\eta},\widetilde{\tau} \in C_b^1(\Sigma;\R)$, and $\widetilde{d} = \widetilde{\eta}^2 -\widetilde{\tau}^2$ such that
	\begin{equation}\label{eq_cond_d_tilde_cor}
		\sup_{x_\Sigma \in \Sigma} |\widetilde{d}(x_\Sigma)|<4 \quad \textup{or} \quad \inf_{x_\Sigma \in \Sigma}|\widetilde{d}(x_\Sigma)|>4
	\end{equation} 
	holds, can  be approximated by a sequence of Dirac operators with strongly localized potentials. 
	
	\begin{cor}\label{cor_main_1}
		Let $q\in L^\infty((-1,1);[0,\infty))$ with $\int_{-1}^1 q(s) \,ds =1$, let $\widetilde\eta, \widetilde\tau \in  C^1_b(\Sigma;\R)$ 
		and $\widetilde{V} = \widetilde{\eta}I_N + \widetilde{\tau}\beta$, and assume that $\widetilde d = \widetilde\eta^{\,2}-\widetilde\tau^{\,2}$
		satisfies the condition \eqref{eq_cond_d_tilde_cor}.
		Define
		the interaction strengths $\eta,\tau\in C^1_b(\Sigma)$ by 
		\begin{equation}\label{eq_cor_def_eta_tau}
			(\eta,\tau) =
			\begin{cases}
				\frac{2 \arctan\left(\frac{\sqrt{\widetilde d}}{2}\right)}{\sqrt{\widetilde{d}}} (\widetilde{\eta},\widetilde{\tau}), &\quad  \text{if } \sup_{x_\Sigma \in \Sigma} |\widetilde{d}(x_\Sigma)| < 4,\\
				\frac{-2 \arctan\left(\frac{2}{\sqrt{\widetilde{d}}}\right)}{\sqrt{\widetilde{d}}} (\widetilde{\eta},\widetilde{\tau}), &\quad \text{if }\inf_{x_\Sigma \in \Sigma} |\widetilde{d}(x_\Sigma)| > 4,
			\end{cases}
		\end{equation}
		and let $V=\eta I_N+ \tau\beta$ and $V_\varepsilon$ be  as in   \eqref{eq_V_eps}.
		Then, for all $r \in (0,\tfrac{1}{2})$ and $z \in \C\setminus \R$ there exist $C >0$ and $\varepsilon' \in (0,\varepsilon_1)$ such that the following is true:
		\begin{enumerate}[\upshape(i)]
			\item If $\sup_{x_\Sigma \in \Sigma} |\widetilde{d}(x_\Sigma)| < 4$, then
		\begin{equation*}
			\norm{({H_{V_\varepsilon}} - z)^{-1} -(H_{\widetilde{V}\delta_\Sigma} - z)^{-1} }_{L^2(\R^\theta;\C^N) \to L^2(\R^\theta;\C^N)} \leq C \varepsilon^{1/2 -r}
		\end{equation*}
				for all   $\varepsilon \in (0, \varepsilon')$.
			\item If $\inf_{x_\Sigma \in \Sigma}|\widetilde{d}(x_\Sigma)|>4$, then
		\begin{equation*}
			\norm{({H_{V_\varepsilon}} + V_{m,\varepsilon} - z)^{-1} -(H_{\widetilde{V}\delta_\Sigma} - z)^{-1} }_{L^2(\R^\theta;\C^N) \to L^2(\R^\theta;\C^N) } \leq C \varepsilon^{1/2 -r}
		\end{equation*}
			for all  $\varepsilon \in (0, \varepsilon')$, where $V_{m,\varepsilon}$ is as in \eqref{eq_V^m_eps}.
	\end{enumerate}
	\end{cor}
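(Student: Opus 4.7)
The plan is to derive Corollary~\ref{cor_main_1} by reducing each case to the corresponding theorem already established: case~(i) to Theorem~\ref{theo_main_1} and case~(ii) to Theorem~\ref{theo_main_2}. The definitions in \eqref{eq_cor_def_eta_tau} are, by construction, the pointwise inverses of the rescalings \eqref{eq_scaling} and \eqref{eq_magnetic_scaling}, so the task reduces to three checks: (a) $(\eta,\tau) \in C^1_b(\Sigma;\R)$; (b) the resulting $d = \eta^2-\tau^2$ satisfies the hypotheses of the relevant theorem; (c) applying that theorem's rescaling to $(\eta,\tau)$ returns the prescribed $(\widetilde\eta,\widetilde\tau)$.

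For part~(i) I would start from \eqref{eq_cor_def_eta_tau} and compute $d = 4\arctan^2(\sqrt{\widetilde d}/2)$. With the branch convention from Section~\ref{sec_not}, the map $w \mapsto 4\arctan^2(\sqrt w/2)$ is real analytic on $\{w\in\R : |w|<4\}$ (equal to $-4\,\textup{arctanh}^2(\sqrt{|w|}/2)$ when $w<0$) and strictly bounded above by $\pi^2/4$. Combined with $\widetilde d \in C^1_b(\Sigma;\R)$ and $\sup_\Sigma |\widetilde d| < 4$, this gives (a) and the condition $\sup_\Sigma d < \pi^2/4$ in (b). For (c) I would use $\sqrt d/2 = \arctan(\sqrt{\widetilde d}/2)$, so that $\tan(\sqrt d/2) = \sqrt{\widetilde d}/2$ and $\textup{tanc}(\sqrt d/2) = \sqrt{\widetilde d}/(2\arctan(\sqrt{\widetilde d}/2))$; multiplying $(\eta,\tau)$ by this factor exactly cancels the prefactor in \eqref{eq_cor_def_eta_tau} and yields $(\widetilde\eta,\widetilde\tau)$. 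Theorem~\ref{theo_main_1} then produces the claimed estimate.

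For part~(ii) the analogous calculation gives $d = 4\arctan^2(2/\sqrt{\widetilde d})$. When $\widetilde d > 4$ one has $2/\sqrt{\widetilde d} \in (0,1)$, so $d \in (0,\pi^2/4)$; when $\widetilde d < -4$ the branch convention yields $d = -4\,\textup{arctanh}^2(2/\sqrt{|\widetilde d|}) \in (-\infty,0)$. Since $\widetilde\eta,\widetilde\tau \in C^1_b(\Sigma;\R)$ the quantity $|\widetilde d|$ is bounded above, and by hypothesis it is bounded below by a number strictly greater than $4$; hence both $\sup_\Sigma d < \pi^2/4$ and $\inf_\Sigma|d|>0$ hold, and $(\eta,\tau) \in C^1_b(\Sigma;\R)$ follows again from the real analyticity of the coefficient on the range of $\widetilde d$. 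Finally, $\tan(\sqrt d/2) = 2/\sqrt{\widetilde d}$ simplifies \eqref{eq_magnetic_scaling} to $(\widetilde\eta,\widetilde\tau) = -(\sqrt{\widetilde d}/\sqrt d)(\eta,\tau)$, which is precisely the inverse of \eqref{eq_cor_def_eta_tau}, so Theorem~\ref{theo_main_2} applies and delivers the convergence rate.

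The only real obstacle I foresee is careful bookkeeping of the $\sqrt{\cdot}$ branch when $\widetilde d < 0$ or $\widetilde d < -4$; the identities $\arctan(ix) = i\,\textup{arctanh}(x)$ and $\tan(ix) = i\tanh(x)$ implicit in Section~\ref{sec_not}~\eqref{it_tanc} render the relevant expressions real-valued in those regimes, after which everything reduces to the algebraic inversions sketched above and the rate $\varepsilon^{1/2-r}$ transfers verbatim from Theorems~\ref{theo_main_1} and \ref{theo_main_2}.
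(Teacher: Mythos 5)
Your proposal is correct and follows essentially the same route as the paper: compute $d = 4\arctan^2(\sqrt{\widetilde d}/2)$ (resp.\ $d = 4\arctan^2(2/\sqrt{\widetilde d})$), use the bound $\arctan^2(w) < \pi^2/16$ on the relevant range to verify condition \eqref{eq_d_cond}, for case~(ii) deduce $\inf_\Sigma|d| > 0$ from the boundedness of $\widetilde d$, and invoke Theorem~\ref{theo_main_1} (resp.\ Theorem~\ref{theo_main_2}). Your additional checks that $(\eta,\tau)\in C^1_b(\Sigma;\R)$ and that the rescalings \eqref{eq_scaling}, \eqref{eq_magnetic_scaling} applied to $(\eta,\tau)$ actually return $(\widetilde\eta,\widetilde\tau)$ are both correct and make explicit two points the paper leaves implicit in the statement and definitions.
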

	
	Note that in~\eqref{eq_cor_def_eta_tau} the convention $\frac{ \arctan(0)}{0} = 1$ is used. Moreover, we would like to point out that for constant $\widetilde{\eta}, \widetilde{\tau} \in \mathbb{R}$ the previous corollary is particularly interesting, as it shows that every Dirac operator with a $\delta$-potential and constant electrostatic and Lorentz-scalar interaction strengths satisfying  $|\widetilde{d}|\neq 4$ can be approximated by Dirac operators with strongly localized potentials.
	
	\begin{proof}[Proof of Corollary~\ref{cor_main_1}]
	Throughout this proof we use $\arctan^2(w) < \tfrac{\pi^2}{16}$ for $w \in [0,1]$ and $w \in i [0,1]$. 
	
	(i) The assumption $\sup_{x_\Sigma \in \Sigma} |\widetilde{d}(x_\Sigma)| < 4$ implies for $d = \eta^2 - \tau^2$
	\begin{equation*}
		\sup_{x_\Sigma \in \Sigma} d(x_\Sigma) = \sup_{x_\Sigma \in \Sigma} 4\arctan^2\Bigl(\tfrac{\sqrt{\widetilde{d}(x_\Sigma)}}{2}\Bigr)<\frac{\pi^2}{4}
	\end{equation*}
	and thus condition \eqref{eq_d_cond} is fulfilled. Hence, the assertion  follows from Theorem~\ref{theo_main_1}.
	
	(ii) The assumption $\inf_{x_\Sigma \in \Sigma} |\widetilde{d}(x_\Sigma)| > 4$ implies
	\begin{equation*}
		\sup_{x_\Sigma \in \Sigma} d(x_\Sigma) = \sup_{x_\Sigma \in \Sigma} 4\arctan^2\Bigl(\tfrac{2}{\sqrt{\widetilde{d}(x_\Sigma)}}\Bigr)<\frac{\pi^2}{4}
	\end{equation*}
	and thus condition \eqref{eq_d_cond} is fulfilled. Furthermore, since $\widetilde d=\widetilde\eta^{\,2}- \widetilde\tau^{\,2} \in  C^1_b(\Sigma;\R)$ we also have $\inf_{x_\Sigma \in \Sigma} |d(x_\Sigma)| >0$.
	Hence, the assertion  follows from Theorem~\ref{theo_main_2}.
	\end{proof}
	
	\section{Counterexamples}\label{sec_counter}
	
	We show in this section that the condition \eqref{eq_d_cond} for the norm resolvent convergence of $H_{V_\varepsilon}$ is optimal by providing suitable counterexamples. Throughout this section we assume that $V$ has the form  $V = \eta I_N + \tau \beta$ with $\eta,\tau \in \R$. In this situation \eqref{eq_d_cond} simplifies to 
	$d=\eta^2-\tau^2 < \tfrac{\pi^2}{4}$ and by \eqref{eq_scaling} we have 
	\begin{equation}\label{nochamal}
		\widetilde{d} = \widetilde{\eta}^2 - \widetilde{\tau}^2 = \textup{tanc}^2\bigl(\tfrac{\sqrt{d}}{2}\bigr) d= 4\tan^2\bigl(\tfrac{\sqrt{d}}{2}\bigr).
	\end{equation}
	
	In our first counterexample we treat the cases $d \geq \tfrac{\pi^2}{4}$ which lead to the critical interaction strength $\widetilde{d} = 4$. These are exactly the cases $d = (2k+1)^2 \tfrac{\pi^2}{4}$, $k \in \N_0$, and include, in particular, the important border case $d = \tfrac{\pi^2}{4}$. In this situation the operator
	$H_{\widetilde{V}\delta_\Sigma}$ is only essentially self-adjoint and hence $H_{V_\varepsilon}$ cannot converge in the  norm resolvent sense to $H_{\widetilde{V}\delta_\Sigma}$. However, if the interaction support $\Sigma$ is a compact $C^\infty$-hypersurface it turns out in Theorem~\ref{theo_main_counter_critical} that $H_{V_\varepsilon}$ does not even converge to the closure of $H_{\widetilde{V} \delta_\Sigma}$ 
	in the norm resolvent sense. In our second counterexample Theorem \ref{theo_main_3} we treat the case $d >  \tfrac{\pi^2}{4}$ and assume $d \neq (2k+1)^2 \tfrac{\pi^2}{4}$, $k \in \N_0$, 
	so that $\widetilde V$ is noncritical (i.e., $\widetilde{d} \neq 4$). If $\theta=2$ and the interaction support $\Sigma$ is the $y$-axis, 
	then we show that $H_{V_\varepsilon}$ does not converge to $H_{\widetilde{V} \delta_\Sigma}$ in the norm resolvent sense.
	
	\begin{theo}\label{theo_main_counter_critical} 
		Let $\Sigma \subset \R^{\theta}$ be a compact $C^\infty$-hypersurface, $q\in L^\infty((-1,1);[0,\infty))$ with $\int_{-1}^1 q(s) \,ds =1$, and 
		$\eta,\tau \in \R$ such that $d = \eta^2-\tau^2 = (2k+1)^2\tfrac{\pi^2}{4}$, $k \in \N_0$. Let
		$V=\eta I_N+ \tau\beta$ and $V_\varepsilon$ be  as in   \eqref{eq_V_eps}, and define $\widetilde V = \widetilde{\eta}I_N + \widetilde{\tau} \beta$ by \eqref{eq_scaling}. Then, $\widetilde{V}$ is critical (i.e., $\widetilde{d} = 4$), $H_{\widetilde{V} \delta_\Sigma}$ is essentially self-adjoint but not self-adjoint, and  $H_{V_\varepsilon}$ does not converge in the norm resolvent sense to the closure of $H_{\widetilde{V}\delta_\Sigma}$.
	\end{theo}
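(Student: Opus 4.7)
The proof has three claims, which I would establish in order. The first, that $\widetilde d = 4$, is an immediate computation: with $d = (2k+1)^2\pi^2/4$ one has $\sqrt{d}/2 = (2k+1)\pi/4$ and $\tan((2k+1)\pi/4) = \pm 1$, so formula \eqref{nochamal} gives $\widetilde d = 4\tan^2(\sqrt{d}/2) = 4$, i.e.\ $\widetilde V$ is critical in the sense of \eqref{eq_d_tilde_non_crit}. For the second claim, that $H_{\widetilde V \delta_\Sigma}$ is essentially self-adjoint but not self-adjoint in the critical case on a smooth compact surface, I would invoke the known analysis of critical Dirac shell interactions from \cite{BHOP20} (for $\theta = 3$) and from \cite[Section~6]{R22a} for the general combination of electrostatic and Lorentz scalar terms: the Cauchy-type boundary integral operator entering the resolvent parametrix of $H_{\widetilde V\delta_\Sigma}$ fails to be invertible precisely in the critical case $\widetilde d = 4$, but has a finite-dimensional deficit, so that taking the operator closure in $L^2(\R^\theta;\C^N)$ yields a self-adjoint extension whose domain strictly contains $\dom H_{\widetilde V\delta_\Sigma}$.

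The main part of the argument is the third claim, proved by contradiction through comparison of essential spectra. Because $\Sigma$ is compact, each $V_\varepsilon$ is a bounded multiplication operator with compact support $\overline{\Omega_\varepsilon}$; by the Rellich--Kondrachov theorem $V_\varepsilon (H - i)^{-1}$ is compact on $L^2(\R^\theta;\C^N)$, so Weyl's essential spectrum theorem gives
\begin{equation*}
\sigma_{\textup{ess}}(H_{V_\varepsilon}) = \sigma_{\textup{ess}}(H) = (-\infty,-\abs{m}] \cup [\abs{m},\infty),\qquad \varepsilon \in (0,\varepsilon_1).
\end{equation*}
On the other hand, for a compact smooth surface $\Sigma$ in the critical case $\widetilde d = 4$ it is known from \cite{BHOP20, R22a} that the closure $\overline{H_{\widetilde V \delta_\Sigma}}$ has essential spectrum strictly larger than $\sigma_{\textup{ess}}(H)$; concretely, for $m \neq 0$ the essential spectrum of the closure meets the open gap $(-\abs{m}, \abs{m})$. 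Since norm resolvent convergence of self-adjoint operators preserves the essential spectrum, in the sense that any open $U \subset \R$ meeting $\sigma_{\textup{ess}}(\overline{H_{\widetilde V \delta_\Sigma}})$ must also meet $\sigma_{\textup{ess}}(H_{V_\varepsilon})$ for all sufficiently small $\varepsilon$, choosing $U$ to be a small open interval around a gap point of $\sigma_{\textup{ess}}(\overline{H_{\widetilde V \delta_\Sigma}}) \cap (-\abs{m}, \abs{m})$ yields the required contradiction.

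The principal technical obstacle is thus pinning down $\sigma_{\textup{ess}}(\overline{H_{\widetilde V \delta_\Sigma}})$ in the critical case with enough precision to identify a point incompatible with $\sigma_{\textup{ess}}(H_{V_\varepsilon})$; for this one relies on the cited critical-shell literature. A minor secondary obstacle is the massless case $m = 0$, where the spectral gap vanishes. There one either restricts to $m \neq 0$ (the physically relevant setting) or separates the sequence from its putative limit through the mismatch in deficiency indices or spectral projections that norm resolvent convergence would otherwise force to agree.
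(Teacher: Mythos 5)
Your proposal follows essentially the same route as the paper: compute $\widetilde d=4$, invoke the known essential self-adjointness results in the critical case, use compactness of $V_\varepsilon$ (compactly supported bounded multiplication) to get $\sigma_{\textup{ess}}(H_{V_\varepsilon})=\sigma_{\textup{ess}}(H)$, and derive a contradiction from the known fact that the closure $\overline{H_{\widetilde V\delta_\Sigma}}$ has essential spectrum inside the mass gap. The paper makes the same argument somewhat more explicit: it pins down a concrete gap point, namely $-\tfrac{\widetilde\tau}{\widetilde\eta}m\in\sigma_{\textup{ess}}(\overline{H_{\widetilde V\delta_\Sigma}})\cap(-m,m)$, citing \cite[Theorems 1.2, 1.3]{BHOP20} and \cite{BP24}, and appeals to \cite[Satz~9.24~a)]{W00} for the transfer of essential spectra under norm resolvent convergence of the compact-perturbation family, which is more economical than your projection-rank argument (though that argument is also valid once one chooses the interval endpoints in the resolvent set of the limit).

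Two small points where the paper's version is cleaner. First, for the massless case you propose an ad hoc workaround; the paper simply notes that norm resolvent convergence is invariant under bounded perturbations and replaces $m$ by any $m>0$ with no loss of generality, disposing of the issue immediately. Second, your citation for essential self-adjointness is off: \cite{BHOP20} is the two-dimensional result (used via Theorem~4.11 for $\theta=2$), while for $\theta=3$ the paper cites \cite[Theorem~3.1~(ii)]{Ben21}; you attribute \cite{BHOP20} to $\theta=3$. These are cosmetic and the logical structure is sound, so the proposal is correct and takes the same approach as the paper.
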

	
	\begin{proof}
		Since the convergence in norm resolvent sense is invariant with respect to bounded perturbations it is no restriction to assume $m>0$. 
		It is clear from \eqref{nochamal} that $d = (2k+1)^2 \tfrac{\pi^2}{4}$ leads to $\widetilde d=4$ and hence the interaction strength $\widetilde V$ is critical.
		The claims regarding the (essential) self-adjointness follow from \cite[Theorem~4.11]{BHOP20} for $\theta =2$ and from \cite[Theorem~3.1~(ii)]{Ben21} for $\theta =3$. Thus, it only remains to prove that $H_{V_\varepsilon}$ does not converge in the norm resolvent sense to the closure of $H_{\widetilde{V}\delta_\Sigma}$. 
		If $\Sigma$ is compact, then $\supp V_\varepsilon \subset \Omega_\varepsilon$ is compact and hence by \cite[Theorem~3.27~(ii)]{M00} $V_\varepsilon$ induces a compact operator from $H^1(\R^{\theta};\C^N)$ to $L^2(\R^{\theta};\C^N)$. In turn the resolvent difference
		\begin{equation*}
			(H-z)^{-1}-(H_{V_\varepsilon}-z)^{-1} = (H_{V_\varepsilon}-z)^{-1} V_\varepsilon (H-z)^{-1}
		\end{equation*}
		is compact in $L^2(\R^{\theta};\C^N)$, which shows 
		$$\sigma_{\textup{ess}}(H_{V_\varepsilon}) = \sigma_{\textup{ess}}(H) = (-\infty,-m]\cup[m,\infty).$$ Consequently, \cite[Satz~9.24~a)]{W00}  implies that if $H_{V_\varepsilon}$ would converge in norm resolvent sense to the closure of  $H_{\widetilde{V} \delta_\Sigma}$, 
		then also
		$$\sigma_{\textup{ess}}(\overline{H_{\widetilde{V}\delta_\Sigma}}) = \sigma_{\textup{ess}}(H) = (-\infty,-m] \cup [m, \infty).$$ 
		However, since $\widetilde{\eta}^2-\widetilde{\tau}^2 =4$ one has $\vert\widetilde\tau\vert <\vert\widetilde\eta\vert$ and 
		$$-\frac{\widetilde{\tau}}{\widetilde{\eta}} m \in \sigma_\textup{ess}(\overline{H_{\widetilde{V} \delta_\Sigma }})\cap (-m,m)$$ 
		according to \cite[Theorem~1.2 and  Theorem~1.3]{BHOP20} and \cite{BP24}; a contradiction. 
	\end{proof}
	
	Our second counterexample concerns the noncritical case and the special interaction support $\Sigma = \{0\} \times \R$ in $\R^2$. The idea of the proof is to use the direct integral method and to verify that
	$0 \in \sigma(H_{V_\varepsilon})$ for all $\varepsilon>0$ sufficiently small, while $0 \notin \sigma(H_{\widetilde{V} \delta_\Sigma})$; cf.
	Remark~\ref{remco} below for a further discussion.

	\begin{theo}\label{theo_main_3}
		Let $\Sigma = \{0\} \times \R  \subset \R^2$, let $q = \tfrac{1}{2} \chi_{(-1,1)}$, and let $\eta,\tau \in \R$ be such that $d = \eta^2-\tau^2 >  \tfrac{\pi^2}{4}$ and $d \neq (2k+1)^2 \pi^2$, $k \in \N_0$. Let
		$V=\eta I_N+ \tau\beta$ and $V_\varepsilon$ be  as in \eqref{eq_V} and  \eqref{eq_V_eps}, and define $\widetilde V = \widetilde{\eta}I_N + \widetilde{\tau} \beta$ by \eqref{eq_scaling}. Moreover, assume that $\widetilde{d} = \widetilde{\eta}^2 - \widetilde{\tau}^2 \neq 4$. Then, the operator $H_{V_\varepsilon}$ does not converge in norm resolvent sense to $H_{\widetilde{V}\delta_\Sigma}$.
	\end{theo}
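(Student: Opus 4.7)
The plan is to follow the hint in the remark: establish (a) $0\in\sigma(H_{V_\varepsilon})$ for every sufficiently small $\varepsilon>0$, and (b) $0\notin\sigma(H_{\widetilde V\delta_\Sigma})$. Upper semicontinuity of the spectrum under norm resolvent convergence then rules out $H_{V_\varepsilon}\to H_{\widetilde V\delta_\Sigma}$ in that topology: any open neighbourhood of $\sigma(H_{\widetilde V\delta_\Sigma})$ would eventually contain $\sigma(H_{V_\varepsilon})$, and choosing one that avoids $0$ contradicts (a).

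Since $\Sigma=\{0\}\times\R$ is translation invariant in $x_2$, the partial Fourier transform $\mathcal{F}_2$ decomposes both operators as direct integrals over $\xi_2\in\R$ of the one-dimensional fiber operators
\begin{equation*}
H_{V_\varepsilon}(\xi_2) = -i\sigma_1\partial_{x_1}+\xi_2\sigma_2+m\sigma_3+\tfrac{V}{2\varepsilon}\chi_{(-\varepsilon,\varepsilon)},\quad H_{\widetilde V\delta_\Sigma}(\xi_2) = -i\sigma_1\partial_{x_1}+\xi_2\sigma_2+m\sigma_3+\widetilde V\delta_0
\end{equation*}
on $L^2(\R;\C^2)$, and the spectrum of each full operator is the closure of the union of its fiber spectra. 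Every fiber has essential spectrum $(-\infty,-\kappa]\cup[\kappa,\infty)$ with $\kappa:=\sqrt{m^2+\xi_2^2}\geq m>0$, so zero can enter the full spectrum only as a (limit of) fiber eigenvalue.

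For the approximation I would reduce ``$0$ is an eigenvalue of $H_{V_\varepsilon}(\xi_2)$'' to a transfer-matrix matching. Outside $(-\varepsilon,\varepsilon)$ the decaying solutions of $u' = (m\sigma_2 - \xi_2\sigma_3)u$ at $\pm\infty$ are spanned by eigenvectors $v_\pm(\xi_2)$ of $m\sigma_2-\xi_2\sigma_3$ for the eigenvalues $\pm\kappa$. Inside the interval the coefficients are constant, so the transfer matrix is $T_\varepsilon(\xi_2)=\exp\widetilde A_\varepsilon(\xi_2)$ with $\widetilde A_\varepsilon(\xi_2)=i\eta\sigma_1+(\tau+2\varepsilon m)\sigma_2-2\varepsilon\xi_2\sigma_3$, and the Pauli anti-commutation relations give $\widetilde A_\varepsilon^2=-d_\varepsilon(\xi_2)I$ with $d_\varepsilon(\xi_2)=\eta^2-(\tau+2\varepsilon m)^2-4\varepsilon^2\xi_2^2$, making $T_\varepsilon(\xi_2)$ explicit in terms of $\cos\sqrt{d_\varepsilon}$, $\sin\sqrt{d_\varepsilon}$ and $\widetilde A_\varepsilon$. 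Extracting the $v_+$-coefficient of $T_\varepsilon(\xi_2)v_+(\xi_2)$ in the basis $\{v_+,v_-\}$ yields the scalar equation $G_\varepsilon(\xi_2)=0$ with
\begin{equation*}
G_\varepsilon(\xi_2) = \cos\sqrt{d_\varepsilon(\xi_2)} + \frac{\sin\sqrt{d_\varepsilon(\xi_2)}}{\sqrt{d_\varepsilon(\xi_2)}}\Bigl(\tfrac{m\tau}{\kappa}+2\varepsilon\kappa\Bigr).
\end{equation*}
Rescaling $\xi_2=\mu/\varepsilon$ with $\mu\in\R\setminus\{0\}$ gives $d_\varepsilon(\mu/\varepsilon)\to d-4\mu^2$, $m\tau/\kappa\to 0$, and $2\varepsilon\kappa\to 2|\mu|$, so $G_\varepsilon(\mu/\varepsilon)$ converges pointwise to $H(\mu):=\cos\sqrt{d-4\mu^2}+\frac{2|\mu|\sin\sqrt{d-4\mu^2}}{\sqrt{d-4\mu^2}}$ (hyperbolically extended for $d<4\mu^2$).

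The key sign analysis is then: the hypothesis $d>\pi^2/4$ provides $\mu_*\in\R\setminus\{0\}$ with $\sqrt{d-4\mu_*^2}=\pi/2$, so $H(\mu_*)=4|\mu_*|/\pi>0$. On the other hand $H(\mu)<0$ for some other $\mu\in\R$: if $d>\pi^2$ then $\sqrt{d-4\mu^2}=\pi$ is achievable and gives $H(\mu)=-1$; if $d\in(\pi^2/4,\pi^2)$ then the exclusion $\widetilde d\neq 4$ forces $\sqrt d\in(\pi/2,\pi)\setminus\{(2k+1)\pi/2\}$, so $\cos\sqrt d<0$ and continuity gives $H(\mu)<0$ for $\mu$ near $0$; a similar case-by-case argument handles the higher intervals $d\in((2k+1)^2\pi^2/4,(2k+3)^2\pi^2/4)$ using the excluded values $d\neq(2k+1)^2\pi^2/4$ and $d\neq(2k+1)^2\pi^2$. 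An intermediate value argument combined with the uniform convergence of $\widetilde G_\varepsilon(\mu):=G_\varepsilon(\mu/\varepsilon)$ to $H(\mu)$ on compact subsets of $\R\setminus\{0\}$ then produces, for each small $\varepsilon>0$, a real $\mu(\varepsilon)$ with $\widetilde G_\varepsilon(\mu(\varepsilon))=0$; setting $\xi_2(\varepsilon)=\mu(\varepsilon)/\varepsilon$ gives a zero eigenvalue of $H_{V_\varepsilon}(\xi_2(\varepsilon))$ and hence $0\in\sigma(H_{V_\varepsilon})$. The analogous computation for the $\delta$-shell operator replaces $T_\varepsilon$ by the explicit jump matrix $S_{\widetilde V}=(-i\sigma_1+\widetilde V/2)^{-1}(-i\sigma_1-\widetilde V/2)$ from the boundary condition in \eqref{eq_def_H_V_tilde} and yields an algebraic equation $G_0(\xi_2)=0$ which, under the noncriticality $\widetilde d\neq 4$, has no real solution and stays bounded away from zero uniformly in $\xi_2$, giving $0\notin\sigma(H_{\widetilde V\delta_\Sigma})$. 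The main obstacle I foresee is the case analysis in the sign argument for $H$: ensuring that for every admissible $(\eta,\tau)$ and every sufficiently small $\varepsilon$ one genuinely has a sign change of $\widetilde G_\varepsilon$ (not only along a subsequence), using precisely the exclusions $d\neq(2k+1)^2\pi^2/4$ and $d\neq(2k+1)^2\pi^2$ that appear in the hypothesis.
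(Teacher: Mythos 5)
Your overall architecture — pass to the fiber operators $H_{V_\varepsilon}[\xi]$ via $\mathcal{F}_2$, derive a scalar matching equation for a zero mode inside $(-\varepsilon,\varepsilon)$, run an intermediate value argument to locate $\xi_\varepsilon$ with $0\in\sigma_{\rm p}(H_{V_\varepsilon}[\xi_\varepsilon])$, then conclude $0\in\sigma(H_{V_\varepsilon})$ — is exactly the paper's, and your transfer-matrix equation $G_\varepsilon(\xi_2)=0$ agrees after simplification with the paper's condition \eqref{eq_cond_mu} (your $d_\varepsilon(\xi_2)$ equals $\mu_{\xi,\varepsilon}^2$ and $\tfrac{m\tau}{\kappa}+2\varepsilon\kappa$ equals the coefficient in \eqref{eq_mu_cond_1}--\eqref{eq_mu_cond_2}). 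Where you differ is in how you organize the intermediate value argument: the paper inverts $u\mapsto -u\cot u$ on the fixed interval $[\tfrac\pi2,\pi)$ and works with its inverse $u_0$, which by design restricts $\mu_{\xi,\varepsilon}$ to one fundamental interval and hence sidesteps the case analysis over the intervals $((2k+1)^2\pi^2/4,(2k+3)^2\pi^2/4)$ that you flag as a potential obstacle; your rescaling $\xi_2=\mu/\varepsilon$ and limit function $H(\mu)$ is in principle workable but does force you into the multi-interval case study you foresee trouble with. This part is a difference of packaging, not of substance.

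However, there is a genuine gap in your treatment of the $\delta$-shell side. You claim that noncriticality $\widetilde d\neq4$ by itself forces the analogous matching equation $G_0(\xi_2)=0$ to have no real solution, hence $0\notin\sigma(H_{\widetilde V\delta_\Sigma})$. This is not true in general: for $\Sigma$ a straight line, the $\delta$-shell Dirac operator generically has a spectral band (or curve of fiber eigenvalues) inside the gap $(-|m|,|m|)$, and whether $0$ lies in it depends on the sign of $(\widetilde d-4)m\widetilde\tau$, not merely on $\widetilde d\neq4$. The paper's proof is careful here: it invokes the explicit spectrum from \cite[eqs. (5.7), (6.7)]{BHT23}, which gives $0\notin\sigma(H_{\widetilde V\delta_\Sigma})$ only under the additional sign condition $(\widetilde d-4)m\widetilde\tau\leq0$ with $m\neq0$, and then uses the observation that replacing $m$ by any other mass is a \emph{simultaneous} bounded perturbation of both $H_{V_\varepsilon}$ and $H_{\widetilde V\delta_\Sigma}$ (adding the same $s\beta$ to both sides) and hence does not affect whether norm resolvent convergence holds. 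This lets one assume WLOG that $m$ has the sign needed to push the band away from $0$, and \emph{that} is what makes the contradiction go through. Without this mass-changing step, the asserted ``$0\notin\sigma(H_{\widetilde V\delta_\Sigma})$'' can simply fail for the given $(m,\widetilde\eta,\widetilde\tau)$, and your argument breaks. You would also want to be explicit about the passage from ``$0$ is a fiber eigenvalue at one $\xi_\varepsilon$'' to ``$0\in\sigma(H_{V_\varepsilon})$''; the paper does this via norm resolvent continuity of $\xi\mapsto H_{V_\varepsilon}[\xi]$ and \cite[Theorem XIII.85(d)]{RS77}, which you leave implicit.
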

	
	\begin{proof}
		 The main idea of the proof is to use the direct integral method and to show that if $\varepsilon >0$ is sufficiently small, then $0 \in \sigma(H_{V_\varepsilon})$. However, then norm resolvent convergence would imply $0 \in \sigma(H_{\widetilde{V}\delta_\Sigma})$, see, e.g., 
		 \cite[Chapter~IV, Theorem~3.1]{kato}, \cite[Theorem~VIII.23~(a)]{RS72} or \cite[Satz~9.24]{W00}. In the case that $\Sigma$ is the straight line the spectrum has been calculated explicitly in  \cite[eqs. (5.7), (6.7)]{BHT23}. In particular, if $m \neq 0$ and $(\widetilde{d}-4)m \widetilde{\tau} \leq 0$, then $0 \notin \sigma(H_{\widetilde{V}\delta_\Sigma})$, which yields  a contradiction. Note that it is no 
		 restriction to assume $m \neq 0$ and $(\widetilde{d}-4)m \widetilde{\tau}\leq 0$, since the bounded perturbation $m \beta$ does not influence the norm resolvent convergence. 
		 
		 We show  $0 \in \sigma(H_{V_\varepsilon})$ by applying the direct integral method.
		For this observe first that in the special case of Theorem~\ref{theo_main_3}  the operator $H_{V_\varepsilon}$ can be represented by 
		\begin{equation*}
			\begin{aligned}
				H_{V_\varepsilon}  &=   \sigma_1(-i\partial_1) + \sigma_2(-i \partial_2) + m \sigma_3 + (\eta I_2 + \tau \sigma_3 )\frac{\chi_{(-\varepsilon,\varepsilon) \times \R}}{2\varepsilon},  \\
				\dom H_{V_\varepsilon} &= H^1(\R^2;\C^{2}) \subset L^2(\R^2;\C^2);
			\end{aligned}
		\end{equation*}	
		cf. \eqref{eq_V_eps}, \eqref{eq_H_V_eps} and Section~\ref{sec_not}~\eqref{it_Dirac_matrices}.
		By identifying $L^2(\R^2;\C^2)$ with $ \int_{\R}^\oplus L^2(\R;\C^2) \, d\xi$, we get similar to \cite[eq. (2.3) and the text below]{BHT23}
		\begin{equation*}
			\mathcal{F}_{2}	H_{V_\varepsilon}  \mathcal{F}^{-1}_{2} = \int_{\R}^\oplus H_{V_\varepsilon}[\xi] \, d\xi
		\end{equation*}
		with the fiber operators
		\begin{equation*}
		\begin{split}
		H_{V_\varepsilon}[\xi]  &=   -i\sigma_1 \frac{d}{dx}+ \xi \sigma_2  + m \sigma_3 + (\eta I_2 + \tau \sigma_3)\frac{\chi_{(-\varepsilon,\varepsilon)}}{2\varepsilon}, \\
		\dom H_{V_\varepsilon}[\xi] &= H^1(\R;\C^{2}),
		\end{split}
		\end{equation*}	
		for $\xi \in \R$.
		We split the proof of $ 0 \in \sigma(H_{V_\varepsilon})$ for $\varepsilon>0$ sufficiently small in 4 steps. In \textit{Step~1} we find a condition for $0$ being in the point spectrum of $H_{V_\varepsilon}[\xi]$. \textit{Step~2} is an intermediate step in which we consider the inverse of the function $ {[\tfrac{\pi}{2},\pi) \ni u \mapsto -u \cot(u)}$. Using this function we verify in \textit{Step~3} that if $\varepsilon>0$ is sufficiently small, then there always exists an $\xi_\varepsilon >0$ such that $0 \in \sigma_{\textup{p}}(H_{V_\varepsilon}[\xi_\varepsilon])$. Finally, we prove in \textit{Step~4} that this implies $0 \in \sigma(H_{V_\varepsilon})$. 
		
		\textit{Step~1.}
		In this step we prove  for $\xi \in \R$ that  $0 \in \sigma_{\textup{p}}(H_{V_\varepsilon}[\xi])$ is equivalent to the condition
		\begin{equation}\label{eq_cond_mu}
			\cos(\mu_{\xi,\varepsilon}) + \frac{d-\mu_{\xi,\varepsilon}^2-2 \varepsilon \tau  m}{\sqrt{d - \mu_{\xi,\varepsilon}^2 - 4\varepsilon \tau  m}} \textup{sinc}(\mu_{\xi,\varepsilon}) = 0,
		\end{equation}
		where $\mu_{\xi,\varepsilon} := \sqrt{ d - 4\varepsilon^2\upsilon_\xi^2-4 \varepsilon \tau m  }$ and $\upsilon_\xi := \sqrt{\xi^2 + m^2}$. 
		
		Let us assume that there exists a nonzero function $u \in H^1(\R;\C^2)$ such that $H_{V_\varepsilon} [\xi] u = 0$. Then,
		\begin{equation*}
			\begin{aligned}
				\frac{d}{dx} u &= -i\sigma_1\left( \xi \sigma_2 + m \sigma_3 + (\eta I_2 + \tau \sigma_3) \frac{\chi_{(-\varepsilon,\varepsilon)}}{2\varepsilon}\right) u
			\end{aligned}
		\end{equation*}
		a.e. on  $\R$. Thus, there exist $w_1, w_2, w_3 \in \C^2$ such that 
		\begin{equation}\label{eq_mu_cond_ef}
			u(x) = \begin{cases} \exp(Ax)w_1, &  x \in (-\infty,-\varepsilon), \\ \exp(Bx)w_2, &x \in (-\varepsilon,\varepsilon),\\
				\exp(Ax)w_3, & x \in  (\varepsilon,\infty),\end{cases}
		\end{equation}
		where
		\begin{equation*}
			A = -i\sigma_1\big( \xi \sigma_2 + m \sigma_3\big) =\begin{pmatrix} \xi & im  \\ -im  & -\xi \end{pmatrix}  \text{ and }  B = A-i\sigma_1 (\eta I_2 + \tau \sigma_3) \frac{1}{2\varepsilon}.
		\end{equation*} 
		Note that $A$ has the distinct eigenvalues $\pm \upsilon_\xi = \pm \sqrt{\xi^2+m^2}$ and the corresponding orthogonal eigenvectors are given by $a_+ =( -im , \xi - \upsilon_\xi)$ and $a_-=(\xi - \upsilon_\xi,  -im)$.
		Since $u \in H^1(\R;\C^2)$, $u(x)$ has to converge to zero for $x \to \pm \infty$. Hence, $w_1 = c_1 a_+$ and $w_3 = c_3 a_-$ with $c_1,c_3 \in \C$. Moreover, $u \in H^1(\R;\C^2) \subset C(\R;\C^2)$ yields the conditions
		\begin{equation}\label{eq_u_A_B}
			\begin{aligned}
				\exp(-A\varepsilon)w_1 &=c_1  e^{-\varepsilon\upsilon_\xi} a_+ = \exp(-B \varepsilon)w_2\\
				\exp(A\varepsilon)w_3 &=c_3  e^{-\varepsilon\upsilon_\xi} a_- = \exp(B \varepsilon)w_2;
			\end{aligned}
		\end{equation}
		this implies
		\begin{equation}\label{eq_ev}
			c_1 a_+ - c_3 \exp(-2B\varepsilon)a_- = 0.
		\end{equation}
		Next, we write 
		\begin{equation*}
			\exp(-2B \varepsilon) =  \cos(i2B \varepsilon) - 2B \varepsilon \textup{sinc}(i2B \varepsilon).
		\end{equation*} 
		Note that $(i2B \varepsilon)^2 = \mu_{\xi,\varepsilon}^2 I_2 $ and hence $\cos(i2B \varepsilon) = \cos(\mu_{\xi,\varepsilon}) I_2 $ as well as $\textup{sinc}(i2B \varepsilon) = \textup{sinc}(\mu_{\xi,\varepsilon}) I_2 $. These considerations and $2 B \varepsilon  = 2 A \varepsilon  -i \sigma_1(\eta I_2 + \tau \sigma_3)$ yield
		\begin{equation*}
			\begin{aligned}
				\exp(-2B \varepsilon) a_- &= \cos(\mu_{\xi,\varepsilon}) a_-  -2B \varepsilon \textup{sinc}(\mu_{\xi,\varepsilon}) a_- \\
				&=\cos(\mu_{\xi,\varepsilon}) a_-  -(2 A \varepsilon  -i \sigma_1 (\eta I_2 + \tau \sigma_3) ) \textup{sinc}(\mu_{\xi,\varepsilon}) a_- \\
				&= \big(\cos(\mu_{\xi,\varepsilon}) + 2 \varepsilon \upsilon_\xi \textup{sinc}(\mu_{\xi,\varepsilon}) \big) a_- +i \sigma_1(\eta I_2 + \tau \sigma_3)  \textup{sinc}(\mu_{\xi,\varepsilon}) a_- .
			\end{aligned}
		\end{equation*}
		Since $a_+$  and $a_-$ are an orthogonal basis of $\C^2$, \eqref{eq_ev} is fulfilled if the scalar product of \eqref{eq_ev} with $a_+$ and $a_-$ is zero. This yields the system
		\begin{equation}\label{eq_mu_cond_0}
			\begin{aligned}
				0 &= c_1 |a_+|^2 - c_3 \big\langle i \sigma_1(\eta I_2 + \tau \sigma_3)  \textup{sinc}(\mu_{\xi,\varepsilon}) a_-,a_+ \big\rangle \\
				0 &= c_3 \Big( (\cos(\mu_{\xi,\varepsilon}) + 2 \varepsilon \upsilon_\xi \textup{sinc}(\mu_{\xi,\varepsilon}))|a_-|^2  \\
				&\hspace{38 pt}+  \big\langle i \sigma_1(\eta I_2 + \tau \sigma_3)  \textup{sinc}(\mu_{\xi,\varepsilon}) a_-,a_- \big\rangle \Big).
			\end{aligned}
		\end{equation}
		Note that $c_3 \neq 0$, as otherwise $c_1 =0$ and \eqref{eq_u_A_B} would imply  $w_2 =0$, and thus $w_1 = w_2 =w_3=0$, which in turn would lead to $u =0$. However, $u\neq0$  by assumption. Thus, the second line of the above system implies
		\begin{equation*}
			0 = (\cos(\mu_{\xi,\varepsilon}) + 2 \varepsilon \upsilon_\xi \textup{sinc}(\mu_{\xi,\varepsilon}))|a_-|^2  + \big\langle i \sigma_1(\eta I_2 + \tau \sigma_3)  \textup{sinc}(\mu_{\xi,\varepsilon}) a_-,a_- \big\rangle .
		\end{equation*}
		Using the relations $ \sigma_1^* = \sigma_1$, $\sigma_1 a_- = a_+$, $a_+\perp a_-$,  $\langle\sigma_3 a_-, a_+ \rangle = 2im(\xi - \upsilon_\xi)$ and $|a_-|^2 = m^2 + (\xi-\upsilon_\xi)^2$ we can simplify this equation to
		\begin{equation}\label{eq_mu_cond_1}
			\begin{aligned}
				0 =\cos(\mu_{\xi,\varepsilon}) +  \Big(2\varepsilon \upsilon_\xi -  2\frac{ (\xi - \upsilon_\xi)\tau m}{m^2+ (\xi - \upsilon_\xi)^2}\Big)\textup{sinc}(\mu_{\xi,\varepsilon}).
			\end{aligned}
		\end{equation}
		Next, we  use $m^2 = \upsilon_\xi^2 - \xi^2 = -(\xi + \upsilon_\xi)(\xi-\upsilon_\xi)$ and $2\varepsilon \upsilon_\xi  = \sqrt{d-\mu_{\xi,\varepsilon}^2-4\varepsilon \tau  m}$ to rewrite 
		\begin{equation}\label{eq_mu_cond_2}
			\begin{aligned}
				2\varepsilon \upsilon_\xi -  \frac{2 (\xi - \upsilon_\xi)\tau m}{m^2+ (\xi - \upsilon_\xi)^2} &=2\varepsilon \upsilon_\xi  -  2\frac{ (\xi - \upsilon_\xi) \tau m}{(-(\xi + \upsilon_\xi)+ (\xi - \upsilon_\xi))(\xi-\upsilon_\xi)}\\
				& = 2\varepsilon \upsilon_\xi +\frac{\tau m}{\upsilon_\xi}\\
				&=\frac{1}{2\varepsilon \upsilon_\xi} \Big( (2 \varepsilon \upsilon_\xi)^2 + 2\varepsilon \tau m \Big) \\
				&= \frac{d-\mu_{\xi,\varepsilon}^2 - 2\varepsilon \tau m}{2\varepsilon \upsilon_\xi}\\
				&=\frac{d-\mu_{\xi,\varepsilon}^2 -2\varepsilon\tau m}{\sqrt{d - \mu_{\xi,\varepsilon}^2 - 4\varepsilon \tau  m}}.
			\end{aligned}
		\end{equation}
		Plugging \eqref{eq_mu_cond_2} into \eqref{eq_mu_cond_1} gives us \eqref{eq_cond_mu}. 
		
		Now, we argue that the reverse direction is also true. If \eqref{eq_cond_mu} is fulfilled, then using  \eqref{eq_mu_cond_2} implies that \eqref{eq_mu_cond_1} is valid. Then, we fix a arbitrary $c_3 \in \C \setminus \{0\}$. For this  $c_3$ there exists exactly one $c_1 \in  \C$ such that \eqref{eq_mu_cond_0} is satisfied. Furthermore, \eqref{eq_ev} is also true. The choices $w_1 = c_1 a_+, w_2 = \exp(B \varepsilon)\exp(-A \varepsilon)w_1$ and $w_3 = c_3 a_-$ yield with \eqref{eq_u_A_B} that $u$ defined by \eqref{eq_mu_cond_ef} is in $H^1(\R;\C^N) = \dom H_{V_\varepsilon}[\xi]$. Moreover, by construction $u \in (\ker H_{V_\varepsilon}[\xi])\setminus \{0\}$ which implies $0 \in \sigma_{\textup{p}}(H_{V_\varepsilon}[\xi])$.
		
		\textit{Step~2.}
		Consider the function $a_0:[\tfrac{\pi}{2},\pi) \mapsto [0,\infty)$, $a_0(u) = -u \cot(u)$.
		It easy to see that $a_0$ is continuous and $a_0([\tfrac{\pi}{2},\pi)) = [0,\infty)$. Furthermore,
		\begin{equation*}
			\begin{aligned}
				a_0'(u) &= - \cot(u) +\frac{u}{\sin^2(u)}  = \frac{-\cos(u)\sin(u)+u}{\sin^2(u)} \\
				&= \frac{-\frac{1}{2}\sin(2u)+u}{\sin^2(u)} \geq  \frac{-\frac{1}{2}+\frac{\pi}{2}}{\sin^2(u)}>0, \qquad u \in [\tfrac{\pi}{2},\pi),
			\end{aligned}
		\end{equation*}
		and hence $a_0$ is monotonically increasing and bijective. Thus, the same holds for its inverse function 
		\begin{equation*}
			u_0:= a_0^{-1}:  [0,\infty) \mapsto [\tfrac{\pi}{2},\pi).
		\end{equation*} 
		For $a \in [0,\infty)$ we have\ $a = -u_0(a) \cot(u_0(a))$ and therefore multiplying with $\textup{sinc}(u_0(a))$ shows that $u_0$ fulfils the relation
		\begin{equation}\label{eq_u_0_relation}
			\cos(u_0(a)) + a \textup{sinc}(u_0(a)) = 0, \qquad  a \in [0,\infty).
		\end{equation}
		
		\textit{Step~3.} Now we use the function $u_0$ from the previous step to find for sufficiently small $\varepsilon>0$ a $\xi_\varepsilon$ such that $0 \in \sigma_{\textup{p}}(H_{V_\varepsilon}[\xi_\varepsilon])$. We start by claiming that there exists an $a_\varepsilon>0$ which fulfils
		\begin{equation}\label{eq_def_a_eps}
			a_\varepsilon = \frac{d-u_0^2(a_\varepsilon) - 2 \varepsilon \tau m}{\sqrt{d - u_0^2(a_\varepsilon) - 4 \varepsilon\tau m}}.
		\end{equation} 
		In fact, if we set
		\begin{equation*}
			b_\varepsilon := \sqrt{\min\{d,\pi^2\} - 4\varepsilon(|\tau m| +1)}
		\end{equation*}
		and choose $\varepsilon>0$ sufficiently small we can guarantee $b_\varepsilon \in (\tfrac{\pi}{2},\pi) $, and hence $u_0^{-1}(b_\varepsilon) \in (0,\infty)$. Since $u_0$ is monotonically increasing and continuous,  the function 
		\begin{equation*}
			F:[0 , u_0^{-1}(b_\varepsilon)] \to \R,  \quad F(a) =  a - \frac{d-u_0^2(a)-2 \varepsilon \tau m}{\sqrt{d - u_0^2(a) - 4 \varepsilon \tau m}}
		\end{equation*}
		is continuous and well-defined. The properties of the function $u_0$  and the assumption $d > \tfrac{\pi^2}{4}$ show that 
		\begin{equation*}
			F(0) = -\frac{d-\frac{\pi^2}{4}-2 \varepsilon \tau  m}{ \sqrt{d- \frac{\pi^2}{4} -4 \varepsilon \tau  m}}
		\end{equation*}
		 is smaller than zero for sufficiently small $\varepsilon>0$. Moreover, since $u_0^{-1}$ is continuous and $\lim_{b \to \pi} u_0^{-1}(b) = \infty$, we get
		  \begin{equation*}
		  	\begin{aligned}
		  	\lim_{\varepsilon \to 0} F(u_0^{-1}(b_\varepsilon)) &=  \lim_{\varepsilon \to 0} \Bigl(u_0^{-1}(b_\varepsilon) - \frac{d-\min\{d,\pi^2\} + 4\varepsilon(|\tau m| +1) -2\varepsilon\tau m}{\sqrt{d-\min\{d,\pi^2\} + 4\varepsilon(|\tau m| +1) -4\varepsilon \tau m}}\Bigr).
		  	\\
		  	&= \begin{cases}
		  		u_0^{-1}(\sqrt{d}), & d < \pi^2,\\
		  		\infty, & d \geq \pi^2,
		  	\end{cases}\\
	  			&>0. 
	  		\end{aligned}
		  \end{equation*}
	  	Thus, $F(u_0^{-1}(b_\varepsilon)) >0$ if $\varepsilon >0$ is sufficiently small.
	  	Hence, there exists an $a_\varepsilon \in (0, u_0^{-1}(b_\varepsilon))$ such that \eqref{eq_def_a_eps} is fulfilled. 
		  
		For this $a_\varepsilon $ we also have
		\begin{equation*}
			\begin{aligned}
				&d-u_0^2(a_\varepsilon)-4\varepsilon^2m^2 -4\varepsilon \tau m \\
				&\qquad > d- u_0^2(u_0^{-1}(b_\varepsilon)) -  4\varepsilon^2m^2 -4\varepsilon \tau m \\
				&\qquad= d-\min\{d,\pi^2\} + 4\varepsilon(|\tau m| +1) -4\varepsilon^2 m^2 -4\varepsilon \tau m \\
				&\qquad > 0
			\end{aligned}
		\end{equation*}
		for sufficiently small $\varepsilon>0$. Thus, 
		\begin{equation}\label{eq_def_xi_eps}
			\xi_{\varepsilon} := \frac{1}{2\varepsilon}\sqrt{d-u_0^2(a_\varepsilon)  - 4\varepsilon^2m^2-4\varepsilon \tau m} > 0
		\end{equation} 
		is well-defined. From \eqref{eq_def_xi_eps} and the definition of $\mu_{\xi,\varepsilon}$ below \eqref{eq_cond_mu} it follows that
		\begin{equation*}
			u_0(a_\varepsilon) = \sqrt{d - 4\varepsilon^2 \upsilon_{\xi_\varepsilon}^2 - 4 \varepsilon \tau m}= \mu_{\xi_\varepsilon,\varepsilon},
		\end{equation*}
		and plugging this expression into \eqref{eq_def_a_eps} yields 
		\begin{equation*}
			a_\varepsilon = \frac{d-\mu_{\xi_\varepsilon,\varepsilon}^2 - 2\varepsilon \tau  m}{\sqrt{d - \mu_{\xi_\varepsilon,\varepsilon}^2 - 4\varepsilon \tau  m}}.
		\end{equation*}
		Combining these relations with \eqref{eq_u_0_relation} shows that
		\eqref{eq_cond_mu} is fulfilled for $\xi = \xi_\varepsilon$. Thus, $0 \in \sigma_{\textup{p}}(H_{V_\varepsilon}[\xi_\varepsilon])$ by \textit{Step~1}.

		\textit{Step~4.}
		Finally, we show in this step $0 \in \sigma(H_{V_\varepsilon})$ for $\varepsilon>0$ chosen sufficiently small. This follows from  \cite[Theorem XIII.85 (d)]{RS77} if we can show that for all $\delta >0$ there is a $\gamma_\delta >0$  such that $(-\delta,\delta) \cap \sigma(H_{V_\varepsilon}[\xi]) \neq \emptyset$ for all $\xi \in (\xi_{\varepsilon} - \gamma_\delta, \xi_{\varepsilon} + \gamma_\delta)$.  We assume that our claim is not true. In this case there exists a $\delta' > 0$ and a sequence $(\xi_{n})_{n \in \N}$ such that $\xi_n \rightarrow \xi_{\varepsilon}$ for $n\to\infty$ and $(-\delta',\delta') \cap \sigma(H_{V_\varepsilon}[\xi_n]) = \emptyset$ for all $n \in \N$. Note that 
		\begin{equation*}
			\begin{aligned}
				&\norm{(H_{V_\varepsilon}[\xi_{\varepsilon}]-z)^{-1}-(H_{V_\varepsilon}[\xi_n]-z)^{-1}}_{ L^2(\R;\C^2) \to L^2(\R;\C^2)} \\
				&= \norm{(H_{V_\varepsilon}[\xi_{\varepsilon}]-z)^{-1}\sigma_2(\xi_n- \xi_{\varepsilon})(H_{V_\varepsilon}[\xi_n]-z)^{-1}} _{ L^2(\R;\C^2) \to L^2(\R;\C^2)}\\
				&\leq \frac{1}{(\textup{Im } z)^2} \abs{\xi_n - \xi_{\varepsilon}} \rightarrow 0,\quad n \to \infty,
			\end{aligned}
		\end{equation*}
		holds for $z \in \C \setminus \R$, i.e., $H_{V_\varepsilon}[\xi_n]$ converges in norm resolvent sense to $H_{V_\varepsilon}[\xi_{\varepsilon}]$. Moreover,  $(-\delta',\delta') \cap \sigma(H_{V_\varepsilon}[\xi_n]) = \emptyset$, $n \in \N$, and \cite[Theorem VIII.24 (a)]{RS72} imply the contradiction $(-\delta',\delta') \cap \sigma(H_{V_\varepsilon}[\xi_{\varepsilon}]) = \emptyset$.
	\end{proof}

	\begin{remark}\label{remco}
	    We note that in the situation of Theorem~\ref{theo_main_3} one can even show $\sigma(H_{V_\varepsilon}) = \R$ for all $\varepsilon>0$ which are sufficiently small; cf. \cite[Theorem~6.5]{SL24}. Furthermore, the statement of Theorem~\ref{theo_main_3}, that $H_{V_\varepsilon}$ does not converge in the norm resolvent sense to $H_{\widetilde{V} \delta_\Sigma}$, remains true in more general situations, e.g., if $\Sigma \subset \R^{\theta}$, $\theta \in \{2,3\}$, is as in  Section~\ref{sec_not}~\eqref{it_def_Sigma} and contains a flat part, see 
	    \cite[Theorem~6.7]{SL24} for more details.
	\end{remark}

	\section{Approximation of Dirac operators with $\delta$-shell potentials supported on rotated $C^2_b$-graphs}\label{sec_C^2_b_graphs}
	
	The main aim of this section is to prove Theorem~\ref{theo_main_1} for the case that $\Sigma$ is a rotated $C^2_b$-graph. In this case, there exist $\zeta \in C^2_b(\R^{\theta-1};\R)$ and $\kappa \in \textup{SO}(\theta)$ such that 
	\begin{equation}\label{eq_rotated_graph}
		\begin{aligned}
			\Omega_+ &= \{\kappa (x',x_\theta): (x',x_\theta) \in \R^{\theta} \textup{ and } x_\theta < \zeta(x') \}, \\
			 \Omega_- &= \R^\theta \setminus \overline{\Omega_+} = \{\kappa (x',x_\theta):(x',x_\theta) \in \R^{\theta} \textup{ and } x_\theta > \zeta(x')  \}, \\
			\Sigma &= \partial \Omega_+ = \Sigma_{\zeta,\kappa} := \{ \kappa(x',\zeta(x')) : x' \in \R^{\theta-1}\}.
		\end{aligned}
	\end{equation}
	
	To emphasize the importance of this result, let us rephrase  Theorem~\ref{theo_main_1} for the special case of a rotated $C^2_b$-graph.
	\begin{theo}\label{theo_rotated_graph}
		Let  $\Sigma$ be a rotated $C^2_b$-graph, let $q\in L^\infty((-1,1);[0,\infty))$ with $\int_{-1}^1 q(s) \,ds =1$ and assume that
		$\eta, \tau \in  C^1_b(\Sigma;\R)$ satisfy the condition 
		\begin{equation}\label{eq_d_cond2}
		\sup_{x_\Sigma \in \Sigma} d(x_\Sigma) < \frac{\pi^2}{4}, \qquad d = \eta^2 - \tau^2.
	\end{equation}
		 Let $V$ and $V_\varepsilon$ be  as in \eqref{eq_V} and  \eqref{eq_V_eps}, and define $\widetilde V$ by \eqref{eq_scaling}.
		Then, for all $z \in \C\setminus\R$  and $r \in (0,\tfrac{1}{2} )$ there exist $C>0$ and $\varepsilon_3 \in (0,\varepsilon_1)$ such that
		\begin{equation*}
			\norm{(H_{V_\varepsilon}-z)^{-1} - (H_{\widetilde{V}\delta_\Sigma}-z)^{-1}}_{L^2(\R^\theta;\C^N) \to L^2(\R^\theta;\C^N)} \leq C \varepsilon^{1/2-r}, \quad  \varepsilon \in (0, \varepsilon_3).
		\end{equation*} 
		In particular, $H_{V_\varepsilon}$ converges to $H_{\widetilde{V}\delta_\Sigma}$ in the norm resolvent sense as $\varepsilon \to 0$. 
	\end{theo}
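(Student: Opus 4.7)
My plan is to specialize the resolvent framework from the companion paper \cite{BHS23} to the rotated-graph setting and to exploit the pointwise identity $((\alpha\cdot\nu)V)^2 = d\, I_N$, which holds exactly because $V = \eta I_N + \tau\beta$, in order to replace the implicit smallness hypothesis of Theorem~\ref{THEO_MAIN} by the sharp scalar bound $\sup_\Sigma d < \pi^2/4$. The graph assumption is essential here: it provides a globally $C^1_b$-regular tubular parametrization and a global transverse coordinate, and it allows the partial Fourier transform $\mathcal F$ in the tangential variable $x' \in \R^{\theta-1}$ and hence a fibrewise analysis of the relevant symbols.

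First I would set up the tubular coordinate change of variables based on $\iota$ and express both $(H_{V_\varepsilon}-z)^{-1}$ and $(H_{\widetilde V \delta_\Sigma}-z)^{-1}$ via the Birman--Schwinger type formulas from \cite{BHS23}, that is, as the free resolvent $(H-z)^{-1}$ perturbed by a composition $\Phi_{\varepsilon,z}(I + M_{\varepsilon,z})^{-1}\Psi_{\varepsilon,z}$, with analogous data $\Phi_{0,z}, M_{0,z}, \Psi_{0,z}$ at $\varepsilon = 0$, all acting through the Bochner spaces $\mathcal B^r(\R^{\theta-1})$. The proof of the claimed resolvent difference bound then reduces to two things: (a) uniform invertibility of $I + M_{\varepsilon,z}$ for $\varepsilon \in (0, \varepsilon_3)$ and of $I + M_{0,z}$, and (b) $\mathcal B^r$-type operator-norm estimates controlling $M_{\varepsilon,z} - M_{0,z}$, $\Phi_{\varepsilon,z} - \Phi_{0,z}$, and $\Psi_{\varepsilon,z} - \Psi_{0,z}$ by $C\varepsilon^{1/2-r}$.

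The decisive point, where the sharp condition enters, is (a). The identity $((\alpha\cdot\nu)V)^2 = d\, I_N$ collapses the matrix-valued power series $\cos(\tfrac{1}{2}(\alpha\cdot\nu)V)$ and $\textup{sinc}(\tfrac{1}{2}(\alpha\cdot\nu)V)$ to the scalar multiples $\cos(\tfrac{\sqrt d}{2})\, I_N$ and $\textup{sinc}(\tfrac{\sqrt d}{2})\, I_N$. After applying $\mathcal F$ in $x'$, the matrix-valued fiber symbol of $I + M_{0,z}$ at $\xi' \in \R^{\theta-1}$ can therefore be inverted in closed form, and the uniform bound $\inf_\Sigma \cos(\tfrac{\sqrt d}{2}) > 0$, which is precisely $\sup_\Sigma d < \pi^2/4$, produces a $\xi'$-independent operator bound on the inverse. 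The limiting noncriticality $\sup_\Sigma \widetilde d < 4$ follows from \eqref{eq_d_cond2} via \eqref{eq_scaling} and guarantees that $H_{\widetilde V \delta_\Sigma}$ is self-adjoint, so the $\varepsilon = 0$ data are indeed well defined. A perturbation argument then transfers invertibility from $\varepsilon = 0$ to small $\varepsilon > 0$ using (b). The estimates in (b) are established by Taylor expansion of the $\varepsilon$-dependent integral kernels around $\varepsilon = 0$ together with trace estimates in $H^{1/2-r}$ instead of $H^{1/2}$, the loss $\varepsilon^{-r}$ absorbing curvature terms arising from $\zeta \in C^2_b$, in the spirit of \cite[Sections 3--5]{BHS23}.

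The main obstacle is the invertibility step (a): in \cite{BHS23} a Neumann series in $\norm{V}_{W^1_\infty(\Sigma;\C^{N\times N})}\norm{q}_{L^\infty(\R)}$ is used to invert $I + M_{\varepsilon,z}$, so that argument forces $V$ to be small and gives no room to reach the sharp threshold. My plan is to circumvent this by writing the limiting fiber inverse explicitly via the scalar collapse above and by verifying that the sharp scalar bound $\sup_\Sigma d < \pi^2/4$ is exactly what is needed for this closed-form inverse to be uniformly bounded in $\xi'$ and to persist for small $\varepsilon > 0$ via (b). Once this is in hand, Hilbert's resolvent identity extends the bound from $|\textup{Im}\,z|$ sufficiently large to arbitrary $z \in \C \setminus \R$ and completes the proof.
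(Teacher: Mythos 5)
Your plan correctly identifies the abstract scaffolding -- the Birman--Schwinger type resolvent formula from \cite{BHS23}, the reduction to uniform invertibility of a family $I + B_\varepsilon(z)Vq$, and the use of the algebraic identity $((\alpha\cdot\nu)V)^2 = d\,I_N$ to collapse the matrix power series. But the decisive step (a) contains several genuine gaps.

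First, you misidentify the origin of the threshold $\pi^2/4$. You claim it comes from $\inf_\Sigma \cos(\tfrac{\sqrt{d}}{2}) > 0$, but that inequality is equivalent to $\sup_\Sigma d < \pi^2$, not $\pi^2/4$. One might instead try $\sup_\Sigma d < \pi^2/4 \iff |\tan(\tfrac{\sqrt d}{2})|<1 \iff \sup_\Sigma|\widetilde d|<4$ (the noncriticality condition), but this cannot be the explanation either: the counterexample in Theorem~\ref{theo_main_3} exhibits $d > \pi^2/4$ with $\widetilde d \neq 4$, so noncriticality of the limit is not sufficient for convergence. The correct mechanism is a genuinely new spectral estimate that has no counterpart in \cite{BHS23}: Lemma~\ref{lem_volterra}, which shows $\sigma(\sqrt{q}\,\mathfrak{H}_{\rho,w'}\sqrt{q})\subset[-\tfrac{2}{\pi},\tfrac{2}{\pi}]$ via a weighted Cauchy--Schwarz argument with the cutoff weight $\cos(\tfrac{\pi}{2}Q(s))$, where $Q$ is a primitive of $q$. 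Plugged into the factorization $I - d(\sqrt{q}\,\mathfrak{H}\sqrt{q})^2 = (I - \sqrt q\,\mathfrak{H}Q_{\eta,\tau}\sqrt q)(I + \sqrt q\,\mathfrak{H}Q_{\eta,\tau}\sqrt q)$ this yields invertibility exactly when $d\cdot\tfrac{4}{\pi^2} < 1$. Your closed-form inversion of the fiber symbol of $I + M_{0,z}$ would never produce this bound; it gives noncriticality, which is strictly weaker.

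Second, your fibering by $\mathcal F$ in $x'$ only diagonalizes the operators when $\Sigma$ is an affine hyperplane. For a genuine rotated $C^2_b$-graph the tangential Fourier transform leaves a non-multiplier operator, and a direct perturbation argument in $\varepsilon$ fails: by Proposition~\ref{prop_conv_res} the convergence $B_\varepsilon(z)\to B_0(z)$ holds only in $\|\cdot\|_{1/2\to 0}$, not in the $\mathcal B^0$ operator norm, so you cannot transfer uniform boundedness of $(I+B_0(z)Vq)^{-1}$ to $(I+B_\varepsilon(z)Vq)^{-1}$ by a Neumann series. At the fiber level the hyperplane estimate $\|\mathfrak{D}_{\varepsilon,\xi'}-\mathfrak{D}_{0,\xi'}\|\lesssim\varepsilon(1+|\xi'|)$ (Lemma~\ref{lem_diff_frak_operators}) degenerates for large $|\xi'|$, which is why the paper splits into low-frequency (comparison with $\mathfrak{D}_{0,\xi'}$) and high-frequency (comparison with $\mathfrak{H}_{|\xi'|\varepsilon,\xi'/|\xi'|}$) regimes in Proposition~\ref{prop_uniform_bdd_D}, and then globalizes from hyperplanes to curved graphs via the $\varepsilon^{1/6}$-scaled Cotlar--Stein/local-principle partition of unity of Section~\ref{sec_loc_part}. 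Your plan has no substitute for either the spectral bound of Lemma~\ref{lem_volterra}, the two-regime fiber analysis, or the scale-dependent localization; ``absorbing curvature terms in the trace loss $\varepsilon^{-r}$'' does not address any of these.
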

	This theorem is a direct consequence of Proposition~\ref{prop_resolvent_convergence},  Proposition~\ref{prop_inv_I+BVq} and Proposition~\ref{prop_uniform_bdd_inv_B_eps}.

	Section~\ref{sec_C^2_b_graphs} is devoted to proving these propositions. We proceed as follows: In Section~\ref{sec_free_Dirac} we introduce various integral operators associated to the free Dirac operator. Then, in Section~\ref{sec_res_conv} we recall comparable resolvent  formulas for $H_{\widetilde{V}\delta_\Sigma}$ and $H_{V_\varepsilon}$ from \cite{BHS23}. Moreover, relying again on results from \cite{BHS23} we provide convergence results for the operators involved in these resolvent formulas and end the section by stating Proposition~\ref{prop_resolvent_convergence} which gives abstract conditions for the norm resolvent convergence of $H_{V_\varepsilon}$. Afterwards, we show in Section~\ref{sec_B_0} and Section~\ref{sec_I+B_eps} (in Proposition~\ref{prop_inv_I+BVq} and Proposition~\ref{prop_uniform_bdd_inv_B_eps}, respectively) that these conditions are met if \eqref{eq_d_cond2} is fulfilled. 
	
	Finally, let us mention that the restriction to  rotated $C^2_b$-graphs is only necessary in Section~\ref{sec_I+B_eps} and all results  from the Sections~\ref{sec_free_Dirac}--\ref{sec_B_0} remain valid for the general class of hypersurfaces described in Section~\ref{sec_not}~\eqref{it_def_Sigma}.

	\subsection{The free Dirac operator and associated integral operators}\label{sec_free_Dirac}
	
	Let $m \in \mathbb{R}$ and recall that the Dirac matrices $\alpha_1, \dots, \alpha_\theta, \beta \in \mathbb{C}^{N \times N}$ are given by~\eqref{def_Dirac_matrices_2d}--\eqref{def_Dirac_matrices_3d}. The free Dirac operator $H$ is the differential operator in $L^2(\mathbb{R}^\theta; \mathbb{C}^N)$ given by
	\begin{equation}\label{eq_def_free_Dirac}
		\begin{aligned}
			H  &:= - i (\alpha \cdot \nabla)  + m \beta , \qquad \dom H := H^1(\R^\theta;\C^N).
		\end{aligned}
	\end{equation}
	With the help of the Fourier transform one gets that $H$ is self-adjoint in $L^2(\R^\theta;\C^N)$ and $\sigma(H) = \left(-\infty, -\abs{m}\right] \cup \left[\abs{m}, \infty \right)$, see for instance \cite[Section~2]{BHT23} for $\theta=2$ and \cite[Theorem 1.1]{T92} for $\theta =3$. For $z \in \rho(H) = \C \setminus \sigma(H)$ we have
	\begin{equation*}
		R_z u(x) := (H-z)^{-1} u(x) = \int_{\mathbb{R}^\theta} G_z(x-y) u(y) \,d y, \qquad u \in L^2(\mathbb{R}^\theta; \mathbb{C}^N), ~x \in \mathbb{R}^\theta,
	\end{equation*}
	where  $G_z$ is given for $\theta=2$ and $x \in \mathbb{R}^2 \setminus \{ 0 \}$ by
	\begin{equation}\label{eq_G_z_2D}
		\begin{aligned}
			G_z(x) &= \frac{\sqrt{ z^2-m^2}}{2\pi} K_1\big(-i \sqrt{ z^2-m^2}\abs{x}\big)\frac{\alpha \cdot x}{\abs{x}} 
			\\
			&\hspace{100 pt}+\frac{1}{2\pi} K_0\big(-i \sqrt{ z^2-m^2}\abs{x}\big)\left(m\beta +  z I_2\right)
		\end{aligned}
	\end{equation}	
	and for $\theta=3$ and $x \in \mathbb{R}^3 \setminus \{ 0 \}$ by
	\begin{equation}\label{eq_G_z_3D}
		G_ z(x) = \left(  z I_4 + m \beta + i\left( 1 - i \sqrt{ z^2 -m^2}\abs{x} \right) \frac{ \alpha \cdot x }{\abs{x}^2}\right)\frac{e^{i\sqrt{ z^2 -m^2} \abs{x}}}{4 \pi \abs{x}};
	\end{equation}
	cf., e.g., \cite{BHOP20, BHSS22, T92}.
	Here, $K_0$ and $K_1$ denote the modified Bessel functions of the second kind of order zero and one, respectively.
	Note that $R_z$ is bounded in $L^2(\mathbb{R}^\theta; \mathbb{C}^N)$ and it can also be viewed as a bounded operator from $L^2(\R^\theta;\C^N)$ to $H^1(\R^\theta;\C^N)$.
	
	We move on to the discussion of potential and boundary integral operators associated with the free Dirac operator. In the following, let $z \in \rho(H) = \mathbb{C} \setminus ((-\infty, -|m|] \cup [|m|, \infty))$ be fixed and let $\Omega_\pm$ and $\Sigma \subset \mathbb{R}^\theta$ be as in \eqref{eq_rotated_graph}. First, we introduce the potential operator $\Phi_z: L^2(\Sigma; \mathbb{C}^N) \rightarrow L^2(\mathbb{R}^\theta; \mathbb{C}^N)$ by
	\begin{equation} \label{def_Phi_z}
		\begin{aligned}
			\Phi_ z   \varphi(x) &:= \int_\Sigma G_ z (x-y_\Sigma) \varphi(y_\Sigma) \,d\sigma(y_\Sigma),  \qquad \varphi \in L^2(\Sigma; \mathbb{C}^N),~x \in \R^\theta.
		\end{aligned}
	\end{equation}
	We note that $\Phi_z$ is indeed well-defined and bounded, see \cite[Lemma 2.1]{AMV14}. 
	Further properties of $\Phi_z$ are summarized in the following proposition. These results are well-known and a proof can be found in \cite[Appendix C]{BHS23}.
	
	\begin{proposition}\label{prop_Phi_z}
		Let $z \in \rho(H) = \mathbb{C} \setminus ((-\infty, -|m|] \cup [|m|, \infty))$ and let $\Phi_z$ be given by~\eqref{def_Phi_z}. Then, the following is true:
		\begin{itemize}
			\item[$\textup{(i)}$] For any $ r \in [0, \tfrac{1}{2}]$ the operator $\Phi_z$ gives rise to a bounded operator
			\begin{equation*}
				\Phi_z :H^r(\Sigma;\C^N) \to H^{r+1/2}(\R^\theta \setminus \Sigma ;\C^N).\\
			\end{equation*}
			\item[$\textup{(ii)}$] For $\varphi \in H^{1/2}(\Sigma; \mathbb{C}^N)$ one has $(-i (\alpha \cdot \nabla) + m \beta - zI_N) (\Phi_z \varphi)_\pm = 0$. 
			\item[$\textup{(iii)}$] The adjoint $\Phi_z^*: L^2(\mathbb{R}^\theta; \mathbb{C}^N) \to  L^2(\Sigma; \mathbb{C}^N)$ of $\Phi_z$ acts on $u \in L^2(\mathbb{R}^\theta; \mathbb{C}^N)$ as
			\begin{equation*}
				\Phi_{z}^* u(x_\Sigma) = \int_{\R^\theta} G_{\overline{ z}}(x_\Sigma -y) u(y) \, dy = \tr R_{\overline{z}} u(x_\Sigma), \qquad x_\Sigma \in \Sigma,
			\end{equation*}
			and $\Phi_z^*$ gives rise to a bounded operator $\Phi_z^*: L^2(\mathbb{R}^\theta; \mathbb{C}^N) \rightarrow H^{1/2}(\Sigma; \mathbb{C}^N)$.
		\end{itemize}
	\end{proposition}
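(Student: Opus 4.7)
The plan is to establish the three assertions in the order (iii), (i), (ii). Part (iii) provides the cleanest starting point, and the formula it produces also underpins the analysis in (i).

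For (iii), I would apply Fubini to the pairing $\langle \Phi_z \varphi, u\rangle_{L^2(\R^\theta;\C^N)}$ with $\varphi \in L^2(\Sigma;\C^N)$ and $u \in L^2(\R^\theta;\C^N)$. The algebraic identity $G_z(x-y)^* = G_{\overline z}(y-x)$ follows at once from $R_z^* = R_{\overline z}$ together with uniqueness of integral kernels, and yields
\begin{equation*}
\Phi_z^* u(x_\Sigma) = \int_{\R^\theta} G_{\overline z}(x_\Sigma - y) u(y)\, dy
\end{equation*}
for almost every $x_\Sigma\in\Sigma$. Since $R_{\overline z}\colon L^2(\R^\theta;\C^N)\to H^1(\R^\theta;\C^N)$ is bounded (as already noted for the free Dirac operator), the right-hand side is $\tr R_{\overline z} u$, and composition with the trace theorem $\tr\colon H^1(\R^\theta;\C^N)\to H^{1/2}(\Sigma;\C^N)$ gives both the claimed identification and the boundedness $L^2(\R^\theta;\C^N)\to H^{1/2}(\Sigma;\C^N)$.

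For (i), I would first verify on dense subspaces and then extend by continuity the representation $\Phi_z\varphi = R_z\Lambda\varphi$, where $\Lambda\varphi$ denotes the surface distribution $\psi\mapsto \int_\Sigma \langle \varphi,\tr\psi\rangle\, d\sigma$. By duality with the trace theorem, $\Lambda\colon L^2(\Sigma;\C^N)\to H^{-1/2}(\R^\theta;\C^N)$ is bounded; on the other hand, the symbol bound $|(\alpha\cdot\xi+m\beta-z)^{-1}|\le C(1+|\xi|)^{-1}$ shows that $R_z$ maps $H^s(\R^\theta;\C^N)$ continuously to $H^{s+1}(\R^\theta;\C^N)$ for every $s\in\R$. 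Composing yields $\Phi_z\varphi \in H^{1/2}(\R^\theta;\C^N)\hookrightarrow H^{1/2}(\R^\theta\setminus\Sigma;\C^N)$, giving the case $r=0$. The case $r=1/2$ is the main obstacle, because $\Phi_z\varphi$ is genuinely discontinuous across $\Sigma$ so the global scale $H^1(\R^\theta;\C^N)$ is not attainable. Here I would invoke the classical jump calculus: the jump $\tr^+\Phi_z\varphi-\tr^-\Phi_z\varphi$ equals a bounded multiple of $(\alpha\cdot\nu)\varphi$, and the mean $\tfrac12(\tr^+\Phi_z\varphi+\tr^-\Phi_z\varphi)$ is produced by a boundary singular integral operator that is bounded on $H^{1/2}(\Sigma;\C^N)$; together these imply $\tr^\pm\Phi_z\varphi\in H^{1/2}(\Sigma;\C^N)$ for $\varphi\in H^{1/2}(\Sigma;\C^N)$. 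Combined with the interior equation $(H-z)(\Phi_z\varphi)_\pm=0$ from (ii) and $H^1$-regularity up to the boundary for the homogeneous Dirac boundary value problem, this yields $(\Phi_z\varphi)_\pm\in H^1(\Omega_\pm;\C^N)$, and interpolation between $r=0$ and $r=1/2$ covers the intermediate range. Both the jump formulas and the $H^{1/2}(\Sigma;\C^N)$-continuity of the boundary singular integral rely crucially on the $C^2$-smoothness of $\Sigma$, used to flatten the boundary in local charts and to control the singular behaviour of $G_z$ in the resulting coordinates.

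For (ii), the argument is essentially direct. Since $G_z\in C^\infty(\R^\theta\setminus\{0\};\C^{N\times N})$ satisfies $(-i(\alpha\cdot\nabla)+m\beta-zI_N)G_z(\cdot)=0$ pointwise away from the origin, and since for $x\in\Omega_\pm$ and $y_\Sigma\in\Sigma$ the separation $|x-y_\Sigma|$ is bounded from below on compact subsets of $\Omega_\pm$, differentiation under the integral sign in~\eqref{def_Phi_z} is justified and produces $(-i(\alpha\cdot\nabla)+m\beta-zI_N)(\Phi_z\varphi)_\pm=0$ in $\Omega_\pm$.
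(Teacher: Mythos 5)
The arguments for (ii) and (iii) are correct and essentially standard: (iii) via Fubini and the kernel symmetry $G_z(x-y)^*=G_{\overline z}(y-x)$ together with the composition $\tr\circ R_{\overline z}$, and (ii) by differentiating under the integral sign away from $\Sigma$. For (i), however, there is a genuine gap in the very first step. You claim that the surface distribution $\Lambda\colon L^2(\Sigma;\C^N)\to H^{-1/2}(\R^\theta;\C^N)$ is bounded ``by duality with the trace theorem.'' This is false. The dual statement would be boundedness of $\tr\colon H^{1/2}(\R^\theta;\C^N)\to L^2(\Sigma;\C^N)$, and this is precisely the endpoint at which the trace theorem fails (the trace is bounded from $H^s(\R^\theta)$ to $H^{s-1/2}(\Sigma)$ only for $s\in(\tfrac12,\tfrac52)$, as the paper itself states; the borderline $s=\tfrac12$ is a classical counterexample). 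Hence the naive factorisation $\Phi_z = R_z\Lambda$ composed with abstract mapping properties of $R_z$ only gives $\Phi_z\colon L^2(\Sigma;\C^N)\to H^{1/2-\varepsilon}(\R^\theta;\C^N)$ for every $\varepsilon>0$, strictly weaker than the $r=0$ case of the proposition. The correct half-derivative gain at this endpoint is true but requires exploiting the explicit kernel $G_z$: the usual routes are a $T^*T$-type argument on the boundary restriction, or a careful decomposition of $G_z$ into a principal singular part plus a better-behaved remainder, both of which work with the integral kernel rather than abstract Sobolev duality; this is what is done in the references the paper relies on (\cite[Lemma~2.1]{AMV14} and \cite[Appendix~C]{BHS23}).

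There is a secondary concern in your treatment of the $r=\tfrac12$ endpoint. You invoke the boundedness of the boundary singular integral operator on $H^{1/2}(\Sigma;\C^N)$ and the jump relations to control $\tr^\pm\Phi_z\varphi$, and then appeal to elliptic regularity. But in the paper's logical order the relevant boundary integral operator is $\mathcal C_z$ from Proposition~\ref{prop_C_z}, whose mapping properties are established \emph{after} and partly \emph{through} the mapping properties of $\Phi_z$; one must be careful not to argue in a circle. A self-contained proof should establish the bound on $\tr\Phi_z$ directly (via the kernel) and then deduce both the present proposition and the continuity of $\mathcal C_z$ from it, rather than the other way around. Once the two endpoints $r=0$ and $r=\tfrac12$ are in place, the interpolation step you sketch is fine.
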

	
	Finally, we introduce a family of boundary integral operators for the Dirac equation. Let $z \in \rho(H) = \mathbb{C} \setminus ((-\infty, -|m|] \cup [|m|, \infty))$. Then, we define the map $\mathcal{C}_z: H^{1/2}(\Sigma; \mathbb{C}^N) \rightarrow H^{1/2}(\Sigma; \mathbb{C}^N)$ by
	\begin{equation} \label{def_C_z}
		\mathcal{C}_z \varphi := \frac{1}{2} ( \tr^+  + \tr^-)\Phi_z \varphi, \qquad \varphi \in H^{1/2}(\Sigma; \mathbb{C}^N).
	\end{equation}
	We remark that the operator $\mathcal{C}_z$ can be represented as a strongly singular boundary integral operator, see for instance \cite[equation (4.5) and Proposition 4.4 (ii)]{BHSS22} for the case that $\Omega_+$ is bounded. However, for our purposes the representation in \eqref{def_C_z} is more convenient.
	The basic properties of $\mathcal{C}_z$ are stated in the following proposition. Again, a proof can be found in \cite[Appendix C]{BHS23}.
	
	\begin{proposition} \label{prop_C_z}
		Let $z \in \rho(H) = \mathbb{C} \setminus ((-\infty, -|m|] \cup [|m|, \infty))$ and let $\mathcal{C}_z$ be given by~\eqref{def_C_z}. Then, the following is true:
		\begin{itemize}
			\item[$\textup{(i)}$] For any $r \in [-\frac{1}{2},\frac{1}{2}]$ the map $\mathcal{C}_z$ has a bounded extension $\mathcal{C}_z: H^r(\Sigma;\C^N) \rightarrow H^r(\Sigma;\C^N)$.
			\item[$\textup{(ii)}$] For any $r \in(0,\frac{1}{2}]$ and $\varphi \in H^r(\Sigma;\C^N)$ one has
			\begin{equation*}
				\mathcal{C}_ z \varphi =  \pm \frac{i}{2}  (\alpha \cdot \nu) \varphi +  \tr^\pm \left(	\Phi_ z   \varphi\right)_{\pm}. 
			\end{equation*}
		\end{itemize}
	\end{proposition}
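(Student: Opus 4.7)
The plan is to prove part (ii) first at $r=1/2$ via a distributional jump relation, to extend both (ii) and the corresponding boundedness in (i) to $r\in(0,1/2]$ by density, and then to treat the negative Sobolev indices in (i) by an $L^2$-adjoint identification followed by interpolation.

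For (ii) at $r=1/2$, fix $\varphi\in H^{1/2}(\Sigma;\C^N)$ and set $u:=\Phi_z\varphi$. By Proposition~\ref{prop_Phi_z}(i) one has $u\in H^1(\R^\theta\setminus\Sigma;\C^N)$, so the traces $\tr^\pm u_\pm\in H^{1/2}(\Sigma;\C^N)$ exist. Since $G_z$ is the integral kernel of $(H-z)^{-1}$, a distributional computation gives $(H-z)u=\varphi\otimes\delta_\Sigma$ on $\R^\theta$. On the other hand, applying the distributional product rule for $-i(\alpha\cdot\nabla)$ to $u=u_+\chi_{\Omega_+}+u_-\chi_{\Omega_-}$, together with $(H-z)u_\pm=0$ from Proposition~\ref{prop_Phi_z}(ii), yields
\begin{equation*}
(H-z)u=i(\alpha\cdot\nu)(\tr^+ u_+-\tr^- u_-)\otimes\delta_\Sigma.
\end{equation*}
Comparing the two expressions and multiplying by $-i(\alpha\cdot\nu)$, using $(\alpha\cdot\nu)^2=I_N$, gives the jump relation $\tr^+ u_+-\tr^- u_-=-i(\alpha\cdot\nu)\varphi$. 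Combining it with the definition $\mathcal{C}_z\varphi=\tfrac12(\tr^+ u_++\tr^- u_-)$ and solving for $\tr^\pm u_\pm$ produces (ii) for $r=1/2$.

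For $r\in(0,1/2)$, Proposition~\ref{prop_Phi_z}(i) still provides $\Phi_z:H^r(\Sigma;\C^N)\to H^{r+1/2}(\R^\theta\setminus\Sigma;\C^N)$, the trace is bounded from $H^{r+1/2}(\Omega_\pm;\C^N)$ to $H^r(\Sigma;\C^N)$ because $r+1/2>1/2$, and multiplication by $(\alpha\cdot\nu)\in C^1_b(\Sigma;\C^{N\times N})$ is bounded on $H^r(\Sigma;\C^N)$. Hence both sides of (ii) define bounded operators from $H^r(\Sigma;\C^N)$ to itself and coincide on the dense subspace $H^{1/2}(\Sigma;\C^N)$, so (ii) extends by density to all of $(0,1/2]$ and simultaneously (i) is established on this range. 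For $r\in[-1/2,0]$, I will verify the adjoint identity $(\mathcal{C}_z)^*=\mathcal{C}_{\overline z}$ on $L^2(\Sigma;\C^N)$, which follows from the symmetry $G_z(x-y)^*=G_{\overline z}(y-x)$, a direct computation from the explicit formulas~\eqref{eq_G_z_2D} and~\eqref{eq_G_z_3D} together with the branch convention $\textup{Im}\,\sqrt{w}>0$. Since $\mathcal{C}_{\overline z}$ is bounded on $H^{1/2}(\Sigma;\C^N)$ by the above, duality yields boundedness of $\mathcal{C}_z$ on $H^{-1/2}(\Sigma;\C^N)$, and complex interpolation between $H^{-1/2}$ and $H^{1/2}$ delivers the claim for all intermediate $r$, in particular $r=0$.

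The main obstacle is the rigorous justification of the distributional identity $(H-z)\Phi_z\varphi=\varphi\otimes\delta_\Sigma$ at the level of $H^{1/2}$-regularity of $\varphi$ rather than for smooth densities, and of the tracewise product rule in the same regularity; both are handled by approximating $\varphi$ by smooth functions on $\Sigma$, applying the computations in the classical setting, and then passing to the limit using the continuity provided by Proposition~\ref{prop_Phi_z}(i) and the trace theorem.
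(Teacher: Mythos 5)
Your proposal is correct. The paper itself does not give a proof of Proposition~\ref{prop_C_z} but refers to Appendix~C of \cite{BHS23}, so there is no inline argument to compare against; however, your route is the standard single-layer-potential one that these boundary integral operators rest on, and it matches the technique the present paper actually uses in Step~3 of the proof of Lemma~\ref{lem_I+C_z_inverse} (anti-dual extension of $\mathcal{C}_z$ to $H^{-1/2}$ via $\mathcal{C}_{\overline z}$, followed by interpolation). The distributional computation is right: the single-layer identity $(H-z)\Phi_z\varphi = \varphi\otimes\delta_\Sigma$, the trace product rule applied to $u = u_+\chi_{\Omega_+}+u_-\chi_{\Omega_-}$ with $(H-z)u_\pm = 0$ giving $(H-z)u = i(\alpha\cdot\nu)(\tr^+u_+ - \tr^- u_-)\otimes\delta_\Sigma$, and hence the jump $\tr^+u_+ - \tr^-u_- = -i(\alpha\cdot\nu)\varphi$, which with the definition $\mathcal{C}_z\varphi = \tfrac12(\tr^+u_+ + \tr^-u_-)$ yields exactly (ii). The density argument down to $r\in(0,1/2)$ via $\Phi_z:H^r\to H^{r+1/2}(\R^\theta\setminus\Sigma;\C^N)$ and the trace theorem (valid since $r+1/2\in(1/2,1]$) is sound, and the duality/interpolation step for $r\in[-1/2,0]$ correctly relies on the kernel symmetry $G_z(x-y)^* = G_{\overline z}(y-x)$, which does follow from the explicit formulas together with $\overline{\sqrt{w}}=-\sqrt{\overline w}$ under the stated branch convention. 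You have also flagged the only place that genuinely needs care (justifying the distributional identities at $H^{1/2}$ regularity), and approximation by smooth densities is the right fix; in fact the identity $(H-z)\Phi_z\varphi=\varphi\otimes\delta_\Sigma$ already holds for all $\varphi\in L^2(\Sigma;\C^N)$ by Fubini and the resolvent equation, so approximation is only really needed for the trace product rule.
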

	
	\subsection{Formulas and corresponding convergence estimates for the resolvents of $H_{V_\varepsilon}$ and $H_{\widetilde{V}\delta_\Sigma}$}\label{sec_res_conv}
	In this section we introduce resolvent formulas for $H_{V_\varepsilon}$ and for  $H_{\widetilde{V}\delta_\Sigma}$ in terms of (Bochner) integral operators and study their convergence properties.
	
	Recall that $\mathcal{B}^0(\Sigma) = L^2((-1,1);L^2(\Sigma;\C^N))$ and that $W$ is the Weingarten map associated to $\Sigma$; cf. \cite[Definition 2.3]{BHS23} and \cite[Definition 2.2]{BEHL17}. We (formally) define for $\varepsilon>0$ and $z \in \rho(H)$ the following integral operators:
	\begin{subequations} 
		\begin{align}
			\label{def_A_eps}
			&\hspace{ 15pt}\begin{aligned}
				A_\varepsilon(z) &: \mathcal{B}^0(\Sigma) \to L^2(\R^{\theta};\C^N),\\
				A_\varepsilon( z)f(x) &:= \int_{-1}^1\int_{\Sigma} G_ z(x-y_\Sigma -\varepsilon s\nu(y_\Sigma))   f(s)(y_\Sigma)\\
				&\hspace{120 pt} \cdot \det(I-\varepsilon sW(y_\Sigma)) \, d\sigma(y_\Sigma) \,  ds,
			\end{aligned}\\
			\label{def_B_eps}
			&\begin{aligned}
				B_\varepsilon(z) &: \mathcal{B}^0(\Sigma) \to \mathcal{B}^0(\Sigma),\\
				B_\varepsilon( z)f(t)(x_\Sigma) &:=   \int_{-1}^1 \int_{\Sigma} G_ z(x_\Sigma +\varepsilon t\nu(x_\Sigma) -y_\Sigma - \varepsilon s\nu(y_\Sigma))   f(s)(y_\Sigma)\\
				&\hspace{120 pt }\cdot \det(I- \varepsilon sW(y_\Sigma)) \, d\sigma(y_\Sigma) \, ds,
			\end{aligned}\\
			\label{def_C_eps}
			&\begin{aligned}
				C_\varepsilon(z) &: L^2(\R^{\theta};\C^N) \to \mathcal{B}^0(\Sigma),  \\
				C_\varepsilon( z)u(t)(x_\Sigma) &:= \int_{\R^\theta} G_ z(x_\Sigma +  \varepsilon t\nu(x_\Sigma) -y) u(y)\, dy,
			\end{aligned}
		\end{align}
	\end{subequations}
	In the next proposition the well-definedness of these operators and also their connection to the resolvent of $H_{V_\varepsilon}$ are discussed.
	\begin{proposition}\label{prop_res_formula_eps}
		Let $z \in \rho(H)$, $q$ be as in \eqref{eq_q2}, $V$ be as in \eqref{eq_V}, $H_{V_\varepsilon} $ be  defined as in \eqref{eq_H_V_eps} and $\varepsilon_1>0$ be as below \eqref{eq_iota}. Then, for all $\varepsilon \in (0,\varepsilon_1)$ the operators given in \eqref{def_A_eps}--\eqref{def_C_eps} are well-defined and if $-1 \in \rho(B_\varepsilon(z)Vq)$, then $z \in \rho(H_{V_\varepsilon})$ and the resolvent identity
		\begin{equation*}
			(H_{V_\varepsilon}-z)^{-1} = (H-z)^{-1} - A_\varepsilon(z)Vq(I+B_\varepsilon(z)Vq)^{-1}C_\varepsilon(z)
		\end{equation*}
		holds.
	\end{proposition}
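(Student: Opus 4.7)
The plan is to factor the strongly localized potential $V_\varepsilon$ through the Bochner space $\mathcal{B}^0(\Sigma)$ using tubular coordinates, and then extract the resolvent formula from the Birman--Schwinger-type identity $w = R_z u - R_z V_\varepsilon w$ with $w := (H_{V_\varepsilon}-z)^{-1} u$. Since $V_\varepsilon \in L^\infty(\R^\theta;\C^{N\times N})$, $H_{V_\varepsilon}$ is self-adjoint on $\dom H = H^1(\R^\theta;\C^N)$, so the identity holds with all manipulations taking place inside $L^2(\R^\theta;\C^N)$.

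For the well-definedness of the three operators I would use that for $\varepsilon\in(0,\varepsilon_1)$ the map $\iota$ restricts to a $C^1$-diffeomorphism $\Sigma\times(-\varepsilon,\varepsilon) \to \Omega_\varepsilon$ with Jacobian $\det(I - tW(x_\Sigma))$ uniformly bounded above and away from zero; cf.\ \cite[Proposition~2.4]{BHS23}. The substitution $y=\iota(y_\Sigma,\varepsilon s)$ then identifies
\begin{equation*}
A_\varepsilon(z) = R_z \mathcal{E}_\varepsilon, \qquad C_\varepsilon(z) = \mathcal{T}_\varepsilon R_z, \qquad B_\varepsilon(z) = \mathcal{T}_\varepsilon R_z \mathcal{E}_\varepsilon,
\end{equation*}
where $\mathcal{E}_\varepsilon : \mathcal{B}^0(\Sigma) \to L^2(\R^\theta;\C^N)$ is the extension $(\mathcal{E}_\varepsilon f)(y) := \varepsilon^{-1} f(t/\varepsilon)(y_\Sigma)$ on $\Omega_\varepsilon$ (zero elsewhere), and $\mathcal{T}_\varepsilon$ is the slice-trace $(\mathcal{T}_\varepsilon v)(s)(x_\Sigma) := v(\iota(x_\Sigma,\varepsilon s))$. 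The Jacobian appearing explicitly in the definitions of $A_\varepsilon(z)$ and $B_\varepsilon(z)$ is exactly what the change of variables produces. Boundedness of $\mathcal{E}_\varepsilon$ is immediate; $\mathcal{T}_\varepsilon$ is bounded from $H^r(\R^\theta;\C^N)$ into $\mathcal{B}^0(\Sigma)$ for any $r>1/2$ by the standard Sobolev trace theorem applied uniformly in $s\in(-1,1)$. Since $R_z : L^2 \to H^1$ is bounded, the same follows for all three operators on the stated spaces.

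For the resolvent identity, the explicit form $V_\varepsilon(\iota(x_\Sigma,\varepsilon s)) = \varepsilon^{-1}V(x_\Sigma)q(s)$ on $\Omega_\varepsilon$ combined with the same substitution yields $R_z V_\varepsilon w = A_\varepsilon(z)(Vq)\tilde w$, where $\tilde w := \mathcal{T}_\varepsilon w$ and $Vq$ denotes the multiplication operator on $\mathcal{B}^0(\Sigma)$ defined by $((Vq)f)(s)(x_\Sigma) := V(x_\Sigma)q(s)f(s)(x_\Sigma)$. Applying $\mathcal{T}_\varepsilon$ to $w = R_z u - A_\varepsilon(z)(Vq)\tilde w$ gives $(I + B_\varepsilon(z)Vq)\tilde w = C_\varepsilon(z)u$, and under the assumption $-1 \in \rho(B_\varepsilon(z)Vq)$ one solves for $\tilde w$ and substitutes back to obtain the claimed formula. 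That this hypothesis already forces $z\in\rho(H_{V_\varepsilon})$ is verified independently by setting $\mathcal{R}_z$ equal to the right-hand side and checking $(H_{V_\varepsilon}-z)\mathcal{R}_z = I$ directly: using $(H-z)A_\varepsilon(z) = \mathcal{E}_\varepsilon$, $V_\varepsilon R_z = \mathcal{E}_\varepsilon (Vq) C_\varepsilon(z)$, and $V_\varepsilon A_\varepsilon(z)(Vq) = \mathcal{E}_\varepsilon (Vq) B_\varepsilon(z)(Vq)$, the contributions telescope via $(I + B_\varepsilon(z)Vq)(I + B_\varepsilon(z)Vq)^{-1} = I$, and a symmetric computation applied inside the kernel yields injectivity.

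The only real obstacle is the careful bookkeeping of Jacobians and function-space norms in the three factorizations $A_\varepsilon(z) = R_z \mathcal{E}_\varepsilon$, $C_\varepsilon(z) = \mathcal{T}_\varepsilon R_z$, and $B_\varepsilon(z) = \mathcal{T}_\varepsilon R_z \mathcal{E}_\varepsilon$; once these identifications are in place, the entire Birman--Schwinger manipulation is purely algebraic, and one could equivalently extract the formula from the analogous construction already carried out in \cite{BHS23}.
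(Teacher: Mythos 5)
Your proposal is correct and follows essentially the same route as the paper, which simply refers back to \cite[(3.1a)--(3.1c), Proposition~3.1 and Proposition~3.2]{BHS23} for the well-definedness and the resolvent identity; those earlier results are derived by precisely the tubular-coordinate factorization $V_\varepsilon = \mathcal{E}_\varepsilon (Vq)\mathcal{T}_\varepsilon$, $A_\varepsilon(z)=R_z\mathcal{E}_\varepsilon$, $B_\varepsilon(z)=\mathcal{T}_\varepsilon R_z\mathcal{E}_\varepsilon$, $C_\varepsilon(z)=\mathcal{T}_\varepsilon R_z$ and the Birman--Schwinger telescoping that you spell out. Two small cautionary remarks: the boundedness of $\mathcal{E}_\varepsilon:\mathcal{B}^0(\Sigma)\to L^2(\R^\theta;\C^N)$ is indeed immediate but with norm of order $\varepsilon^{-1/2}$, so this factorization does not by itself give the $\varepsilon$-uniform bound on $A_\varepsilon(z)$ stated later (that is a separate result, Proposition~\ref{prop_conv_res}, and needs the smoothing gain of $R_z$ more carefully); and to cover the full range $z\in\rho(H)$ (which includes the real interval $(-|m|,|m|)$, where self-adjointness alone does not settle injectivity of $H_{V_\varepsilon}-z$) the ``symmetric computation'' verifying $\mathcal{R}_z(H_{V_\varepsilon}-z)u=u$ on $\dom H_{V_\varepsilon}$ should be carried out explicitly, which it can using $C_\varepsilon(z)(H_{V_\varepsilon}-z)u=(I+B_\varepsilon(z)Vq)\mathcal{T}_\varepsilon u$. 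With those points addressed, the argument is complete.
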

	\begin{proof}
		Note that $\varepsilon_1 > 0$ is chosen in exactly the same way as in \cite[Proposition~2.4]{BHS23}. Thus, the assertions follow from \cite[(3.1a)--(3.1c), Proposition~3.1 and Proposition~3.2]{BHS23}.
	\end{proof}
	Next, we introduce the operators $A_0(z)$, $B_0(z)$, and $C_0(z)$ which will turn out to be the limit operators of $A_\varepsilon(z)$, $B_\varepsilon(z)$, and $C_\varepsilon(z)$, respectively. For $z \in \rho (H)$, they are defined by
	\begin{equation*}
		\begin{aligned}
			A_0(z) &: \mathcal{B}^0(\Sigma) \to L^2(\R^{\theta};\C^N),\\
			A_0( z)f &:= \Phi_z \int_{-1}^1 f(t) \,dt, \\
			B_0(z) &: \mathcal{B}^0(\Sigma) \to \mathcal{B}^0(\Sigma),\\
			B_0( z) f(t)&:= \frac{i}{2}(\alpha \cdot \nu)\int_{-1}^1 \sign(t-s)f(s) \,ds + \mathcal{C}_z \int_{-1}^1  f(s) \, ds,\\
		C_0(z) &: L^2(\R^{\theta};\C^N) \to \mathcal{B}^0(\Sigma),  \\
			C_0( z)u(t) &:= \Phi_{\overline{z}}^*u,
		\end{aligned}
	\end{equation*}
	where  $\Phi_z$ is the operator defined in $\eqref{def_Phi_z}$  and $\mathcal{C}_z$ is the extension of the operator defined in  \eqref{def_C_z} to $L^2(\Sigma;\C^N)$, see also Proposition~\ref{prop_C_z}~(i). Using the identifications described in Section~\ref{sec_not}~\eqref{it_Bochner}, one sees that the operators $A_0(z)$, $B_0(z)$, and $C_0(z)$ are well-defined and bounded since by Proposition~\ref{prop_Phi_z}~(i) $\Phi_z$ is a bounded operator from $L^2(\Sigma;\C^N)$ to $L^2(\R^{\theta};\C^N)$ and by Proposition~\ref{prop_C_z}~(i) $\mathcal{C}_z$ is a bounded operator in $L^2(\Sigma;\C^N)$. The operator $B_0(z)$ can also be represented by 
	\begin{equation}\label{B_0_alt_rep}
		B_0(z) = T(\alpha \cdot \nu) + \mathfrak{I} \mathcal{C}_z \mathfrak{I}^*,
	\end{equation}
	where  $\mathfrak{J}$ is defined in Section~\ref{sec_not}~\eqref{it_Bochner} and
	\begin{equation*}
		\begin{aligned}
			T &:\mathcal{B}^0(\Sigma) \to \mathcal{B}^0(\Sigma), \qquad
			&Tf(t) &:= \frac{i}{2}  \int_{-1}^1 \textup{sign}(t-s)f(s) \,ds.
		\end{aligned}
	\end{equation*}
	Moreover, since $\mathcal{C}_z$  also acts as a bounded operator in $H^r(\Sigma;\C^N)$ for $r \in [-\tfrac{1}{2},\tfrac{1}{2}]$, see Proposition~\ref{prop_C_z}~(i), $B_0(z)$ acts also as a bounded operator in $\mathcal{B}^r(\Sigma)$ for $r \in [-\tfrac{1}{2},\tfrac{1}{2}]$.
	
	Our next goal is to show a  resolvent formula for $H_{\widetilde{V}\delta_\Sigma}$, which is defined by \eqref{eq_def_H_V_tilde}, in terms of $A_0(z), B_0(z)$, and $C_0(z)$. 
	\begin{proposition}\label{prop_H_V_tilde}
		Let  $z \in \rho(H)$, $q$ and $V= \eta I_N + \tau \beta$ be as described in \eqref{eq_q2} and \eqref{eq_V}, respectively, and $ d= \eta^2-\tau^2$ such that \eqref{eq_scaling_cond} is fulfilled. Moreover, let $(\widetilde{\eta},\widetilde{\tau}) = \textup{tanc}\bigl(\tfrac{\sqrt{d}}{2}\bigr) (\eta,\tau)$, $\widetilde{V} = \widetilde{\eta} I_N + \widetilde{\tau} \beta$ and $\widetilde{d} = \widetilde{\eta}^2 - \widetilde{\tau}^2$ fulfil \eqref{eq_d_tilde_non_crit}. If $-1 \in  \rho(B_0(z)Vq)$, then for the self-adjoint operator $H_{\widetilde{V} \delta_\Sigma}$ the resolvent identity 
		\begin{equation*}
			(H_{\widetilde{V}\delta_\Sigma}-z)^{-1} = (H-z)^{-1} - A_0(z)Vq(I+B_0(z)Vq)^{-1}C_0(z)
		\end{equation*}
		is valid.
	\end{proposition}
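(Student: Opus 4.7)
The plan is to verify the resolvent formula directly: for any $u \in L^2(\R^\theta;\C^N)$, I show that $v := (H-z)^{-1}u - A_0(z)Vq(I+B_0(z)Vq)^{-1}C_0(z)u$ lies in $\dom H_{\widetilde V\delta_\Sigma}$ and satisfies $(H_{\widetilde V\delta_\Sigma}-z)v = u$. The core of the argument is the algebraic identity
\begin{equation*}
\mathfrak{J}^* V q \bigl(I + B_0(z) V q\bigr)^{-1} \mathfrak{J} = \bigl(I + \widetilde V \mathcal{C}_z\bigr)^{-1} \widetilde V
\end{equation*}
as bounded operators on $H^{1/2}(\Sigma;\C^N)$, which reduces the proposition to a classical Birman--Schwinger-type representation and explains the precise form of the rescaling $\widetilde V = \textup{tanc}\bigl(\tfrac{\sqrt{d}}{2}\bigr) V$.

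To establish the identity, I would exploit the rank-one structure in the $t$-variable of the second summand in $B_0(z) V q = T(\alpha\cdot\nu)V q + \mathfrak{J}\mathcal{C}_z V \mathfrak{J}^*\mathcal{M}_q$ coming from~\eqref{B_0_alt_rep}, and apply a Sherman--Morrison/Woodbury inversion to $I + B_0(z) V q$ once $I + T(\alpha\cdot\nu)V q$ is shown to be invertible on $\mathcal{B}^0(\Sigma)$. Setting $M := (\alpha\cdot\nu)V$, the anti-commutation relations~\eqref{eq_dirac_matrices} yield $M^2 = d\,I_N$, and on each eigenspace of $M(x_\Sigma)$ corresponding to $\lambda = \pm\sqrt{d(x_\Sigma)}$ the equation $(I + T M q)\phi = \mathfrak{J}\mathbf{1}$ reduces, via differentiation in $t$, to the scalar ODE $\phi'(t) = -i\lambda q(t)\phi(t)$, whose unique solution is $\phi(t) = e^{i\lambda(1/2 - Q(t))}/\cos(\lambda/2)$ with $Q(t) := \int_{-1}^t q(s)\,ds$; this is well-defined thanks to~\eqref{eq_scaling_cond}. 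A direct integration then yields $\mathfrak{J}^*\mathcal{M}_q(I + T M q)^{-1}\mathfrak{J} = \textup{tanc}\bigl(\tfrac{\sqrt{d}}{2}\bigr) I_N$, the evenness of $\textup{tanc}$ making the answer insensitive to the sign of $\lambda$. Combined with the Woodbury formula and the elementary manipulation $\widetilde V(I + \mathcal{C}_z \widetilde V)^{-1} = (I + \widetilde V \mathcal{C}_z)^{-1}\widetilde V$, this produces the claimed identity.

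With the identity in hand, set $\psi := \mathfrak{J}^* V q (I + B_0(z) V q)^{-1} C_0(z) u$, so that $A_0(z)Vq(I+B_0(z)Vq)^{-1}C_0(z)u = \Phi_z\psi$ and $\psi = (I + \widetilde V \mathcal{C}_z)^{-1} \widetilde V \Phi_{\overline{z}}^* u$. Since $\Phi_{\overline{z}}^* u \in H^{1/2}(\Sigma;\C^N)$ by Proposition~\ref{prop_Phi_z}~(iii), $\widetilde V \in W^1_\infty(\Sigma;\C^{N\times N})$ multiplies $H^{1/2}(\Sigma;\C^N)$ boundedly into itself, and $(I + \widetilde V \mathcal{C}_z)$ is boundedly invertible on $H^{1/2}(\Sigma;\C^N)$ under the noncriticality condition~\eqref{eq_d_tilde_non_crit}, we obtain $\psi \in H^{1/2}(\Sigma;\C^N)$, and hence $\Phi_z\psi \in H^1(\R^\theta\setminus\Sigma;\C^N)$ by Proposition~\ref{prop_Phi_z}~(i). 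The identity $(H - z)v_\pm = u_\pm$ on $\Omega_\pm$ then follows from Proposition~\ref{prop_Phi_z}~(ii). For the transmission condition from~\eqref{eq_def_H_V_tilde}, Proposition~\ref{prop_C_z}~(ii) gives $\tr^\pm v_\pm = \Phi_{\overline{z}}^* u \pm \tfrac{i}{2}(\alpha\cdot\nu)\psi - \mathcal{C}_z\psi$; substituting and using $(\alpha\cdot\nu)^2 = I_N$ produces exactly $-\psi + \widetilde V \Phi_{\overline{z}}^* u - \widetilde V\mathcal{C}_z\psi = 0$, which is our defining equation for $\psi$.

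The main obstacle is the Woodbury/ODE computation in the second step: one must correctly track the interplay between the matrix-valued $M(x_\Sigma)$, whose eigenspaces depend on $x_\Sigma$, the scalar Bochner operators $T$ and $\mathcal{M}_q$, and the boundary operator $\mathcal{C}_z$. In particular, the eigenspace decomposition producing the factor $\textup{tanc}\bigl(\tfrac{\sqrt{d}}{2}\bigr) I_N$ must work uniformly in $x_\Sigma$, which is enabled by~\eqref{eq_scaling_cond} keeping $\cos\bigl(\tfrac{\sqrt{d}}{2}\bigr)$ bounded away from zero; the invertibility of $I + T M q$ on $\mathcal{B}^0(\Sigma)$ then follows from the explicit inverse constructed above, and the standing assumption $-1 \in \rho(B_0(z)Vq)$ together with the Woodbury identification furnishes the invertibility of $I + \widetilde V \mathcal{C}_z$ on $H^{1/2}(\Sigma;\C^N)$.
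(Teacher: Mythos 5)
Your proposal is correct and follows essentially the same route as the paper, which defers the verification to \cite[below eq.~(4.15)]{BHS23}: show that $v:=(H-z)^{-1}u - A_0(z)Vq(I+B_0(z)Vq)^{-1}C_0(z)u$ belongs to $\dom H_{\widetilde V\delta_\Sigma}$ and solves $(H_{\widetilde V\delta_\Sigma}-z)v=u$, using the algebraic reduction $\mathfrak{J}^* Vq(I+B_0(z)Vq)^{-1}\mathfrak{J}=(I+\widetilde V\mathcal{C}_z)^{-1}\widetilde V$ (which is equivalent to the identity $(I+B_0(z)Vq)(I+T(\alpha\cdot\nu)Vq)^{-1}\mathfrak{J}=\mathfrak{J}(I+\mathcal{C}_z\widetilde V)$ the paper takes from \cite[Lemma~4.3~(iii)]{BHS23}). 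One small imprecision: diagonalizing $M(x_\Sigma)=(\alpha\cdot\nu)V$ into $\pm\sqrt{d(x_\Sigma)}$ eigenspaces breaks down at points where $d(x_\Sigma)=0$, since there $M$ may be nonzero but nilpotent; the computation of $\mathfrak{J}^*\mathcal{M}_q(I+TMq)^{-1}\mathfrak{J}=\textup{tanc}\bigl(\tfrac{\sqrt d}{2}\bigr)I_N$ should instead be carried out with the matrix exponential $\exp(-iMQ)$ and the power-series definitions of $\cos$, $\textup{sinc}$ (using $M^2=dI_N$), exactly as in \cite[Lemmas~4.2,~4.3]{BHS23} and as the paper implicitly does in the proof of Proposition~\ref{prop_inv_I+BVq}, which avoids the diagonalizability issue and yields the same formula.
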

	\begin{proof}
		Since $\widetilde{d}$ fulfils \eqref{eq_d_tilde_non_crit}, $H_{\widetilde{V}\delta_\Sigma}$ is self-adjoint by the text above \eqref{eq_d_tilde_non_crit}. Moreover, in the same way as in \cite[below eq.~(4.15)]{BHS23} one can show that if $-1 \in  \rho(B_0(z)Vq)$,  then  for $u \in L^2(\R^\theta;\C^N)$ holds
		\begin{equation*}
			\big((H-z)^{-1} - A_0(z)Vq(I+B_0(z)Vq)^{-1}C_0(z)\big) u \in \dom H_{\widetilde{V}\delta_\Sigma}
		\end{equation*}
		and 
		\begin{equation*}
			(H_{\widetilde{V}\delta_\Sigma}-z)\big((H-z)^{-1} - A_0(z)Vq(I+B_0(z)Vq)^{-1}C_0(z)\big)u =u.
		\end{equation*}
		This shows that the resolvent identity is true. 
	\end{proof}
	In the next proposition we summarize the convergence properties of $A_\varepsilon(z)$,$B_\varepsilon(z)$, and $C_\varepsilon(z)$; cf. \cite[Proposition~3.7, Proposition~3.8, and Proposition~3.10]{BHS23} for the proof.
	\begin{proposition}\label{prop_conv_res}
		Let $z \in \rho(H)$ and $\varepsilon_1>0$ be as below \eqref{eq_iota}. Then, there exists an $\varepsilon_2 \in (0,\varepsilon_1)$ such that  $A_\varepsilon(z)$, $B_\varepsilon(z)$, and $C_\varepsilon(z)$
		are uniformly bounded operators with respect to  $\varepsilon \in (0, \varepsilon_2)$. Moreover,  $C_\varepsilon(z)$ and $C_0(z)$ act also as uniformly bounded operators from  $L^2(\R^{\theta};\C^N)$ to $\mathcal{B}^{1/2}(\Sigma)$ and for $r \in (0,\frac{1}{2})$ there exists a $C >0$ such that one has for $\varepsilon \in (0,\varepsilon_2)$
		\begin{equation*}
			\begin{aligned}
				\norm{A_\varepsilon(z)-A_0(z)}_{0 \to L^2(\R^{\theta};\C^N)} &\leq C \varepsilon^{1/2-r}, \\
				\norm{B_\varepsilon(z)-B_0(z)}_{1/2 \to 0} &\leq C \varepsilon^{1/2-r},\\
				\norm{C_\varepsilon(z)-C_0(z)}_{L^2(\R^{\theta};\C^N) \to 0} &\leq C \varepsilon^{1/2-r}.
			\end{aligned}
		\end{equation*}
	\end{proposition}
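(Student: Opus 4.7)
The plan is to realize each of $A_\varepsilon(z)$, $B_\varepsilon(z)$, $C_\varepsilon(z)$ as a Bochner integral of shifted potential and trace operators, and then compare each with its $\varepsilon \to 0$ limit. For $|\mu| < \varepsilon_1$ introduce the normal shift $S_\mu \colon \Sigma \to \R^\theta$, $S_\mu(x_\Sigma) := x_\Sigma + \mu\nu(x_\Sigma)$, and the associated single-layer type operator
\begin{equation*}
\Phi_z^\mu g(x) := \int_\Sigma G_z(x - S_\mu(y_\Sigma)) g(y_\Sigma) \det(I-\mu W(y_\Sigma))\,d\sigma(y_\Sigma),
\end{equation*}
so that $\Phi_z^0 = \Phi_z$. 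Inspection of \eqref{def_A_eps}--\eqref{def_C_eps} yields the identifications
\begin{equation*}
A_\varepsilon(z)f(x) = \int_{-1}^1 (\Phi_z^{\varepsilon s} f(s))(x)\,ds, \qquad C_\varepsilon(z)u(t)(x_\Sigma) = (R_z u)(S_{\varepsilon t}(x_\Sigma)),
\end{equation*}
and $B_\varepsilon(z)f(t)(x_\Sigma) = \int_{-1}^1 (\Phi_z^{\varepsilon s}f(s))(S_{\varepsilon t}(x_\Sigma))\,ds$, which reduces everything to a careful study of the one-parameter family $\{\Phi_z^\mu\}$ and of traces of $H^1$-functions on the parallel hypersurfaces $S_\mu(\Sigma)$.

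For the uniform-boundedness claims I would show that $\{\Phi_z^\mu\}_{|\mu|<\varepsilon_1}$ satisfies Proposition~\ref{prop_Phi_z}(i) with constants uniform in $\mu$, which amounts to checking that the parallel hypersurfaces $S_\mu(\Sigma)$ carry $C^2$-charts with curvatures controlled uniformly in $\mu$; this is a direct consequence of the $C^2_b$-structure of $\Sigma$ and the tubular-coordinate formulas for the second fundamental form. By duality the adjoint family $u\mapsto \tr(R_z u)\circ S_\mu$ is uniformly bounded from $L^2(\R^\theta;\C^N)$ to $L^2(\Sigma;\C^N)$, and applying the trace theorem to $R_z u\in H^1(\R^\theta;\C^N)$ promotes this to a uniform $L^2\to H^{1/2}$ estimate, giving the claimed $L^2(\R^\theta;\C^N)\to\mathcal{B}^{1/2}(\Sigma)$ bound for $C_\varepsilon(z)$. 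Bochner integration in the remaining variable then provides uniform boundedness of $A_\varepsilon(z)$, $B_\varepsilon(z)$, $C_\varepsilon(z)$. The convergence estimates for $A_\varepsilon-A_0$ and $C_\varepsilon-C_0$ then follow from a H\"older-in-$\mu$ bound
\begin{equation*}
\|\Phi_z^\mu - \Phi_z\|_{L^2(\Sigma;\C^N)\to L^2(\R^\theta;\C^N)} \leq C|\mu|^{1/2-r},
\end{equation*}
expressing that $\Phi_z g \in H^{1/2}(\R^\theta\setminus\Sigma)$ has an $L^2$-modulus of continuity of order $|\mu|^{1/2-r}$ under normal shifts, after sacrificing an $r$-derivative by Sobolev interpolation; inserting this inside the Bochner integral in $s$ gives the claimed rate.

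The main obstacle is the estimate for $B_\varepsilon(z)-B_0(z)$, because the limit $B_0(z)$ contains the sign term $\tfrac{i}{2}(\alpha\cdot\nu)\sign(t-s)$ which is not visible at the pointwise level in $B_\varepsilon(z)$. The identification of this jump is forced by Proposition~\ref{prop_C_z}(ii): for $t>s$ (respectively $t<s$) the point $S_{\varepsilon t}(x_\Sigma)$ lies on the $\Omega_-$-side (respectively $\Omega_+$-side) of the parallel surface $S_{\varepsilon s}(\Sigma)$ for small $\varepsilon$, so that $(\Phi_z^{\varepsilon s}f(s))(S_{\varepsilon t}(x_\Sigma))$ converges to the one-sided trace $\tr^-(\Phi_z f(s))_- = \mathcal{C}_z f(s) + \tfrac{i}{2}(\alpha\cdot\nu)f(s)$ (respectively $\tr^+(\Phi_z f(s))_+ = \mathcal{C}_z f(s) - \tfrac{i}{2}(\alpha\cdot\nu)f(s)$), and the $\pm$ contributions assemble precisely into the sign term of $B_0(z)$. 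Quantitatively, splitting the $s$-integral according to $s\lessgtr t$ and bounding, uniformly in $s,t\in(-1,1)$, the difference between $\Phi_z f(s)$ restricted to $S_{\varepsilon(t-s)}(\Sigma)$ and the appropriate one-sided trace by $C\varepsilon^{1/2-r}\|f(s)\|_{H^{1/2}(\Sigma;\C^N)}$ yields $\|B_\varepsilon(z)-B_0(z)\|_{1/2\to 0}\leq C\varepsilon^{1/2-r}$; the $H^{1/2}$-regularity on the input, reflected in the norm $\|\cdot\|_{1/2\to 0}$, is the minimum needed for the one-sided traces to be defined and cannot be relaxed.
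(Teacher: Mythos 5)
First, note that the paper itself does not prove Proposition~\ref{prop_conv_res}; it simply cites \cite[Propositions~3.7,~3.8,~3.10]{BHS23}. So there is no in-text proof to compare against, but your decomposition into shifted layer potentials $\Phi_z^\mu$ and the identification of the $\sign(t-s)$ term via the one-sided trace formula of Proposition~\ref{prop_C_z}(ii) do capture the structural mechanism behind all three estimates, and the orientation bookkeeping (your $\tr^\mp$ as $s\lessgtr t$) is correct.

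The genuine gap is in the $B_\varepsilon$ estimate. Your representation is $B_\varepsilon(z)f(t)(x_\Sigma)=\int_{-1}^1(\Phi_z^{\varepsilon s}f(s))(S_{\varepsilon t}(x_\Sigma))\,ds$, but the quantitative step you then propose compares ``$\Phi_z f(s)$ restricted to $S_{\varepsilon(t-s)}(\Sigma)$'' with the one-sided trace. Passing from the former to the latter requires replacing $\varepsilon t\nu(x_\Sigma)-\varepsilon s\nu(y_\Sigma)$ by $\varepsilon(t-s)\nu(x_\Sigma)$ in the kernel argument \emph{and} dropping the Jacobian $\det(I-\varepsilon s W(y_\Sigma))$; this is precisely the role of the auxiliary operator $\overline{B}_\varepsilon(z)$ introduced later in this very paper in \eqref{def_B_eps_bar}, whose distance to $B_\varepsilon(z)$ carries its own error $O(\varepsilon^{1/2}(1+|\log\varepsilon|)^{1/2})$ by \eqref{eq_B_diff}. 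Your sketch treats this reduction as though it were immediate, which it is not. Also, the closing remark that $H^{1/2}$ on the input is ``the minimum needed for the one-sided traces to be defined'' is not accurate: Proposition~\ref{prop_C_z}(ii) gives the one-sided trace formula already for $\varphi\in H^r(\Sigma;\C^N)$ with any $r\in(0,\tfrac12]$, so the $H^{1/2}$ norm in $\|\cdot\|_{1/2\to0}$ is what buys the rate $\varepsilon^{1/2-r}$, not the bare existence of the traces.
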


	After summarizing the convergence properties of $A_\varepsilon(z)$, $B_\varepsilon(z)$, and $C_\varepsilon(z)$ we state in Proposition~\ref{prop_resolvent_convergence} conditions for the norm resolvent convergence of $H_{V_\varepsilon}$.

	\begin{proposition}[{\cite[Proposition~3.12 and Remark 4.5]{BHS23}}]\label{prop_resolvent_convergence}
		Let  the assumptions of Proposition~\ref{prop_H_V_tilde} hold. Moreover, let $r \in (0,\frac{1}{2})$ and $\varepsilon_2>0$ be as in Proposition \ref{prop_conv_res}. Assume that the following conditions hold: 
		\begin{itemize}
			\item[(i)] There exists an $\varepsilon_3 \in (0,\varepsilon_2)$ such that  $(I+B_\varepsilon(z)Vq)^{-1}$ exists for $\varepsilon \in (0,\varepsilon_3)$ and is uniformly bounded in $\mathcal{B}^0(\Sigma)$.
			\item[(ii)] $I + B_0(z)Vq$ is bijective in $\mathcal{B}^{1/2}(\Sigma)$.
		\end{itemize}
		Then,
		\begin{equation*}
			\norm{(H_{V_\varepsilon}-z)^{-1} - (H_{\widetilde{V}\delta_\Sigma}-z)^{-1}}_{L^2(\R^\theta;\C^N) \to L^2(\R^\theta;\C^N)} \leq C \varepsilon^{1/2-r} 
		\end{equation*}
		for $\varepsilon \in (0,\varepsilon_3)$. In particular, $H_{V_\varepsilon}$ converges for $\varepsilon \to 0$ to $H_{\widetilde{V}\delta_\Sigma}$ in norm resolvent sense.
	\end{proposition}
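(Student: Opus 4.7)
The plan is to subtract the resolvent formulas furnished by Proposition~\ref{prop_res_formula_eps} and Proposition~\ref{prop_H_V_tilde}. The free resolvent $(H-z)^{-1}$ cancels on both sides, and abbreviating $\mathcal{Y}_\star := (I+B_\star(z)Vq)^{-1}$ for $\star \in \{0,\varepsilon\}$, one obtains a standard three-term telescoping decomposition
\begin{equation*}
\begin{aligned}
(H_{V_\varepsilon}-z)^{-1} - (H_{\widetilde{V}\delta_\Sigma}-z)^{-1}
&= A_0(z)Vq\,\mathcal{Y}_0\,C_0(z) - A_\varepsilon(z)Vq\,\mathcal{Y}_\varepsilon\,C_\varepsilon(z) \\
&= \bigl(A_0(z)-A_\varepsilon(z)\bigr)Vq\,\mathcal{Y}_0\,C_0(z) \\
&\quad + A_\varepsilon(z)Vq\,(\mathcal{Y}_0-\mathcal{Y}_\varepsilon)\,C_0(z) \\
&\quad + A_\varepsilon(z)Vq\,\mathcal{Y}_\varepsilon\,\bigl(C_0(z)-C_\varepsilon(z)\bigr),
\end{aligned}
\end{equation*}
where hypothesis~(i) ensures $\mathcal{Y}_\varepsilon$ is uniformly bounded on $\mathcal{B}^0(\Sigma)$, and the assumption $-1\in \rho(B_0(z)Vq)$ from Proposition~\ref{prop_H_V_tilde} gives $\mathcal{Y}_0$ bounded on $\mathcal{B}^0(\Sigma)$.

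The first and third pieces are routine. For the first, Proposition~\ref{prop_conv_res} supplies $\|A_0(z)-A_\varepsilon(z)\|_{0\to L^2(\R^\theta;\C^N)}\leq C\varepsilon^{1/2-r}$, while $Vq$ acts as a bounded multiplier on $\mathcal{B}^0(\Sigma)$ (since $V\in C^1_b(\Sigma;\C^{N\times N})\subset L^\infty$ and $q\in L^\infty$), and $\mathcal{Y}_0\,C_0(z):L^2(\R^\theta;\C^N)\to \mathcal{B}^0(\Sigma)$ is bounded. The third piece is handled symmetrically using the bound $\|C_0(z)-C_\varepsilon(z)\|_{L^2(\R^\theta;\C^N)\to 0}\leq C\varepsilon^{1/2-r}$ together with uniform boundedness of $A_\varepsilon(z)Vq\,\mathcal{Y}_\varepsilon:\mathcal{B}^0(\Sigma)\to L^2(\R^\theta;\C^N)$.

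The main obstacle is the middle piece, and this is exactly where condition~(ii) enters. I would apply the second resolvent identity
\begin{equation*}
\mathcal{Y}_0 - \mathcal{Y}_\varepsilon = \mathcal{Y}_\varepsilon\,\bigl(B_\varepsilon(z)-B_0(z)\bigr)\,Vq\,\mathcal{Y}_0,
\end{equation*}
so that the difference of inverses is reduced to the operator difference $B_\varepsilon(z)-B_0(z)$. The subtle point is that Proposition~\ref{prop_conv_res} only provides $\|B_\varepsilon(z)-B_0(z)\|_{1/2\to 0}\leq C\varepsilon^{1/2-r}$, that is, the input must be measured in the stronger norm of $\mathcal{B}^{1/2}(\Sigma)$. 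Consequently the composition $Vq\,\mathcal{Y}_0\,C_0(z)$ appearing to the right of $B_\varepsilon(z)-B_0(z)$ has to land in $\mathcal{B}^{1/2}(\Sigma)$, not merely in $\mathcal{B}^0(\Sigma)$. Proposition~\ref{prop_conv_res} already gives that $C_0(z):L^2(\R^\theta;\C^N)\to \mathcal{B}^{1/2}(\Sigma)$ is bounded, and multiplication by $V\in C^1_b(\Sigma;\C^{N\times N})\subset W^1_\infty(\Sigma;\C^{N\times N})$ (together with $q\in L^\infty$) is a bounded operator on $\mathcal{B}^{1/2}(\Sigma)$; what is still missing is boundedness of $\mathcal{Y}_0$ on $\mathcal{B}^{1/2}(\Sigma)$, which is precisely hypothesis~(ii). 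Chaining these mapping properties with the uniform boundedness of $A_\varepsilon(z)Vq\,\mathcal{Y}_\varepsilon:\mathcal{B}^0(\Sigma)\to L^2(\R^\theta;\C^N)$ produces an $O(\varepsilon^{1/2-r})$ bound for the middle piece as well. Summing the three contributions yields the claimed resolvent estimate, and the norm resolvent convergence of $H_{V_\varepsilon}$ to $H_{\widetilde{V}\delta_\Sigma}$ follows immediately by letting $\varepsilon\to 0$.
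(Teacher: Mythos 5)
Your proof is correct. This proposition is cited from \cite{BHS23} and is not reproven in the present paper, so there is no in-text proof to compare against directly, but your argument reconstructs the intended one: telescope the difference of the two resolvent formulas (Propositions~\ref{prop_res_formula_eps} and~\ref{prop_H_V_tilde}) into three pieces, apply the second resolvent identity $\mathcal{Y}_0-\mathcal{Y}_\varepsilon=\mathcal{Y}_\varepsilon(B_\varepsilon(z)-B_0(z))Vq\,\mathcal{Y}_0$ in the middle piece, and observe that the only available convergence estimate for $B_\varepsilon(z)-B_0(z)$ is from $\mathcal{B}^{1/2}(\Sigma)$ into $\mathcal{B}^0(\Sigma)$, which is precisely why hypothesis~(ii) (bijectivity of $I+B_0(z)Vq$ on $\mathcal{B}^{1/2}(\Sigma)$, whence bounded invertibility by the open mapping theorem) and the $\mathcal{B}^{1/2}$-boundedness of $C_0(z)$ from Proposition~\ref{prop_conv_res} are needed. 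Your bookkeeping of the mapping spaces in each of the three terms is consistent with the quantitative bounds supplied by Proposition~\ref{prop_conv_res} and with hypotheses~(i) and~(ii), and gives the asserted $O(\varepsilon^{1/2-r})$ rate.
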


	According to Proposition~\ref{prop_resolvent_convergence} it is essential to study the operators $I + B_0(z)Vq$ and $I + B_\varepsilon(z)Vq$ in order to find explicit conditions for the norm resolvent convergence of $H_{V_\varepsilon}$. In Section~\ref{sec_B_0} and Section~\ref{sec_I+B_eps} we show that \eqref{eq_d_cond2} guarantees that the requirements (i) and (ii) of Proposition~\ref{prop_resolvent_convergence} are met.
	
	\subsection{Analysis of $I + B_0(z)Vq$}\label{sec_B_0}
	We study the operator $I + B_0(z)Vq$ and are particularly interested under which conditions (ii) of Proposition~\ref{prop_resolvent_convergence} is fulfilled, i.e. under which conditions  $I + B_0(z)Vq$ is bijective in $\mathcal{B}^{1/2}(\Sigma)$. First, we state an auxiliary result about the invertibility of $I + \mathcal{C}_z \widetilde V$ with $\mathcal{C}_z$ defined by \eqref{def_C_z}, see also Proposition~\ref{prop_C_z}~(i).
	\begin{lemma}\label{lem_I+C_z_inverse}
		Let $z \in \C\setminus\R$, $r \in [0,\tfrac{1}{2}]$, $\widetilde{\eta},\widetilde{\tau} \in C^1_b(\Sigma;\R)$, and $\widetilde{V} = \widetilde{\eta} I_N + \widetilde{\tau} \beta$ such that $\widetilde{d} = \widetilde{\eta}^2 - \widetilde{\tau}^2$ fulfils \eqref{eq_d_tilde_non_crit}.
		Then, the operator $I + \mathcal{C}_z \widetilde V$ is continuously invertible in $H^{r}(\Sigma;\C^N)$.   
	\end{lemma}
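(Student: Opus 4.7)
The strategy is to combine the algebraic identity
\[
\bigl((\alpha\cdot\nu)\widetilde V\bigr)^2=\widetilde d\,I_N,
\]
which follows from $(\alpha\cdot\nu)^2=I_N$, $\beta^2=I_N$ and $\{(\alpha\cdot\nu),\beta\}=0$, with the self-adjointness of $H_{\widetilde V\delta_\Sigma}$ in the noncritical case \eqref{eq_d_tilde_non_crit}, which is available from the references already cited in the paper.

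First I would treat the endpoint $r=1/2$ via the standard Birman--Schwinger correspondence for Dirac $\delta$-shell operators: an analogue of Proposition~\ref{prop_H_V_tilde} (obtained by replacing the smeared profile $Vq$ by the pure surface potential $\widetilde V$) yields a resolvent representation
\[
(H_{\widetilde V\delta_\Sigma}-z)^{-1}=R_z-\Phi_z(I+\widetilde V\mathcal{C}_z)^{-1}\widetilde V\Phi_{\bar z}^{\,*},
\]
whose existence is equivalent to $(I+\widetilde V\mathcal{C}_z)^{-1}$ being bounded on $H^{1/2}(\Sigma;\C^N)$. Since \eqref{eq_d_tilde_non_crit} forces $H_{\widetilde V\delta_\Sigma}$ to be self-adjoint, every $z\in\C\setminus\R$ lies in its resolvent set, whence $I+\widetilde V\mathcal{C}_z$ is bijective on $H^{1/2}$. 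Transposing via $\mathcal{C}_{\bar z}^{\,*}=\mathcal{C}_z$ (up to a Hermitian sign coming from the matrix part) and using $\widetilde V^*=\widetilde V$ then gives the analogous statement for $I+\mathcal{C}_z\widetilde V$.

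For $r=0$ I would argue by Fredholm theory. Boundedness on $L^2(\Sigma;\C^N)$ is immediate from Proposition~\ref{prop_C_z}(i). Using the jump representation $\mathcal{C}_z=\pm\tfrac{i}{2}(\alpha\cdot\nu)+\tr^\pm(\Phi_z\cdot)_\pm$ in Proposition~\ref{prop_C_z}(ii) together with the algebraic identity, I would compute $(I+\mathcal{C}_z\widetilde V)^2$ and reduce it, modulo smoothing boundary contributions, to multiplication by the scalar matrix $\tfrac{1}{4}(4-\widetilde d)\,I_N$. The noncritical hypothesis $\inf|\widetilde d-4|>0$ makes this prefactor uniformly invertible, so that $I+\mathcal{C}_z\widetilde V$ is Fredholm of index zero on $L^2$. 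Injectivity follows by bootstrap: any $\varphi\in L^2(\Sigma;\C^N)$ with $(I+\mathcal{C}_z\widetilde V)\varphi=0$ satisfies $\varphi=-\mathcal{C}_z\widetilde V\varphi$, and one further application of $\mathcal{C}_z\widetilde V$ combined with $((\alpha\cdot\nu)\widetilde V)^2=\widetilde d\,I_N$ promotes $\varphi$ to $H^{1/2}(\Sigma;\C^N)$, whereupon Step~1 forces $\varphi=0$.

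The remaining range $r\in(0,\tfrac12)$ is then obtained by complex interpolation of the bounded inverses at the two endpoints. The principal obstacle is Step~2, where isolating the decisive scalar factor $4-\widetilde d$ inside $(I+\mathcal{C}_z\widetilde V)^2$ requires careful bookkeeping of which pieces of the product $\mathcal{C}_z\widetilde V\mathcal{C}_z\widetilde V$ are smoothing and which are not; here the anticommutation $\{(\alpha\cdot\nu),\beta\}=0$ is what makes the cross terms $\widetilde\eta\widetilde\tau(\alpha\cdot\nu)\beta+\widetilde\tau\widetilde\eta\beta(\alpha\cdot\nu)$ cancel and thereby cleanly reveals the scalar obstruction controlled by the noncritical hypothesis.
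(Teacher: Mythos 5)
Your endpoint argument at $r=1/2$ is essentially the paper's: one proves bijectivity of $I+\widetilde V\mathcal{C}_z$ on $H^{1/2}(\Sigma;\C^N)$ by showing directly that an element of the kernel generates, via $u=\Phi_z\psi$, an element of $\ker(H_{\widetilde V\delta_\Sigma}-z)=\{0\}$, and that any $\varphi\in H^{1/2}$ can be hit by constructing an explicit preimage using the resolvent of $H_{\widetilde V\delta_\Sigma}$. You compress this into a claimed ``Birman--Schwinger equivalence'' which, stated that baldly, is only one implication; the converse (resolvent set $\Rightarrow$ invertibility of the boundary operator) is exactly what the paper's surjectivity construction supplies, so that step could be made rigorous along the lines you sketch.

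The real problem is your Step~2. First, the algebra: $(I+\mathcal{C}_z\widetilde V)^2 = I + 2\mathcal{C}_z\widetilde V + (\mathcal{C}_z\widetilde V)^2$ does \emph{not} reduce to a multiplication operator modulo smoothing --- the middle term $2\mathcal{C}_z\widetilde V$ is a genuine non-smoothing singular integral operator. The correct identity of that type is $(I-\mathcal{C}_z\widetilde V)(I+\mathcal{C}_z\widetilde V) = I - (\mathcal{C}_z\widetilde V)^2 = \tfrac{4-\widetilde d}{4}I_N + S$ modulo smoothing, and even this requires \emph{two} analytic inputs that your argument never isolates: the exact relation $\bigl(\mathcal{C}_z(\alpha\cdot\nu)\bigr)^2=-\tfrac14 I$, and the fact that the anticommutator $\{\alpha\cdot\nu,\mathcal{C}_z\}$ (and likewise $\{\beta,\mathcal{C}_z\}$ and the commutators with $\widetilde\eta,\widetilde\tau\in C^1_b$) is smoothing. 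The purely pointwise identity $\bigl((\alpha\cdot\nu)\widetilde V\bigr)^2=\widetilde d\,I_N$, which you cite, is not sufficient by itself because $\mathcal{C}_z$ is an integral operator, not a multiplication.

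Second, and more fundamentally, the Fredholm-theoretic conclusion fails in the generality the lemma is actually used. The paper explicitly allows $\Sigma$ to be unbounded (rotated $C^2_b$-graphs are the central case of Section~\ref{sec_C^2_b_graphs}, and the intro to that section states that Lemma~\ref{lem_I+C_z_inverse}, living in Section~\ref{sec_B_0}, must hold for the general hypersurface class of Section~\ref{sec_not}~\eqref{it_def_Sigma}). On an unbounded $\Sigma$ the ``smoothing'' remainder $S$ need not be compact on $L^2(\Sigma;\C^N)$, so the parametrix does not give Fredholmness, index zero is not available, and surjectivity does not follow from injectivity. Your bootstrap supplies injectivity only; without Fredholmness you have no route to surjectivity on $L^2$. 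The paper sidesteps this entirely: it extends $I+\mathcal{C}_z\widetilde V$ to $H^{-1/2}(\Sigma;\C^N)$ as the anti-dual of $(I+\widetilde V\mathcal{C}_{\bar z})\!\upharpoonright H^{1/2}$, which is invertible there because its pre-dual is invertible on $H^{1/2}$, and then interpolates between $H^{-1/2}$ and $H^{1/2}$ to cover $r\in[0,\tfrac12]$. This duality-plus-interpolation route needs neither compactness of $\Sigma$ nor any parametrix computation, and is the argument you should use in place of Fredholm theory.
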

	\begin{proof}
		We split the proof in three steps. In \textit{Step~1} we show that   $\widetilde{V}$ and $\mathcal{C}_z$ act  as bounded operators in $H^r(\Sigma;\C^N)$ for $r \in [0,\tfrac{1}{2}]$ and $z \in \C \setminus \R$. Afterwards, we show in \textit{Step~2} that for $z \in \C \setminus \R$ the operator $I + \widetilde{V}\mathcal{C}_z$ is continuously invertible  in $H^{1/2}(\Sigma;\C^N)$. In \textit{Step~3} we use  \textit{Step~1~\&~2}  to  prove the assertion.
		
		\textit{Step~1.} Let $z \in \C \setminus \R$ and $r \in [0,\tfrac{1}{2}]$. The assumption $\widetilde{\eta},\widetilde{\tau} \in C^1_b(\Sigma;\R)$ implies $\widetilde{V} = \widetilde{\eta} I_N + \widetilde{\tau} \beta \in C^1_b (\Sigma;\C^{N \times N}) \subset W^1_\infty(\Sigma;\C^{N \times N})$. Hence, $\widetilde{V}$ induces a bounded multiplication operator in $H^r(\Sigma;\C^N)$  which is bounded by $\|\widetilde{V}\|_{W^1_\infty(\Sigma;\C^{N \times N})}$. Moreover, $\mathcal{C}_z$ acts as a bounded operator in $H^r(\Sigma;\C^N)$ by Proposition~\ref{prop_C_z}~(i).
		
		\textit{Step~2.} We already know from \textit{Step~1} that $\mathcal{C}_z$ and $\widetilde{V}$  act as bounded operators in $H^{1/2}(\Sigma;\C^N)$ for $z \in \C\setminus\R$. Hence, in order to prove that $I + \widetilde{V}\mathcal{C}_z$ is continuously invertible in $H^{1/2}(\Sigma;\C^N)$, it suffices to show that $I + \widetilde{V}\mathcal{C}_z$ is bijective in $H^{1/2}(\Sigma;\C^N)$. We begin with the injectivity. Let $\psi \in H^{1/2}(\Sigma;\C^N)$ such that $ (I+ \widetilde{V}\mathcal{C}_z)\psi = 0$. We set $u = \Phi_z \psi$ with $\Phi_z$ defined by \eqref{def_Phi_z}. Then, Proposition~\ref{prop_Phi_z}~(i) implies $u \in H^{1}(\R^\theta \setminus  \Sigma;\C^N)$. Furthermore, Proposition~\ref{prop_Phi_z}~(ii) yields  $(- i (\alpha \cdot \nabla) + m \beta -z I_N)u_{\pm} = 0$ and Proposition~\ref{prop_C_z}~(ii) gives us \begin{equation}\label{eq_u_trace_properties}
			i(\alpha \cdot \nu)(\tr^+ - \tr^-)u = \psi \quad \textup{and} \quad \frac{1}{2}(\tr^+ + \tr^-)u = \mathcal{C}_z \psi.
		\end{equation}
		Thus, \eqref{eq_u_trace_properties} leads to
		\begin{equation}\label{eq_u_trace_properties_0}
			i(\alpha \cdot \nu)(\tr^+ - \tr^-)u +  \widetilde{V}\frac{1}{2}(\tr^+ + \tr^-)u= \psi+   \widetilde{V}\mathcal{C}_z\psi= 0
		\end{equation}
		and hence $u \in \ker (H_{\widetilde{V}\delta_\Sigma} -z)$. Since $\widetilde{d}$ fulfils \eqref{eq_d_tilde_non_crit}, $H_{\widetilde{V}\delta_\Sigma}$ is by the text above \eqref{eq_d_tilde_non_crit} self-adjoint and therefore $\ker (H_{\widetilde{V}\delta_\Sigma} -z) = \{0\}$; implying $u =0$ and therefore \eqref{eq_u_trace_properties} shows $\psi = 0$. Now we turn to the surjectivity. Let $\varphi \in H^{1/2}(\Sigma;\C^N)$. Then, according to \cite[Theorem 2]{M87} there exist $w_\pm \in H^{1}(\Omega_\pm;\C^N)$ such that $\tr^\pm w_\pm = \frac{\mp i (\alpha \cdot \nu)}{2} \varphi \in H^{1/2}(\Sigma;\C^N)$. Next, we set $w = w_+ \oplus w_- \in H^{1}(\R^{\theta}\setminus\Sigma;\C^N)$ and see 
		\begin{equation}\label{eq_w_trace_properties}
			i(\alpha \cdot \nu)(\tr^+ - \tr^-)w  = \varphi \quad \textup{as well as} \quad \frac{1}{2}(\tr^+ + \tr^-)w  = 0.
		\end{equation} 
		Moreover, let 
		\begin{equation*}
			v:=(H_{\widetilde{V}\delta_\Sigma}-z)^{-1}[(-i(\alpha\cdot \nabla) + m\beta -zI_N) w_+  \oplus(-i(\alpha\cdot \nabla) + m\beta -zI_N) w_-] .
		\end{equation*}
		Then, $v \in \dom H_{\widetilde{V}\delta_\Sigma}$ and $(-i (\alpha \cdot \nabla) + m \beta - zI_N) (v-w)_\pm = 0$, and thus due to \cite[eq. (C.4) and the text below]{BHS23} there exists a $\psi \in H^{1/2}(\Sigma;\C^N)$ such that $\Phi_z \psi = w-v$. Hence, we use the relations \eqref{eq_u_trace_properties} (for $u  = \Phi_z \psi$) as in \eqref{eq_u_trace_properties_0}, \eqref{eq_w_trace_properties} and $v \in \dom H_{\widetilde{V}\delta_\Sigma}$ to obtain
		\begin{equation*}
			\begin{aligned}
				(I + \widetilde{V} \mathcal{C}_z)\psi 
				&= i(\alpha \cdot \nu)(\tr^+ - \tr^-)\Phi_z \psi  +   \widetilde{V}\frac{1}{2}(\tr^+ + \tr^-)\Phi_z \psi\\
				&= i(\alpha \cdot \nu)(\tr^+ - \tr^-)(w-v)   + \widetilde{V}\frac{1}{2}(\tr^+ + \tr^-)(w-v)  \\
				& =  i(\alpha \cdot \nu)(\tr^+ - \tr^-)w   +  \widetilde{V}\frac{1}{2}(\tr^+ + \tr^-)w  = \varphi.
			\end{aligned}
		\end{equation*}
		This proves the surjectivity and completes \textit{Step~2}.
		
		\textit{Step~3.}
		First, let $r =\tfrac{1}{2}$. In this case we already know from \textit{Step~2} that $I+ \widetilde{V}\mathcal{C}_z$ is continuously invertible in $H^{1/2}(\Sigma;\C^N)$. Hence, elementary calculations show that then $I - \mathcal{C}_z(I+\widetilde{V}\mathcal{C}_z)^{-1}\widetilde{V}$ is the inverse of $I + \mathcal{C}_z \widetilde{V}$. Moreover, it follows from \textit{Step~1 \& Step~2} that $I - \mathcal{C}_z(I+\widetilde{V}\mathcal{C}_z)^{-1}\widetilde{V}$ is also bounded as  an operator in $H^{1/2}(\Sigma;\C^N)$.
		Consequently, the assertion is true for $r=1/2$. Next, let us consider the case $r \in [0,\frac{1}{2})$. From the proof of \cite[Proposition 2.9]{BHS23} we obtain that $(\mathcal{C}_{\overline{z}} \upharpoonright H^{1/2}(\Sigma;\C^N))'$ (where $'$ is used to denote the anti-dual operator) is a continuous  extension of $\mathcal{C}_z$ to $H^{-1/2}(\Sigma;\C^N)$. Moreover, using the symmetry of $\widetilde{V}$ and the fact that $\widetilde{V}$ induces a bounded multiplication operator in $H^{1/2}(\Sigma;\C^N)$ shows that $\widetilde{V}$ can also be extended to a bounded multiplication operator in $H^{-1/2}(\Sigma;\C^N)$. Therefore, 
		\begin{equation}\label{eq_I+C_z_ext}
			\big((I+ \widetilde{V}\mathcal{C}_{\overline{z}}) \upharpoonright H^{1/2}(\Sigma;\C^N) \big)' = I+ (\mathcal{C}_{\overline{z}} \upharpoonright H^{1/2}(\Sigma;\C^N))' (\widetilde{V} \upharpoonright H^{1/2}(\Sigma;\C^N))' 
		\end{equation}is a continuous extension of $I+ \mathcal{C}_z\widetilde{V}$ in $H^{-1/2}(\Sigma;\C^N)$. \textit{Step~2} shows that $I+ \widetilde{V}\mathcal{C}_{\overline{z}}$ is continuously invertible in $H^{1/2}(\Sigma;\C^N)$ and therefore the operator in \eqref{eq_I+C_z_ext} has the bounded inverse $\big((I+ \widetilde{V}\mathcal{C}_{\overline{z}})^{-1} \upharpoonright H^{1/2}(\Sigma;\C^N) \big)'$. Hence, one can use interpolation to show the assertion for $r \in [0,\tfrac{1}{2}]$; cf. \cite[eq.~(2.2)]{BHS23} and \cite[Theorem~B.11]{M00}.
	\end{proof}
	
	\begin{proposition}\label{prop_inv_I+BVq}
		Let  $z \in \C\setminus\R$, $r \in [0,\tfrac{1}{2}]$, $q$ and $V= \eta I_N + \tau \beta$ be as described in \eqref{eq_q2} and \eqref{eq_V}, respectively, and $ d= \eta^2-\tau^2$ such that \eqref{eq_scaling_cond} is fulfilled. Moreover, let $(\widetilde{\eta},\widetilde{\tau}) = \textup{tanc}\bigl(\tfrac{\sqrt{d}}{2}\bigr) (\eta,\tau)$, $\widetilde{V} = \widetilde{\eta} I_N + \widetilde{\tau} \beta$ and $\widetilde{d} = \widetilde{\eta}^2 - \widetilde{\tau}^2$ fulfil \eqref{eq_d_tilde_non_crit}.
		Then, the operator $I+ B_0(z)Vq$ is continuously invertible in the space $\mathcal{B}^r(\Sigma)$. In particular, assumption \textup{(ii)} of Proposition~\ref{prop_resolvent_convergence} is fulfilled in this case.
	\end{proposition}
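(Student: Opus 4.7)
My plan is to factor
\[
I + B_0(z)Vq = \bigl(I + T(\alpha\cdot\nu)Vq\bigr)\bigl(I + (I + T(\alpha\cdot\nu)Vq)^{-1}\mathfrak{J}\mathcal{C}_z\mathfrak{J}^*Vq\bigr),
\]
relying on the splitting $B_0(z) = T(\alpha\cdot\nu) + \mathfrak{J}\mathcal{C}_z\mathfrak{J}^*$ from \eqref{B_0_alt_rep}, and then to establish the continuous invertibility of both factors in $\mathcal{B}^r(\Sigma)$, eventually reducing the second one to $I + \mathcal{C}_z\widetilde V$, which is handled by Lemma~\ref{lem_I+C_z_inverse}. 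All closed-form computations below rest on the crucial algebraic identity $((\alpha\cdot\nu)V)^2 = d\,I_N$, which follows from the anticommutation relations \eqref{eq_dirac_matrices} and the form $V = \eta I_N + \tau\beta$.

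For the Volterra factor I introduce $K := \tfrac{i}{2}(\alpha\cdot\nu)V$ and $h(t) := \int_{-1}^t q(s)f(s)\,ds$. The identity $\int_{-1}^1 \textup{sign}(t-s)q(s)f(s)\,ds = 2h(t) - h(1)$ turns the equation $(I + T(\alpha\cdot\nu)Vq)f = g$ into $f(t) + 2Kh(t) - Kh(1) = g(t)$; multiplying by $q(t)$ then yields the matrix-valued linear ODE $h' + 2Kqh = qg + Kq\,h(1)$ with $h(-1) = 0$ and the value $h(1)$ to be determined self-consistently. Solving by variation of constants with the propagator $e^{-2K(Q(t) - Q(r))}$, where $Q(t) := \int_{-1}^t q(s)\,ds$, and then evaluating at $t = 1$ produces the linear equation $(I - MK)h(1) = \widetilde g$ in $H^r(\Sigma;\C^N)$, with $M := \int_0^1 e^{-2Ks}\,ds$ and $\widetilde g := \int_{-1}^1 e^{-2K(1-Q(r))}q(r)g(r)\,dr$. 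Expanding $M$ and $I-MK$ through their power series using $K^2 = -d/4\,I_N$ gives $I - MK = \cos^2(\sqrt d/2)\,I_N - \tfrac{i}{2}\textup{sinc}(\sqrt d)(\alpha\cdot\nu)V$; diagonalising in the eigenbasis of $(\alpha\cdot\nu)V$, whose eigenvalues are $\pm\sqrt d$, one finds that $I-MK$ has eigenvalues $\cos(\sqrt d/2)\,e^{\mp i\sqrt d/2}$, and thus its determinant is a positive power of $\cos^2(\sqrt d/2)$. Under \eqref{eq_scaling_cond} combined with the boundedness of $d$ this is bounded below on $\Sigma$, and the construction yields an explicit bounded inverse of $I + T(\alpha\cdot\nu)Vq$ on $\mathcal{B}^r(\Sigma)$ for every $r\in[0,\tfrac12]$ (using that the multiplication coefficients belong to $W^1_\infty(\Sigma;\C^{N\times N})$).

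Next I compute the effective rescaling
\[
\Psi := P_q\,(I + T(\alpha\cdot\nu)Vq)^{-1}\,\mathfrak{J} : H^r(\Sigma;\C^N) \to H^r(\Sigma;\C^N),\qquad P_q\varphi := \int_{-1}^1 q(s)\varphi(s)\,ds.
\]
Specialising the previous step to a constant input $g = \mathfrak{J}\varphi$ gives $\Psi = (I-MK)^{-1}M$. From the explicit formulas $M = \textup{sinc}(\sqrt d)\,I_N - \tfrac{i}{2}\textup{sinc}^2(\sqrt d/2)(\alpha\cdot\nu)V$ and $(I-MK)^{-1} = I_N + \tfrac{i}{2}\textup{tanc}(\sqrt d/2)(\alpha\cdot\nu)V$, a direct trigonometric calculation using $((\alpha\cdot\nu)V)^2 = dI_N$ collapses the product to the scalar multiplier
\[
\Psi = \textup{tanc}(\sqrt d/2)\,I_N;
\]
in particular $\Psi V = \widetilde V$ as multiplication operators on $H^r(\Sigma;\C^N)$.

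For the second factor in the factorization, applying $P_q$ to the equation $(I + (I+T_V)^{-1}\mathfrak{J}\mathcal{C}_z\mathfrak{J}^*Vq)f = g$ and exploiting $\mathfrak{J}^*Vq = V P_q$ together with Step~2 reduces the problem to
\[
\bigl(I + \textup{tanc}(\sqrt d/2)\,\mathcal{C}_zV\bigr)(P_qf) = P_qg \quad \text{in } H^r(\Sigma;\C^N).
\]
Applying the standard identity $(I + BA)^{-1} = I - B(I + AB)^{-1}A$ with $B := \textup{tanc}(\sqrt d/2)\,I_N$ and $A := \mathcal{C}_zV$, and using that $V$ and $\textup{tanc}(\sqrt d/2)$ commute as multiplication operators so that $I + AB = I + \mathcal{C}_z\widetilde V$, invertibility of $I + BA$ follows from Lemma~\ref{lem_I+C_z_inverse} (applicable since $\widetilde d$ satisfies \eqref{eq_d_tilde_non_crit}). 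Inserting this bounded inverse into the reconstruction formula for $f$ yields a bounded inverse of $I + B_0(z)Vq$ on $\mathcal{B}^r(\Sigma)$. The main obstacle I anticipate is the trigonometric collapse in the computation of $\Psi$; it is the algebraic incarnation of the nonlinear rescaling $V \mapsto \widetilde V$ and requires careful bookkeeping with the double- and half-angle identities for $\sin$ and $\cos$ at the argument $\sqrt d/2$.
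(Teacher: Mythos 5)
Your proof is correct and rests on the same fundamental route as the paper: split $B_0(z)=T(\alpha\cdot\nu)+\mathfrak{J}\mathcal{C}_z\mathfrak{J}^*$ as in~\eqref{B_0_alt_rep}, peel off the ``Volterra'' factor $I+T(\alpha\cdot\nu)Vq$, and reduce what remains to $I+\mathcal{C}_z\widetilde V$ via Lemma~\ref{lem_I+C_z_inverse}. Where you depart from the paper is mostly a matter of self-containment and organization. The paper imports the invertibility of $I+T(\alpha\cdot\nu)Vq$ and the key formulas $(I+T(\alpha\cdot\nu)Vq)^{-1}\mathfrak{J}=\cos\bigl(\tfrac{(\alpha\cdot\nu)V}{2}\bigr)^{-1}\exp(-i(\alpha\cdot\nu)VQ)\mathfrak{J}$ and $\mathfrak{J}^*q(I+T(\alpha\cdot\nu)Vq)^{-1}\mathfrak{J}=S$ from \cite[Lemmas 4.2 and 4.3]{BHS23}, and it relies on \cite[Proposition 2.1.8]{P94} to transfer invertibility from $\mathcal{C}_z\widetilde V$ to $S\mathcal{C}_zV$; you instead re-derive the Volterra inverse from scratch via an ODE for $h(t)=\int_{-1}^t qf$, compute $\Psi=\mathfrak{J}^*q(I+T(\alpha\cdot\nu)Vq)^{-1}\mathfrak{J}=(I-MK)^{-1}M=\mathrm{tanc}\bigl(\tfrac{\sqrt d}{2}\bigr)I_N$ explicitly from $K^2=-\tfrac d4 I_N$, and replace the spectral-symmetry citation by the elementary identity $(I+BA)^{-1}=I-B(I+AB)^{-1}A$. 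Your explicit factorization $I+B_0(z)Vq=(I+T_V)\bigl(I+(I+T_V)^{-1}\mathfrak{J}\mathcal{C}_z\mathfrak{J}^*Vq\bigr)$ followed by the $P_q$-projection also packages the argument more compactly than the paper's separate injectivity/surjectivity verifications, and one can check that the inverse it produces coincides with the paper's $f_g=(I+T_V)^{-1}(g+\mathfrak{J}\psi)$. What the paper's cited lemmas buy is generality (they apply for matrix potentials $V$ without the special structure~\eqref{eq_V}) and brevity; what your route buys is a fully elementary, self-contained derivation specialised to the electrostatic/Lorentz case, at the cost of the trigonometric bookkeeping you flag yourself. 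Be aware that the ODE step and the closed-form for $M$, $I-MK$, and $(I-MK)^{-1}$ must be understood as Bochner/operator-valued computations in $H^r(\Sigma;\mathbb{C}^N)$, with $\mathrm{tanc}(\sqrt d/2)\in W^1_\infty(\Sigma)$ ensured by~\eqref{eq_scaling_cond} and the boundedness of $d$; you touch on this but it is worth stating explicitly when writing it out in full.
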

	\begin{proof}
		We start by arguing that $I + B_0(z)Vq$ is a bounded operator in $\mathcal{B}^r(\Sigma)$. Due to the representation of $B_0(z)$ in \eqref{B_0_alt_rep} and the text below, $B_0(z)$ acts as a bounded operator in $\mathcal{B}^r(\Sigma)$. Moreover, $V \in W^1_\infty(\Sigma;\C^{N \times N})$ and $q \in L^{\infty}((-1,1);[0,\infty))$ imply that  $Vq$ induces a bounded operator in $\mathcal{B}^r(\Sigma)$, see Section~\ref{sec_not}~\eqref{it_Bochner}.  Hence, it remains to prove the bijectivity of $I + B_0(z)Vq$ in $\mathcal{B}^r(\Sigma)$.
		
		Let us start with the injectivity. To do so, we use the representation of $B_0(z)$ given by \eqref{B_0_alt_rep} and assume for $f \in \mathcal{B}^r(\Sigma)$
		\begin{equation}\label{eq_I+B_0_inj}
			(I +  B_0(z)Vq)f =(I +T(\alpha \cdot \nu)Vq)f + \mathfrak{I}\mathcal{C}_zV \mathfrak{I}^*qf = 0.
		\end{equation}
		By \cite[Lemma~4.2~(i)]{BHS23} $I + T(\alpha \cdot \nu)Vq$ is continuously invertible in in the space $\mathcal{B}^r(\Sigma)$ if $\cos\bigl(\tfrac{(\alpha\cdot\nu)V}{2}\bigr)^{-1} \in W^1_\infty(\Sigma;\C^{N \times N})$. Note that the structure of $V = \eta I_N + \tau \beta$ and the rules for the Dirac matrices from \eqref{eq_dirac_matrices} imply $((\alpha\cdot\nu)V)^2 = (\eta^2-\tau^2)I_N =d I_N$ and therefore
		\begin{equation*}
			\cos\bigl(\tfrac{(\alpha \cdot \nu) V}{2}\bigr) = \sum_{j=0}^\infty (-1)^{j}\frac{((\alpha \cdot \nu) V/2)^{2j}}{(2j)!} =  \sum_{j=0}^\infty (-1)^{j}\frac{(\sqrt{d}/2)^{2j}}{(2j)!} I_N = \cos\bigl(\tfrac{\sqrt{d}}{2}\bigr) I_N.
		\end{equation*}
		Consequently, $\cos\bigl(\tfrac{(\alpha\cdot\nu)V}{2}\bigr)^{-1} \in W^1_\infty(\Sigma;\C^{N \times N})$ since \eqref{eq_scaling_cond} is satisfied. Hence, we can apply $(I + T(\alpha \cdot \nu)Vq)^{-1}$ to \eqref{eq_I+B_0_inj}. This yields
		\begin{equation*}
			f + (I + T(\alpha \cdot \nu)Vq)^{-1}\mathfrak{I}\mathcal{C}_zV \mathfrak{I}^*qf =0.
		\end{equation*}
		Using \cite[Lemma 4.2~(ii)]{BHS23} and defining $Q(t) = \int_{-1}^t q(s) \,ds - \tfrac{1}{2}$, $t \in [-1,1]$,  gives us 
		\begin{equation*}
			(I+T(\alpha \cdot \nu)Vq)^{-1}\mathfrak{I} = \cos\bigl(\tfrac{(\alpha \cdot \nu)V}{2}\bigr)^{-1}\exp(-i(\alpha \cdot \nu) V Q) \mathfrak{I}
		\end{equation*}
		and therefore
		\begin{equation}\label{eq_I+B_0_invert_1}
			f + \cos\bigl(\tfrac{(\alpha \cdot \nu)V}{2}\bigr)^{-1}\exp(-i(\alpha \cdot \nu) V Q)\mathfrak{I} \mathcal{C}_zV \mathfrak{I}^*qf =0.
		\end{equation}
		By applying $\mathfrak{I}^*q$ and \cite[Lemma~4.3~(ii)]{BHS23} we obtain
		\begin{equation}\label{eq_I+B_0_invert_2}
			\begin{aligned}
				&\mathfrak{I}^*q f + \mathfrak{I}^*q\cos\bigl(\tfrac{(\alpha \cdot \nu)V}{2}\bigr)^{-1}\exp(-i(\alpha \cdot \nu) V Q)\mathfrak{I}\mathcal{C}_zV \mathfrak{I}^*qf\\
				&\hspace{160 pt}= (I+S \mathcal{C}_z V)\mathfrak{I}^*q f =0
			\end{aligned}
		\end{equation}
		with $S= \textup{sinc}\bigl(\tfrac{(\alpha\cdot\nu)V}{2}\bigr)\cos\bigl(\tfrac{(\alpha\cdot\nu)V}{2}\bigr)^{-1}$. Similar as we showed $\cos\bigl(\tfrac{(\alpha\cdot\nu)V}{2}\bigr) = \cos\bigl(\tfrac{\sqrt{d}}{2}\bigr)I_N$ one can show $\textup{sinc}\bigl(\tfrac{(\alpha\cdot\nu)V}{2}\bigr) = \textup{sinc}\bigl(\tfrac{\sqrt{d}}{2}\bigr) I_N$ and therefore 
		\begin{equation} \label{equation_rescaling}
			V S = V \textup{tanc}\bigl(\tfrac{\sqrt{d}}{2}\bigr) = \widetilde{V}.
		\end{equation}  
		Moreover, Lemma~\ref{lem_I+C_z_inverse} shows that $-1 \in \rho(\mathcal{C}_z\widetilde{V})$. By \cite[Proposition~2.1.8]{P94} there  holds $\rho(\mathcal{C}_z\widetilde{V}) \setminus  \{0\} = \rho(S\mathcal{C}_zV) \setminus  \{0\}$. Consequently, $-1 \in \rho(S\mathcal{C}_zV)$ and hence \eqref{eq_I+B_0_invert_2} implies $\mathfrak{J}^*qf=0$. This, in turn, implies  according to \eqref{eq_I+B_0_invert_1}  $f =0$.
		
		Next, we show the surjectivity of the operator $I + B_0(z)Vq$. Let $g \in \mathcal{B}^r(\Sigma)$. We set $f_g = (I +T(\alpha \cdot \nu)Vq)^{-1}(g +  \mathfrak{I}\psi)$, where 
		\begin{equation*}
			\psi =-(I+ \mathcal{C}_z\widetilde{V})^{-1}  \mathcal{C}_z\mathfrak{I}^* Vq(I +T(\alpha \cdot \nu)Vq)^{-1}g.
		\end{equation*}
		Item (iii) from \cite[Lemma 4.3]{BHS23}  yields together with~\eqref{equation_rescaling}
		\begin{equation*}
			(I +   B_0(z) Vq)(I +T(\alpha \cdot \nu)Vq)^{-1}\mathfrak{I} = \mathfrak{I}(I+ \mathcal{C}_z\widetilde{V}).
		\end{equation*}
		Thus, by  applying  $I +   B_0(z) Vq$ to $f_g$ and using $B_0(z) = T(\alpha\cdot\nu) + \mathfrak{I}\mathcal{C}_z \mathfrak{I}^*$, see \eqref{B_0_alt_rep},  we obtain
		\begin{equation*}
			\begin{aligned}
				(I +   B_0(z) Vq)f_g &= (I +   B_0(z) Vq)(I +T(\alpha \cdot \nu)Vq)^{-1}g \\ 
				&\qquad+ (I +   B_0(z) Vq)(I +T(\alpha \cdot \nu)Vq)^{-1}\mathfrak{I}\psi\\
				&= (I +T(\alpha \cdot \nu)Vq + \mathfrak{I}\mathcal{C}_z\mathfrak{I}^*Vq)(I +T(\alpha \cdot \nu)Vq)^{-1}g \\
				&\qquad+ \mathfrak{I}(I+ \mathcal{C}_z\widetilde{V}) \psi\\
				&= g+ \mathfrak{I}\mathcal{C}_z\mathfrak{I}^*Vq(I +T(\alpha \cdot \nu)Vq)^{-1}g + \mathfrak{I}(I+ \mathcal{C}_z\widetilde{V}) \psi\\
				&=g,
			\end{aligned}
		\end{equation*}
		which completes the proof.
	\end{proof}
	
	\subsection{Analysis of $I + B_\varepsilon(z)Vq$}\label{sec_I+B_eps}
	In this section we show  that if 
	\begin{equation*}
		\sup_{x_\Sigma \in \Sigma} d(x_\Sigma) < \frac{\pi^2}{4}, \qquad d = \eta^2-\tau^2,
	\end{equation*}  then (i) of Proposition~\ref{prop_resolvent_convergence} is fulfilled, i.e. $(I+B_\varepsilon(z)Vq)^{-1}$ is uniformly bounded in $\mathcal{B}^0(\Sigma)$. Combining this result with  Proposition~\ref{prop_resolvent_convergence} and Proposition~\ref{prop_inv_I+BVq} proves the norm resolvent convergence of $H_{V_\varepsilon}$. To prove the uniform boundedness of  $(I+B_\varepsilon(z)Vq)^{-1}$ a  careful and deep analysis of this operator is necessary. We do this by studying $(I+B_\varepsilon(z)Vq)^{-1}$ in the case that $\Sigma$ is a hyperplane and $\eta$ and $\tau$ are constant in detail in Section~\ref{sec_hyperplane}. Then, we use a parameter dependent partition of unity in Section~\ref{sec_loc_part} to transfer the results to  the case where 
	$\Sigma = \Sigma_{\zeta,\kappa}$,  $\zeta \in C^2_b(\R^{\theta-1};\R)$, $\kappa \in \textup{SO}(\theta)$,
	 is a rotated $C^2_b$-graph as in \eqref{eq_rotated_graph} and $\eta,\tau \in C^1_b(\Sigma;\R)$. 
	We start by introducing for $\varepsilon \in (0,\varepsilon_2)$ with $\varepsilon_2$ as in Proposition \ref{prop_conv_res} the auxiliary operator
	\begin{equation}\label{def_B_eps_bar}
		\begin{aligned}
			\overline{B}_\varepsilon(z) &: \mathcal{B}^0(\Sigma) \to \mathcal{B}^0(\Sigma),\\
			\overline{B}_\varepsilon( z)f(t)(x_\Sigma) &:=   \int_{-1}^1 \int_{\Sigma} G_ z(x_\Sigma +\varepsilon (t-s)\nu(x_\Sigma) -y_\Sigma )   f(s)(y_\Sigma) \, d\sigma(y_\Sigma) \, ds;
		\end{aligned}
	\end{equation} 
	cf. \cite[eq. (3.17), eq. (3.23) and Appendix~B]{BHS23}. According to \cite[eq. (3.29)]{BHS23}, 
	\begin{equation}\label{eq_B_diff}
		\norm{\overline{B}_\varepsilon(z)  -	B_\varepsilon(z)}_{0 \to 0} \leq C \varepsilon^{1/2} (1+\abs{\log(\varepsilon)})^{1/2}, \qquad  \varepsilon \in (0,\varepsilon_2).
	\end{equation}
	 Next, we transfer this operator to $\mathcal{B}^0(\R^{\theta-1})$. 
	To do so, we introduce the isomorphism
	\begin{equation}\label{def_iota}
		\iota_{\zeta,\kappa} :L^2(\Sigma;\C^N) \to L^2(\R^{\theta-1};\C^N), \quad	(\iota_{\zeta,\kappa} f)(x') := f\big(\kappa (x',\zeta(x'))\big).
	\end{equation}
	By transforming the integral on $\Sigma$ to an integral on $\R^{\theta-1}$ one obtains that the norm of $\iota_{\zeta,\kappa}$ and its inverse are given by
	\begin{equation}\label{eq_norm_iota}
		\begin{aligned}
			\|\iota_{\zeta,\kappa}\|_{L^2(\Sigma;\C^N) \to L^2(\R^{\theta-1};\C^N)} = \sup_{x' \in \R^{\theta-1}} (1+\abs{\nabla\zeta(x')}^2)^{-1/4} \\
			\textup{ and } 	\big\|\iota_{\zeta,\kappa}^{-1}\big\|_{ L^2(\R^{\theta-1};\C^N) \to L^2(\Sigma;\C^N)}  =\sup_{x' \in \R^{\theta-1}} (1+\abs{\nabla\zeta(x')}^2)^{1/4}.
		\end{aligned}
	\end{equation}
	Note that the definition of $H^r(\Sigma;\C^N)$, $r \in [0,2]$, see \cite{M00} or \cite[Section~2.1]{BHS23}, implies that $\iota_{\zeta,\kappa}$ also acts  as an isomorphic operator from  $H^r(\Sigma;\C^N)$ to $H^r(\R^{\theta-1};\C^N)$ for $r \in [0,2]$. Recall  that in this case $\iota_{\zeta,\kappa}$ can also be viewed as a bounded operator in $\mathcal{B}^r(\Sigma)$ to $\mathcal{B}^r(\R^{\theta-1})$ which has the same norm as the operator acting from $H^r(\Sigma;\C^N)$ to $H^r(\R^{\theta-1};\C^N)$.

	Next, we introduce for $\varepsilon \in (0,\varepsilon_2)$ the operators
	\begin{equation}\label{def_D_operators}
		\begin{aligned}
			D_\varepsilon^{\zeta,\kappa}(z)&:=  \iota_{\zeta,\kappa}\overline{B}_\varepsilon({z})  \iota_{\zeta,\kappa}^{-1} : \mathcal{B}^0(\R^{\theta-1}) \to \mathcal{B}^0(\R^{\theta-1})\\
			\text{and }D_0^{\zeta,\kappa}(z)&:=  \iota_{\zeta,\kappa}B_0({z})  \iota_{\zeta,\kappa}^{-1}\: : \mathcal{B}^0(\R^{\theta-1}) \to\mathcal{B}^0(\R^{\theta-1}).
		\end{aligned}
	\end{equation}
	The results from Proposition~\ref{prop_conv_res} and \eqref{eq_B_diff} imply that $D_\varepsilon^{\zeta,\kappa}(z)$ is uniformly bounded in $\mathcal{B}^0(\R^{\theta-1})$ with respect to $\varepsilon \in (0,\varepsilon_2)$ and for $r \in (0,\frac{1}{2})$  the inequality
	\begin{equation}\label{eq_D_eps_conv}
		\begin{aligned}
		\|D_0^{\zeta,\kappa}(z) - D_\varepsilon^{\zeta,\kappa}(z)\|_{1/2 \to 0} & = 
		\|\iota_{\zeta,\kappa}\bigl(B_0({z})  - \overline{B}_\varepsilon({z})\bigr)   \iota_{\zeta,\kappa}^{-1}\|_{1/2 \to 0}\\
		&\leq C \bigl(\|B_0({z})  -  B_\varepsilon({z})\|_{1/2 \to 0} \\ & \qquad+\|B_\varepsilon({z})  -  \overline{B}_\varepsilon({z})\|_{1/2 \to 0} \bigr) \\
		&\leq C \bigl(\|B_0({z})  -  B_\varepsilon({z})\|_{1/2 \to 0} \\ & \qquad+\|B_\varepsilon({z})  -  \overline{B}_\varepsilon({z})\|_{0 \to 0} \bigr)\\
		 &\leq   C \varepsilon^{1/2-r}, \quad \varepsilon \in (0, \varepsilon_2),
		\end{aligned}
	\end{equation}
	holds.
	In particular, $D_\varepsilon^{\zeta,\kappa}(z)f$ converges for $\varepsilon \to 0$ to $D_0^{\zeta,\kappa}(z)f$ in $\mathcal{B}^0(\R^{\theta-1})$ for $f \in \mathcal{B}^{1/2}(\R^{\theta-1})$.  Furthermore, $\mathcal{B}^{1/2}(\R^{\theta-1})$ is by \cite[Lemma~1.2.19]{HNVW16} and \cite[the lines above eq.~(3.22)]{M00} a dense subset of  $\mathcal{B}^{0}(\R^{\theta-1})$. Combining these considerations with the uniform boundedness of $D_\varepsilon^{\zeta,\kappa}(z)$ in $\mathcal{B}^0(\R^{\theta-1})$ shows that for all $f \in \mathcal{B}^0(\R^{\theta-1})$
	\begin{equation}\label{eq_D_strong_conv}
		D_\varepsilon^{\zeta,\kappa}(z)f \to D_0^{\zeta,\kappa}(z)f \quad\textup{in } \mathcal{B}^0(\R^{\theta-1}), \textup{ as } \varepsilon \to 0.
	\end{equation}
	Using \eqref{def_B_eps_bar} and \eqref{def_iota}, and setting 
	\begin{equation}\label{eqref_x_nu_zeta_kappa}
		x_{\zeta,\kappa}= \kappa(\cdot,\zeta(\cdot))\quad \textup{and} \quad \nu_{\zeta, \kappa} = \nu\circ x_{\zeta,\kappa} =\frac{\kappa (-\nabla \zeta, 1)}{\sqrt{1 + \abs{\nabla \zeta}^2}}
	\end{equation} yields for $f \in \mathcal{B}^0(\R^{\theta-1})$ and a.e. $(t,x') \in (-1,1) \times \R^{\theta-1}$
	\begin{equation}\label{eq_D_eps_int_rep_0}	
		\begin{aligned}
			D_\varepsilon^{\zeta,\kappa}({z}) f (t)(x')&= \int_{-1}^1\int_{\R^{\theta-1}} G_{z}\big(x_{\zeta,\kappa}(x') - x_{\zeta,\kappa}(y') + \varepsilon(t-s) \nu_{\zeta,\kappa}(x') \big)\\
			&\hspace{115 pt}\cdot \sqrt{1+\abs{\nabla \zeta(y')}^2}f(s)(y') \,dy' \, ds.
		\end{aligned}
	\end{equation}
	Another useful representation is given by
	\begin{equation}\label{eq_D_eps_int_rep}
		D_\varepsilon^{\zeta,\kappa}({z}) f (t) = \int_{-1}^1 d_{\varepsilon(t-s)}^{\zeta,\kappa}({z})f(s) \, ds, \quad f \in \mathcal{B}^0(\R^{\theta-1}), t \in (-1,1),
	\end{equation}	
	where the integral is considered as a Bochner integral and 
	\begin{equation}\label{eq_d_eps}
		\begin{aligned}
			d_{\widetilde{\varepsilon}}^{\zeta,\kappa}({z}) &: L^2(\R^{\theta-1};\C^N) \to L^2(\R^{\theta-1};\C^N),\\
			d_{\widetilde{\varepsilon}}^{\zeta,\kappa}({z})g(x') 
			&=\int_{\R^{\theta-1}} G_{z}\big(x_{\zeta,\kappa}(x') - x_{\zeta,\kappa}(y') + \widetilde{\varepsilon} \nu_{\zeta,\kappa}(x') \big)\\
			&\hspace{130 pt}\cdot \sqrt{1+\abs{\nabla \zeta(y')}^2}g(y') \,dy',
		\end{aligned}
	\end{equation}
	for $\widetilde{\varepsilon} \in (-2\varepsilon_2,2\varepsilon_2)\setminus\{0\}$. For the interaction strengths  $\eta,\tau \in C^1_b(\Sigma;\R)$ we also define the  matrix valued function
	\begin{equation}\label{eq_Q_matrix}
		Q^{\zeta,\kappa}_{\eta,\tau}:= V \circ x_{\zeta,\kappa} = \eta \circ x_{\zeta,\kappa} I_N + \tau \circ x_{\zeta,\kappa}\beta.
	\end{equation} 
	There  holds $Q^{\zeta,\kappa}_{\eta,\tau} = \iota_{\zeta,\kappa} V\iota_{\zeta,\kappa}^{-1}$ in the sense of operators in $L^2(\R^{\theta-1};\C^N)$.

	\subsubsection{Hyperplanes and constant interaction strengths}\label{sec_hyperplane}
	
	In this section we assume that $\zeta$ is a constant function having the value $y_0 \in \R$, i.e. $\Sigma =  \Sigma_{\zeta,\kappa} = \Sigma_{y_0,\kappa}$ is an  affine $(\theta-1)$-dimensional hyperplane in $\R^{\theta}$. We also assume in this section that the interaction strengths are constant and given by $\eta,\tau \in \R$. This implies that  $Q^{y_0,\kappa}_{\eta,\tau}$ is equal to the constant matrix 
	\begin{equation}\label{eq_Q_const}
		Q_{\eta,\tau} := \eta I_N + \tau \beta 
	\end{equation}
	 in this case. The main goal of this section is to show that for every compact set $S \subset \R^2$ satisfying
	\begin{equation*}
		\max_{(\eta,\tau) \in S} \eta^2- \tau^2 < \frac{\pi^2}{4}
	\end{equation*} 
	there exists a $\delta_2 = \delta_2(S)$ such that  
	\begin{equation*}
		\|(I+ D^{y_0,\kappa}_\varepsilon(z)Q_{\eta,\tau})^{-1}\|_{0 \to 0}
	\end{equation*} 
	is uniformly bounded with respect to $(\varepsilon,y_0,(\eta,\tau),\kappa) \in (0,\delta_2)\times \R \times S \times\textup{SO}(\theta)$; cf. Corollary~\ref{cor_uniform_bdd_D}. This result will play a major role when we prove the uniform boundedness of $(I + B_\varepsilon(z)Vq)^{-1}$ in Section~\ref{sec_loc_part}. 
	To do so we proceed as follows: We start by using the Fourier transform to transform $D^{y_0,\kappa}_\varepsilon(z)$ into a decomposable direct integral operator with frequency  dependent fiber operators, see the considerations up to \eqref{eq_def_frak_d}. Then, up to Lemma~\ref{lem_frak_h_inv_estimate} we find and analyse suitable approximations for the fiber operators for high and low frequencies. Finally, we use these results to  prove the main statements (Proposition~\ref{prop_uniform_bdd_D} and Corollary~\ref{cor_uniform_bdd_D})  of this section. 
	
	Before we start, let us fix some notations. In the present setting the normal vector $\nu$ is constant and given by  $\kappa e_\theta$, where $e_\theta$ is the $\theta$-th euclidean unit vector. Let us also mention that $\varepsilon_2$ from  Proposition~\ref{prop_res_formula_eps} is according to \cite[eq. (3.11)]{BHS23} given by
	\begin{equation*}
		\varepsilon_2 = \min \biggl\{\frac{\varepsilon_1}{2}, \frac{1}{2 \norm{D \widetilde{\nu}}_{L^\infty(\R^{\theta};\R^{\theta \times \theta})}} \biggr\}.
	\end{equation*}
	where, $\varepsilon_1$ is chosen as below \eqref{eq_iota}, see also \cite[Proposition~2.4]{BHS23} and $\widetilde{\nu}$ is an extension $C^1_b$ of $\nu$ to $\R^{\theta}$; cf. \cite[the discussion above eq. (3.5)]{BHS23}. Next, we argue that $\varepsilon_2$ can be set independently of $\kappa$ to $\infty$. First, $\varepsilon_1$ is chosen  such that one can identify $\Omega_{\varepsilon_1}$ via the map $\iota$ in~\eqref{eq_iota} with $\Sigma \times (-\varepsilon_1,\varepsilon_1)$ and  the eigenvalues of $ \varepsilon_1 W(x_\Sigma)$, $x_\Sigma \in \Sigma$, are sufficiently small, so that this identification is bijective. However, in the current case $\Omega_{\varepsilon_1} = \kappa(\R^{\theta-1} \times (y_0-\varepsilon_1, y_0 + \varepsilon_1))$ can be identified with $\Sigma \times (-\varepsilon_1,\varepsilon_1) = \kappa(\R^{\theta-1} \times \{y_0\}) \times (-\varepsilon_1,\varepsilon_1)$ for arbitrary $\varepsilon_1>0$ and the Weingarten map is zero. Thus, we can set $\varepsilon_1$   independently of $y_0$ and $\kappa$ to $\infty$. Furthermore, as $\nu$ is constant, its constant extension is a $C^1_b$-extension of $\nu$. Hence,  $(2 \norm{D \widetilde{\nu}}_{L^\infty(\R^{\theta};\R^{\theta \times \theta})})^{-1} = \infty$ and therefore $\varepsilon_2 = \infty$. 
	Using \eqref{eq_D_eps_int_rep_0} and 
	\begin{equation*}
		x_{y_0,\kappa}(x')-x_{y_0,\kappa}(y') + \varepsilon (t-s) \nu_{y_0,\kappa}(x') = \kappa(x'-y',\varepsilon(t-s)), \,  x',y' \in \R^{\theta-1}, t,s \in (-1,1),
	\end{equation*}  
shows that $D_\varepsilon^{y_0,\kappa}(z) $  has for $\varepsilon \in (0,\infty)$ and $f \in \mathcal{B}^0(\R^{\theta-1})$ the representation
	\begin{equation}\label{eq_D_eps_straight}
		D^{y_0,\kappa}_{\varepsilon}(z)f(t)(x')  =
		\int_{-1}^1\int_{\R^{\theta-1}} G_{z}\big(\kappa(x'-y',\varepsilon (t-s))\big) f(s)(y') \,dy' \,ds
	\end{equation}
	for a.e. $(x',t) \in \R^{\theta-1}  \times (-1,1) $, which proves  that $D_\varepsilon^{y_0,\kappa}(z)$ is independent of $y_0 \in \R$. Furthermore,  \eqref{eq_D_strong_conv} implies that also $D_0^{y_0,\kappa}(z)$ is independent of $y_0$. Thus, w.l.o.g. we can set $y_0=0$. Before we state the next result, we define for convenience the matrices $\widetilde{\alpha}_j := \alpha \cdot \kappa e_j$, $j \in \{1,\dots, \theta\}$, $\widetilde{\alpha} \cdot \xi := \sum_{j=1}^{\theta} \widetilde{\alpha}_j \xi_j$, $\xi \in \R^{\theta}$, and $\widetilde{\alpha}' \cdot \xi' = \sum_{j=1}^{\theta-1} \widetilde{\alpha}_j \xi_{j}'$, $\xi'\in\R^{\theta-1}$. Note that $\widetilde{\alpha}_1, \dots, \widetilde{\alpha}_\theta$ satisfy the same anti-commutation relations as $\alpha_1, \dots, \alpha_\theta$; cf. \eqref{it_Dirac_matrices} of Section~\ref{sec_not}.
	
	We start by calculating the Fourier transform of the function $G_z(\kappa(\cdot,\widetilde{\varepsilon}))$ for fixed $\widetilde{\varepsilon} \neq 0$; cf. \cite[eqs. (44)--(45)]{R22a} for similar considerations. Recall that $\mathcal{F}$ is the $(\theta-1)$-dimensional Fourier transform defined in Section~\ref{sec_not}~\eqref{it_Fourier}.

	\begin{lemma}\label{lem_Fourier_transform_G_z_eps} 
		Let  $z \in \rho(H)$, $G_z$ be the integral kernel of $(H-z)^{-1}$ given by \eqref{eq_G_z_2D}--\eqref{eq_G_z_3D}, and $\widetilde{\varepsilon}\neq0$. Then, 
		\begin{equation*}
			\mathcal{F} G_z(\kappa(\cdot,\widetilde{\varepsilon})) = \bigg(\frac{\widetilde{\alpha}' \cdot(\cdot) + m \beta + zI_N)}{\sqrt{z^2-m^2-\abs{\cdot}^2}} + \widetilde{\alpha}_{\theta}\sign(\widetilde{\varepsilon})\bigg)\frac{ie^{\abs{\widetilde{\varepsilon}}i\sqrt{z^2-m^2-\abs{\cdot}^2}}}{ 2 \sqrt{(2\pi)^{\theta-1}}}.
		\end{equation*}
	\end{lemma}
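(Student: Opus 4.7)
The plan is to start from the full $\theta$-dimensional Fourier transform of $G_z$ and then recover the displayed formula by a one-dimensional residue computation in the last variable. First I would exploit the fact that $G_z$ is the convolution kernel of the resolvent of the free Dirac operator, so that on the symbol side one has
\begin{equation*}
\mathcal{F}_{1,2}G_z(\xi)=\frac{1}{\sqrt{(2\pi)^{\theta}}}\bigl(\alpha\cdot\xi+m\beta-zI_N\bigr)^{-1},\qquad \xi\in\R^{\theta}.
\end{equation*}
Using the anticommutation relations \eqref{eq_dirac_matrices}, which give $(\alpha\cdot\xi+m\beta)^2=(|\xi|^2+m^2)I_N$, this symbol can be rewritten as $(2\pi)^{-\theta/2}(\alpha\cdot\xi+m\beta+zI_N)/(|\xi|^2+m^2-z^2)$. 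Then, since $\kappa\in\mathrm{SO}(\theta)$ acts orthogonally, a change of variables yields $\mathcal{F}_{1,2}(G_z\circ\kappa)(\xi)=(\mathcal{F}_{1,2}G_z)(\kappa\xi)$, and using $\alpha\cdot\kappa\xi=\widetilde{\alpha}\cdot\xi$ (which follows directly from the definition $\widetilde{\alpha}_j=\alpha\cdot\kappa e_j$) gives
\begin{equation*}
\mathcal{F}_{1,2}(G_z\circ\kappa)(\xi)=\frac{1}{\sqrt{(2\pi)^{\theta}}}\cdot\frac{\widetilde{\alpha}\cdot\xi+m\beta+zI_N}{|\xi|^2+m^2-z^2}.
\end{equation*}

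Second, I would reduce the desired $(\theta-1)$-dimensional transform to a one-dimensional inverse Fourier integral via
\begin{equation*}
\mathcal{F}\bigl(G_z(\kappa(\cdot,\widetilde{\varepsilon}))\bigr)(\xi')=\mathcal{F}_2^{-1}\bigl[\mathcal{F}_{1,2}(G_z\circ\kappa)\bigr](\xi',\widetilde{\varepsilon})=\frac{1}{\sqrt{2\pi}}\int_{\R}\frac{\widetilde{\alpha}'\cdot\xi'+\widetilde{\alpha}_\theta\xi_\theta+m\beta+zI_N}{\sqrt{(2\pi)^{\theta}}(\xi_\theta^2-\lambda^2)}e^{i\widetilde{\varepsilon}\xi_\theta}\,d\xi_\theta,
\end{equation*}
with $\lambda:=\sqrt{z^2-m^2-|\xi'|^2}$. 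The branch convention fixed in Section~\ref{sec_not} guarantees $\mathrm{Im}\,\lambda>0$ for every $z\in\C\setminus\R$ and $\xi'\in\R^{\theta-1}$, because $z^2-m^2-|\xi'|^2$ lies in $\C\setminus[0,\infty)$ (this needs a brief verification using $z\notin\R$). The integrand is meromorphic in $\xi_\theta$ with simple poles at $\pm\lambda$, one in each open half-plane, and the prefactor $e^{i\widetilde{\varepsilon}\xi_\theta}$ decays exponentially in the upper (resp.\ lower) half-plane when $\widetilde{\varepsilon}>0$ (resp.\ $\widetilde{\varepsilon}<0$).

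Third, I would close the contour in the appropriate half-plane and apply the residue theorem, using Jordan's lemma to discard the semicircular contribution (the matrix numerator only grows linearly, so Jordan's lemma applies after splitting off the bounded piece). For $\widetilde{\varepsilon}>0$ only the pole at $\xi_\theta=\lambda$ contributes, with residue $(\widetilde{\alpha}'\cdot\xi'+\widetilde{\alpha}_\theta\lambda+m\beta+zI_N)/(2\lambda)\cdot e^{i\widetilde{\varepsilon}\lambda}$; for $\widetilde{\varepsilon}<0$ one picks up the pole at $\xi_\theta=-\lambda$ with an extra minus sign from the orientation, producing the analogous expression with $\widetilde{\alpha}_\theta$ replaced by $-\widetilde{\alpha}_\theta$ and $e^{i\widetilde{\varepsilon}\lambda}$ by $e^{-i\widetilde{\varepsilon}\lambda}=e^{i|\widetilde{\varepsilon}|\lambda}$. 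Collecting the numerical constants via
\begin{equation*}
\frac{1}{\sqrt{2\pi}}\cdot\frac{2\pi i}{\sqrt{(2\pi)^{\theta}}}=\frac{i}{\sqrt{(2\pi)^{\theta-1}}}
\end{equation*}
and splitting the fraction as
\begin{equation*}
\frac{\widetilde{\alpha}'\cdot\xi'+\widetilde{\alpha}_\theta\,\mathrm{sign}(\widetilde{\varepsilon})\lambda+m\beta+zI_N}{2\lambda}=\frac{1}{2}\left(\frac{\widetilde{\alpha}'\cdot\xi'+m\beta+zI_N}{\lambda}+\widetilde{\alpha}_\theta\,\mathrm{sign}(\widetilde{\varepsilon})\right)
\end{equation*}
assembles the two cases into the single claimed formula. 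I expect the only genuinely delicate point to be verifying that $G_z(\kappa(\cdot,\widetilde{\varepsilon}))$ is in $L^1(\R^{\theta-1};\C^{N\times N})$ so that $\mathcal{F}$ acts classically — this however follows directly from the explicit exponential decay built into \eqref{eq_G_z_2D}--\eqref{eq_G_z_3D} together with $\mathrm{Im}\sqrt{z^2-m^2}>0$, and the contour argument itself is a routine application of residues.
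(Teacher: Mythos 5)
Your proposal is correct, and the overall architecture matches the paper: both pass through the symbol $(2\pi)^{-\theta/2}(\alpha\cdot\xi+m\beta+zI_N)/(|\xi|^2+m^2-z^2)$, handle the rotation $\kappa$ the same way, and then reduce the claim to an inverse partial Fourier transform in the last variable. The one genuine divergence is in how that last step is carried out. The paper does not compute $\mathcal{F}_2^{-1}$ directly; instead it \emph{states} the closed-form answer (eq.~\eqref{eq_G_z_Fourier_Transform_2}) and verifies it by applying $\mathcal{F}_2$ to that candidate and checking that one recovers eq.~\eqref{eq_G_z_Fourier_Transform_1} — an elementary forward integration of $e^{i|x_\theta|\lambda - ix_\theta\xi_\theta}$ over $\R$. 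You instead derive the answer constructively by closing a contour and picking up the pole at $\pm\lambda$, with Jordan's lemma handling the linearly growing $\widetilde{\alpha}_\theta\xi_\theta$ piece. Both routes are rigorous; the paper's is marginally shorter to write since the verification integral is trivial, while yours is self-contained and does not presuppose the final formula. Your residue bookkeeping (factor $2\pi i$, orientation sign for $\widetilde\varepsilon<0$, the split $(\widetilde\alpha'\cdot\xi'+\widetilde\alpha_\theta\,\mathrm{sign}(\widetilde\varepsilon)\lambda+m\beta+z)/(2\lambda)$) all checks out. One minor imprecision: you say $z^2-m^2-|\xi'|^2\in\C\setminus[0,\infty)$ ``for every $z\in\C\setminus\R$'', but the lemma allows any $z\in\rho(H)$, which includes real $z\in(-|m|,|m|)$; for those the quantity is negative, so the conclusion still holds — just broaden the parenthetical.
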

	\begin{proof}
		Let $\mathcal{F}$, $\mathcal{F}_1$, $\mathcal{F}_2$ and $\mathcal{F}_{1,2}$ be  defined as in Section~\ref{sec_not}~\eqref{it_Fourier}. First, we consider $\mathcal{F}_1 G_z(\kappa(\cdot))$.  Since $G_z(\kappa(\cdot)) \in L^1(\R^\theta;\C^{N\times N}) \subset \mathcal{S}'(\R^{\theta};\C^{N\times N})$, see \eqref{eq_G_z_2D}--\eqref{eq_G_z_3D}, the expression $\mathcal{F}_1 G_z(\kappa(\cdot))$ is well-defined in $\mathcal{S}'(\R^{\theta};\C^{N\times N})$. Moreover,  $\mathcal{F}_1 G_z(\kappa(\cdot)) = \mathcal{F}_{2}^{-1}\mathcal{F}_{1,2} G_z (\kappa(\cdot))$. Thus, we calculate $\mathcal{F}_{1,2}G_z(\kappa(\cdot))$ next. The function $G_z$ satisfies the equation $(-i(\alpha \cdot \nabla )+m\beta- zI_N)G_z =  \delta I_N $, with $\delta$ denoting the $\delta$ distribution supported in $\{0\}$. Hence, the standard rules for the Fourier transform, see \cite[Chapter IX]{RS75}, show 
		\begin{equation*}
			\big(\alpha \cdot (\cdot)+ m\beta - zI_N\big)\mathcal{F}_{1,2}G_z= \frac{1}{\sqrt{(2\pi)^{\theta}}}I_N \quad \textup{in }\mathcal{S}'(\R^\theta;\C^{N\times N}). 
		\end{equation*}
		Furthermore, since $G_z \in L^1(\R^{\theta};\C^{N\times N})$, we have by the Riemann-Lebesgue lemma $\mathcal{F}_{1,2}G_z \in C_0(\R^\theta;\C^{N\times N})$. Using the properties of $\alpha_j$, $j \in \{1, \dots ,\theta\}$, and $\beta$ yields $(\alpha \cdot \xi+ m\beta - zI_N)^{-1} = \frac{\alpha \cdot \xi+ m\beta + zI_N}{\abs{\xi}^2 + m^2 - z^2}$ for $\xi \in \R^{\theta}$. Thus, 
		\begin{equation*}
			(\mathcal{F}_{1,2}G_z)(\kappa \xi)= \frac{\alpha \cdot (\kappa \xi) + m\beta + zI_N}{ (\abs{\kappa \xi}^2 + m^2 - z^2) \sqrt{(2\pi)^{\theta}}}, \quad  \xi \in \R^\theta.
		\end{equation*}
		Since $\kappa \in \textup{SO}(\theta)$, we obtain
		\begin{equation}\label{eq_G_z_Fourier_Transform_1}
			\mathcal{F}_{1,2}G_z(\kappa(\cdot))(\xi) = (\mathcal{F}_{1,2}G_z)(\kappa \xi)= \frac{\widetilde{\alpha} \cdot  \xi + m\beta + zI_N}{ (\abs{\xi}^2 + m^2 - z^2) \sqrt{(2\pi)^{\theta}} }, \quad \xi \in \R^{\theta }.
		\end{equation}
		Next, we determine $\mathcal{F}_2^{-1}\mathcal{F}_{1,2}G_z$. We claim that for a.e. $(\xi',x_\theta) \in \R^\theta$ the equation
		\begin{equation}\label{eq_G_z_Fourier_Transform_2}
			\begin{aligned}
				(\mathcal{F}_2^{-1}\mathcal{F}_{1,2}&G_z(\kappa(\cdot)))(\xi',x_\theta)\\
				&=\bigg(\frac{\widetilde{\alpha}'\cdot \xi'+ m \beta + zI_N}{\sqrt{z^2-m^2-|\xi'|^2}} + \widetilde{\alpha}_\theta\sign(x_{\theta})\bigg) \cdot\frac{ie^{\abs{x_\theta}i\sqrt{z^2-m^2-\abs{\xi'}^2}}}{2\sqrt{(2\pi)^{\theta-1}}}
			\end{aligned}
		\end{equation} 
		holds.
		We verify \eqref{eq_G_z_Fourier_Transform_2} by applying $\mathcal{F}_2$ and comparing the result with  \eqref{eq_G_z_Fourier_Transform_1}. Since the right hand side of \eqref{eq_G_z_Fourier_Transform_2} decays exponentially for $|x_\theta|\to \infty$, we can use the integral representation of the Fourier transform. Hence, simple integration gives us for $\xi= (\xi',\xi_\theta) \in \R^\theta$
		\begin{equation*}
			\begin{aligned}
				&\frac{1}{\sqrt{2\pi}}\int_\R \bigg(\frac{\widetilde{\alpha}'\cdot \xi'+ m \beta + zI_N}{\sqrt{z^2-m^2-|\xi'|^2}} + \widetilde{\alpha}_\theta\sign(x_\theta)\bigg) \\
				&\hspace{55 pt}\cdot\frac{ie^{i(-x_\theta \xi_\theta +\abs{x_\theta}\sqrt{z^2-m^2-\abs{\xi'}^2})}}{2 \sqrt{(2\pi)^{\theta-1}}} dx_\theta
				= \frac{\widetilde{\alpha} \cdot  \xi + m\beta + zI_N}{ (\abs{\xi}^2 + m^2 - z^2) \sqrt{(2 \pi)^{\theta}}},
			\end{aligned}
		\end{equation*}
		which verifies \eqref{eq_G_z_Fourier_Transform_2}. 
		Therefore, $\mathcal{F}_1 G_z(\kappa(\cdot))= \mathcal{F}_2^{-1}\mathcal{F}_{1,2}G_z(\kappa(\cdot))$ can be represented by the function 
		\begin{equation*}
			\begin{aligned}
				\R^{\theta} \ni	(\xi',x_\theta) \mapsto \bigg(\frac{\widetilde{\alpha}'\cdot \xi'+ m \beta + zI_N}{\sqrt{z^2-m^2-|\xi'|^2}} + \widetilde{\alpha}_\theta\sign(x_\theta)\bigg)
				\frac{ie^{|x_\theta|i\sqrt{z^2-m^2-|\xi'|^2}}}{2\sqrt{(2\pi)^{\theta-1}}}.
			\end{aligned}
		\end{equation*}
		Furthermore, since $G_z(\kappa(\cdot,\widetilde{\varepsilon}))\in L^1(\R^{\theta-1};\C^{N\times N})$ for $\widetilde{\varepsilon} \neq 0$, this shows that for $\widetilde{\varepsilon} \neq 0$ and $\xi' \in \R^{\theta-1}$ there holds
		\begin{equation*}
			\begin{aligned}
				\mathcal{F}G_z(\kappa(\cdot,\widetilde{\varepsilon})) (\xi') &=\frac{1}{\sqrt{(2\pi)^{\theta-1}}}\int_{\R^{\theta-1}} G_z(\kappa(x',\widetilde{\varepsilon}))e^{-i \langle x', \xi' \rangle} \, dx'  \\
				&= \mathcal{F}_1G_z(\kappa(\cdot))(\xi',\widetilde{\varepsilon}) \\
				&= \bigg(\frac{\widetilde{\alpha}'\cdot\xi' + m \beta + zI_N}{\sqrt{z^2-m^2-\abs{\xi'}^2}} + \widetilde{\alpha}_\theta\sign(\widetilde{\varepsilon})\bigg)\frac{ie^{\abs{\widetilde{\varepsilon}}i\sqrt{z^2-m^2-\abs{\xi'}^2}}}{2\sqrt{(2\pi)^{\theta-1}}}.
			\end{aligned}
		\end{equation*}
	\end{proof}
	
	\begin{proposition}\label{prop_Fourier_D}
		Let  $z \in \rho(H)$, $\varepsilon >0$, and $\mathcal{F}$ be the $(\theta-1)$-dimensional Fourier transform defined in Section~\ref{sec_not}~\eqref{it_Fourier}. Then, there holds for $f \in \mathcal{B}^0(\R^{\theta-1})$
		\begin{equation}\label{eq_Fourier_Transform_eps}
			\begin{aligned}
				&\mathcal{F}D_\varepsilon^{0,\kappa}(z)\mathcal{F}^{-1}f(t)(\xi') 
				= \int_{-1}^1 \bigg(\frac{\widetilde{\alpha}'\cdot \xi'   + m \beta + zI_N}{\sqrt{z^2-m^2-\abs{\xi'}^2}} + \widetilde{\alpha}_\theta\sign(t-s)\bigg) \\
				&\hspace{ 150 pt}\cdot\frac{ie^{\abs{\varepsilon(t-s)}i\sqrt{z^2-m^2-\abs{\xi'}^2 }}}{2}f(s)(\xi') \, ds
			\end{aligned}
		\end{equation}
		and 
		\begin{equation}\label{eq_Fourier_Transform_0}
			\begin{aligned} \mathcal{F}D_0^{0,\kappa}(z)\mathcal{F}^{-1}f(t)(\xi') &= \int_{-1}^1 \bigg(\frac{\widetilde{\alpha}'\cdot \xi'   + m \beta + zI_N}{\sqrt{z^2 - m^2 - \abs{\xi'}^2}} + 		\widetilde{\alpha}_\theta\sign(t-s)\bigg)\\
				&\hspace{150 pt}\cdot\frac{if(s)(\xi')}{2} \, ds
			\end{aligned}
		\end{equation}
		for a.e $(t,\xi') \in (-1,1) \times  \R^{\theta-1}$. In particular, the operators $D_\varepsilon^{0,\kappa}(z)$ and $D_0^{0,\kappa}(z)$ depend continuously on $\kappa \in \textup{SO}(\theta)$ with respect to the operator norm.
	\end{proposition}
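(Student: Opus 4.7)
The strategy is to compute $\mathcal{F}D_\varepsilon^{0,\kappa}(z)\mathcal{F}^{-1}$ directly from the kernel representation \eqref{eq_D_eps_straight} using Lemma~\ref{lem_Fourier_transform_G_z_eps}, then to extract the $\varepsilon=0$ formula by passage to the limit $\varepsilon\downarrow 0$, and finally to read off the continuity in $\kappa$ from the resulting explicit symbol. To establish \eqref{eq_Fourier_Transform_eps}, fix $\varepsilon>0$ and $f$ in the dense subspace of $\mathcal{B}^0(\R^{\theta-1})$ of smooth compactly supported functions in $t$ with values in $\mathcal{S}(\R^{\theta-1};\C^N)$. The $y'$-integral in \eqref{eq_D_eps_straight} is the convolution of $f(s)(\cdot)$ with $G_z(\kappa(\cdot,\varepsilon(t-s)))\in L^1(\R^{\theta-1};\C^{N\times N})$, so Fubini together with the convolution theorem gives
\begin{equation*}
(\mathcal{F}D_\varepsilon^{0,\kappa}(z)f)(t)(\xi') = \sqrt{(2\pi)^{\theta-1}}\int_{-1}^{1}\mathcal{F}G_z(\kappa(\cdot,\varepsilon(t-s)))(\xi')\,(\mathcal{F}f(s))(\xi')\,ds,
\end{equation*}
and inserting Lemma~\ref{lem_Fourier_transform_G_z_eps} yields \eqref{eq_Fourier_Transform_eps} on this dense subset; uniform boundedness from Proposition~\ref{prop_conv_res} and the unitarity of $\mathcal{F}$ extend it to all of $\mathcal{B}^0(\R^{\theta-1})$.

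For \eqref{eq_Fourier_Transform_0}, the branch convention $\mathrm{Im}\sqrt{\cdot}\geq 0$ gives $|e^{|\varepsilon(t-s)|i\sqrt{z^2-m^2-|\xi'|^2}}|\leq 1$ with pointwise limit $1$ as $\varepsilon\downarrow 0$, while the matrix prefactor
\begin{equation*}
\frac{\widetilde{\alpha}'\cdot\xi'+m\beta+zI_N}{\sqrt{z^2-m^2-|\xi'|^2}} + \widetilde{\alpha}_\theta\,\sign(t-s)
\end{equation*}
is bounded uniformly in $(\xi',t,s)$ because $z\in\rho(H)$ keeps the denominator away from zero while $|\xi'|/|\sqrt{z^2-m^2-|\xi'|^2}|$ remains bounded. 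Dominated convergence, first pointwise in $(\xi',t)$ and then in the $\mathcal{B}^0$-norm, identifies the $\varepsilon\downarrow 0$ limit of the right-hand side of \eqref{eq_Fourier_Transform_eps} with the right-hand side of \eqref{eq_Fourier_Transform_0}; the strong convergence \eqref{eq_D_strong_conv} together with the unitarity of $\mathcal{F}$ confirms that this limit equals $\mathcal{F}D_0^{0,\kappa}(z)\mathcal{F}^{-1}f$.

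For the continuity statement, both \eqref{eq_Fourier_Transform_eps} and \eqref{eq_Fourier_Transform_0} express $\mathcal{F}D_\varepsilon^{0,\kappa}(z)\mathcal{F}^{-1}$ as a finite linear combination
\begin{equation*}
\sum_{j=1}^{\theta-1}\widetilde{\alpha}_j\,M_j^\varepsilon(z) + (m\beta+zI_N)\,M_0^\varepsilon(z) + \widetilde{\alpha}_\theta\,M_\theta^\varepsilon(z),
\end{equation*}
where each $M_\ell^\varepsilon(z)$ is obtained by stripping off the $\widetilde{\alpha}_\ell$-coefficient and is therefore a bounded operator on $\mathcal{B}^0(\R^{\theta-1})$ that does \emph{not} depend on $\kappa$. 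Since $\kappa\mapsto\widetilde{\alpha}_j=\alpha\cdot\kappa e_j$ is continuous with values in $\C^{N\times N}$, the norm continuity of $D_\varepsilon^{0,\kappa}(z)$ and $D_0^{0,\kappa}(z)$ in $\kappa$ follows. The main technical point is verifying the $L^\infty$-boundedness (and hence $\mathcal{B}^0$-boundedness) of the scalar symbols defining $M_\ell^0(z)$ when no exponential damping is available: this reduces to the standard fact that $\xi'\mapsto \xi'_j/\sqrt{z^2-m^2-|\xi'|^2}$ is uniformly bounded on $\R^{\theta-1}$ for every $z\in\rho(H)$.
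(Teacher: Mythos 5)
Your proof is correct and follows essentially the same route as the paper: for $\varepsilon>0$ you use the kernel representation \eqref{eq_D_eps_straight}, the convolution theorem, and Lemma~\ref{lem_Fourier_transform_G_z_eps}; for $\varepsilon=0$ you pass to the limit via dominated convergence and identify it with $D_0^{0,\kappa}(z)f$ through the strong convergence \eqref{eq_D_strong_conv} and unitarity of $\mathcal{F}$; and for $\kappa$-continuity you factor $\widetilde{\alpha}_j$ out of $\kappa$-independent bounded operators. The only cosmetic difference is that you argue first on a dense subspace and then extend, whereas the paper invokes the convolution theorem \cite[Theorem~IX.4]{RS75} directly for $f \in \mathcal{B}^0(\R^{\theta-1})$, and you spell out the $L^\infty$-boundedness of the scalar symbols which the paper leaves implicit.
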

	
	\begin{proof}
		We start with the case  $\varepsilon >0$. Equation \eqref{eq_D_eps_straight} shows 
		\begin{equation*}
			D_{\varepsilon}^{0,\kappa}(z)f(t) = \int_{-1}^1G_z\bigl(\kappa(\cdot,\varepsilon(t-s))\bigr)\ast f(s) \,ds
		\end{equation*} 
		for a.e. $t \in (-1,1)$ and all  $f \in \mathcal{B}^0(\R^{\theta-1})$. Thus, Lemma \ref{lem_Fourier_transform_G_z_eps} and \cite[Theorem~IX.4]{RS75}  prove the statement for $\varepsilon>0$. It remains to consider the operator $D_0^{0,\kappa}(z)$. We start by defining 
		\begin{equation*}
			\begin{aligned}
				\widetilde{D}_0^{0,\kappa}(z) &:  \mathcal{B}^0(\R^{\theta-1}) \to \mathcal{B}^0(\R^{\theta-1})\\
				\widetilde{D}_0^{0,\kappa}(z)f(t)(\xi') &:= \int_{-1}^1 \bigg(\frac{\widetilde{\alpha}'\cdot \xi'   + m \beta + zI_N}{\sqrt{z^2-m^2-\abs{\xi'}^2}} + 		\widetilde{\alpha}_\theta\sign(t-s)\bigg)\frac{i f(s)(\xi')}{2} \, ds.
			\end{aligned}
		\end{equation*}
		Next, let  $f \in \mathcal{B}^{0}(\R^{\theta-1})$. Using \eqref{eq_Fourier_Transform_eps}  and dominated convergence, see \cite[Proposition 1.2.5]{HNVW16}, one obtains that $\mathcal{F} D_\varepsilon^{0,\kappa}(z) \mathcal{F}^{-1}f$ converges for $\varepsilon \to 0$ to $\widetilde{D}_0^{0,\kappa}(z) f$ in $\mathcal{B}^0(\R^{\theta-1})$.  Thus, the boundedness of $\mathcal{F}$ and $\mathcal{F}^{-1}$ in $L^2(\R^{\theta-1};\C^N)$ (and therefore also in $\mathcal{B}^0(\R^{\theta-1})$; cf. Section~\ref{sec_not}~\eqref{it_Bochner}) implies that $D_\varepsilon^{0,\kappa}(z)f$ converges to $\mathcal{F}^{-1}\widetilde{D}_0^{0,\kappa}(z)\mathcal{F}f$ in $\mathcal{B}^0(\R^{\theta-1})$. Moreover, by \eqref{eq_D_strong_conv}  $D_\varepsilon^{0,\kappa}(z) f$ converges to $D_0^{0,\kappa}(z)f$ in $ \mathcal{B}^0(\R^{\theta-1})$. Hence, $D_0^{0,\kappa}(z) f = \mathcal{F}^{-1}\widetilde{D}_0^{0,\kappa}(z)\mathcal{F}f$ which proves \eqref{eq_Fourier_Transform_0}. 
		
		It remains to verify the continuous dependence on $\kappa$. Taking the definition of $\widetilde{\alpha}_1, \dots, \widetilde{\alpha}_\theta$, see the text before Lemma~\ref{lem_Fourier_transform_G_z_eps}, and $D_\varepsilon^{0,\kappa}(z)$ and $D_0^{0,\kappa}(z)$ in \eqref{eq_Fourier_Transform_eps} and \eqref{eq_Fourier_Transform_0}, respectively, into account, one sees that all terms that depend on $\kappa$ can be taken out of the integrals. This implies the claimed continuity.
	\end{proof}
	The structure of $\mathcal{F} D_\varepsilon^{0,\kappa}(z) \mathcal{F}^{-1}$ and $\mathcal{F} D_0^{0,\kappa}(z) \mathcal{F}^{-1}$ inspires us to change our viewpoint. Namely, instead of viewing theses operators in $\mathcal{B}^0(\R^{\theta-1})$ we consider them as operators in the isometrically isomorphic space $L^2(\R^{\theta-1};L^2((-1,1);\C^N))$. In \cite[Chapter~XIII.16]{RS77} and generally in the context of direct integrals the notation $\int_{\R^{\theta-1}}^\oplus L^2((-1,1);\C^N) \, d\xi'$ for this space is also common.
	Considered as operators acting in this space  $\mathcal{F} D_\varepsilon^{0,\kappa}(z) \mathcal{F}^{-1}$ and $\mathcal{F} D_0^{0,\kappa}(z) \mathcal{F}^{-1}$ are decomposable direct integral operators with fibers which are defined for 
	 $\varepsilon \geq  0$, $\xi' \in \R^{\theta-1}$, and $z \in \rho(H)$ by
		\begin{equation}\label{eq_def_frak_d}
			\begin{aligned}
				\mathfrak{D}_{\varepsilon,\xi'}({z})&: L^2((-1,1);\C^N) \to  L^2((-1,1);\C^N),\\
				\mathfrak{D}_{\varepsilon,\xi'}({z})f(t)&:=\int_{-1}^1 \bigg(\frac{\widetilde{\alpha}'\cdot \xi'   + m \beta + zI_N}{\sqrt{z^2-m^2-\abs{\xi'}^2}} + \widetilde{\alpha}_\theta\sign(t-s)\bigg)\\
				&\hspace{100 pt}\cdot\frac{ie^{\abs{\varepsilon(t-s)}i\sqrt{z^2-m^2-\abs{\xi'}^2}}}{2}f(s) \,ds.
			\end{aligned}
		\end{equation}
	\begin{remark}
		These operators still depend on the rotation matrix $\kappa \in \textup{SO}(\theta)$ since $\widetilde{\alpha}_1, \dots, \widetilde{\alpha}_\theta$ depend on $\kappa$. However, since we use these operators only as auxiliary operators in this section, we omit $\kappa$.
	\end{remark}
	
	Next, we explain the ideas described before \eqref{eq_def_frak_d} in a more rigorous way. Using 
	\begin{equation*}
		\begin{aligned}
			\mathcal{B}^0(\R^{\theta-1}) &= L^2((-1,1);L^2(\R^{\theta-1};\C^N)) \\
			&\simeq L^2((-1,1) \times \R^{\theta-1};\C^N) \simeq L^2(\R^{\theta-1};L^2((-1,1);\C^N)),
		\end{aligned}
	\end{equation*}
	see \cite[Corollary~1.2.23 and Proposition~1.2.24]{HNVW16}, allows us to define the isometric isomorphism
	\begin{equation*}
		\begin{aligned}
				\mathfrak{i}: \mathcal{B}^0(\R^{\theta-1}) &\to  L^2(\R^{\theta-1};L^2((-1,1);\C^N)),\\
			\mathfrak{i}  f(\xi')(t)&:= \mathcal{F}f(t)(\xi') \quad \text{for a.e. } (\xi',t) \in \R^{\theta-1}\times(-1,1). 
		\end{aligned}
	\end{equation*}
	Thus, by Proposition~\ref{prop_Fourier_D} and \eqref{eq_def_frak_d} we obtain for $\varepsilon\geq0$ that
	\begin{equation*}
			\mathfrak{i}  D^{0,\kappa}_\varepsilon({z}) \mathfrak{i}^{-1} :  L^2(\R^{\theta-1};L^2((-1,1);\C^N)) \to  L^2(\R^{\theta-1};L^2((-1,1);\C^N))
	\end{equation*}
	acts for $f \in L^2(\R^{\theta-1};L^2((-1,1);\C^N))$ and fixed $\xi' \in \R^{\theta-1}$  as 	$ \mathfrak{D}_{\varepsilon,\xi'}({z}) f(\xi')$,
	i.e.  $\mathfrak{i} D^{0,\kappa}_\varepsilon({z}) \mathfrak{i}^{-1}$ is a direct integral operator which can be decomposed in the fiber operators $\mathfrak{D}_{\varepsilon,\xi'}({z})$; cf. \cite[Chapter~XIII.16]{RS77}.

	Now, we  use the  theory of direct integrals in order to transfer results regarding $\mathfrak{D}_{\varepsilon,\xi'}({z})$ to ${D}^{0,\kappa}_\varepsilon({z})$ and vice versa.
	We formulate this in the upcoming lemma which follows from \cite[Theorem XIII.83 and Theorem XIII.84]{RS77}. There, we denote by $\mathcal{L}(L^2((-1,1);\C^N))$ the space of all bounded operators in $L^2((-1,1);\C^N)$.
	\begin{lemma}\label{lem_decomp_op}
		Let $\mathfrak{M} \in L^\infty(\R^{\theta-1};\mathcal{L}(L^2((-1,1);\C^N)))$ and $M$ be defined by
		\begin{equation*}
			M : \mathcal{B}^0(\R^{\theta-1}) \to \mathcal{B}^0(\R^{\theta-1}), \qquad
			\mathfrak{i}  M \mathfrak{i}^{-1} f(\xi') := \mathfrak{M}(\xi') f(\xi'). 
		\end{equation*}
		Then, 
		\begin{equation*}
			\begin{aligned}
			\| M \|_{0 \to 0} &=  \|\mathfrak{M}\|_{L^\infty(\R^{\theta-1};\mathcal{L}(L^2((-1,1);\C^N)))} \\
			&= \textup{ess sup}_{\xi' \in \R^{\theta -1}} \| \mathfrak{M}(\xi')\|_{L^2((-1,1);\C^N) \to L^2((-1,1);\C^N)}
			\end{aligned}
		\end{equation*}
		 and the operator $M$ is continuously invertible if and only if $\mathfrak{M}$ is continuously invertible a.e. and the inequality ${\|\mathfrak{M}^{-1}\|_{L^\infty(\R^{\theta-1};\mathcal{L}(L^2((-1,1);\C^N)))} < \infty}$ holds. Furthermore, in this case $\|M^{-1}\|_{0\to0} = \|\mathfrak{M}^{-1}\|_{L^\infty(\R^{\theta-1};\mathcal{L}(L^2((-1,1);\C^N)))} $.
	\end{lemma}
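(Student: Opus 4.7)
The plan is to realize $M$ as a decomposable operator on a direct integral of Hilbert spaces and then invoke the standard theory from \cite[Theorems~XIII.83 and~XIII.84]{RS77}. First, I would observe that the isometric isomorphism $\mathfrak{i}: \mathcal{B}^0(\R^{\theta-1}) \to L^2(\R^{\theta-1};L^2((-1,1);\C^N))$ from the text above \eqref{eq_def_frak_d} identifies $\mathcal{B}^0(\R^{\theta-1})$ with the constant-fiber direct integral $\int_{\R^{\theta-1}}^\oplus L^2((-1,1);\C^N)\,d\xi'$. Under this identification, the definition $\mathfrak{i} M\mathfrak{i}^{-1} f(\xi') = \mathfrak{M}(\xi')f(\xi')$ says exactly that $\mathfrak{i} M\mathfrak{i}^{-1}$ is the decomposable operator with fibers $\mathfrak{M}(\xi')$, and the assumption $\mathfrak{M}\in L^\infty(\R^{\theta-1};\mathcal{L}(L^2((-1,1);\C^N)))$ guarantees the measurability and essential boundedness required for this to be a bounded operator on the direct integral.

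Next, the norm identity is a direct application of \cite[Theorem~XIII.83]{RS77}, which states that for a decomposable operator the operator norm coincides with the essential supremum of the norms of its fibers; since $\mathfrak{i}$ is a unitary isomorphism, $\|M\|_{0\to 0}=\|\mathfrak{i} M\mathfrak{i}^{-1}\|$, and the right-hand side equals $\textup{ess sup}_{\xi'}\|\mathfrak{M}(\xi')\|$, which is by definition $\|\mathfrak{M}\|_{L^\infty(\R^{\theta-1};\mathcal{L}(L^2((-1,1);\C^N)))}$.

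For the invertibility statement I would use \cite[Theorem~XIII.84]{RS77}, according to which a decomposable operator is continuously invertible exactly when its fibers are invertible almost everywhere and the map $\xi'\mapsto\mathfrak{M}(\xi')^{-1}$ is essentially bounded; in that case the inverse is again decomposable, with fibers $\mathfrak{M}(\xi')^{-1}$. Transporting this back along $\mathfrak{i}$ yields the claimed equivalence and, together with the norm identity established in the previous step, the formula $\|M^{-1}\|_{0\to 0}=\|\mathfrak{M}^{-1}\|_{L^\infty(\R^{\theta-1};\mathcal{L}(L^2((-1,1);\C^N)))}$.

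Since the lemma reduces cleanly to two named theorems of Reed--Simon, I do not anticipate a genuine obstacle; the only point that requires a moment of care is checking that the hypothesis $\mathfrak{M}\in L^\infty(\R^{\theta-1};\mathcal{L}(L^2((-1,1);\C^N)))$ supplies the weak measurability needed for \cite[Theorems~XIII.83 and~XIII.84]{RS77}, and that in the invertibility direction the essential boundedness of $\xi'\mapsto\mathfrak{M}(\xi')^{-1}$ is precisely the hypothesis assumed.
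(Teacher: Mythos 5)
Your proposal is correct and takes the same route as the paper: the paper also realizes $\mathfrak{i} M \mathfrak{i}^{-1}$ as a decomposable operator on the constant-fiber direct integral $\int_{\R^{\theta-1}}^\oplus L^2((-1,1);\C^N)\,d\xi'$ and cites \cite[Theorems~XIII.83 and~XIII.84]{RS77} for the norm identity and the invertibility criterion, exactly as you do.
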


	Next, we study the operator $\mathfrak{D}_{\varepsilon,\xi'}(z)$ in detail. For this purpose, we introduce for $\rho \in (0,\infty)$ and $w' \in \R^{\theta-1}$ with $\abs{w'} = 1$ the auxiliary operator
	\begin{equation*}
		\begin{aligned}
			\mathfrak{H}_{\rho,w'} :L^2((-1,1);\C^N) &\to L^2((-1,1);\C^N) \\
			(\mathfrak{H}_{\rho,w'} f) (t) &:= \int_{-1}^{1} \big( \widetilde{\alpha}'\cdot w' +  i  \widetilde{\alpha}_\theta \sign(t-s) ) \frac{e^{-\rho\abs{t-s}}}{2}f(s)\,ds.
		\end{aligned}
	\end{equation*}
	It is not difficult to check that $\mathfrak{H}_{\rho,w'}$ is a self-adjoint Hilbert-Schmidt operator.
	
	\begin{lemma}\label{lem_diff_frak_operators} 
		Let  $\varepsilon > 0$, $ \xi' \in \R^{\theta-1} \setminus \{0\}$, and $z \in \rho(H)$. Then, there exists a constant $C_1>0$ which only depends on $m$ and $z$ such that
		\begin{equation*}
			\begin{aligned}
				\norm{ \mathfrak{D}_{\varepsilon,\xi'}({z})-\mathfrak{D}_{0,\xi'}({z}) }_{L^2( 	(-1,1); \C^N) \to L^2((-1,1) ; \C^N) }  &\leq  C_1 \varepsilon (1+\abs{\xi'}),\\
				\norm{ 	\mathfrak{D}_{\varepsilon,\xi'}({z})-\mathfrak{H}_{\abs{\xi'}\varepsilon,\xi'/\abs{\xi'}}}_{L^2( (-1,1); \C^N) \to L^2((-1,1) ; \C^N) }  &\leq \frac{C_1}{1+\abs{\xi'}}.
			\end{aligned}
		\end{equation*}
	\end{lemma}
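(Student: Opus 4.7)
My plan is to bound both operator norms by pointwise estimates on the integral kernels of the difference operators on $(-1,1)\times(-1,1)$, then to pass to the $L^2((-1,1);\C^N)$ operator norm via Schur's test. Writing $w := \sqrt{z^2-m^2-|\xi'|^2}$ with $\textup{Im}\,w>0$, I first note that $z \in \rho(H) = \C\setminus((-\infty,-|m|]\cup[|m|,\infty))$ implies a two-sided bound $c(1+|\xi'|)\leq |w| \leq C(1+|\xi'|)$ for constants depending only on $m$ and $z$; this follows from the asymptotic $w \sim i\sqrt{|\xi'|^2+m^2-z^2}$ together with a short case analysis depending on whether $z$ is real with $|z|<|m|$, purely imaginary, or neither. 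Consequently the matrix
\[
M_{\pm}(\xi',t,s) \;:=\; \tfrac{\widetilde{\alpha}'\cdot\xi' + m\beta + zI_N}{w} + \widetilde{\alpha}_\theta\sign(t-s)
\]
appearing in \eqref{eq_def_frak_d} has uniformly bounded Frobenius norm in $\xi'$.

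For the first estimate, the kernel of $\mathfrak{D}_{\varepsilon,\xi'}(z)-\mathfrak{D}_{0,\xi'}(z)$ equals $\tfrac{i}{2}M_{\pm}(\xi',t,s)\bigl(e^{|\varepsilon(t-s)|iw}-1\bigr)$. Since $\textup{Re}\bigl(|\varepsilon(t-s)|iw\bigr) = -|\varepsilon(t-s)|\textup{Im}\,w \leq 0$, the representation $e^a-1 = a\int_0^1 e^{sa}\,ds$ gives $|e^a-1|\leq|a|$ whenever $\textup{Re}\,a\leq 0$, hence $|e^{|\varepsilon(t-s)|iw}-1|\leq 2\varepsilon|w|\leq C\varepsilon(1+|\xi'|)$. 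Schur's test on $(-1,1)^2$ then yields the first bound.

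For the second estimate the crucial algebraic identity is $|\xi'|^2-(iw)^2 = z^2-m^2$, which gives
\[
|iw+|\xi'|| \;=\; \tfrac{|z^2-m^2|}{||\xi'|-iw|}\;\leq\;\tfrac{C}{1+|\xi'|},
\]
using that $\textup{Re}\bigl(|\xi'|-iw\bigr) = |\xi'|+\textup{Im}\,w \geq c(1+|\xi'|)$. I decompose the kernel of $\mathfrak{D}_{\varepsilon,\xi'}(z)-\mathfrak{H}_{|\xi'|\varepsilon,\xi'/|\xi'|}$ into a \emph{symbol error} $\tfrac{1}{2}\bigl(i\tfrac{\widetilde{\alpha}'\cdot\xi'+m\beta+zI_N}{w}-\widetilde{\alpha}'\cdot\tfrac{\xi'}{|\xi'|}\bigr)e^{|\varepsilon(t-s)|iw}$ plus an \emph{exponential error} $\tfrac{1}{2}\bigl(\widetilde{\alpha}'\cdot\tfrac{\xi'}{|\xi'|}+i\widetilde{\alpha}_\theta\sign(t-s)\bigr)\bigl(e^{|\varepsilon(t-s)|iw}-e^{-|\xi'|\varepsilon|t-s|}\bigr)$. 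Rewriting $i/w-1/|\xi'| = (i|\xi'|-w)/(w|\xi'|)$ and combining the key identity with $|w|\geq c(1+|\xi'|)$ gives a pointwise bound $C/(1+|\xi'|)$ on the symbol error matrix, and since $|e^{|\varepsilon(t-s)|iw}|\leq 1$ the symbol-error contribution meets the target. For the exponential error both exponents have non-positive real part, so the mean-value bound $|e^c-e^b|\leq |c-b|\sup_{\theta\in[0,1]}|e^{\theta c+(1-\theta)b}|\leq|c-b|$ yields $|e^{|\varepsilon(t-s)|iw}-e^{-|\xi'|\varepsilon|t-s|}|\leq|\varepsilon(t-s)(iw+|\xi'|)|\leq 2C\varepsilon/(1+|\xi'|)$.

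The main obstacle is that the last bound depends linearly on $\varepsilon$, while $C_1$ must be $\varepsilon$-independent; I resolve this by a case split on the product $\varepsilon|\xi'|$. In the regime $\varepsilon|\xi'|\leq 1$ the pointwise estimate above already delivers a bound $\leq 2C/(1+|\xi'|)$. In the regime $\varepsilon|\xi'|\geq 1$ I abandon the cancellation between the two operators and control each individually: using $\textup{Im}\,w\geq c|\xi'|$ for $|\xi'|$ large, Schur's test applied to the kernels of $\mathfrak{D}_{\varepsilon,\xi'}(z)$ and $\mathfrak{H}_{|\xi'|\varepsilon,\xi'/|\xi'|}$ separately, integrating the factors $e^{-|\varepsilon(t-s)|\textup{Im}\,w}$ and $e^{-|\xi'|\varepsilon|t-s|}$, yields $\|\mathfrak{D}_{\varepsilon,\xi'}(z)\|,\|\mathfrak{H}_{|\xi'|\varepsilon,\xi'/|\xi'|}\| \leq C/(\varepsilon|\xi'|) \leq C/|\xi'| \leq 2C/(1+|\xi'|)$. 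Finally, for $|\xi'|$ bounded the uniform pointwise boundedness of both kernels gives a constant bound that is absorbed into $C_1/(1+|\xi'|)$ since $1+|\xi'|$ is then bounded, completing the proof.
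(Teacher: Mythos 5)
Your overall strategy is the same as the paper's: pointwise kernel estimates passed to the operator norm by Schur's test, a decomposition of the second difference into a ``symbol error'' and an ``exponential error'', and the crucial factorization $|\xi'|^2 - (iw)^2 = z^2 - m^2$ to gain the $1/(1+|\xi'|)$ decay. Your first estimate and your treatment of the symbol error are correct.

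However, there is a genuine gap in the treatment of the exponential error. Your pointwise bound $2C\varepsilon/(1+|\xi'|)$ does grow in $\varepsilon$, and the case split on $\varepsilon|\xi'|$ does not repair this. In the regime $\varepsilon|\xi'|\geq 1$ you invoke the chain $C/(\varepsilon|\xi'|)\leq C/|\xi'|$, but this inequality requires $\varepsilon\geq 1$, which is not implied by $\varepsilon|\xi'|\geq 1$; for instance, with $\varepsilon=1/|\xi'|$ and $|\xi'|$ large one has $\varepsilon|\xi'|=1$ yet $C/(\varepsilon|\xi'|)=C$ does not decay, while $C/|\xi'|\to 0$. More fundamentally, abandoning the cancellation and bounding $\mathfrak{D}_{\varepsilon,\xi'}(z)$ and $\mathfrak{H}_{|\xi'|\varepsilon,\xi'/|\xi'|}$ separately by Schur's test cannot give better than $O(1)$ in this borderline regime; the decay in $|\xi'|$ genuinely relies on the cancellation between the two kernels (or, equivalently, on the smallness of $|iw+|\xi'||$), and that must be exploited \emph{pointwise}.

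The correct fix is the one the paper uses: split pointwise in $(t,s)$ on whether $\varepsilon|t-s|\leq 1$ or $\varepsilon|t-s|>1$, not on $\varepsilon|\xi'|$. For $\varepsilon|t-s|\leq 1$, your mean-value bound yields
\begin{equation*}
\bigl|e^{|\varepsilon(t-s)|iw}-e^{-|\xi'|\varepsilon|t-s|}\bigr|\leq \varepsilon|t-s|\,\bigl|iw+|\xi'|\bigr| \leq \bigl|iw+|\xi'|\bigr| \leq \frac{C}{1+|\xi'|},
\end{equation*}
where the condition $\varepsilon|t-s|\leq 1$ is used to drop the factor $\varepsilon|t-s|$ entirely, independently of $\varepsilon$. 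For $\varepsilon|t-s|>1$, bound the two exponentials separately by
\begin{equation*}
\bigl|e^{|\varepsilon(t-s)|iw}-e^{-|\xi'|\varepsilon|t-s|}\bigr|\leq e^{-\varepsilon|t-s|\,\textup{Im}\,w}+e^{-\varepsilon|t-s||\xi'|}\leq e^{-\textup{Im}\,w}+e^{-|\xi'|}\leq\frac{C}{1+|\xi'|},
\end{equation*}
using $\textup{Im}\,w\geq c(1+|\xi'|)$ for a suitable $c>0$. This gives a pointwise kernel bound $C/(1+|\xi'|)$ on the exponential error, uniformly in $\varepsilon>0$, and Schur's test then completes the proof without any further case distinction on $\varepsilon|\xi'|$.
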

	\begin{proof}
	 	In this proof $C>0$ denotes a constant which may change in-between lines, but only depends on $m$ and $z$.	We start by estimating the kernel of the integral operator $\mathfrak{D}_{\varepsilon,\xi'}({z})-\mathfrak{D}_{0,\xi'}({z})$. It is easy to see that we can estimate this kernel by
		\begin{equation*}
			\begin{aligned}
				C\Bigl|1- e^{\varepsilon \abs{t-s}i\sqrt{z^2-m^2-\abs{\xi'}^2}} \Bigr| &\leq C\varepsilon \abs{t-s} \sqrt{ \abs{ z^2-m^2-\abs{\xi'}^2}} \\
				&\leq C \varepsilon  (1+\abs{\xi'}).
			\end{aligned}
		\end{equation*}
		With this estimate one finds a constant $C_1$ which only depends on $m$ and $z$ such that 
		\begin{equation*}
			\norm{\mathfrak{D}_{\varepsilon,\xi'}({z})-\mathfrak{D}_{0,\xi'}({z})}_{L^2((-1,1); \C^N) \to L^2((-1,1);\C^N)} \leq  C_1 \varepsilon  (1+\abs{\xi'}).
		\end{equation*}
		Next, we estimate the kernel of $\mathfrak{D}_{\varepsilon,\xi'}({z})-\mathfrak{H}_{\abs{\xi'}\varepsilon,\xi'/\abs{\xi'}} $  by
		\begin{equation}\label{eq_diff_frak_op_kernel}
			\begin{aligned}
				&\frac{1}{2}\biggl|\left( e^{-\varepsilon\abs{t-s}|\xi'|} -e^{\varepsilon\abs{t-s}i{\sqrt{m^2-z^2-\abs{\xi'}^2}}} \right)\left( \widetilde{\alpha}'\cdot \frac{\xi'}{|\xi'|} +  \widetilde{\alpha}_\theta i \sign(t-s)\right)\biggr| \\
				&\hspace{ 30 pt}+  \frac{1}{2}\Biggl|e^{\varepsilon\abs{t-s} i{\sqrt{z^2-m^2-\abs{\xi'}^2 }}} \Big(\widetilde{\alpha}'\cdot \frac{\xi'}{|\xi'|}\Big)\Bigg(1-\frac{i\abs{\xi'}}{\sqrt{z^2-m^2-\abs{\xi'}^2}}\Bigg)\Biggr| \\
				&\hspace{ 30 pt}+\frac{1}{2}  \Biggl|e^{\varepsilon\abs{t-s}i{\sqrt{z^2-m^2-\abs{\xi'}^2}}} \Bigg(\frac{m \beta + {z} I_N}{\sqrt{z^2-m^2-\abs{\xi'}^2 }} \Bigg)\Biggr|.
			\end{aligned}
		\end{equation}
		The first term in \eqref{eq_diff_frak_op_kernel} can be estimated for $\varepsilon \abs{t-s} \leq 1$ by
		\begin{equation*}
			\begin{aligned}
				C \Bigl|e^{-\varepsilon\abs{t-s}|\xi'|} -e^{\varepsilon\abs{t-s}i{\sqrt{z^2-m^2-\abs{\xi'}^2}}}\Bigr| &\leq C \varepsilon |t-s| \Big||\xi'|+i\sqrt{z^2-m^2-\abs{\xi'}^2} \Big|\\
				&\leq C \frac{|z^2-m^2|}{\Big||\xi'|-i\sqrt{z^2-m^2-\abs{\xi'}^2} \Big|}\\
				&\leq  \frac{ C}{1+\abs{\xi'}},
			\end{aligned}
		\end{equation*}
		where we used  $z \in \rho(H) = \C \setminus \big((-\infty,-|m|]\cup[|m|,\infty) \big)$ and   $\textup{Im } \sqrt{w} >0$ for $w \in \C \setminus [0,\infty)$. For $\varepsilon |t-s|> 1$ we get
		\begin{equation*}
			C \Bigl| e^{-\varepsilon\abs{t-s}|\xi'|} -e^{\varepsilon\abs{t-s}i{\sqrt{z^2-m^2-\abs{\xi'}^2}}} \Bigr| \leq C (e^{-\abs{\xi'}} + e^{- \textup{Im} \sqrt{z^2 - m^2 -\abs{\xi'}^2}})  
			\leq  \frac{ C}{1+\abs{\xi'}}.
		\end{equation*}
		Similarly as we estimated the first term in the case $\varepsilon \abs{t-s} \leq 1$, the second term in \eqref{eq_diff_frak_op_kernel} can be estimated by
		\begin{equation*}
			\begin{aligned}
				C \bigg| 1-\frac{i\abs{\xi'}}{\sqrt{z^2-m^2-\abs{\xi'}^2}} \bigg| &= C \frac{ |z^2-m^2|}{\big|\sqrt{z^2-m^2-\abs{\xi'}^2}+i|\xi'|\big| \big|\sqrt{z^2-m^2-\abs{\xi'}^2}\big|}\\
				&\leq \frac{C}{(1+|\xi'|)^2} \leq \frac{C}{1+|\xi'|}.
			\end{aligned}
		\end{equation*}
		One also sees that third term in \eqref{eq_diff_frak_op_kernel} is smaller than $\tfrac{C}{1+|\xi'|}$ for sufficiently large $C$. Summing up, we have that the kernel of $\mathfrak{D}_{\varepsilon,\xi'}({z})-\mathfrak{H}_{\abs{\xi'}\varepsilon,\xi'/\abs{\xi'}} $ can be bounded by $\tfrac{C}{1+|\xi'|}$ and therefore if $C_1$  is chosen sufficiently large, then
		\begin{equation*}
			\norm{ \mathfrak{D}_{\varepsilon,\xi'}({z})-\mathfrak{H}_{\abs{\xi'}\varepsilon,\xi'/\abs{\xi'}}}_{L^2( (-1,1); \C^N) \to L^2((-1,1) ; \C^N) }  \leq \frac{C_1}{1+\abs{\xi'}}.
		\end{equation*}
	\end{proof}
	
	\begin{lemma}\label{lem_volterra}
		Let  $\rho>0$, $w' \in \R^{\theta-1}$ with $\abs{w'} =1$ and $q$ be as in \eqref{eq_q2}. Then, $\sigma(\sqrt{q}\mathfrak{H}_{\rho,w'}\sqrt{q}) \subset [-\tfrac{2}{\pi},\tfrac{2}{\pi}]$.
	\end{lemma}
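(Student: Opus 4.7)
\textbf{Plan to prove Lemma~\ref{lem_volterra}.}
The plan is to reduce the matrix-valued operator $\sqrt{q}\mathfrak{H}_{\rho,w'}\sqrt{q}$ to a scalar Volterra-type operator on $L^2(-1,1)$, and then to the classical Volterra operator $Vg(\tau)=\int_0^{\tau} g(\sigma)\,d\sigma$ on $L^2(0,1)$, whose operator norm is well known to be $\frac{2}{\pi}$. Since $\mathfrak{H}_{\rho,w'}$ is self-adjoint (check symmetry of its matrix kernel) and $\sqrt{q}$ is a bounded real multiplication, $S:=\sqrt{q}\mathfrak{H}_{\rho,w'}\sqrt{q}$ is self-adjoint, so $\sigma(S)\subset[-\|S\|,\|S\|]$ and it suffices to show $\|S\|\leq\frac{2}{\pi}$.

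First I would split the kernel of $\mathfrak{H}_{\rho,w'}$ according to the sign of $t-s$. Writing $A:=\widetilde{\alpha}'\cdot w'$, $B:=\widetilde{\alpha}_\theta$, and $N_\pm:=A\pm iB$, the kernel equals $\tfrac{1}{2}N_+ e^{-\rho(t-s)}$ for $t>s$ and $\tfrac{1}{2}N_- e^{-\rho(s-t)}$ for $t<s$. Hence $\mathfrak{H}_{\rho,w'}=\tfrac{1}{2}(N_+\mathcal{K}_+ +N_-\mathcal{K}_-)$, where $\mathcal{K}_+ f(t)=\int_{-1}^t e^{-\rho(t-s)}f(s)\,ds$ and $\mathcal{K}_-=\mathcal{K}_+^*$ are scalar Volterra-type operators acting only on the function variable. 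The relations $A^2=B^2=I_N$ and $AB+BA=0$, coming from \eqref{eq_dirac_matrices} together with $|w'|=1$, imply $N_\pm^2=0$ and $N_+N_-=4P_-$, $N_-N_+=4P_+$, where $P_\pm:=\tfrac{1}{2}(I_N\pm iAB)$ are the mutually orthogonal spectral projections of the self-adjoint involution $iAB$.

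Setting $K:=\sqrt{q}\mathcal{K}_+\sqrt{q}$ on $L^2((-1,1))$, so that $K^*=\sqrt{q}\mathcal{K}_-\sqrt{q}$, the matrix part and the scalar part commute, and one computes
\begin{equation*}
S^2=\tfrac{1}{4}(N_+K+N_-K^*)^2=P_+\otimes K^*K + P_-\otimes KK^*,
\end{equation*}
the squared terms $N_\pm^2$ vanishing. This is block-diagonal on the decomposition $L^2((-1,1))\otimes E_+\,\oplus\,L^2((-1,1))\otimes E_-$ induced by $P_\pm$, so the $C^*$-identity gives $\|S\|^2=\|S^2\|=\max(\|K^*K\|,\|KK^*\|)=\|K\|^2$, and the whole problem reduces to proving $\|K\|\leq \tfrac{2}{\pi}$.

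For the latter I would use two ingredients. Since the kernel of $K$ is nonnegative and $e^{-\rho(t-s)}e^{-\rho(u-s)}=e^{\rho(2s-t-u)}\leq 1$ for $s\leq\min(t,u)$, the symmetric nonnegative kernel of $KK^*$ is dominated pointwise by that of $K_0K_0^*$, where $K_0$ is the analogue for $\rho=0$. Since for a self-adjoint operator with nonnegative kernel $T$ one has $\|T\|=\sup_{f\geq 0}\langle Tf,f\rangle/\|f\|^2$, this kernel domination yields $\|K\|^2=\|KK^*\|\leq\|K_0K_0^*\|=\|K_0\|^2$. For $\|K_0\|$, when $q>0$ a.e.\ the map $W:L^2(0,1)\to L^2(-1,1)$, $(Wg)(t)=\sqrt{q(t)}\,g(\tau(t))$ with $\tau(t)=\int_{-1}^t q(s)\,ds$, is an isometry (since $\int q=1$), and a direct change-of-variable shows $K_0=WVW^*$, so $\|K_0\|\leq\|V\|=\tfrac{2}{\pi}$. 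The general case of $q\geq 0$ possibly vanishing on a set of positive measure follows by approximating $q$ with $q_n:=(q+n^{-1})/(1+2n^{-1})$ and invoking lower semi-continuity of the operator norm under the resulting strong convergence.

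The main technical care is needed in the two last steps: verifying that the positive-kernel comparison applies here (by passing through the self-adjoint $KK^*$), and handling the possible zero set of $q$ in the change-of-variable argument via the approximation above. The Clifford-algebraic squaring identity for $S^2$ is the structural step that converts the matrix-valued problem into a scalar Volterra estimate and explains the appearance of the sharp constant $\tfrac{2}{\pi}$.
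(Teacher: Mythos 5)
Your proof is correct, and it takes a genuinely different route from the paper's. Both arguments rest on the same Clifford-algebraic fact: with $\widetilde{\alpha}_\pm=\widetilde{\alpha}'\cdot w'\pm i\widetilde{\alpha}_\theta$, the ranges of $\widetilde{\alpha}_+$ and $\widetilde{\alpha}_-$ are orthogonal -- equivalently $\widetilde{\alpha}_\pm^2=0$ and $\widetilde{\alpha}_\mp\widetilde{\alpha}_\pm=4P_\pm$ for orthogonal projections $P_\pm$. The paper then estimates $\|\sqrt{q}\,\mathfrak{H}_{\rho,w'}\sqrt{q}f\|^2$ directly on the two ranges via a weighted Cauchy--Schwarz inequality with the ad hoc weight $\cos(\tfrac{\pi}{2}Q(s))$, where $Q$ is a primitive of $q$; this single trick produces the constant $\tfrac{4}{\pi^2}$ in one stroke and treats all $\rho>0$ and admissible $q\ge 0$ uniformly. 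You instead exploit the nilpotency to compute $S^2=P_+\otimes K^*K+P_-\otimes KK^*$, reducing matters to the scalar operator $K=\sqrt{q}\,\mathcal{K}_+\sqrt{q}$; you then eliminate $\rho$ by a kernel-domination argument (using that $KK^*$, $K_0K_0^*$ are positive with nonnegative kernels and $k_\rho\le k_0$ pointwise), realize $K_0$ as the compression $WVW^*$ of the classical Volterra operator $V$ on $L^2(0,1)$ whose norm is $\tfrac{2}{\pi}$, and handle the possible zero set of $q$ by the normalized approximation $q_n$. Both routes give the sharp constant. The paper's is shorter and self-contained, requiring only the one clever weight; yours is more structural and explains the constant as $\|V\|$, at the cost of two extra reduction steps (kernel domination and the $q_n$-approximation), both of which you correctly identified as the technical pressure points and both of which do go through.
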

	\begin{proof}
		To shorten notation we set $\widetilde{\alpha}_\pm := \widetilde{\alpha}' \cdot w' \pm i\widetilde{\alpha}_\theta$. Then, one has for $f \in L^2((-1,1);\C^N)$
		\begin{equation*}
			\mathfrak{H}_{\rho,w'}f(t) = \frac{1}{2}\int_t^1  e^{-\rho|t-s|}\widetilde{\alpha}_- f(s) \,ds   + \frac{1}{2}\int_{-1}^t   e^{-\rho|t-s|}\widetilde{\alpha}_+ f(s) \,ds.
		\end{equation*}
		Using the anti-commutation rules for $\widetilde{\alpha}' \cdot w'$ and $\widetilde{\alpha}_\theta$, and $(\widetilde{\alpha} \cdot w')^2 = \widetilde{\alpha}_\theta^2 = I_N$, see the definition of $\widetilde{\alpha}_1, \dots, \widetilde{\alpha}_\theta$ before Lemma~\ref{lem_Fourier_transform_G_z_eps} and \eqref{it_Dirac_matrices} in Section~\ref{sec_not}, shows that $\ran \widetilde{\alpha}_+ \perp \ran \widetilde{\alpha}_-$.  Hence, as $q \geq 0$,  we have for  $f \in L^2((-1,1); \C^N)$ 
		\begin{equation}\label{eq_volterra}
			\begin{aligned}
				\|\sqrt{q}\mathfrak{H}_{\rho,w'}\sqrt{q}f&\|_{L^2((-1,1);\C^N)}^2 \\
				=& \frac{1}{4} \int_{-1}^1 q(t) \bigg| \int_t^1 e^{-\rho|t-s|}\widetilde{\alpha}_-  \sqrt{q(s)} f(s)  \,ds  \bigg|^2 \,dt \\
				&+ \frac{1}{4} \int_{-1}^1 q(t) \bigg| \int_{-1}^t   e^{-\rho|t-s|}\widetilde{\alpha}_+ \sqrt{q(s)}f(s) \,ds  \bigg|^2 \,dt.
			\end{aligned}
		\end{equation}
		We start by estimating the first term on the right-hand side. To do so, we assume that $Q$ is the primitive function of $q$ such that $Q(1) =0$ and therefore by $\int_{-1}^1 q(s) \, ds =1$, $Q(-1) =-1$. Applying the Cauchy-Schwarz inequality and Fubini's theorem yields
		\begin{equation*}
			\begin{aligned}
				\frac{1}{4}\int_{-1}^1 q(t) &\biggl| \int_t^1  e^{-\rho|t-s|}\widetilde{\alpha}_-  \sqrt{q(s)} f(s)  \,ds  \biggr|^2 \,dt\\
				&= \frac{1}{4} \int_{-1}^1 q(t) \biggl| \int_t^1  \frac{\sqrt{\cos\bigl(\tfrac{\pi}{2}Q(s)\bigr)}}{\sqrt{\cos\bigl(\tfrac{\pi}{2}Q(s)\bigr)}}  e^{-\rho|t-s|}\widetilde{\alpha}_-  \sqrt{q(s)} f(s)  \,ds  \biggr|^2 \,dt\\
				&\leq \frac{1}{4} \int_{-1}^1 q(t) \bigg(\int_{t}^1 \cos\bigl(\tfrac{\pi}{2}Q(s)\bigr)q(s) \,ds \bigg) \bigg( \int_{t}^1 \frac{1}{\cos(\tfrac{\pi}{2}Q(s))}|\widetilde{\alpha}_-f(s)|^2\,ds \bigg) \, dt\\
				&= \frac{1}{2\pi} \int_{-1}^1  -\sin\bigl(\tfrac{\pi}{2}Q(t)\bigr) q(t)\bigg(\int_{t}^1 \frac{1}{\cos\bigl(\tfrac{\pi}{2}Q(s)\bigr)}|\widetilde{\alpha}_- f(s)|^2\,ds \bigg) \, dt\\
				&=  \frac{1}{2\pi} \int_{-1}^1  \bigg( \int_{-1}^s -\sin\bigl(\tfrac{\pi}{2}Q(t)\bigr)q(t) \,dt \bigg) \frac{1}{\cos\bigl(\tfrac{\pi}{2}Q(s)\bigr)}|\widetilde{\alpha}_- f(s)|^2\,ds \\
				&= \frac{1}{\pi^2} \int_{-1}^1 |\widetilde{\alpha}_- f(s)|^2 \, ds.
			\end{aligned}
		\end{equation*}
		The same trick with $Q +1$ instead of $Q$ yields that the second term of the right hand side of equation \eqref{eq_volterra} can be estimated by  $\tfrac{1}{\pi^2} \int_{-1}^1 |\widetilde{\alpha}_+ f(s)|^2 \, ds.$ Using these estimates and  $\ran \widetilde{\alpha}_+ \perp \ran \widetilde{\alpha}_-$ gives us
		\begin{equation*}
			\begin{aligned}
				\|\sqrt{q}\mathfrak{H}_{\rho,w'}\sqrt{q}f\|_{L^2((-1,1);\C^N)}^2 & \leq \frac{1}{\pi^2} \int_{-1}^{1} |\widetilde{\alpha}_- f(s)|^2 +|\widetilde{\alpha}_+ f(s) |^2 \,ds \\
				&= \frac{1}{\pi^2} \int_{-1}^{1} |(\widetilde{\alpha}_- + \widetilde{\alpha}_+) f(s) |^2 \,ds\\
				&= \frac{1}{\pi^2} \int_{-1}^{1} |2(\widetilde{\alpha}' \cdot w')f(s) |^2 \,ds \\
				& = \frac{4}{\pi^2} \norm{f}_{L^2((-1,1);\C^N)}^2.
			\end{aligned}
		\end{equation*}
		Since $\mathfrak{H}_{\rho,w'}$ is self-adjoint in $L^2((-1,1);\C^N)$, we obtain $\sigma(\sqrt{q}\mathfrak{H}_{\rho,w'}\sqrt{q}) \subset [-\tfrac{2}{\pi}, \tfrac{2}{\pi}]$. 
	\end{proof}
	
	Having studied the spectrum of $\sqrt{q}\mathfrak{H}_{\rho,w'}\sqrt{q}$, we employ this knowledge to study the bounded invertibility of $I + \mathfrak{H}_{\rho,w'}Q_{\eta,\tau}q$. Recall that $Q_{\eta,\tau} = \eta I_N + \tau \beta$ for $\eta,\tau \in \R$. 

	\begin{lemma}\label{lem_frak_h_inv_estimate}
		Let $\rho > 0$, $w' \in \R^{\theta-1}$ with $\abs{w'}=1$, $\eta,\tau \in \R$, $Q_{\eta,\tau}= \eta I_N + \tau \beta$, $d = \eta^2-\tau^2$,
		\begin{equation*}
			c(d):= 
			\begin{cases}
				\frac{4 d}{\pi^2}, &  d \geq  0, \\
				0,  & d<0, 
			\end{cases}
		\end{equation*} 
		and  $q$ be as in \eqref{eq_q2}.
		If $d < \frac{\pi^2}{4}$, then  $I + \mathfrak{H}_{\rho,w'}Q_{\eta,\tau} q$ is continuously invertible in $L^{2}((-1,1);\C^N)$ and the norm of the inverse is bounded by the constant 
		\begin{equation}\label{eq_def_C_2}
			C_{2} = C_{2}(\eta,\tau) := 4\norm{q}_{L^\infty((-1,1))} (\abs{\eta}+\abs{\tau})\frac{1 + (\abs{\eta}+\abs{\tau})\frac{2}{\pi}}{(1-c(d))\pi}  + 1.
		\end{equation}
	\end{lemma}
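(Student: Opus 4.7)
The plan is to exploit an algebraic identity that turns $I+\mathfrak{H}_{\rho,w'}Q_{\eta,\tau}q$ into something controlled by the self-adjoint operator $S:=\sqrt{q}\,\mathfrak{H}_{\rho,w'}\sqrt{q}$ of Lemma~\ref{lem_volterra}. I first observe that, because $\beta$ anti-commutes with every $\widetilde\alpha_j$, a direct computation gives $Q_{\eta,\tau}\widetilde\alpha_j Q_{\eta,\tau}=(\eta^2-\tau^2)\widetilde\alpha_j=d\widetilde\alpha_j$ for all $j$, and hence, since the kernel of $\mathfrak{H}_{\rho,w'}$ is a linear combination of the $\widetilde\alpha_j$,
\[
Q_{\eta,\tau}\,\mathfrak{H}_{\rho,w'}\,Q_{\eta,\tau}=d\,\mathfrak{H}_{\rho,w'}.
\]
Since $Q_{\eta,\tau}$ commutes with the scalar multiplication operator $\sqrt{q}$, this yields the key identity $(SQ_{\eta,\tau})^2=S(Q_{\eta,\tau}SQ_{\eta,\tau})=dS^2$, and also $Q_{\eta,\tau}SQ_{\eta,\tau}=dS$.

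The factorisation $(I\pm SQ_{\eta,\tau})(I\mp SQ_{\eta,\tau})=I-dS^2$ then does most of the work. By Lemma~\ref{lem_volterra} the self-adjoint operator $S^2$ has spectrum in $[0,4/\pi^2]$, so under the hypothesis $d<\pi^2/4$ the operator $I-dS^2$ is boundedly invertible with $\|(I-dS^2)^{-1}\|\le 1/(1-c(d))$. Hence $I+SQ_{\eta,\tau}$ is bijective with bounded inverse $(I-SQ_{\eta,\tau})(I-dS^2)^{-1}$, of norm at most $(1+(2/\pi)(|\eta|+|\tau|))/(1-c(d))$.

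The main obstacle, and the part requiring the most care, is transferring this back to the non-symmetric operator $I+\mathfrak{H}_{\rho,w'}Q_{\eta,\tau}q$ in a $\rho$-independent way. I plan to use the Aronszajn--Krein identity with $B=\mathfrak{H}_{\rho,w'}Q_{\eta,\tau}\sqrt{q}$ and $C=\sqrt{q}$ (noting $CB=SQ_{\eta,\tau}$) and then to use $Q_{\eta,\tau}SQ_{\eta,\tau}=dS$ to bring the outer $\mathfrak{H}_{\rho,w'}$ into a form weighted on both sides by $\sqrt{q}$, obtaining
\[
(I+\mathfrak{H}_{\rho,w'}Q_{\eta,\tau}q)^{-1}=I-\mathfrak{H}_{\rho,w'}\sqrt{q}\,(Q_{\eta,\tau}-dS)(I-dS^2)^{-1}\sqrt{q}.
\]
The uniform-in-$\rho$ bound $\|\mathfrak{H}_{\rho,w'}\sqrt{q}\|_{L^2\to L^2}\le 1$ is the crucial ingredient; it follows from the pointwise estimate $|K(t,s)|_{\mathrm{op}}\le (\sqrt{2}/2)e^{-\rho|t-s|}$ together with $\|e^{-\rho|t-\cdot|}\|_{L^2((-1,1))}\le \sqrt{2}$ (giving $\|\mathfrak{H}_{\rho,w'}\|_{L^2\to L^\infty}\le 1$) and $\|\sqrt{q}\|_{L^2}=1$. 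Combining this with $\|(Q_{\eta,\tau}-dS)(I-dS^2)^{-1}\|\le (|\eta|+|\tau|)(1+(|\eta|+|\tau|)\cdot 2/\pi)/(1-c(d))$ (using $|d|\le(|\eta|+|\tau|)^2$ and $\|S\|\le 2/\pi$) and $\|\sqrt{q}\|_\infty=\sqrt{\|q\|_\infty}$ yields a bound of the claimed form; the precise constant $C_2$ follows after absorbing the square root via the pigeonhole lower bound $\|q\|_\infty\ge 1/2$, which is forced by $\int_{-1}^1 q=1$.
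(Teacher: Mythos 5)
Your algebraic setup is correct and is essentially the paper's. The identity $Q_{\eta,\tau}\widetilde{\alpha}_j Q_{\eta,\tau}=d\widetilde{\alpha}_j$ (equivalently, the paper's $\mathfrak{H}_{\rho,w'}Q_{\eta,\tau}=Q_{\eta,-\tau}\mathfrak{H}_{\rho,w'}$ with $Q_{\eta,-\tau}Q_{\eta,\tau}=dI_N$), the factorization $(I\mp SQ_{\eta,\tau})(I\pm SQ_{\eta,\tau})=I-dS^2$, the spectral bound from Lemma~\ref{lem_volterra}, and the resolvent identity are all the same ingredients; your closed-form version of $(I+\mathfrak{H}_{\rho,w'}Q_{\eta,\tau}q)^{-1}$ reduces to the paper's $I-\mathfrak{H}_{\rho,w'}Q_{\eta,\tau}\sqrt{q}\,(I+\sqrt{q}\mathfrak{H}_{\rho,w'}Q_{\eta,\tau}\sqrt{q})^{-1}\sqrt{q}$ after commuting $Q_{\eta,\tau}$ through $\sqrt{q}$.

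The gap is in the final estimate. Your claimed bound $\|\mathfrak{H}_{\rho,w'}\sqrt{q}\|_{L^2\to L^2}\le 1$ is not justified: the pointwise operator-norm bound on the kernel is $e^{-\rho|t-s|}$, not $\tfrac{\sqrt 2}{2}e^{-\rho|t-s|}$. Indeed, writing $A=\widetilde{\alpha}'\cdot w'$ and $B=\widetilde{\alpha}_\theta\,\mathrm{sign}(t-s)$ one has $(A+iB)^*(A+iB)=2I_N+2iAB$ with $(iAB)^2=I_N$, so $\|\tfrac12(A+iB)\|_{\mathrm{op}}=1$ (for $\theta=2$, $\tfrac12(\sigma_1+i\sigma_2)=\begin{pmatrix}0&1\\0&0\end{pmatrix}$ has norm $1$). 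With the correct kernel bound, the $L^2\to L^\infty$ plus embedding argument only gives $\|\mathfrak{H}_{\rho,w'}\sqrt{q}\|_{L^2\to L^2}\le\sqrt{2}$. Moreover, even granting the (incorrect) bound $\le 1$, your resulting estimate $1+\sqrt{\|q\|_\infty}(|\eta|+|\tau|)\tfrac{1+(|\eta|+|\tau|)\frac2\pi}{1-c(d)}$ does not reduce to $C_2$ via $\|q\|_\infty\ge 1/2$: one would need $\sqrt{\|q\|_\infty}\le\tfrac{4}{\pi}\|q\|_\infty$, i.e.\ $\|q\|_\infty\ge\pi^2/16\approx 0.617$, which fails for $q\equiv\tfrac12$. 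The route that actually produces $C_2$ is the paper's: bound $\|\mathfrak{H}_{\rho,w'}\|_{L^2\to L^2}\le\tfrac4\pi$ (apply Lemma~\ref{lem_volterra} to the constant profile $q=\tfrac12$ and use that $\mathfrak{H}_{\rho,w'}$ is self-adjoint), and then estimate the correction term by $\|\mathfrak{H}_{\rho,w'}\|\,\|\sqrt{q}\|_{L^\infty}^2\,\|(Q_{\eta,\tau}-dS)(I-dS^2)^{-1}\|\le\tfrac4\pi\|q\|_{L^\infty}(|\eta|+|\tau|)\tfrac{1+(|\eta|+|\tau|)\frac2\pi}{1-c(d)}$, giving $C_2$ exactly; the pigeonhole lower bound on $\|q\|_\infty$ is not needed.
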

	\begin{proof}
		Using $\mathfrak{H}_{\rho,w'} Q_{\eta,\tau} = Q_{\eta,-\tau} \mathfrak{H}_{\rho,w'}$ and $Q_{\eta,-\tau} Q_{\eta,\tau} = d I_N$, which follows from the anti-commutation relations for $\widetilde{\alpha}_1, \dots, \widetilde{\alpha}_\theta$, discussed before Lemma~\ref{lem_Fourier_transform_G_z_eps}, and $\beta$, gives us
		\begin{equation*}
			I -	d (\sqrt{q}\mathfrak{H}_{\rho,w'}\sqrt{q})^2 = ( I	-\sqrt{q}\mathfrak{H}_{\rho,w'}Q_{\eta,\tau}\sqrt{q} ) ( I+	\sqrt{q}\mathfrak{H}_{\rho,w'} Q_{\eta,\tau} \sqrt{q}) .
		\end{equation*}
		If $d<\tfrac{\pi^2}{4}$, then Lemma~\ref{lem_volterra} implies $1 \in \rho(d( \sqrt{q}\mathfrak{H}_{\rho,w'}\sqrt{q})^2)$ and therefore  the operator $I + \sqrt{q}\mathfrak{H}_{\rho,w'}Q_{\eta,\tau}\sqrt{q} $ is also continuously invertible in $L^{2}((-1,1);\C^N)$ and 
		\begin{equation*}
			\begin{aligned}
				&\bigl\|(I+ \sqrt{q}\mathfrak{H}_{\rho,w'} Q_{\eta,\tau}\sqrt{q})^{-1}\bigr\|_{L^2((-1,1);\C^N) \to L^2((-1,1);\C^N) }   \\
				&\hspace{20 pt}= 	\bigl\|(I -	d (\sqrt{q}\mathfrak{H}_{\rho,w'}\sqrt{q})^2)^{-1}( I	-\sqrt{q}\mathfrak{H}_{\rho,w'}Q_{\eta,\tau}\sqrt{q} )\bigr\|_{L^2((-1,1);\C^N) \to L^2((-1,1);\C^N) }\\
				&\hspace{20 pt}\leq \frac{1 + (\abs{\eta}+\abs{\tau})\frac{2}{\pi}}{1-c(d)}. 
			\end{aligned}
		\end{equation*}
		Moreover,   $(I+\mathfrak{H}_{\rho,w'} Q_{\eta,\tau}q)^{-1} = I - \mathfrak{H}_{\rho,w'} Q_{\eta,\tau}\sqrt{q}(I+ \sqrt{q}\mathfrak{H}_{\rho,w'} Q_{\eta,\tau}\sqrt{q})^{-1} \sqrt{q} $ and  hence
		\begin{equation*}
			\begin{aligned}
				&\bigl\|(I+ \mathfrak{H}_{\rho,w'}  Q_{\eta,\tau}q)^{-1}\bigr\|_{L^2((-1,1);\C^N) \to L^2((-1,1);\C^N) }   \\
				&\hspace{15 pt}= 	\bigl\|\mathfrak{H}_{\rho,w'} Q_{\eta,\tau}\sqrt{q} (I+ \sqrt{q}\mathfrak{H}_{\rho,w'} Q_{\eta,\tau}\sqrt{q})^{-1}\sqrt{q} - I  \bigr\|_{L^2((-1,1);\C^N) \to L^2((-1,1);\C^N) }\\
				&\hspace{15 pt}\leq \norm{q}_{L^\infty((-1,1))} (\abs{\eta}+\abs{\tau}) \norm{\mathfrak{H}_{\rho,w'}   }_{L^2((-1,1);\C^N) \to L^2((-1,1);\C^N) } \frac{1 + (\abs{\eta}+\abs{\tau})\frac{2}{\pi}}{1-c(d)}  + 1\\
				&\hspace{15 pt}\leq 4\norm{q}_{L^\infty((-1,1))} (\abs{\eta}+\abs{\tau})\frac{1 + (\abs{\eta}+\abs{\tau})\frac{2}{\pi}}{(1-c(d))\pi}  + 1,
			\end{aligned}
		\end{equation*}
		where we used Lemma~\ref{lem_volterra} (for the constant function $q = \tfrac{1}{2}$) to estimate the term $\norm{\mathfrak{H}_{\rho,w'}   }_{L^2((-1,1);\C^N) \to L^2((-1,1);\C^N) }$  by $\tfrac{4}{\pi}$.
	\end{proof}

	In the last part of Section~\ref{sec_hyperplane} we use our findings to prove  norm estimates for the operator $(I+D_\varepsilon^{0,\kappa}(z)Q_{\eta,\tau}q)^{-1}$.
	\begin{proposition}\label{prop_uniform_bdd_D}
		Let $z \in \C \setminus \R$,  $\kappa \in \textup{SO}(\theta)$, $\eta,\tau \in \R$ such that $d = \eta^2 - \tau^2< \tfrac{\pi^2}{4}$, and $q$ be as in \eqref{eq_q2}. Moreover, let 
		\begin{equation}\label{eq_def_C_3}
			C_3 = C_3(\eta,\tau,\kappa) :=2\max\{C_2, \| (I+D_{0}^{0,\kappa}(z)Q_{\eta,\tau}q)^{-1} \|_{0\to0} \},
		\end{equation}
		with $C_2 = C_2(\eta,\tau)$ as in \eqref{eq_def_C_2}, and  
		\begin{equation}\label{eq_def_delta_1}
			\delta_1 =\delta_1(\eta,\tau,\kappa):= ( C_3 C_1(|\eta| + |\tau|) \norm{q}_{L^\infty((-1,1))})^{-2},
		\end{equation} 
		with $C_1$ from  Lemma~\ref{lem_diff_frak_operators}. Then,
		\begin{equation*}
			\sup_{\varepsilon \in (0,\delta_1) } \norm{(I+D^{{0,\kappa}}_\varepsilon(z)Q_{\eta,\tau}q)^{-1}}_{0 \to 0} \leq C_3 < \infty.
		\end{equation*} 
	\end{proposition}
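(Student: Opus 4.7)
The plan is to exploit the direct integral decomposition provided by the isomorphism $\mathfrak{i}$ introduced after~\eqref{eq_def_frak_d}. Since $Q_{\eta,\tau}$ is a constant matrix and $q$ depends only on $t\in(-1,1)$, the multiplication operator $Q_{\eta,\tau}q$ respects the decomposition, so $\mathfrak{i}(I+D^{0,\kappa}_\varepsilon(z)Q_{\eta,\tau}q)\mathfrak{i}^{-1}$ is the direct integral of the fiber operators $I+\mathfrak{D}_{\varepsilon,\xi'}(z)Q_{\eta,\tau}q$ on $L^2((-1,1);\C^N)$. By Lemma~\ref{lem_decomp_op}, the claim reduces to the uniform fiberwise bound
\[
\textup{ess sup}_{\xi'\in\R^{\theta-1}}\,\bigl\|(I+\mathfrak{D}_{\varepsilon,\xi'}(z)Q_{\eta,\tau}q)^{-1}\bigr\|_{L^2((-1,1);\C^N)\to L^2((-1,1);\C^N)}\le C_3,\qquad \varepsilon\in(0,\delta_1).
\]

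I would establish this fiberwise bound by splitting the frequency variable at a threshold $R>0$ of order $C_3C_1(|\eta|+|\tau|)\|q\|_{L^\infty((-1,1))}$ and treating the two regimes by Neumann-type perturbation arguments. In the \emph{high-frequency} regime $|\xi'|\ge R$ the natural comparison is $\mathfrak{H}_{|\xi'|\varepsilon,\xi'/|\xi'|}$: the hypothesis $d=\eta^2-\tau^2<\pi^2/4$ is precisely what Lemma~\ref{lem_frak_h_inv_estimate} requires, giving $\|(I+\mathfrak{H}_{|\xi'|\varepsilon,\xi'/|\xi'|}Q_{\eta,\tau}q)^{-1}\|\le C_2\le C_3/2$ uniformly in $\xi'$ and $\varepsilon$, while the second estimate of Lemma~\ref{lem_diff_frak_operators} bounds the perturbation $(\mathfrak{D}_{\varepsilon,\xi'}(z)-\mathfrak{H}_{|\xi'|\varepsilon,\xi'/|\xi'|})Q_{\eta,\tau}q$ by $C_1(|\eta|+|\tau|)\|q\|_{L^\infty}/(1+|\xi'|)$, which by the choice of $R$ is comparable to $1/C_3$. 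In the \emph{low-frequency} regime $|\xi'|\le R$ the comparison operator is $\mathfrak{D}_{0,\xi'}(z)$: the assumption $d<\pi^2/4$ implies both~\eqref{eq_scaling_cond} and~\eqref{eq_d_tilde_non_crit} for the constant $\eta,\tau$, so Proposition~\ref{prop_inv_I+BVq} yields the continuous invertibility of $I+B_0(z)Vq$ in $\mathcal{B}^0(\Sigma)$; transferring via the isomorphism $\iota_{0,\kappa}$ (see~\eqref{def_D_operators}) and applying Lemma~\ref{lem_decomp_op} in the converse direction gives the uniform fiberwise estimate $\|(I+\mathfrak{D}_{0,\xi'}(z)Q_{\eta,\tau}q)^{-1}\|\le \|(I+D^{0,\kappa}_0(z)Q_{\eta,\tau}q)^{-1}\|_{0\to0}\le C_3/2$, and the first estimate of Lemma~\ref{lem_diff_frak_operators} controls the residual perturbation by $C_1\varepsilon(1+R)(|\eta|+|\tau|)\|q\|_{L^\infty}$. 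The square in $\delta_1=(C_3C_1(|\eta|+|\tau|)\|q\|_{L^\infty})^{-2}$ arises precisely because $R$ is itself linear in $C_3C_1(|\eta|+|\tau|)\|q\|_{L^\infty}$, so requiring $\varepsilon$ to cancel a factor of $R$ produces a second power.

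The principal technical obstacle is calibrating the constants so that the two perturbation estimates close against the \emph{same} final constant $C_3$. The explicit choices $C_3=2\max\{C_2,\|(I+D^{0,\kappa}_0(z)Q_{\eta,\tau}q)^{-1}\|_{0\to0}\}$ and $\delta_1$ are tailored exactly for this purpose. Once both regimes are in place, a standard Neumann series argument delivers the fiberwise bound $\|(I+\mathfrak{D}_{\varepsilon,\xi'}(z)Q_{\eta,\tau}q)^{-1}\|\le C_3$, and a final application of Lemma~\ref{lem_decomp_op} repackages this as the asserted operator-norm bound on $(I+D^{0,\kappa}_\varepsilon(z)Q_{\eta,\tau}q)^{-1}$.
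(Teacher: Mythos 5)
Your proposal follows the same route as the paper's proof: a frequency split at a threshold $R$ of order $\delta_1^{-1/2}$, comparison with $\mathfrak{H}_{|\xi'|\varepsilon,\xi'/|\xi'|}$ via Lemma~\ref{lem_frak_h_inv_estimate} for high frequencies and with $\mathfrak{D}_{0,\xi'}(z)$ (invertibility obtained from Proposition~\ref{prop_inv_I+BVq} transferred through $\iota_{0,\kappa}$ and Lemma~\ref{lem_decomp_op}) for low frequencies, Neumann-series closure in both regimes, and a final application of Lemma~\ref{lem_decomp_op} to pass from fiberwise bounds to the operator norm. The calibration of $C_3$, $\delta_1$, and $R$ that you identify is exactly the bookkeeping the paper carries out.
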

	\begin{proof}
		We claim that $C_3 < \infty$. Since $d< \tfrac{\pi^2}{4}$, the constant $C_2$ from \eqref{eq_def_C_2} is finite. Thus, it suffices to show $\| (I+D_{0}^{0,\kappa}(z)Q_{\eta,\tau}q)^{-1} \|_{0\to0} < \infty$ in order to proof the claim $C_3 < \infty$.
		
		 Applying  Proposition~\ref{prop_inv_I+BVq} (for $V= Q_{\eta,\tau} = \eta I_2+\tau\beta =\textup{const.}$ and $r=0$) shows that $I + B_0(z)Q_{\eta,\tau}q$ is continuously invertible in $\mathcal{B}^0(\Sigma_{0,\kappa})$. Let us shortly explain why  Proposition~\ref{prop_inv_I+BVq} is indeed applicable. The inequality $d < \tfrac{\pi^2}{4}$ implies $d \not= (2k+1)^2\pi^2$, $k \in \N_0$, and therefore \eqref{eq_scaling_cond} is fulfilled. In the same way as in Proposition~\ref{prop_inv_I+BVq}  we set $(\widetilde{\eta},\widetilde{\tau})  = \textup{tanc}\bigl(\tfrac{\sqrt{d}}{2}\bigr) (\eta,\tau) $. Then, $\widetilde{d} = \widetilde{\eta}^2-\widetilde{\tau}^2 = 4\tan\bigl(\tfrac{\sqrt{d}}{2}\bigr) < 4$, i.e. $\widetilde{d}$ fulfils \eqref{eq_d_tilde_non_crit}. Hence, the assumptions of Proposition~\ref{prop_inv_I+BVq} are satisfied and its application is justified. Together with \eqref{eq_norm_iota} and \eqref{def_D_operators}, this implies that  the operator $I + D_0^{0,\kappa}(z)Q_{\eta,\tau}q = \iota_{0,\kappa} (I + B_0(z)Q_{\eta,\tau}q) \iota_{0,\kappa}^{-1}$ is continuously invertible in $\mathcal{B}^0(\R^{\theta-1})$, which proves $\| (I+D_{0}^{0,\kappa}(z)Q_{\eta,\tau}q)^{-1} \|_{0\to0} < \infty$. Hence, $C_3 < \infty$.
		 
		According to Lemma \ref{lem_diff_frak_operators} we have
		\begin{equation*}
			\begin{aligned}
				&\bigl\|\mathfrak{D}_{\varepsilon,\xi'}({z})Q_{\eta,\tau}q - \mathfrak{H}_{\abs{\xi'}\varepsilon,\xi'/\abs{\xi'}}Q_{\eta,\tau}q \bigr\|_{L^2((-1,1);\C^N) \to L^2((-1,1);\C^N) } \\
				&\hspace{50 pt}\leq C_1 \norm{q}_{L^\infty((-1,1))}\frac{\abs{\eta}+\abs{\tau}}{1+|\xi'|} = \frac{\delta_1^{-1/2}}{C_3(1+|\xi'|)}
			\end{aligned}
		\end{equation*}	
		for $\xi' \in \R^{\theta-1}\setminus\{0\}$ and all $\varepsilon>0$. Hence, if we choose $R:=\delta_1^{-1/2}-1$, then the choices of $C_3$, $\delta_1$ and $R$, and Lemma~\ref{lem_frak_h_inv_estimate} yield for $0\neq |\xi'| \geq R$  and $\varepsilon>0$
		\begin{equation*}
			\begin{aligned}
				\bigl\|(I+&\mathfrak{H}_{\abs{\xi'}\varepsilon,\xi'/\abs{\xi'}}Q_{\eta,\tau}q)^{-1}\\
				&\cdot(\mathfrak{D}_{\varepsilon,\xi'}({z})Q_{\eta,\tau}q - \mathfrak{H}_{\abs{\xi'}\varepsilon,\xi'/\abs{\xi'}}Q_{\eta,\tau}q)\bigr\|_{L^2((-1,1);\C^N) \to L^2((-1,1);\C^N) } \\
				&\leq C_2 \frac{\delta_1^{-1/2}}{C_3 (1+R)} \\
				& \leq \frac{C_3}{2} \cdot \frac{ \delta_1^{-1/2}}{ C_3 \delta_1^{-1/2}} \\
				&=  \frac{1}{2}.
			\end{aligned}
		\end{equation*}
 In particular,
		\begin{equation*}
			\mathfrak{P}_{\varepsilon,\xi'}:=	I +(I+\mathfrak{H}_{\abs{\xi'}\varepsilon,\xi'/\abs{\xi'}}Q_{\eta,\tau}q)^{-1}
			(\mathfrak{D}_{\varepsilon,\xi'}({z})Q_{\eta,\tau}q - \mathfrak{H}_{\abs{\xi'}\varepsilon,\xi'/\abs{\xi'}}Q_{\eta,\tau}q)
		\end{equation*}
		is continuously invertible in $L^2((-1,1);\C^N)$ and the norm of its inverse can be bounded by $2$ for $0\neq |\xi'| \geq R$ and $\varepsilon >0$. This implies that also 
		\begin{equation*}
			I+\mathfrak{D}_{\varepsilon,\xi'}({z})Q_{\eta,\tau}q=(I+\mathfrak{H}_{\abs{\xi'}\varepsilon,\xi'/\abs{\xi'}}Q_{\eta,\tau}q) \mathfrak{P}_{\varepsilon,\xi'}
		\end{equation*}
		is continuously invertible in $L^2((-1,1);\C^N)$  and by Lemma~\ref{lem_frak_h_inv_estimate}  the corresponding norm estimate
		\begin{equation}\label{eq_uniform_bdd_D_1}
			\begin{aligned}
				&\norm{ (I+\mathfrak{D}_{\varepsilon,\xi'}({z})Q_{\eta,\tau}q)^{-1} }_{L^2((-1,1);\C^N) \to L^2((-1,1);\C^N) }  \\
				&\hspace{50 pt}=	\big\|\mathfrak{P}_{\varepsilon,\xi'}^{-1}(I+\mathfrak{H}_{\abs{\xi'}\varepsilon,\xi'/\abs{\xi'}}Q_{\eta,\tau}q)^{-1} \big\|_{L^2((-1,1);\C^N) \to L^2((-1,1);\C^N) }  \\
				&\hspace{50 pt} \leq 2 C_2 \\
				& \hspace{50 pt} \leq C_3
			\end{aligned}
		\end{equation}	
		is valid	for $0\neq |\xi'| \geq R$ and $\varepsilon >0$. 
		
		Having found an estimate for $0 \neq |\xi'| \geq R$, we aim to find a similar estimate for $0 \neq |\xi'| \leq R$.
		Again, according to  Lemma \ref{lem_diff_frak_operators}  we have for $\xi' \in \R^{\theta-1}\setminus\{0\}$ and $\varepsilon>0$
		\begin{equation*}
			\begin{aligned}
				&\norm{\mathfrak{D}_{\varepsilon,\xi'}({z})Q_{\eta,\tau}q - \mathfrak{D}_{0,\xi'}({z})Q_{\eta,\tau}q }_{L^2((-1,1);\C^N) \to L^2((-1,1);\C^N) }\\
				&\hspace{30 pt}\leq C_1 \norm{q}_{L^\infty((-1,1))}(\abs{\eta}+\abs{\tau})\varepsilon(1+|\xi'|)
				=\varepsilon \frac{\delta_1^{-1/2}(1+|\xi'|)}{C_3}.
			\end{aligned}
		\end{equation*}
		
		Moreover, Lemma \ref{lem_decomp_op} (with $\mathfrak{M}(\xi') =  I+\mathfrak{D}_{0,\xi'}({z})Q_{\eta,\tau}q$ for $\xi' \in \R^{\theta-1}$)  and \eqref{eq_def_C_3} imply 
		\begin{equation*}
			\begin{aligned}
				\textup{ess sup}_{\xi'\in\R^{\theta-1}} &\| (I+\mathfrak{D}_{0,\xi'}({z})Q_{\eta,\tau}q)^{-1} \|_{L^2((-1,1);\C^N) \to L^2((-1,1);\C^N) } \\
				=&\|(I+D^{{0,\kappa}}_0(z)Q_{\eta,\tau}q)^{-1}\|_{0 \to 0} 
				\leq \frac{C_3}{2}.
			\end{aligned}
		\end{equation*}
		Hence, as $1+R = \delta_1^{-1/2}$, we can estimate similarly as in the first part of the proof for $\varepsilon \in (0,\delta_1)$
		\begin{equation}\label{eq_uniform_bdd_D_2}
			\begin{aligned}
				&\textup{ess sup}_{\abs{\xi'} \leq R} \norm{ (I+\mathfrak{D}_{\varepsilon,\xi'}({z})Q_{\eta,\tau}q)^{-1} }_{L^2((-1,1);\C^N) \to L^2((-1,1);\C^N) } \\
				&= \textup{ess sup}_{\abs{\xi'} \leq R} \big\| \big[I+(I+\mathfrak{D}_{0,\xi'}({z})Q_{\eta,\tau}q)^{-1}\bigl(\mathfrak{D}_{\varepsilon,\xi'}({z})Q_{\eta,\tau}q  -\mathfrak{D}_{0,\xi'}({z})Q_{\eta,\tau}q \bigr)\big]^{-1}\\
				&	\hspace{100 pt}\cdot(I+\mathfrak{D}_{0,\xi'}({z})Q_{\eta,\tau}q)^{-1} \big\|_{L^2((-1,1);\C^N) \to L^2((-1,1);\C^N) }\\
				&\leq \frac{1}{1-\frac{C_3}{2} \cdot \frac{\varepsilon\delta_1^{-1/2}(1+R)}{C_3}} \cdot \frac{C_3}{2}\\
				&= \frac{1}{1-\frac{C_3}{2} \cdot \frac{\varepsilon\delta_1^{-1}}{C_3}} \cdot \frac{C_3}{2}  \leq C_3.
			\end{aligned}
		\end{equation}	
		Combining \eqref{eq_uniform_bdd_D_1} and  \eqref{eq_uniform_bdd_D_2}, and applying  Lemma~\ref{lem_decomp_op} gives us
		\begin{equation*}
			\begin{aligned}
				&\hspace{-10 pt}\norm{(I+D_{\varepsilon}^{0,\kappa}({z})Q_{\eta,\tau}q)^{-1} }_{0\to0}\\
				&= \max\big\{ \textup{ess sup}_{|\xi'| \geq R
				}\norm{(I+\mathfrak{D}_{\varepsilon,\xi'}({z})Q_{\eta,\tau}q)^{-1}}_{L^2((-1,1);\C^N) \to L^2((-1,1);\C^N) } , \\ 
				&\hspace{47 pt}\textup{ess sup}_{|\xi'| < R}\norm{(I+\mathfrak{D}_{\varepsilon,\xi'}({z})Q_{\eta,\tau}q)^{-1}}_{L^2((-1,1);\C^N) \to L^2((-1,1);\C^N) } \big\}  \\
				&\leq C_3
			\end{aligned} 
		\end{equation*}	
		for $\varepsilon \in (0,\delta_1)$.
	\end{proof}

	\begin{cor}\label{cor_uniform_bdd_D}
		Let $z \in \C \setminus \R$,   $q$ be as in \eqref{eq_q2}, $S \subset \R^2$ be compact and $\max_{(\eta,\tau) \in S} \eta^2-\tau^2 < \tfrac{\pi^2}{4}$. Then, there exists a $\delta_2= \delta_2(S) > 0$  such that 
		\begin{equation*}
			\sup_{(\varepsilon, y_0, (\eta,\tau), \kappa ) \in (0,\delta_2) \times \R \times S \times  \textup{SO}(\theta)} \norm{(I+D^{{y_0,\kappa}}_\varepsilon(z)Q_{\eta,\tau}q)^{-1}}_{0 \to 0} < \infty.
		\end{equation*} 
	\end{cor}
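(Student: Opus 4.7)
The plan is to upgrade Proposition~\ref{prop_uniform_bdd_D} from a pointwise to a uniform estimate via a Neumann-series perturbation argument combined with compactness of the parameter space $S\times\textup{SO}(\theta)$. The $y_0$-variable is harmless: as noted immediately above \eqref{eq_D_eps_straight}, both $D_\varepsilon^{y_0,\kappa}(z)$ and $D_0^{y_0,\kappa}(z)$ are independent of $y_0$, so the supremum over $y_0\in\R$ is trivial and it suffices to bound $\|(I+D^{0,\kappa}_\varepsilon(z)Q_{\eta,\tau}q)^{-1}\|_{0\to 0}$ uniformly in $(\varepsilon,(\eta,\tau),\kappa)\in(0,\delta_2)\times S\times\textup{SO}(\theta)$.

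The key technical input is an estimate of the form
\begin{equation*}
\sup_{\varepsilon>0}\bigl\|D^{0,\kappa}_\varepsilon(z)Q_{\eta,\tau}q-D^{0,\kappa_*}_\varepsilon(z)Q_{\eta_*,\tau_*}q\bigr\|_{0\to 0}\longrightarrow 0\quad\text{as}\quad(\eta,\tau,\kappa)\to(\eta_*,\tau_*,\kappa_*),
\end{equation*}
which I would establish via the fibre representation \eqref{eq_def_frak_d} and Lemma~\ref{lem_decomp_op}. Inspecting the kernel of $\mathfrak{D}_{\varepsilon,\xi'}(z)$, the dependence on $\kappa$ enters only through the matrices $\widetilde{\alpha}_1,\dots,\widetilde{\alpha}_\theta=\alpha\cdot\kappa e_1,\dots,\alpha\cdot\kappa e_\theta$, while the remaining scalar factors $\xi_j'/\sqrt{z^2-m^2-|\xi'|^2}$ (bounded uniformly in $\xi'\in\R^{\theta-1}$ for $z\in\rho(H)$) together with the oscillatory/decaying factor $e^{|\varepsilon(t-s)|i\sqrt{z^2-m^2-|\xi'|^2}}/2$ give rise to scalar integral operators on $L^2((-1,1);\C^N)$ with norms bounded uniformly in $\varepsilon>0$ and $\xi'\in\R^{\theta-1}$. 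This yields $\|D^{0,\kappa}_\varepsilon(z)-D^{0,\kappa_*}_\varepsilon(z)\|_{0\to 0}\leq C|\kappa-\kappa_*|$ uniformly in $\varepsilon$; combined with the trivial continuity $\|Q_{\eta,\tau}-Q_{\eta_*,\tau_*}\|\leq |\eta-\eta_*|+|\tau-\tau_*|$ and the uniform-in-$\varepsilon$ bound on $\|D^{0,\kappa}_\varepsilon(z)\|_{0\to 0}$ (which follows from Lemma~\ref{lem_diff_frak_operators} and the bound $\|\mathfrak{H}_{\rho,w'}\|\leq 4/\pi$ from Lemma~\ref{lem_volterra}), the displayed limit follows.

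With this uniform continuity in hand, the proof is straightforward. Fix $(\eta_*,\tau_*,\kappa_*)\in S\times\textup{SO}(\theta)$; since $\eta_*^2-\tau_*^2<\pi^2/4$ by the hypothesis on $S$, Proposition~\ref{prop_uniform_bdd_D} gives $\delta_1(\eta_*,\tau_*,\kappa_*)>0$ and $C_3(\eta_*,\tau_*,\kappa_*)<\infty$ such that
\begin{equation*}
\bigl\|(I+D^{0,\kappa_*}_\varepsilon(z)Q_{\eta_*,\tau_*}q)^{-1}\bigr\|_{0\to 0}\leq C_3\quad\text{for all}\quad\varepsilon\in(0,\delta_1).
\end{equation*}
By the uniform continuity estimate, there is a relatively open neighbourhood $U_*\subset S\times\textup{SO}(\theta)$ of $(\eta_*,\tau_*,\kappa_*)$ on which $\|D^{0,\kappa}_\varepsilon(z)Q_{\eta,\tau}q-D^{0,\kappa_*}_\varepsilon(z)Q_{\eta_*,\tau_*}q\|_{0\to 0}<\tfrac{1}{2C_3}$ for every $\varepsilon>0$, so a Neumann-series expansion of $(I+D^{0,\kappa}_\varepsilon(z)Q_{\eta,\tau}q)^{-1}$ around $(I+D^{0,\kappa_*}_\varepsilon(z)Q_{\eta_*,\tau_*}q)^{-1}$ produces the uniform bound $2C_3$ on $U_*\times(0,\delta_1)$.

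Finally, since $S\times\textup{SO}(\theta)$ is compact, it is covered by finitely many such neighbourhoods $U_{*_1},\dots,U_{*_n}$; setting $\delta_2:=\min_i\delta_1(\eta_{*_i},\tau_{*_i},\kappa_{*_i})$ and taking the maximum of the corresponding constants $2C_3(\eta_{*_i},\tau_{*_i},\kappa_{*_i})$ gives the desired uniform bound. I expect the only nontrivial step to be the uniform-in-$\varepsilon$ operator-norm continuity of $\kappa\mapsto D^{0,\kappa}_\varepsilon(z)$: Proposition~\ref{prop_Fourier_D} already asserts continuity for fixed $\varepsilon$, and the required uniformity is obtained by extracting the $\kappa$-dependence explicitly from the fibre kernel and bounding the remaining scalar operators as sketched above.
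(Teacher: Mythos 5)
Your proof is correct, but it takes a genuinely different (and somewhat more laborious) route than the paper. Both proofs first discard $y_0$ via the identity $D^{y_0,\kappa}_\varepsilon(z)=D^{0,\kappa}_\varepsilon(z)$. The paper then exploits the \emph{explicit} formulae for $C_2$, $C_3$ and $\delta_1$ from Proposition~\ref{prop_uniform_bdd_D}: since $\max_S d<\pi^2/4$ gives $\sup_S C_2<\infty$, and $\delta_1$ is a fixed monotone function of $C_3$ and $|\eta|+|\tau|$, the entire problem collapses to showing $\sup_{S\times\textup{SO}(\theta)}\|(I+D^{0,\kappa}_0(z)Q_{\eta,\tau}q)^{-1}\|_{0\to0}<\infty$, which is just continuity (Proposition~\ref{prop_Fourier_D} for $\kappa$, trivially for $\eta,\tau$) of the $\varepsilon=0$ operator on a compact set. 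No uniform-in-$\varepsilon$ control is needed beyond what Proposition~\ref{prop_uniform_bdd_D} already packages into $C_3,\delta_1$. You instead prove a stronger and not-yet-available fact, namely uniform-in-$\varepsilon$ operator-norm continuity of $(\eta,\tau,\kappa)\mapsto D^{0,\kappa}_\varepsilon(z)Q_{\eta,\tau}q$, and then combine it with a Neumann-series perturbation around each base point plus compactness. Your kernel argument for the uniform continuity is valid: the $\kappa$-dependence enters linearly through $\widetilde{\alpha}_j=\alpha\cdot\kappa e_j$, the scalar factor $|\xi'|/|\sqrt{z^2-m^2-|\xi'|^2}|$ is bounded uniformly in $\xi'$ for $z\in\rho(H)$, the exponential factor has modulus at most one because $\textup{Im}\sqrt{z^2-m^2-|\xi'|^2}>0$ under the fixed branch, and the fibre-wise kernel bound transfers to the full operator norm via Lemma~\ref{lem_decomp_op}. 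The payoff of your approach is that it only uses Proposition~\ref{prop_uniform_bdd_D} as a black box providing pointwise finiteness of $C_3$ and positivity of $\delta_1$, and would work even without the explicit formulae \eqref{eq_def_C_2}--\eqref{eq_def_delta_1}; the cost is that you must supply the additional uniform-in-$\varepsilon$ estimate, which the paper's bookkeeping makes unnecessary.
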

	\begin{proof}
		Since $D_\varepsilon^{y_0,\kappa}(z) = D_\varepsilon^{0,\kappa}(z)$, see the text below \eqref{eq_D_eps_straight}, the assertion follows directly from Proposition~\ref{prop_uniform_bdd_D} if we can show
		\begin{equation*}
			\sup_{((\eta,\tau),\kappa) \in S \times \textup{SO}(\theta)} C_3(\eta,\tau,\kappa) < \infty \quad \textup{and} \quad \inf_{((\eta,\tau),\kappa) \in S \times \textup{SO}(\theta)} \delta_1(\eta,\tau,\kappa) >0,
		\end{equation*}
		with $C_3$ and $\delta_1$ as in Proposition \ref{prop_uniform_bdd_D}. Note also that as $S$ is bounded the first inequality  and \eqref{eq_def_delta_1} imply the second inequality in the above displayed formula. Moreover, the assumption $\max_{(\eta,\tau)\in S} \eta^2-\tau^2< \tfrac{\pi^2}{4}$ implies $\max_{(\eta,\tau) \in S}C_2(\eta,\tau)< \infty$, where $C_2$ was defined in \eqref{eq_def_C_2}. Hence, it follows from \eqref{eq_def_C_3} that the inequality  $\sup_{((\eta,\tau),\kappa) \in S \times \textup{SO}(\theta)} C_3(\eta,\tau,\kappa) < \infty $  is fulfilled if 
		\begin{equation}\label{eq_cor_D_uniform_bdd}
			\sup_{((\eta,\tau),\kappa)\in S\times\textup{SO}(\theta)} \|(I+D^{{0,\kappa}}_0(z)Q_{\eta,\tau}q)^{-1}\|_{0 \to 0} < \infty.
		\end{equation}
		Obviously, the operator $D^{{0,\kappa}}_0(z)Q_{\eta,\tau}q$ depends continuously on  $\eta$, $\tau$ with respect to the operator norm in $\mathcal{B}^0(\R^{\theta-1})$. According to    Proposition~\ref{prop_Fourier_D} $D^{{0,\kappa}}_0(z)Q_{\eta,\tau}q$ depends also continuously on $\kappa$ with respect to the operator norm. Moreover, Proposition~\ref{prop_uniform_bdd_D} gives us that the operator $I+ D^{{0,\kappa}}_0(z)Q_{\eta,\tau}q  $ is continuously invertible in $\mathcal{B}^0(\R^{\theta-1})$ for $((\eta,\tau), \kappa) \in S \times \textup{SO}(\theta)$. Thus, as $S \times \textup{SO}(\theta)$ is compact,  \eqref{eq_cor_D_uniform_bdd} is indeed true.
	\end{proof} 
	
	\subsubsection{$\Sigma$ is a rotated $C^2_b$-graph}\label{sec_loc_part}
	After treating the case of affine hyperplanes, we turn to the case where $\Sigma$ is a rotated $C^2_b$-graph as in \eqref{eq_rotated_graph}, i.e.  there exist ${\zeta \in C^2_b(\R^{\theta-1};\R)}$ and $\kappa \in \textup{SO}(\R^{\theta})$ such   that 
	\begin{equation*}
		\Sigma = \Sigma_{\zeta,\kappa} = \{\kappa(x',\zeta(x')): x' \in \R^{\theta -1} \}.
	\end{equation*}
	Moreover, recall that $x_{\zeta,\kappa}(x') = \kappa(x',\zeta(x'))$ and $\nu_{\zeta,\kappa}(x') = \nu(x_{\zeta,\kappa}(x'))$, $x' \in \R^{\theta-1}$, see \eqref{eqref_x_nu_zeta_kappa}. According to \cite[Proposition A.2 (i), first line of the proof of Proposition 2.4, and eq. (3.11)]{BHS23} there exists a $C_4 = C_4(\Sigma)>0$ such that for all $x',y' \in \R^{\theta-1}$ and $\widetilde{\varepsilon} \in (-2\varepsilon_2,2\varepsilon_2)$
	\begin{equation}\label{eq_est_x_Sigma}
		C_4^{-1}\big(|x'-y' | + |\widetilde{\varepsilon}|\big)  \leq|x_{\zeta,\kappa}(x') - x_{\zeta,\kappa}(y') + \widetilde{\varepsilon} \nu_{\zeta,\kappa}(x')| \leq C_4 \big(|x'-y' | + |\widetilde{\varepsilon}|\big),  
	\end{equation}
	with $\varepsilon_2>0$ chosen as in Proposition~\ref{prop_conv_res}.
	We are going to prove the uniform boundedness of $(I + B_\varepsilon(z)Vq)^{-1}$ in $\mathcal{B}^0(\Sigma)$ with respect to $\varepsilon \in (0,\varepsilon_3)$, for a suitable $\varepsilon_3 \in (0,\varepsilon_2)$.  It follows from \eqref{eq_B_diff}, \eqref{eq_norm_iota}, \eqref{def_D_operators}, and  \eqref{eq_Q_matrix} that this is equivalent to proving the uniform boundedness of  $(I + D_\varepsilon^{\zeta,\kappa}(z)Q^{\zeta,\kappa}_{\eta,\tau}q)^{-1} = \iota_{\zeta,\kappa}(I + \overline{B}_\varepsilon(z)Vq)^{-1} \iota_{\zeta,\kappa}^{-1}$.
	
	We start by analysing $D_\varepsilon^{\zeta,\kappa}(z)$ locally. To do so we need to introduce further notations. For $x_0' \in \R^{\theta-1}$ we define 
	\begin{equation}\label{eq_zeta_x_0'}
		\zeta_{x_0'}(x') := \zeta(x_0') + \langle\nabla \zeta(x_0'), x'-x_0' \rangle, \qquad  x'\in \R^{\theta-1}.
	\end{equation}
	Moreover, we define the localisation parameter $a_\varepsilon := \varepsilon^{1/6}$ for $\varepsilon \in (0,\varepsilon_2)$.
	Next, we introduce a family of auxiliary operators. For this, we choose a $C^\infty$-function $\omega$ with $0 \leq \omega \leq 1$, $\omega = 1$ on $\R^{\theta-1} \setminus B(0,1)$ and $\omega =0$ on $B(0,\frac{1}{2})$. We use this function to cut out the singular part of the integral kernel of $D_{\varepsilon}^{\zeta,\kappa}(z)$; cf. \eqref{eq_D_eps_int_rep_0}. More precisely, in analogy with  \eqref{eq_D_eps_int_rep} and  \eqref{eq_d_eps}, we define for $\varepsilon \in (0,\varepsilon_2)$ and $\widetilde{\varepsilon} \in (-2\varepsilon_2,2\varepsilon_2)\setminus \{0\}$ the operators
	\begin{equation}\label{def_e_eps}
		\begin{aligned}
			e^{a_\varepsilon}_{\widetilde{\varepsilon}}(z) &: L^2(\R^{\theta-1};\C^N) \to L^2(\R^{\theta-1};\C^N),\\
			e^{a_\varepsilon}_{\widetilde{\varepsilon}}(z)  g (x') &:= \int_{\R^{\theta-1}} G_z(x_{\zeta,\kappa}(x') -x_{\zeta,\kappa}(y') + \widetilde{\varepsilon}\nu_{\zeta,\kappa}(x')) \omega\bigl(\tfrac{x' - y'}{a_\varepsilon}\bigr)\\
			&\hspace{150 pt}\cdot\sqrt{1+ \abs{\nabla \zeta(y')}^2} g(y') \,dy',
		\end{aligned}
	\end{equation}
	and 
	\begin{equation}\label{def_E_eps}
		E_{{\varepsilon}}(z) :\mathcal{B}^0(\R^{\theta-1}) \to \mathcal{B}^0(\R^{\theta-1}), \qquad 
		E_{{\varepsilon}}(z)  f(t):= \int_{-1}^1 e^{a_\varepsilon}_{\varepsilon(t-s)}(z)f(s) \,ds.
	\end{equation}
	
	We start by proving preliminary results for  $e_{\widetilde{\varepsilon}}^{a_\varepsilon}(z)$ and $d^{\zeta,\kappa}_{\widetilde{\varepsilon}}(z)$. Afterwards, we transfer these results to $E_\varepsilon(z)$ and $D^{\zeta,\kappa}_\varepsilon(z)$ in Proposition \ref{prop_local_prop}.
	For the estimates regarding   $e_{\widetilde{\varepsilon}}^{a_\varepsilon}(z)$ and $d^{\zeta,\kappa}_{\widetilde{\varepsilon}}(z)$ the following lemma turns out to be useful.
	\begin{lemma}\label{lem_est_G_z}
		Let $z \in \rho(H)$  and $G_z$ be defined by \eqref{eq_G_z_2D}--\eqref{eq_G_z_3D}. Then, there exist $C_5=C_5(z,m)>0$ and $C_6=C_6(z,m)>0$ such that for all $x \in \R^{\theta} \setminus \{0\}$ and $j \in \{1,\dots,\theta\}$
		\begin{equation}\label{eq_G_z_est_R_theta}
			\begin{aligned}
				|G_z(x)| &\leq  C_5|x|^{1-\theta}e^{-C_6 |x|}, \\ 	|\partial_j G_z(x)| &\leq C_5|x|^{-\theta}e^{-C_6 |x|}.
			\end{aligned}
		\end{equation}
		In particular, there holds for all $x',y' \in \R^{\theta-1}$, $j \in \{1,\dots,\theta\}$ and $\widetilde{\varepsilon} \in (-2\varepsilon_2,2\varepsilon_2)$
		\begin{equation}\label{eq_G_z_est_R_theta_prime}
			\begin{aligned}
				|G_z(x_{\zeta,\kappa}(x') - x_{\zeta,\kappa}(y') + \widetilde{\varepsilon}\nu_{\zeta,\kappa}(x'))| &\leq  C_5C_4^{\theta-1}(|x'-y'| +|\widetilde{\varepsilon}|)^{1-\theta}e^{-\frac{C_6}{C_4} |x'-y'| }, \\	|\partial_j G_z(x_{\zeta,\kappa}(x') - x_{\zeta,\kappa}(y') + \widetilde{\varepsilon}\nu_{\zeta,\kappa}(x'))| &\leq C_5C_4^\theta(|x'-y'| +|\widetilde{\varepsilon}|)^{-\theta}e^{-\frac{C_6}{C_4} |x'-y'| }.
			\end{aligned}
		\end{equation}
	\end{lemma}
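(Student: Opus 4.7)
The first set of estimates~\eqref{eq_G_z_est_R_theta} follows by inspecting the explicit formulas \eqref{eq_G_z_2D}--\eqref{eq_G_z_3D} for $G_z$. Since $z \in \rho(H) = \C\setminus((-\infty,-|m|]\cup[|m|,\infty))$, one has $z^2-m^2 \notin [0,\infty)$, so by the branch convention in Section~\ref{sec_not} we have $\operatorname{Im}\sqrt{z^2-m^2}>0$ and in turn $C_6 := \tfrac{1}{2}\operatorname{Im}\sqrt{z^2-m^2}>0$ is a uniform exponential decay rate. I would treat the two dimensions separately. For $\theta=3$ the bound on $|G_z(x)|$ is immediate from~\eqref{eq_G_z_3D} by bounding $|zI_4+m\beta+i(1-i\sqrt{z^2-m^2}|x|)\alpha\cdot x/|x|^2|\leq C(1+|x|)$ and the factor $e^{i\sqrt{z^2-m^2}|x|}/(4\pi|x|)$ by $C e^{-2C_6|x|}/|x|$. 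The derivative estimate is obtained by direct differentiation of~\eqref{eq_G_z_3D}: each partial derivative either lowers one power of $|x|$ in the polynomial prefactor or acts on $e^{i\sqrt{z^2-m^2}|x|}$, producing at worst an additional $1/|x|$ multiplied by a bounded function of $\sqrt{z^2-m^2}|x|$, so the claim follows after absorbing any polynomial growth into the exponential decay at the cost of slightly shrinking $C_6$.

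For $\theta=2$ the kernel involves $K_0$ and $K_1$; here I would invoke the well-known asymptotic estimates (see, e.g., DLMF or \cite{AS64}) that give $|K_0(w)|\leq C(1+|\log|w||)e^{-\operatorname{Re} w}$ and $|K_1(w)|\leq C|w|^{-1}e^{-\operatorname{Re} w}$ for $|w|$ bounded, together with $|K_\nu(w)|\leq C|w|^{-1/2}e^{-\operatorname{Re} w}$ for $|w|\geq 1$, as well as the differentiation identity $K_0'=-K_1$. Applied to $w=-i\sqrt{z^2-m^2}|x|$ these give the pointwise bound $|G_z(x)|\leq C|x|^{-1}e^{-C_6|x|}$, after absorbing the mild logarithm near $x=0$ into a slightly smaller exponential rate; the derivative bound is obtained analogously using $\nabla K_0(-i\sqrt{z^2-m^2}|x|) = i\sqrt{z^2-m^2}K_1(-i\sqrt{z^2-m^2}|x|)\,x/|x|$ and the product rule on the factors $(\alpha\cdot x)/|x|$ in~\eqref{eq_G_z_2D}. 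The mildly singular logarithmic terms arising from $K_0$ at the origin are harmless since $|x|^{-1}$ already dominates $1+|\log|x||$ near zero (after shrinking $C_6$). The main technical nuisance is thus tracking the constants uniformly in $x$ across the small-$|x|$ and large-$|x|$ regimes for the Bessel functions, but this is a standard argument.

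Once~\eqref{eq_G_z_est_R_theta} is established, the second pair of estimates~\eqref{eq_G_z_est_R_theta_prime} is a direct consequence of the two-sided bound~\eqref{eq_est_x_Sigma}. Indeed, setting $\xi := x_{\zeta,\kappa}(x')-x_{\zeta,\kappa}(y')+\widetilde\varepsilon\nu_{\zeta,\kappa}(x')$, the lower bound in~\eqref{eq_est_x_Sigma} gives
\[
|\xi|^{1-\theta} \leq C_4^{\theta-1}(|x'-y'|+|\widetilde\varepsilon|)^{1-\theta}
\quad\text{and}\quad
|\xi|^{-\theta}\leq C_4^{\theta}(|x'-y'|+|\widetilde\varepsilon|)^{-\theta},
\]
while the same lower bound gives $e^{-C_6|\xi|}\leq e^{-(C_6/C_4)|x'-y'|}$. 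Plugging these into~\eqref{eq_G_z_est_R_theta} proves~\eqref{eq_G_z_est_R_theta_prime}. The hard part is really just the bookkeeping of the Bessel asymptotics for $\theta=2$; the rest is algebra.
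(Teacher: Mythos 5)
Your approach coincides with the paper's: direct estimation of the explicit formula~\eqref{eq_G_z_3D} for $\theta=3$, standard small- and large-argument asymptotics of $K_0,K_1$ (together with $K_0'=-K_1$) for $\theta=2$, and then transferring both bounds to graph coordinates via the two-sided estimate~\eqref{eq_est_x_Sigma}. The paper compresses the first two points into a one-line reference to the DLMF, but the underlying argument is the same.

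One technical slip in your $\theta=3$ step: the prefactor in~\eqref{eq_G_z_3D} is bounded by $C(1+|x|^{-1})$, not $C(1+|x|)$, because $|\alpha\cdot x/|x|^2|=|x|^{-1}$. The term $i\,(\alpha\cdot x)/(4\pi|x|^3)$ is precisely the $|x|^{-2}$-singular part of $G_z$, so the claimed bound $C(1+|x|)$ would be false near the origin and, combined with the $e^{-2C_6|x|}/|x|$ factor, would only produce a $|x|^{-1}$ singularity instead of the required $|x|^{-2}$. With the corrected prefactor bound one obtains $C(|x|^{-1}+|x|^{-2})e^{-2C_6|x|}$, which is $\le C_5|x|^{-2}e^{-C_6|x|}$ for all $x\ne0$ after absorbing the polynomially growing factor into the surplus exponential decay. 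Likewise, in the $\theta=2$ discussion the logarithm from $K_0$ is controlled near the origin by the power $|x|^{-1}$, not by shrinking $C_6$; shrinking the exponential rate only helps at infinity. Neither point affects the validity of your overall strategy, which is sound and matches the paper.
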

	\begin{proof}
		Rough estimations and asymptotic expansions of the modified Bessel functions $K_1$ and $K_0$, see \cite[\S10.25~(ii) and \S10.30~(i)]{DLMF} and  \cite[eq. (B.8)]{BHS23}, lead to
		\begin{equation*}
				|G_z(x)| \leq  C(1+|x|^{1-\theta})e^{-\textup{Im}\sqrt{z^2-m^2} |x|} 
        \end{equation*}
        and
        \begin{equation*}
            |\partial_j G_z(x)| \leq C(1+|x|^{-\theta})e^{-\textup{Im}\sqrt{z^2-m^2} |x|}
		\end{equation*}
		for all $x \in \R^{\theta}\setminus \{0\}$ and $j \in \{1,\dots,\theta\}$, where $C = C(m,z)>0$ is a constant which only depends on $m$ and $z$. Thus, \eqref{eq_G_z_est_R_theta} is valid if one chooses $ C_6 \in (0,\textup{Im} \sqrt{z^2-m^2})$ and 
		\begin{equation*}
			C_5 = \sup_{x \in \R^{\theta}\setminus\{0\}, l \in \{\theta-1,\theta\}} C\frac{1+ |x|^{-l}}{|x|^{-l}} e^{-(\textup{Im }\sqrt{m^2-z^2} - C_6)|x|  } < \infty.
		\end{equation*}
		Furthermore,  combining these estimates with \eqref{eq_est_x_Sigma} implies \eqref{eq_G_z_est_R_theta_prime}.
	\end{proof}
	
	\begin{lemma}\label{lem_e_eps^r} 
		Let $z \in \rho(H)$, $\varepsilon \in (0,\varepsilon_2)$, $a_\varepsilon = \varepsilon^{1/6}$, and $\widetilde{\varepsilon} \in (-2\varepsilon_2,2\varepsilon_2)\setminus \{0\}$. Then, the operator $e^{a_\varepsilon}_{\widetilde{\varepsilon}}(z)$ acts as a bounded operator from $L^2(\R^{\theta-1};\C^N)$ to $H^1(\R^{\theta-1};\C^N)$ and 
		\begin{equation*}
			\norm{e^{a_\varepsilon}_{\widetilde{\varepsilon}}(z)}_{L^2(\R^{\theta-1};\C^N) \to H^1(\R^{\theta-1};\C^N)} \leq C\frac{1+\abs{\log(\varepsilon)}}{a_\varepsilon},
		\end{equation*}
		where $C>0$ does not depend on $\widetilde{\varepsilon}$ and  $\varepsilon$. Moreover, for $f \in L^2(\R^{\theta-1};\C^N)$ the mapping $(-2\varepsilon_2,2\varepsilon_2)\setminus\{0\}\ni \widetilde{\varepsilon} \mapsto e^{a_\varepsilon}_{\widetilde{\varepsilon}}(z)f \in H^1(\R^{\theta-1};\C^N)$ is continuous.
	\end{lemma}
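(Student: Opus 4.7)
The plan is to apply a Schur test on the integral kernel
\begin{equation*}
K_{\widetilde{\varepsilon}}^{a_\varepsilon}(x',y') := G_z\bigl(x_{\zeta,\kappa}(x')-x_{\zeta,\kappa}(y')+\widetilde{\varepsilon}\nu_{\zeta,\kappa}(x')\bigr)\,\omega\bigl(\tfrac{x'-y'}{a_\varepsilon}\bigr)\sqrt{1+\lvert\nabla\zeta(y')\rvert^2},
\end{equation*}
splitting the argument into an $L^2\to L^2$ bound for $e^{a_\varepsilon}_{\widetilde{\varepsilon}}(z)$ itself and an $L^2\to L^2$ bound for its partial derivatives $\partial_j$, and then combining both into the desired $L^2\to H^1$ estimate. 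The crucial input is Lemma~\ref{lem_est_G_z}, and the crucial geometric fact is that the support of $\omega(\tfrac{x'-y'}{a_\varepsilon})$ forces $\lvert x'-y'\rvert\geq a_\varepsilon/2$, so the kernel never touches the singularity at the diagonal.

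For the $L^2$-bound I would estimate by \eqref{eq_G_z_est_R_theta_prime}
\begin{equation*}
\lvert K_{\widetilde{\varepsilon}}^{a_\varepsilon}(x',y')\rvert \leq C\,\chi_{\{\lvert x'-y'\rvert\geq a_\varepsilon/2\}}\,\bigl(\lvert x'-y'\rvert+\lvert\widetilde{\varepsilon}\rvert\bigr)^{1-\theta}e^{-C\lvert x'-y'\rvert}
\end{equation*}
and compute, using polar coordinates in $\R^{\theta-1}$,
\begin{equation*}
\int_{\lvert x'-y'\rvert\geq a_\varepsilon/2}\lvert x'-y'\rvert^{1-\theta}e^{-C\lvert x'-y'\rvert}\,dy'\leq C\int_{a_\varepsilon/2}^{\infty}r^{-1}e^{-Cr}\,dr\leq C(1+\lvert\log\varepsilon\rvert),
\end{equation*}
since $a_\varepsilon=\varepsilon^{1/6}$. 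The same bound holds after swapping $x'$ and $y'$ (the $\nu_{\zeta,\kappa}(x')$-dependence is immaterial because only $\lvert x'-y'\rvert$ enters the estimate), so Schur's test gives $\lVert e^{a_\varepsilon}_{\widetilde{\varepsilon}}(z)\rVert_{L^2\to L^2}\leq C(1+\lvert\log\varepsilon\rvert)$.

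For the derivative I would differentiate under the integral sign. The derivative $\partial_j$ either hits the composed Green's function, producing (via the chain rule and $\zeta\in C_b^2$, $\nu_{\zeta,\kappa}\in C_b^1$) a kernel dominated by $C\lvert x'-y'\rvert^{-\theta}e^{-C\lvert x'-y'\rvert}\chi_{\{\lvert x'-y'\rvert\geq a_\varepsilon/2\}}$, for which
\begin{equation*}
\int_{a_\varepsilon/2}^{\infty}r^{-\theta}r^{\theta-2}\,dr=\int_{a_\varepsilon/2}^{\infty}r^{-2}\,dr\leq C/a_\varepsilon;
\end{equation*}
or it hits the cutoff $\omega(\tfrac{x'-y'}{a_\varepsilon})$, producing an extra factor $\tfrac{1}{a_\varepsilon}(\partial_j\omega)(\tfrac{x'-y'}{a_\varepsilon})$ supported in the shell $\{a_\varepsilon/2\leq\lvert x'-y'\rvert\leq a_\varepsilon\}$, whose Schur integral is
\begin{equation*}
\frac{C}{a_\varepsilon}\int_{a_\varepsilon/2}^{a_\varepsilon}r^{1-\theta}r^{\theta-2}\,dr=\frac{C}{a_\varepsilon}\log 2\leq C/a_\varepsilon.
\end{equation*}
Hence $\lVert\partial_j e^{a_\varepsilon}_{\widetilde{\varepsilon}}(z)\rVert_{L^2\to L^2}\leq C/a_\varepsilon$, and since $a_\varepsilon\leq 1$ for $\varepsilon$ small, adding the two bounds gives the claimed $C(1+\lvert\log\varepsilon\rvert)/a_\varepsilon$.

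The continuity statement I would verify by dominated convergence: for $\widetilde{\varepsilon}_n\to\widetilde{\varepsilon}_0\neq 0$ in $(-2\varepsilon_2,2\varepsilon_2)$, on the set $\{\lvert x'-y'\rvert\geq a_\varepsilon/2\}$ the argument of $G_z$ lies in a compact subset of $\R^\theta\setminus\{0\}$ (uniformly for $\widetilde{\varepsilon}_n$ near $\widetilde{\varepsilon}_0$) so $G_z$ and its first derivatives are continuous there; the kernel differences and their $\partial_j$-derivatives thus tend to zero pointwise and are dominated by the integrable bounds used above, which yields $e^{a_\varepsilon}_{\widetilde{\varepsilon}_n}(z)f\to e^{a_\varepsilon}_{\widetilde{\varepsilon}_0}(z)f$ in $H^1(\R^{\theta-1};\C^N)$. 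The main bookkeeping obstacle is keeping track of the polar-coordinate integrals $\int r^{-1}dr$ versus $\int r^{-2}dr$ and producing the two different orders of blow-up ($\lvert\log\varepsilon\rvert$ versus $1/a_\varepsilon$); the chain-rule contributions from $x_{\zeta,\kappa}$ and $\nu_{\zeta,\kappa}$ are harmless because $\zeta\in C_b^2$.
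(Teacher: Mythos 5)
Your proposal is correct and follows essentially the same route as the paper: a Schur test (Lemma~\ref{lem_Schur_test}) applied to the kernel $k$ and its $x'$-derivatives, with the kernel bounds coming from \eqref{eq_G_z_est_R_theta_prime} and the cutoff forcing $\lvert x'-y'\rvert\geq a_\varepsilon/2$, followed by the same polar-coordinate integrals producing the $\tfrac{1}{a_\varepsilon}$ and $\tfrac{1+\lvert\log\varepsilon\rvert}{a_\varepsilon}$ contributions, and dominated convergence for the continuity claim. The only (immaterial) difference is bookkeeping: the paper packages the bounds for $\lvert k\rvert$ and $\sum_l\lvert\partial_{x'_l}k\rvert$ into a single integrable majorant $\widetilde{k}$ before invoking the Schur test, whereas you estimate the $L^2\to L^2$ norm of the operator and of its derivatives separately and then add.
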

	\begin{proof}
		We aim to prove the assertion by applying Lemma~\ref{lem_Schur_test}. To do so, it is necessary to find suitable estimates for the integral kernel of $e^{a_\varepsilon}_{\widetilde{\varepsilon}}(z)$ which is for $x',y' \in \R^{\theta-1}$ given by
		\begin{equation*}
			k(x',y') := G_z(x_{\zeta,\kappa}(x') -x_{\zeta,\kappa}(y') + \widetilde{\varepsilon}\nu_{\zeta,\kappa}(x')) \omega\bigl(\tfrac{x' - y'}{a_\varepsilon}\bigr)\sqrt{1+ \abs{\nabla \zeta(y')}^2}.
		\end{equation*} 
		We notice as $G_z \in C^\infty(\R^{\theta}\setminus\{0\};\C^{N \times N})$, $\zeta \in C_b^2(\R^{\theta-1};\R)$, and $\omega \in C_b^\infty(\R^{\theta-1};\R)$ and as $\omega$ cuts out the singularity of $G_z$, we have $k \in C_b^1(\R^{\theta-1} \times \R^{\theta-1};\C^{N \times N})$. Furthermore, using \eqref{eq_G_z_est_R_theta_prime}, $0 \leq \omega \leq 1$, $\supp \omega \subset \R^{\theta-1}\setminus B(0,\frac{1}{2})$ and $\zeta \in C^2_b(\R^{\theta-1};\R)$ immediately gives us for $x' \neq y' \in \R^{\theta-1}$
		\begin{equation*}
			\begin{aligned}
				|k(x',y')| &\leq C \chi_{\R^{\theta-1}\setminus B(0,1/2)}\bigl(\tfrac{x'-y'}{a_\varepsilon}\bigr) (|x'-y'| + |\widetilde{\varepsilon}|)^{1-\theta}e^{-c|x'-y'|}\\
				&\leq  C \chi_{\R^{\theta-1}\setminus B(0,1/2)}\bigl(\tfrac{x'-y'}{a_\varepsilon}\bigr) |x'-y'|^{1-\theta}e^{-c|x'-y'|},
			\end{aligned}
		\end{equation*}
		where $c = \tfrac{C_6}{C_4}$ with $C_4>0$ from \eqref{eq_est_x_Sigma} and $C_6$ from Lemma~\ref{lem_est_G_z}. Next, we estimate the derivatives with respect to $x'$ of $k$. The derivative with respect to $x_l'$, $l \in \{1,\dots\theta-1\}$, is given for for $x'\neq y' \in \R^{\theta-1}$ by
		\begin{equation*}
			\begin{aligned}
				\frac{d}{d x'_l}k(x',y')&= \sum_{j = 1}^{\theta} \Big((\partial_j G_z)(x_{\zeta,\kappa}(x') -x_{\zeta,\kappa}(y') + \widetilde{\varepsilon}\nu_{\zeta,\kappa}(x'))\\
				&\hspace{50 pt}\cdot\frac{d}{d x'_l}(x_{\zeta,\kappa}(x')[j] + \widetilde{\varepsilon}\nu_{\zeta,\kappa}(x')[j])
				\omega\bigl(\tfrac{x' - y'}{a_\varepsilon}\bigr)\sqrt{1+ \abs{\nabla \zeta(y')}^2} \Big)\\
				&+ G_z(x_{\zeta,\kappa}(x') -x_{\zeta,\kappa}(y') + \widetilde{\varepsilon}\nu_{\zeta,\kappa}(x'))\frac{1}{a_\varepsilon}(\partial_{l} \omega)\bigl(\tfrac{x'-y'}{a_\varepsilon}\bigr)\sqrt{1+ \abs{\nabla \zeta(y')}^2},
			\end{aligned}
		\end{equation*}
		where $v[j]$ denotes the $j$-th component of a vector $v$.
		Applying \eqref{eq_G_z_est_R_theta_prime}, the properties of $\omega$ and $\zeta$ again, we can estimate for $x' \neq y' \in \R^{\theta-1}$
		\begin{equation*}
			\begin{aligned}
				\Bigl|\frac{d}{d x'_l}k(x',y')\Bigr| &\leq  C \chi_{\R^{\theta-1}\setminus B(0,1/2)}\bigl(\tfrac{x'-y'}{a_\varepsilon}\bigr)\Bigl(  (|x'-y'| + |\widetilde{\varepsilon}|)^{-\theta}e^{-c|x'-y'|} \\
				&\hspace{110 pt} +  \frac{1}{a_\varepsilon}(|x'-y'| + |\widetilde{\varepsilon}|)^{1-\theta}e^{-c|x'-y'|} \Bigr)\\
				&\leq  C \chi_{\R^{\theta-1}\setminus B(0,1/2)}\bigl(\tfrac{x'-y'}{a_\varepsilon}\bigr)\Bigl(  |x'-y'|^{-\theta} +  \frac{1}{a_\varepsilon}|x'-y'|^{1-\theta} \Bigr)e^{-c|x'-y'|}.
			\end{aligned}
		\end{equation*}
		Thus, if we set $\widetilde{k}(z'):=  C \chi_{\R^{\theta-1}\setminus B(0,1/2)}\bigl(\tfrac{z'}{a_\varepsilon}\bigr)\Bigl(  |z'|^{-\theta} +  \tfrac{1}{a_\varepsilon}|z'|^{1-\theta} \Bigr)e^{-c|z'|}$ for $z' \in \R^{\theta-1}\setminus \{0\}$ we get
		\begin{equation}\label{eq_est_k_tilde}
			|k(x',y')|,\, \sum_{l=1}^{\theta-1} \Bigl|\frac{d}{d x'_l} k(x',y')\Bigr| \leq \widetilde{k}(x'-y'),  \quad x' \neq y' \in \R^{\theta-1}.
		\end{equation}
		Hence, by Lemma~\ref{lem_Schur_test} the map $e^{a_\varepsilon}_{\widetilde{\varepsilon}}(z)$ acts as a bounded operator from $L^2(\R^{\theta-1};\C^N)$ to $H^1(\R^{\theta-1};\C^N)$ and 
		\begin{equation*}
			\norm{e^{a_\varepsilon}_{\widetilde{\varepsilon}}(z)}_{L^2(\R^{\theta-1};\C^N) \to H^1(\R^{\theta-1};\C^N)} \leq C \|\widetilde{k}\|_{L^1(\R^{\theta-1})}. 
		\end{equation*}
		Now, the norm estimate in the assertion follows from
		\begin{equation*}
			\begin{aligned}
				\|\widetilde{k}\|_{L^1(\R^{\theta-1})} &= C \int_{\R^{\theta-1}}  \chi_{\R^{\theta-1}\setminus B(0,1/2)}\bigl(\tfrac{z'}{a_\varepsilon}\bigr)\Big(  |z'|^{-\theta} +  \frac{1}{a_\varepsilon}|z'|^{1-\theta} \Big)e^{-c|z'|} dz' \\
				&\leq C \int_{a_\varepsilon/2}^\infty \Big(r^{-\theta} + \frac{1}{a_\varepsilon}r^{1-\theta}\Big)e^{-cr} r^{\theta-2} \, dr\\
				&\leq C \bigg(\frac{1}{a_\varepsilon} + \frac{1+\abs{\log(a_\varepsilon)}}{a_\varepsilon}\bigg) \leq C \frac{1+\abs{\log(\varepsilon)}}{a_\varepsilon}.
			\end{aligned}
		\end{equation*}
	
		Finally, we prove the continuity. To do so, let $\widetilde{\varepsilon} \in (-2\varepsilon_2,2\varepsilon_2) \setminus \{0\}$ and $(\widetilde{\varepsilon}_n)_{n \in \N} $ be a sequence such that  $\widetilde{\varepsilon}_n \in (-2\varepsilon_2,2\varepsilon_2) \setminus \{0\}$ for all $n \in \N$ and  $\widetilde{\varepsilon}_n \overset{n \to \infty}\longrightarrow  \widetilde{\varepsilon}$. Using the dominated convergence theorem and \eqref{eq_est_k_tilde} shows that for $f \in L^2(\R^{\theta-1};\C^N)$ $e^{a_\varepsilon}_{\widetilde{\varepsilon}_n}f$ and $\partial_l e^{a_\varepsilon}_{\widetilde{\varepsilon}_n}f$, $l \in \{1,\dots,\theta-1\}$, converge pointwise to $e^{a_\varepsilon}_{\widetilde{\varepsilon}}f$ and $\partial_l e^{a_\varepsilon}_{\widetilde{\varepsilon}}f$, ${l \in \{1,\dots,\theta-1\}}$, respectively. Furthermore, \eqref{eq_est_k_tilde} shows that $|e^{a_\varepsilon}_{\widetilde{\varepsilon}_n}f|$ and $|\partial_l e^{a_\varepsilon}_{\widetilde{\varepsilon}_n}f|$, $l \in \{1,\dots,\theta-1\}$, are independently of $n \in \N$ pointwise  bounded by the function $|f| * \widetilde{k}$ which is by Young's inequality square integrable as $ f \in L^2(\R^{\theta-1};\C^N)$ and $\widetilde{k} \in L^1(\R^{\theta-1})$. Hence, applying the dominated converge theorem again shows that $e^{a_\varepsilon}_{\widetilde{\varepsilon}_n}f$ converges to $e^{a_\varepsilon}_{\widetilde{\varepsilon}}f$ in $H^1(\R^{\theta-1};\C^N)$.
	\end{proof}
	
	\begin{lemma}\label{lem_comuutator_d_eps}
		Let $z \in \rho(H)$, $\psi \in C^{1}_b(\R^{\theta-1})$, $x_0' \in \R^{\theta-1}$,  $\zeta_{x_0'} $ be as in \eqref{eq_zeta_x_0'}, and $\widetilde{\varepsilon}  \in (- 2\varepsilon_2,2\varepsilon_2)\setminus \{0\}$. Then,
		\begin{equation*}
			\big\|[d^{\zeta_{x_0'},\kappa}_{\widetilde{\varepsilon}}(z),\psi]\big\|_{L^2(\R^{\theta-1};\C^N) \to H^1(\R^{\theta-1};\C^N)} \leq C \norm{\psi}_{W^1_\infty(\R^{\theta-1})} (1+\abs{\log\abs{\widetilde{\varepsilon}}}),
		\end{equation*}
		where $C>0$ does not depend on $\widetilde{\varepsilon}\neq0$ and $x_0' \in \R^{\theta-1}$. Moreover, for $f \in L^2(\R^{\theta-1};\C^N)$ the mapping $(-2\varepsilon_2,2\varepsilon_2)\setminus\{0\}\ni \widetilde{\varepsilon} \mapsto [d^{\zeta_{x_0'},\kappa}_{\widetilde{\varepsilon}}(z),\psi]f \in H^1(\R^{\theta-1};\C^N)$ is continuous.
	\end{lemma}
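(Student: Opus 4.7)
The strategy is to write the commutator as an integral operator, use the Lipschitz bound on $\psi$ to gain one power of $|x'-y'|$ against the singularity of $G_z$, and then apply Lemma~\ref{lem_Schur_test} (Schur-type test) to the kernel and to its gradient in $x'$ separately. A key simplification is that $\zeta_{x_0'}$ is affine, so $\nabla\zeta_{x_0'}(y')=\nabla\zeta(x_0')$ is constant (and bounded uniformly in $x_0'$ by $\|\zeta\|_{C^2_b}$) and $\nu_{\zeta_{x_0'},\kappa}$ is a constant unit vector; moreover \eqref{eq_est_x_Sigma} holds for $\zeta_{x_0'}$ with a constant $C_4$ depending only on $\|\zeta\|_{C^2_b}$, hence uniformly in $x_0'$.

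By definition of $d^{\zeta_{x_0'},\kappa}_{\widetilde\varepsilon}(z)$, the commutator $[d^{\zeta_{x_0'},\kappa}_{\widetilde\varepsilon}(z),\psi]$ is the integral operator with kernel
\begin{equation*}
K_\psi(x',y') = G_z\big(x_{\zeta_{x_0'},\kappa}(x')-x_{\zeta_{x_0'},\kappa}(y')+\widetilde\varepsilon\,\nu_{\zeta_{x_0'},\kappa}(x')\big)\bigl(\psi(y')-\psi(x')\bigr)\sqrt{1+|\nabla\zeta(x_0')|^2}.
\end{equation*}
Using $|\psi(y')-\psi(x')|\leq \|\psi\|_{W^1_\infty}|x'-y'|$ together with the estimate \eqref{eq_G_z_est_R_theta_prime} (applied to $\zeta_{x_0'}$) gives the kernel bound
\begin{equation*}
|K_\psi(x',y')| \leq C\|\psi\|_{W^1_\infty}\, |x'-y'|\bigl(|x'-y'|+|\widetilde\varepsilon|\bigr)^{1-\theta}e^{-c|x'-y'|}\leq C\|\psi\|_{W^1_\infty}\bigl(|x'-y'|+|\widetilde\varepsilon|\bigr)^{2-\theta}e^{-c|x'-y'|}.
\end{equation*}
Passing to polar coordinates in $y'$ yields $\int_0^\infty (r+|\widetilde\varepsilon|)^{2-\theta}r^{\theta-2}e^{-cr}\,dr \leq C$ uniformly in $\widetilde\varepsilon\in(-2\varepsilon_2,2\varepsilon_2)\setminus\{0\}$ (and uniformly in $x_0'$), so the Schur test delivers the $L^2\!\to\!L^2$ bound $\|[d^{\zeta_{x_0'},\kappa}_{\widetilde\varepsilon}(z),\psi]\|_{0\to 0}\leq C\|\psi\|_{W^1_\infty}$, without any logarithmic factor.

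For the gradient part I would differentiate under the integral. Because $\nu_{\zeta_{x_0'},\kappa}$ is independent of $x'$ and $x_{\zeta_{x_0'},\kappa}(\cdot)$ is affine with derivative bounded by a constant depending only on $\|\zeta\|_{C^2_b}$, the chain rule gives
\begin{equation*}
\nabla_{x'}K_\psi(x',y') = -\nabla\psi(x')\,G_z(\cdots)\sqrt{1+|\nabla\zeta(x_0')|^2} + \bigl(\psi(y')-\psi(x')\bigr)\nabla_{x'}G_z(\cdots)\sqrt{1+|\nabla\zeta(x_0')|^2}.
\end{equation*}
Using \eqref{eq_G_z_est_R_theta_prime} the first summand is bounded by $C\|\psi\|_{W^1_\infty}(|x'-y'|+|\widetilde\varepsilon|)^{1-\theta}e^{-c|x'-y'|}$, and passing to polar coordinates produces
\begin{equation*}
\int_0^\infty r^{\theta-2}(r+|\widetilde\varepsilon|)^{1-\theta}e^{-cr}\,dr \leq C(1+|\log|\widetilde\varepsilon||), \qquad \theta\in\{2,3\}.
\end{equation*}
The second summand is bounded by $C\|\psi\|_{W^1_\infty}|x'-y'|(|x'-y'|+|\widetilde\varepsilon|)^{-\theta}e^{-c|x'-y'|}$ and yields
\begin{equation*}
\int_0^\infty r^{\theta-1}(r+|\widetilde\varepsilon|)^{-\theta}e^{-cr}\,dr \leq C(1+|\log|\widetilde\varepsilon||).
\end{equation*}
Applying Lemma~\ref{lem_Schur_test} to the combined bound on $|K_\psi|+|\nabla_{x'}K_\psi|$ (which is majorized by an $L^1$-function of $x'-y'$ with $L^1$-norm $\lesssim 1+|\log|\widetilde\varepsilon||$) gives the announced operator bound from $L^2(\R^{\theta-1};\C^N)$ to $H^1(\R^{\theta-1};\C^N)$, with a constant depending only on $\|\zeta\|_{C^2_b}$ and hence independent of $x_0'$.

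Finally, the continuity of $\widetilde\varepsilon\mapsto [d^{\zeta_{x_0'},\kappa}_{\widetilde\varepsilon}(z),\psi]f$ in $H^1(\R^{\theta-1};\C^N)$ for fixed $f\in L^2(\R^{\theta-1};\C^N)$ follows from two applications of dominated convergence, exactly as in the last paragraph of the proof of Lemma~\ref{lem_e_eps^r}: the integrand and its $x'$-derivative converge pointwise as $\widetilde\varepsilon_n\to\widetilde\varepsilon\neq 0$ and are dominated by the integrable majorants just established. The main technical point I expect to have to handle carefully is tracking that all constants depend only on $\|\zeta\|_{C^2_b}$, $z$, and $m$, and not on $x_0'\in\R^{\theta-1}$; this is what allows this local lemma to be used subsequently in a parameter-dependent partition of unity argument in Section~\ref{sec_loc_part}.
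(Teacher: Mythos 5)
Your proposal is correct and follows essentially the same route as the paper's proof: write the commutator as an integral operator with kernel $G_z(\cdots)\sqrt{1+|\nabla\zeta(x_0')|^2}(\psi(y')-\psi(x'))$, exploit the Lipschitz bound on $\psi$ and the affine structure of $\zeta_{x_0'}$ (so that $\nu_{\zeta_{x_0'},\kappa}$ is constant and the analogue of \eqref{eq_est_x_Sigma} holds with a constant controlled only by $\|\nabla\zeta\|_{L^\infty}$, uniformly in $x_0'$), bound the kernel and its $x'$-gradient pointwise, integrate in polar coordinates to pick up the $1+|\log|\widetilde\varepsilon||$ factor, and conclude with Lemma~\ref{lem_Schur_test}; the continuity statement follows by dominated convergence as in Lemma~\ref{lem_e_eps^r}. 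Your side remark that the pure $L^2\to L^2$ bound carries no logarithmic loss is also consistent with the paper's estimates.
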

	\begin{proof}
		We prove this result in the same vein as the previous lemma, i.e. we estimate the integral kernel of $[d^{\zeta_{x_0'},\kappa}_{\widetilde{\varepsilon}}(z),\psi]$ and its partial derivatives, and apply Lemma~\ref{lem_Schur_test}. The integral kernel of $[d^{\zeta_{x_0'},\kappa}_{\widetilde{\varepsilon}}(z),\psi]$ is given for $x',y' \in \R^{\theta-1}$ by 
		\begin{equation*}
			k(x',y'):= G_z(x_{\zeta_{x_0'},\kappa}(x') -x_{\zeta_{x_0'},\kappa}(y') + \widetilde{\varepsilon}\nu_{\zeta_{x_0'},\kappa}(x'))\sqrt{1+ \abs{\nabla \zeta_{x_0'}(y')}^2}(\psi(y')- \psi(x')).
		\end{equation*}
		Using 
		\begin{equation}\label{eq_lem_comm_d_eps_1}
			\begin{aligned}
				x_{\zeta_{x_0'},\kappa}(x') &= \kappa(x',\zeta(x_0')+\langle\nabla\zeta(x_0'),x'-x_0'\rangle), \\
				\nu_{\zeta_{x_0'},\kappa}(x') &= \frac{\kappa(-\nabla\zeta(x_0'),1)}{\sqrt{1+|\nabla \zeta(x_0')|^2}}  = \nu_{\zeta,\kappa}(x_0'),\\
				\nabla \zeta_{x_0'}(x') &= \nabla \zeta(x_0')
			\end{aligned}
		\end{equation}
		for $x' \in \R^{\theta-1}$ shows that $k$ can be simplified to
		\begin{equation*}
			k(x',y')= G_z\bigl(\kappa(x'-y', \langle \nabla \zeta(x_0'), x'-y' \rangle) + \widetilde{\varepsilon}\nu_{\zeta,\kappa}(x'_0)\bigr)\sqrt{1+ \abs{\nabla \zeta(x_0')}^2}(\psi(y')- \psi(x')). 
		\end{equation*}
		Moreover, with \eqref{eq_lem_comm_d_eps_1}, $\kappa \in \textup{SO}(\theta)$, and the Pythagorean theorem one gets
		\begin{equation*}
			\begin{aligned}
				|\kappa(x'-y', \langle \nabla \zeta(x_0'), x'-y' \rangle) &+ \widetilde{\varepsilon}\nu_{\zeta,\kappa}(x'_0)|^2 \\
				&= |x'-y'|^2 + \langle \nabla \zeta(x_0'), x'-y' \rangle^2 + \widetilde{\varepsilon}^2 \\
				&\leq |x'-y'|^2(1+\|\nabla \zeta\|_{L^\infty(\R^{\theta-1};\R^{\theta-1})}^2) + \widetilde{\varepsilon}^2. 
			\end{aligned}
		\end{equation*}
		In particular, we can choose $C_4'>0$  which does not depend on $x_0' $ and $\widetilde{\varepsilon}$ such that 
		\begin{equation}\label{eq_lem_comm_d_eps_2}
			\begin{aligned}
				(C_4')^{-1}(|x'-y'| + |\widetilde{\varepsilon}|) &\leq |\kappa(x'-y', \langle \nabla \zeta(x_0'), x'-y' \rangle) + \widetilde{\varepsilon}\nu_{\zeta,\kappa}(x'_0)| \\ 
				&\leq  	C_4'(|x'-y'| + |\widetilde{\varepsilon}|). 
			\end{aligned}
		\end{equation}
		Then, \eqref{eq_G_z_est_R_theta},  \eqref{eq_lem_comm_d_eps_2}, the Lipschitz continuity of $\psi \in C^1_b(\R^{\theta-1})$ and $\zeta \in C^2_b(\R^{\theta-1};\R)$ yield
		\begin{equation*}
			\begin{aligned}
				|k(x',y')| &\leq C (|x'-y'| + |\widetilde{\varepsilon}|)^{1-\theta}e^{-c'|x'-y'|} \norm{\psi}_{W^1_\infty(\R^{\theta-1})} |x'-y’|\\
				&\leq C\norm{\psi}_{W^1_\infty(\R^{\theta-1})}(|x'-y'| + |\widetilde{\varepsilon}|)^{2-\theta}e^{-c'|x'-y'|}, \quad  x',y' \in \R^{\theta-1},
			\end{aligned}
		\end{equation*} 
		where $c' = \tfrac{C_6}{C_4'}$ with $C_6$ from Lemma~\ref{lem_est_G_z}  and $C>0$ is independent of $x_0'$ and $\widetilde{\varepsilon}$.
		The derivative with respect to $x_l'$, $x' \in \R^{\theta-1}$, of $k$ is given by
		\begin{equation*}
			\begin{aligned}
				\frac{d}{d x'_l}k(x',y')&= \biggl(\sum_{j = 1}^{\theta} (\partial_j G_z)\bigl(\kappa(x'-y', \langle \nabla \zeta(x_0'), x'-y' \rangle) + \widetilde{\varepsilon}\nu_{\zeta,\kappa}(x'_0)\bigr)\\
				&\hspace{100 pt} \cdot \bigl(\kappa(e_l',\partial_l \zeta(x_0'))\bigr)[j](\psi(y') - \psi(x'))\\
				&\hspace{10 pt}-G_z\bigl(\kappa(x'-y', \langle \nabla \zeta(x_0'), x'-y' \rangle) + \widetilde{\varepsilon}\nu_{\zeta,\kappa}(x'_0)\bigr)(\partial_{l}\psi)(x') \biggr)\\
				&\hspace{180 pt}\cdot\sqrt{1+ \abs{\nabla \zeta(x_0')}^2}, 
			\end{aligned}
		\end{equation*}
		where $e_l'$ denotes the $l$-th Euclidean unit vector in $\R^{\theta-1}$ and $\bigl(\kappa(e_l',\partial_l \zeta(x_0'))\bigr)[j]$ denotes the $j$-th entry of the vector $\kappa(e_l',\partial_l \zeta(x_0'))$.
		Using  \eqref{eq_G_z_est_R_theta}, \eqref{eq_lem_comm_d_eps_2}, the Lipschitz continuity of $\psi \in C^1_b(\R^{\theta-1})$ and $\zeta \in C^2_b(\R^{\theta-1};\R)$ again gives us
		\begin{equation*}
			\begin{aligned}
				\Bigl|\frac{d}{d x'_l}k(x',y')\Bigr| &\leq C\norm{\psi}_{W^1_\infty(\R^{\theta-1})}\Big( (|x'-y'| + |\widetilde{\varepsilon}|)^{-\theta}e^{-c'|x'-y'|} |x'-y’|\\
				&\hspace{100 pt}+ (|x'-y'| + |\widetilde{\varepsilon}|)^{1-\theta}e^{-c'|x'-y'|}  \Big)\\
				&\leq C\norm{\psi}_{W^1_\infty(\R^{\theta-1})}(|x'-y'| + |\widetilde{\varepsilon}|)^{1-\theta}e^{-c'|x'-y'|} 
			\end{aligned}
		\end{equation*}
		for all $x',y' \in \R^{\theta-1}$, where $C>0$ can again be chosen independently of $x_0'$ and $\widetilde{\varepsilon}$.
		Setting for $z' \in \R^{\theta-1}$
		\begin{equation*}
			\widetilde{k}(z'):=  C \norm{\psi}_{W^1_\infty(\R^{\theta-1})}\big((|z'| + |\widetilde{\varepsilon}|)^{2-\theta}+ (|z'| + |\widetilde{\varepsilon}|)^{1-\theta}\big)e^{-c'|z'|}
		\end{equation*} 
		leads to
		\begin{equation*}
			|k(x',y')|, \, \sum_{l=1}^{\theta-1} \Bigl|\frac{d}{d x'_l}k(x',y')\Bigr| \leq \widetilde{k}(x'-y'),  \quad  x' , y' \in \R^{\theta-1}.
		\end{equation*}
		Now, Lemma~\ref{lem_Schur_test} shows
		
		\begin{equation*}
			\begin{aligned}
				\big\|[d^{\zeta_{x_0'},\kappa}_{\widetilde{\varepsilon}}(z)&,\psi]\big\|_{L^2(\R^{\theta-1};\C^N) \to H^1(\R^{\theta-1};\C^N)} \leq C \int_{\R^{\theta-1}} \widetilde{k}(z') \, dz'\\
				& \leq C\norm{\psi}_{W^1_\infty(\R^{\theta-1})} \int_0^\infty \big((r + \abs{\widetilde{\varepsilon}})^{2-\theta}+ (r+ \abs{\widetilde{\varepsilon}})^{1-\theta}\big)e^{-c'r} r^{\theta-2} \, dr\\
				&\leq C\norm{\psi}_{W^1_\infty(\R^{\theta-1})} \int_0^\infty \big(1+ (r+ |\widetilde{\varepsilon}|)^{-1}\big)e^{-c'r}  \, dr\\
				&\leq C\norm{\psi}_{W^1_\infty(\R^{\theta-1})}(1+ \abs{ \log\abs{\widetilde{\varepsilon}}}).
			\end{aligned}
		\end{equation*}
		The assertion regarding the continuity can be proven in a similar way as in Lemma~\ref{lem_e_eps^r}.
	\end{proof}
	
	\begin{lemma}\label{lem_diff_d}
		Let $z \in \rho(H)$, $x_0' \in \R^{\theta -1}$, $\zeta \in C^2_b(\R^{\theta-1};\R)$ be as described at the beginning of Section~\ref{sec_loc_part}, $\zeta_{x_0'} $ be as in \eqref{eq_zeta_x_0'}, $\varepsilon \in (0,\varepsilon_2)$, $a_\varepsilon = \varepsilon^{1/6}$, and $ \widetilde{\varepsilon}  \in (- 2\varepsilon_2,2\varepsilon_2)\setminus \{0\}$.  Then, there exists a $\delta_3 = \delta_3(\zeta) \in (0,\varepsilon_2)$ such that for all  $\varepsilon \in (0,\delta_3)$ the inequality
		\begin{equation*}
			\big\|\chi_{B(x_0',3a_\varepsilon)}\big(d^{\zeta,\kappa}_{\widetilde{\varepsilon}}(z)- d^{\zeta_{x_0'},\kappa}_{\widetilde{\varepsilon}}(z)  \big)\chi_{B(x_0',3a_\varepsilon)}\big\|_{L^2(\R^{\theta-1};\C^N) \to L^2(\R^{\theta-1};\C^N)} \leq C a_\varepsilon (1+\abs{\log\abs{\widetilde{\varepsilon}}})
		\end{equation*}
		holds,
		where $C>0$ does not depend on $\varepsilon$, $\widetilde{\varepsilon}$, and $x_0'$.
	\end{lemma}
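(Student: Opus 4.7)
The strategy is to use the fact that $\zeta_{x_0'}$ is the first order Taylor polynomial of $\zeta$ at $x_0'$, so the deviations of $\zeta$, $\nabla \zeta$, and $\sqrt{1+|\nabla\zeta|^2}$ from their $\zeta_{x_0'}$-counterparts are controlled on $B(x_0',3a_\varepsilon)$, and then to apply a Schur test (Lemma~\ref{lem_Schur_test}) to the kernel of the localized difference. I denote by $K(x',y')$ the integral kernel of the operator $\chi_{B(x_0',3a_\varepsilon)}(d^{\zeta,\kappa}_{\widetilde{\varepsilon}}(z) - d^{\zeta_{x_0'},\kappa}_{\widetilde{\varepsilon}}(z))\chi_{B(x_0',3a_\varepsilon)}$ and I set
\begin{equation*}
A := x_{\zeta,\kappa}(x')-x_{\zeta,\kappa}(y')+\widetilde{\varepsilon}\nu_{\zeta,\kappa}(x'),\qquad
B := x_{\zeta_{x_0'},\kappa}(x')-x_{\zeta_{x_0'},\kappa}(y')+\widetilde{\varepsilon}\nu_{\zeta_{x_0'},\kappa}(x').
\end{equation*}

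The key step is the sharp bound
\begin{equation*}
|A-B|\leq C\,a_\varepsilon\bigl(|x'-y'|+|\widetilde{\varepsilon}|\bigr),\qquad x',y'\in B(x_0',3a_\varepsilon).
\end{equation*}
To see this, introduce $g:=\zeta-\zeta_{x_0'}$; one has $\nabla g(x_0')=0$ and by $\zeta\in C^2_b(\R^{\theta-1};\R)$ the bound $|\nabla g(x')|\leq C|x'-x_0'|\leq Ca_\varepsilon$ holds on $B(x_0',3a_\varepsilon)$, so $|g(x')-g(y')|\leq C a_\varepsilon|x'-y'|$; this controls the component of $A-B$ coming from the difference $\zeta(x')-\zeta(y')-\zeta_{x_0'}(x')+\zeta_{x_0'}(y')$. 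The contribution from the normal vectors is $\widetilde{\varepsilon}(\nu_{\zeta,\kappa}(x')-\nu_{\zeta_{x_0'},\kappa}(x'))$; since $\nabla\zeta_{x_0'}\equiv\nabla\zeta(x_0')$, the difference of normals equals $\nu_{\zeta,\kappa}(x')-\nu_{\zeta,\kappa}(x_0')$ and is bounded by $C|x'-x_0'|\leq Ca_\varepsilon$ via the $C^2_b$-regularity of $\zeta$. Combining these yields the displayed estimate.

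Next, choose $\delta_3=\delta_3(\zeta)>0$ small enough that $Ca_\varepsilon\leq \tfrac{1}{2}\min\{C_4^{-1},(C_4')^{-1}\}$ for $\varepsilon\in(0,\delta_3)$; then the lower bounds \eqref{eq_est_x_Sigma} for $A$ and \eqref{eq_lem_comm_d_eps_2} for $B$ combined with $|A-B|\leq Ca_\varepsilon(|x'-y'|+|\widetilde{\varepsilon}|)$ imply that the entire line segment $\xi_s:=(1-s)A+sB$, $s\in[0,1]$, satisfies $|\xi_s|\geq c(|x'-y'|+|\widetilde{\varepsilon}|)$ for some constant $c>0$. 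A mean value estimate together with the gradient bound from Lemma~\ref{lem_est_G_z} then gives
\begin{equation*}
|G_z(A)-G_z(B)|\leq|A-B|\sup_{s\in[0,1]}|\nabla G_z(\xi_s)|\leq C a_\varepsilon (|x'-y'|+|\widetilde{\varepsilon}|)^{1-\theta}e^{-c'|x'-y'|}.
\end{equation*}
Moreover, $|\sqrt{1+|\nabla\zeta(y')|^2}-\sqrt{1+|\nabla\zeta_{x_0'}(y')|^2}|\leq Ca_\varepsilon$ on $B(x_0',3a_\varepsilon)$, and $|G_z(B)|\leq C(|x'-y'|+|\widetilde{\varepsilon}|)^{1-\theta}e^{-c'|x'-y'|}$ by Lemma~\ref{lem_est_G_z} and \eqref{eq_lem_comm_d_eps_2}. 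Splitting the difference of the two kernels via
\begin{equation*}
K(x',y')=\bigl[G_z(A)-G_z(B)\bigr]\sqrt{1+|\nabla\zeta(y')|^2}+G_z(B)\bigl[\sqrt{1+|\nabla\zeta(y')|^2}-\sqrt{1+|\nabla\zeta_{x_0'}(y')|^2}\bigr]
\end{equation*}
and using $\chi_{B(x_0',3a_\varepsilon)}(x')\chi_{B(x_0',3a_\varepsilon)}(y')$ to restrict to $|x'-y'|\leq 6a_\varepsilon$, one obtains
\begin{equation*}
|K(x',y')|\leq Ca_\varepsilon (|x'-y'|+|\widetilde{\varepsilon}|)^{1-\theta}e^{-c'|x'-y'|}\chi_{\{|x'-y'|\leq 6a_\varepsilon\}}.
\end{equation*}

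Finally, applying Lemma~\ref{lem_Schur_test} and passing to polar coordinates gives the Schur integral
\begin{equation*}
\sup_{x'}\int_{\R^{\theta-1}}|K(x',y')|\,dy'\leq Ca_\varepsilon\int_0^{6a_\varepsilon}(r+|\widetilde{\varepsilon}|)^{1-\theta}r^{\theta-2}\,dr\leq Ca_\varepsilon\bigl(1+|\log|\widetilde{\varepsilon}||\bigr),
\end{equation*}
with the analogous bound for the $y'$-integral, which yields the claimed operator norm estimate. The principal technical obstacle is obtaining the refined estimate $|A-B|\leq Ca_\varepsilon(|x'-y'|+|\widetilde{\varepsilon}|)$ rather than the cruder $Ca_\varepsilon(a_\varepsilon+|\widetilde{\varepsilon}|)$; without the factor $|x'-y'|$, the Schur integral would diverge as $|\widetilde{\varepsilon}|\to 0$ faster than the permitted logarithm.
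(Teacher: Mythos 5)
Your proof is correct and follows essentially the same route as the paper's: decompose the kernel by splitting off the difference of the surface-measure factors, bound $|A-B|$ by $Ca_\varepsilon(|x'-y'|+|\widetilde\varepsilon|)$ using that $\zeta_{x_0'}$ is the tangent plane at $x_0'$ and the $C^2_b$-regularity of $\zeta$, control $|\xi_s|$ from below on the interpolating segment via \eqref{eq_est_x_Sigma} and \eqref{eq_lem_comm_d_eps_2}, apply a mean-value bound with Lemma~\ref{lem_est_G_z}, and finish with the Schur test on the ball $|x'-y'|\le 6a_\varepsilon$. The only cosmetic difference is that you control the $\zeta$-contribution to $|A-B|$ via $g:=\zeta-\zeta_{x_0'}$ and the pointwise gradient bound $|\nabla g|\lesssim a_\varepsilon$ on $B(x_0',3a_\varepsilon)$, whereas the paper writes out the integral remainder of the Taylor expansion explicitly; these are equivalent. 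Your closing observation about why the refined $|A-B|\lesssim a_\varepsilon(|x'-y'|+|\widetilde\varepsilon|)$ (rather than $a_\varepsilon(a_\varepsilon+|\widetilde\varepsilon|)$) is essential is also correct: without the factor $|x'-y'|$ the Schur integral would produce an unbounded $a_\varepsilon/|\widetilde\varepsilon|$ term instead of the admissible $\log|\widetilde\varepsilon|$.
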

	\begin{proof}
		We prove this statement by estimating the integral kernel of  the operator
		\begin{equation*}
			\chi_{B(x_0',a_\varepsilon)}\big( d^{\zeta,\kappa}_{\widetilde{\varepsilon}}(z) - d^{\zeta_{x_0'},\kappa}_{\widetilde{\varepsilon}}(z) \big)\chi_{B(x_0',a_\varepsilon)}
		\end{equation*} and applying Lemma \ref{lem_Schur_test}. The mentioned integral kernel is given for $x' , y' \in \R^{\theta-1}$ by
		\begin{equation*}
			\begin{aligned}
				k(x',y') :=& \chi_{B(x_0',3a_\varepsilon)}(x')\bigg(G_z(x_{\zeta,\kappa}(x') -x_{\zeta,\kappa}(y') + \widetilde{\varepsilon}\nu_{\zeta,\kappa}(x'))\sqrt{1+\abs{\nabla \zeta(y')}^2} \\
				&-G_z(\kappa(x'-y',\langle\nabla \zeta(x_0') , x'-y'\rangle) + \widetilde{\varepsilon}\nu_{\zeta,\kappa}(x_0'))\sqrt{1+\abs{\nabla \zeta(x_0')}^2}\bigg)\\
				&\hspace{200 pt} \cdot  \chi_{B(x_0',3a_\varepsilon)}(y').
			\end{aligned}
		\end{equation*}
		If $x' \not\in B(x_0',3a_\varepsilon)$ or $y' \not\in B(x_0',3a_\varepsilon)$, then $k(x',y') =0$. Thus, we assume from now on $x',y'\in B(x_0',3a_\varepsilon)$. 
		Using the mean value theorem for matrix-valued functions, cf. \cite[Lemma A.1]{BHS23}, \eqref{eq_G_z_est_R_theta}, and $\zeta \in C^2_b(\R^{\theta-1};\R)$ we find
		\begin{equation}\label{eq_d_diff_ker_1}
			\begin{aligned}
				|k(x',y')| &\leq \sqrt{\theta}\sup_{\upsilon \in [0,1],j\in \{1,\dots \theta\}} |\partial_j G_z(w_\upsilon)\sqrt{1+|\nabla \zeta(y')|^2} | |w_1 - w_0|\\
				&\hspace{50 pt} + \Bigl| G_z(w_0) \bigl(\sqrt{1+|\nabla \zeta(y')|^2} -\sqrt{1+|\nabla \zeta(x_0')|^2}\bigr)\Bigr| \\
				&\leq C 	\Big(\sup_{\upsilon \in [0,1]}|w_\upsilon|^{-\theta} |w_1 - w_0| + |w_0|^{1-\theta} |x_0'- y'|\Big)
			\end{aligned}
		\end{equation}
		with
		\begin{equation*}
			w_\upsilon =\upsilon\big(x_{\zeta,\kappa}(x') -x_{\zeta,\kappa}(y') 
			+ \widetilde{\varepsilon}\nu_{\zeta,\kappa}(x')\big) 
			+(1-\upsilon) \big( \kappa(x'-y',\langle\nabla \zeta(x_0'),x'-y'\rangle) + \widetilde{\varepsilon}\nu_{\zeta,\kappa}(x_0')\big)
		\end{equation*}
		for $\upsilon \in [0,1]$. Note that the mean value theorem is applicable since $w_\upsilon  \neq 0$ for $\upsilon \in [0,1]$, see \eqref{eq_d_diff_ker_3} below. 
		 Next, we want to estimate
		\begin{equation*}
			\begin{aligned}
				|w_1 - &w_0| \\
				&= |x_{\zeta,\kappa}(x') -x_{\zeta,\kappa}(y') 
				+ \widetilde{\varepsilon}\nu_{\zeta,\kappa}(x') -   \kappa(x'-y',\langle\nabla \zeta(x_0'),x'-y'\rangle) - \widetilde{\varepsilon}\nu_{\zeta,\kappa}(x_0') |\\
				&=|\kappa(x'-y',\zeta(x')-\zeta(y'))\\
				&\hspace{70 pt}-   \kappa(x'-y',\langle\nabla \zeta(x_0'),x'-y'\rangle) + \widetilde{\varepsilon}(\nu_{\zeta,\kappa}(x') - \nu_{\zeta,\kappa}(x_0'))  |\\
				&=|\kappa(0,\zeta(x')-\zeta(y') - \langle \nabla \zeta(x_0'),x'-y'\rangle) + \widetilde{\varepsilon}(\nu_{\zeta,\kappa}(x') - \nu_{\zeta,\kappa}(x_0') ) |.
			\end{aligned}
		\end{equation*}
		As $\zeta \in C^2_b(\R^{\theta-1};\R)$ and $\kappa \in \textup{SO}(\theta)$, there exists a $K=K(\zeta)>0$ such that
		\begin{equation*}
			\abs{\nu_{\zeta,\kappa}(x') - \nu_{\zeta,\kappa}(x_0')}  = \bigg| \frac{\kappa(-\nabla \zeta(x'),1)}{\sqrt{1+ |\nabla\zeta(x')|^2}}  - \frac{\kappa(-\nabla \zeta(x_0'),1)}{\sqrt{1+ |\nabla\zeta(x_0')|^2}} \bigg|\leq K \abs{x' - x_0'} \leq 3K a_\varepsilon
		\end{equation*} 
		and 
		\begin{equation*}
			\begin{aligned}
				|\kappa(0,\zeta(x') -\zeta(y') - \langle&\nabla \zeta(x_0'),x'-y' \rangle ) | \\
				&= | \zeta(x') -\zeta(y') - \langle \nabla \zeta(x_0'),x'-y'\rangle| \\
				&= \bigg|  \int_{0}^1 \langle \nabla\zeta(y' + t(x'-y')) - \nabla\zeta(x_0'), x'-y' \rangle \,dt\bigg|\\
				&\leq K\bigg|  \int_{0}^1 |t (x'- x_0') + (1-t)(y'-x_0')| |x'-y'|  \,dt\bigg| \\
				& \leq 3K a_\varepsilon |x'-y'|,
			\end{aligned}
		\end{equation*}
		where we used $x',y' \in B(x_0',3a_\varepsilon)$. Hence, if $\delta_3=\delta_3(\zeta) \in (0,\varepsilon_2)$ is chosen sufficiently small, then  for all $a_\varepsilon \in (0,\delta_3^{1/6})$ the inequality
		\begin{equation}\label{eq_d_diff_ker_2}
			|w_1 - w_0| \leq K3 a_\varepsilon (|x'-y'|+|\widetilde{\varepsilon}|) \leq \frac{1}{2 C_4}(|x'-y'|+|\widetilde{\varepsilon}|)
		\end{equation}
		holds with $C_4>0$ from \eqref{eq_est_x_Sigma}.
		Therefore, we can  use \eqref{eq_est_x_Sigma} to estimate $|w_\upsilon|$, $\upsilon \in [0,1]$,  from below by
		\begin{equation}\label{eq_d_diff_ker_3}
			\begin{aligned}
				|w_\upsilon| &= |\upsilon w_1 + (1-\upsilon)w_0| = |w_1+ (1-\upsilon) (w_0-w_1)| \geq |w_1| - |w_1-w_0| \\
				&\geq  \frac{1}{C_4}(|x'-y'| + |\widetilde{\varepsilon}|) - \frac{1}{2 C_4}(|x'-y'|+ |\widetilde{\varepsilon}|) = \frac{1}{2 C_4}(|x'-y'|+|\widetilde{\varepsilon}|).
			\end{aligned}
		\end{equation}
		Thus, plugging \eqref{eq_d_diff_ker_2} and \eqref{eq_d_diff_ker_3} into \eqref{eq_d_diff_ker_1} yields for $a_\varepsilon = \varepsilon^{1/6}$ with  $\varepsilon \in (0,\delta_3)$ 
		\begin{equation*}
			\abs{k(x',y')} \leq \begin{cases} 0 & \textup{if } x' \not \in B(x_0',3 a_\varepsilon) \textup{ or } y' \not \in B(x_0', 3a_\varepsilon),\\
				C a_\varepsilon (\abs{x'-y'} + \abs{\widetilde{\varepsilon}})^{1-\theta} &\textup{else}.
			\end{cases}
		\end{equation*}
		Applying Lemma \ref{lem_Schur_test} yields
		\begin{equation*}
			\begin{aligned}
				\big\|\chi_{B(x_0',3a_\varepsilon)}\big(d^{\zeta,\kappa}_{\widetilde{\varepsilon}}(z)-	 d^{\zeta_{x_0'},\kappa}_{\widetilde{\varepsilon}}(z)  \big)&\chi_{B(x_0',3a_\varepsilon)}\big\|_{L^2(\R^{\theta-1};\C^N) \to L^2(\R^{\theta-1};\C^N)} \\
				&\leq C a_\varepsilon \int_{B(0,6a_\varepsilon)}  (\abs{z'} + \abs{\widetilde{\varepsilon}})^{1-\theta} \, dz'\\
				& \leq C a_\varepsilon \int_0^{6a_\varepsilon} (r + \abs{\widetilde{\varepsilon}})^{1-\theta} r^{\theta-2} \, dr\\
				&\leq C a_\varepsilon \int_0^{6a_\varepsilon}  (r + \abs{\widetilde{\varepsilon}})^{-1} \,dr\\
				&\leq C a_\varepsilon(1+ \abs{\log\abs{\widetilde{\varepsilon}}}).
			\end{aligned}
		\end{equation*}
	\end{proof}

	\begin{cor}\label{cor_diff_d}
		Let $z \in \rho(H)$, $x_0' \in \R^{\theta -1}$, $\zeta \in C^2_b(\R^{\theta-1};\R)$ be as described at the beginning of Section~\ref{sec_loc_part}, $\zeta_{x_0'} $ be as in \eqref{eq_zeta_x_0'}, $\varepsilon \in (0,\delta_3)$ with $\delta_3$  chosen as in Lemma~\ref{lem_diff_d}, $a_\varepsilon = \varepsilon^{1/6} $, $ \widetilde{\varepsilon}  \in (- 2\varepsilon_2,2\varepsilon_2)\setminus \{0\}$, and $Q_{\eta,\tau}^{\zeta,\kappa}$ be as in \eqref{eq_Q_matrix}. Then,
		\begin{equation*}
			\begin{aligned}
				\big\|&\chi_{B(x_0',3a_\varepsilon)}\big( 
				d^{\zeta, \kappa}_{\widetilde{\varepsilon}}(z)Q_{\eta,\tau}^{\zeta,\kappa} \\
				&\hspace{7 pt}-d_{\widetilde{\varepsilon}}^{\zeta_{x_0'},\kappa}(z)Q_{\eta,\tau}^{\zeta,\kappa} (x_0')\big)\chi_{B(x_0',3a_\varepsilon)}\big\|_{L^2(\R^{\theta-1};\C^N) \to 	L^2(\R^{\theta-1};\C^N)} \leq C a_\varepsilon(1+\abs{\log\abs{\widetilde{\varepsilon}}}),
			\end{aligned}
		\end{equation*}
		where $C$ does not depend on $\varepsilon$, $\widetilde{\varepsilon}$, and $x_0'$. 
	\end{cor}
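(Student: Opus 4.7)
The plan is to reduce the estimate to Lemma~\ref{lem_diff_d} by telescoping the difference according to whether $\zeta$ or the interaction profile $Q^{\zeta,\kappa}_{\eta,\tau}$ is frozen at $x_0'$. Specifically, I would write
\begin{align*}
d^{\zeta,\kappa}_{\widetilde{\varepsilon}}(z) Q^{\zeta,\kappa}_{\eta,\tau} - d^{\zeta_{x_0'},\kappa}_{\widetilde{\varepsilon}}(z) Q^{\zeta,\kappa}_{\eta,\tau}(x_0')
&= d^{\zeta,\kappa}_{\widetilde{\varepsilon}}(z) \bigl(Q^{\zeta,\kappa}_{\eta,\tau} - Q^{\zeta,\kappa}_{\eta,\tau}(x_0')\bigr)  \\
&\quad + \bigl(d^{\zeta,\kappa}_{\widetilde{\varepsilon}}(z) - d^{\zeta_{x_0'},\kappa}_{\widetilde{\varepsilon}}(z)\bigr) Q^{\zeta,\kappa}_{\eta,\tau}(x_0'),
\end{align*}
sandwich each piece with $\chi_{B(x_0',3a_\varepsilon)}$, and bound them separately.

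For the second summand the key point is that $Q^{\zeta,\kappa}_{\eta,\tau}(x_0')$ is a fixed matrix, bounded in operator norm by $\lVert\eta\rVert_{L^\infty(\Sigma)} + \lVert\tau\rVert_{L^\infty(\Sigma)}$, and commutes with the localisation $\chi_{B(x_0',3a_\varepsilon)}$; it can therefore be pulled out of the sandwich, and Lemma~\ref{lem_diff_d} immediately yields the desired bound $C a_\varepsilon(1+|\log|\widetilde{\varepsilon}||)$ uniformly in $x_0'$ and $\widetilde{\varepsilon}$. For the first summand, the multiplication operator $M:=\bigl(Q^{\zeta,\kappa}_{\eta,\tau} - Q^{\zeta,\kappa}_{\eta,\tau}(x_0')\bigr)\chi_{B(x_0',3a_\varepsilon)}$ satisfies $\lVert M\rVert_{L^\infty} \leq C a_\varepsilon$, because $\eta,\tau \in C^1_b(\Sigma;\R)$ and $x_{\zeta,\kappa}$ is Lipschitz (with constant controlled by $\lVert\nabla\zeta\rVert_{L^\infty}$) make $Q^{\zeta,\kappa}_{\eta,\tau}$ Lipschitz in $x'$. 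It therefore remains to check that
\begin{equation*}
\lVert d^{\zeta,\kappa}_{\widetilde{\varepsilon}}(z)\rVert_{L^2(\R^{\theta-1};\C^N) \to L^2(\R^{\theta-1};\C^N)} \leq C(1+|\log|\widetilde{\varepsilon}||),
\end{equation*}
which follows from Schur's test applied to the kernel of $d^{\zeta,\kappa}_{\widetilde{\varepsilon}}(z)$: by Lemma~\ref{lem_est_G_z} together with \eqref{eq_est_x_Sigma}, this kernel is dominated by $C(|x'-y'|+|\widetilde{\varepsilon}|)^{1-\theta}e^{-c|x'-y'|}$, and the resulting radial integral $\int_0^\infty (r+|\widetilde{\varepsilon}|)^{1-\theta} r^{\theta-2} e^{-cr}\,dr$ yields the logarithmic factor for both $\theta=2$ and $\theta=3$. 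Multiplying the two estimates gives the required bound on the first summand.

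No serious obstacle is expected; the main point to monitor is uniformity of the constants in $x_0'$, $\varepsilon$, and $\widetilde{\varepsilon}$. This is ensured because the Lipschitz constant of $Q^{\zeta,\kappa}_{\eta,\tau}$ depends only on $\lVert\eta\rVert_{C^1_b(\Sigma)}$, $\lVert\tau\rVert_{C^1_b(\Sigma)}$, and $\lVert\nabla\zeta\rVert_{L^\infty}$, Lemma~\ref{lem_diff_d} already produces the estimate with a constant independent of $x_0'$ and $\widetilde{\varepsilon}$, and the Schur bound depends only on the structural constants $C_4$ from \eqref{eq_est_x_Sigma} and $C_5, C_6$ from Lemma~\ref{lem_est_G_z}.
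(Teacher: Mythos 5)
Your decomposition into $d^{\zeta,\kappa}_{\widetilde\varepsilon}(z)\bigl(Q^{\zeta,\kappa}_{\eta,\tau}-Q^{\zeta,\kappa}_{\eta,\tau}(x_0')\bigr)$ plus $\bigl(d^{\zeta,\kappa}_{\widetilde\varepsilon}(z)-d^{\zeta_{x_0'},\kappa}_{\widetilde\varepsilon}(z)\bigr)Q^{\zeta,\kappa}_{\eta,\tau}(x_0')$, with the first piece handled by the $C^1_b$ Lipschitz bound on $Q^{\zeta,\kappa}_{\eta,\tau}$ together with a Schur-test bound $\|d^{\zeta,\kappa}_{\widetilde\varepsilon}(z)\|\le C(1+|\log|\widetilde\varepsilon||)$ and the second by Lemma~\ref{lem_diff_d}, is exactly the paper's argument. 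Correct, and essentially identical to the paper's proof.
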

	\begin{proof}
		Lemma~\ref{lem_diff_d} and  $Q_{\eta,\tau}^{\zeta,\kappa}  \in C^1_b(\R^{\theta-1};\C^{N\times N})$ yield
		\begin{equation*}
			\begin{aligned}
				\big\|&\chi_{B(x_0',3a_\varepsilon)}\big( 
				d^{\zeta, \kappa}_{\widetilde{\varepsilon}}(z)Q_{\eta,\tau}^{\zeta,\kappa} \\
				&\hspace{80 pt}-d_{\widetilde{\varepsilon}}^{\zeta_{x_0'},\kappa}(z)Q_{\eta,\tau}^{\zeta,\kappa} (x_0')\big)\chi_{B(x_0',3a_\varepsilon)}\big\|_{L^2(\R^{\theta-1};\C^N) \to 	L^2(\R^{\theta-1};\C^N)}\\
				&\leq 	\big\|\chi_{B(x_0',3a_\varepsilon)} 
				d^{\zeta, \kappa}_{\widetilde{\varepsilon}}(z)\big(Q_{\eta,\tau}^{\zeta,\kappa} - Q_{\eta,\tau}^{\zeta,\kappa}(x_0') \big)\chi_{B(x_0',3a_\varepsilon)} \big\|_{L^2(\R^{\theta-1};\C^N) \to 	L^2(\R^{\theta-1};\C^N)} \\
				&\hspace{10pt}+\big\|\chi_{B(x_0',3a_\varepsilon)}\big(	d^{\zeta, \kappa}_{\widetilde{\varepsilon}}(z)\\
				&\hspace{80 pt}-  d_{\widetilde{\varepsilon}}^{\zeta_{x_0'},\kappa}(z)\big)\chi_{B(x_0',3a_\varepsilon)} Q_{\eta,\tau}^{\zeta,\kappa} (x_0')\big\|_{L^2(\R^{\theta-1};\C^N) \to 	L^2(\R^{\theta-1};\C^N)}\\
				& \leq C \bigl(\|d^{\zeta, \kappa}_{\widetilde{\varepsilon}}(z) \|_{L^2(\R^{\theta-1};\C^N) \to L^2(\R^{\theta-1};\C^N)} a_\varepsilon + a_\varepsilon(1+\abs{\log\abs{\widetilde{\varepsilon}}})\bigr),
			\end{aligned}
		\end{equation*}
		where $C$ does not depend on $\varepsilon$, $\widetilde{\varepsilon}$, and $x_0'$. Moreover,   \eqref{eq_d_eps}, \eqref{eq_G_z_est_R_theta_prime},  and $\zeta \in C^2_b(\R^{\theta-1};\R)$ imply for the integral kernel $k$ of $d_{\widetilde{\varepsilon}}^{\zeta,\kappa}(z)$ 
		\begin{equation*}
			|k(x',y')| \leq C (|x'-y'|+|\widetilde{\varepsilon}|)^{1-\theta}e^{-c|x'-y'|}, \quad x',y' \in \R^{\theta-1},
		\end{equation*}
		where $c = \tfrac{C_6}{C_4}>0$ with $C_4$ from \eqref{eq_est_x_Sigma} and $C_6$ from \eqref{eq_G_z_est_R_theta_prime}, 
		and therefore Lemma~\ref{lem_Schur_test} implies
		\begin{equation*}
			\begin{aligned}
				\|d^{\zeta, \kappa}_{\widetilde{\varepsilon}}(z) \|_{L^2(\R^{\theta-1};\C^N) \to L^2(\R^{\theta-1};\C^N)} &\leq C \int_{\R^{\theta-1}} (|z'| + |\widetilde{\varepsilon}|)^{1-\theta} e^{-c|z'|} \, dz'\\
				&\leq C \int_{0}^\infty (r + |\widetilde{\varepsilon}|)^{1-\theta} e^{-cr} r^{\theta-2} \,dr\\
				&\leq C (1+\abs{\log\abs{\widetilde{\varepsilon}}}),
			\end{aligned}
		\end{equation*}
		which completes the proof.
	\end{proof}
	
	Now, we transfer in Proposition~\ref{prop_local_prop} the results regarding  $e_{\widetilde{\varepsilon}}^{a_\varepsilon}(z)$ and $d^{\zeta,\kappa}_{\widetilde{\varepsilon}}(z)$  to $E_\varepsilon(z)$ and $D^{\zeta,\kappa}_\varepsilon(z)$. The first, second, and third estimate in Proposition~\ref{prop_local_prop} are consequences of Lemma~\ref{lem_e_eps^r}, Lemma~\ref{lem_comuutator_d_eps}, and Corollary~\ref{cor_diff_d}, respectively.
	Recall that $\zeta \in C^2_b(\R^{\theta-1};\R)$ is described in the beginning of Section~\ref{sec_loc_part}, $\zeta_{x_0'} $ is defined by~\eqref{eq_zeta_x_0'}, and $\delta_3$ is chosen as in  Lemma~\ref{lem_diff_d}.
	
	\begin{proposition}\label{prop_local_prop}
		Let $z \in \rho(H)$, $x_0' \in \R^{\theta -1}$,  $\varepsilon \in (0,\delta_3)$, $a_\varepsilon = \varepsilon^{1/6} $, $Q_{\eta,\tau}^{\zeta,\kappa}$ be as in \eqref{eq_Q_matrix}, and $\psi \in C^1_b(\R^{\theta-1})$. Then, the operators  $E_{\varepsilon}(z)$  and $[D_{\varepsilon}^{ \zeta, \kappa}(z),\psi]$ act as bounded operators from $\mathcal{B}^0(\R^{\theta-1})$ to $\mathcal{B}^1(\R^{\theta-1})$ and 
		\begin{equation*}
			\begin{aligned}
				&\|E_{\varepsilon}(z)\|_{0\to1} \leq C\frac{1+\abs{\log(\varepsilon)}}{a_\varepsilon},\\
				&\|[D_{\varepsilon}^{ \zeta_{x_0'}, \kappa}(z),\psi]\|_{0\to1}\leq C \norm{\psi}_{W^1_\infty(\R^{\theta-1})} (1+\abs{\log(\varepsilon)}),\\
				&\big\|\chi_{B(x_0',3a_\varepsilon)}\big(D_{\varepsilon}^{ \zeta, \kappa}(z)Q_{\eta,\tau}^{\zeta,\kappa}- D_{\varepsilon}^{ \zeta_{x_0'}, \kappa}(z)Q_{\eta,\tau}^{\zeta,\kappa}(x_0')\big)\chi_{B(x_0',3a_\varepsilon)}\big\|_{0 \to 0} \leq C a_\varepsilon (1+\abs{\log(\varepsilon)}),
			\end{aligned}
		\end{equation*}
		where $C>0$ does not depend on $x_0'$ and $\varepsilon$.
	\end{proposition}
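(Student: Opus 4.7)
The plan is to derive each of the three estimates from the corresponding fiberwise bounds in Lemma~\ref{lem_e_eps^r}, Lemma~\ref{lem_comuutator_d_eps} and Corollary~\ref{cor_diff_d} applied with $\widetilde{\varepsilon}=\varepsilon(t-s)$, and then to integrate the resulting pointwise-in-$t$ estimates over $t\in(-1,1)$. The starting point is the Bochner integral representations
\begin{equation*}
E_\varepsilon(z)f(t) = \int_{-1}^1 e^{a_\varepsilon}_{\varepsilon(t-s)}(z)f(s)\,ds, \qquad D_\varepsilon^{\zeta,\kappa}(z)f(t) = \int_{-1}^1 d^{\zeta,\kappa}_{\varepsilon(t-s)}(z)f(s)\,ds,
\end{equation*}
from~\eqref{def_E_eps} and~\eqref{eq_D_eps_int_rep}. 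The continuity-in-$\widetilde{\varepsilon}$ statements of Lemma~\ref{lem_e_eps^r} and Lemma~\ref{lem_comuutator_d_eps}, together with the analogous assertion for the family appearing in Corollary~\ref{cor_diff_d} (which follows from the same dominated-convergence reasoning), guarantee that these integrals make sense as $H^1(\R^{\theta-1};\C^N)$-valued (resp. $L^2(\R^{\theta-1};\C^N)$-valued in the third case) Bochner integrals in $t$, so all three operators land in the claimed target spaces.

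For the first bound I would apply the triangle inequality for Bochner integrals and the uniform fiber estimate of Lemma~\ref{lem_e_eps^r} to obtain
\begin{equation*}
\|E_\varepsilon(z)f(t)\|_{H^1(\R^{\theta-1};\C^N)} \leq C\,\frac{1+|\log\varepsilon|}{a_\varepsilon}\int_{-1}^1\|f(s)\|_{L^2(\R^{\theta-1};\C^N)}\,ds,
\end{equation*}
uniformly in $t\in(-1,1)$. Cauchy--Schwarz in $s$ and squaring-then-integrating in $t$ yield at once $\|E_\varepsilon(z)\|_{0\to1}\leq C(1+|\log\varepsilon|)/a_\varepsilon$.

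The remaining two estimates are structurally identical, with the caveat that the corresponding fiber bounds from Lemma~\ref{lem_comuutator_d_eps} and Corollary~\ref{cor_diff_d} contain the factor $1+|\log|\varepsilon(t-s)||$, which blows up as $s\to t$. I would absorb this by the elementary splitting
\begin{equation*}
1+|\log|\varepsilon(t-s)||\leq 1+|\log\varepsilon|+|\log|t-s||,
\end{equation*}
together with the observation that $s\mapsto|\log|t-s||$ lies in $L^2((-1,1))$ with $L^2$-norm bounded uniformly in $t\in(-1,1)$. After Cauchy--Schwarz in $s$ the pointwise-in-$t$ bounds become $C\|\psi\|_{W^1_\infty(\R^{\theta-1})}(1+|\log\varepsilon|)\norm{f}_0$ and $Ca_\varepsilon(1+|\log\varepsilon|)\norm{f}_0$, respectively, uniformly in $t$; a final integration in $t$ completes the proof. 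The cutoffs $\chi_{B(x_0',3a_\varepsilon)}$ in the third estimate commute past the $s$-integral, bringing the fiber statement of Corollary~\ref{cor_diff_d} into play directly.

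The main (and only) technicality is the logarithmic singularity at $s=t$ just discussed, which is handled by the $L^2$-integrability of $\log|\cdot|$ near zero; the rest is routine Bochner-integral bookkeeping that uses the three preparatory lemmas as black boxes.
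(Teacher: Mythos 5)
Your proposal is correct and follows essentially the same route as the paper: represent $E_\varepsilon(z)$ and $[D_\varepsilon^{\zeta_{x_0'},\kappa}(z),\psi]$ as Bochner integrals of the fiber operators $e^{a_\varepsilon}_{\varepsilon(t-s)}(z)$ and $[d^{\zeta_{x_0'},\kappa}_{\varepsilon(t-s)}(z),\psi]$, invoke the continuity-in-$\widetilde\varepsilon$ statements for measurability, plug in the fiber bounds from Lemma~\ref{lem_e_eps^r}, Lemma~\ref{lem_comuutator_d_eps}, and Corollary~\ref{cor_diff_d}, and integrate over $t$. The only point of departure is the treatment of the log singularity: the paper splits the weight $w(t,s)=1+|\log|\varepsilon(t-s)||$ as $\sqrt{w}\cdot\sqrt{w}$, applies Cauchy–Schwarz, then uses Fubini, thereby needing only $L^1$-integrability of $|\log|\cdot||$; you instead use the $L^2$-integrability of $|\log|\cdot||$ near the origin to obtain a pointwise-in-$t$ bound and then integrate. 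Both give $C(1+|\log\varepsilon|)$, so this is a cosmetic variation rather than a different method. One small remark: for the third estimate the paper observes that no measurability argument is needed at all, since the operator is already known to be bounded on $\mathcal{B}^0(\R^{\theta-1})$ and one only needs the norm estimate there; your proposed ``analogous continuity assertion for Corollary~\ref{cor_diff_d}'' is therefore unnecessary, though it would do no harm.
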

	\begin{proof}
		We start by showing that $[D_{\varepsilon}^{ \zeta_{x_0'}, \kappa}(z),\psi]$ is well-defined as an operator from $\mathcal{B}^0(\R^{\theta-1})$ to $\mathcal{B}^1(\R^{\theta-1})$. Let $f \in \mathcal{B}^0(\R^{\theta-1})$ and set $g = [D_{\varepsilon}^{ \zeta_{x_0'}, \kappa}(z),\psi]f$. It follows from \eqref{eq_D_eps_int_rep} that  $g$ has for $ t \in (-1,1)$ the representation 
		\begin{equation}\label{eq_g}
			g(t)= \int_{-1}^1 [d_{\varepsilon(t-s)}^{\zeta_{x_0'},\kappa}(z),\psi]f(s)\,ds.
		\end{equation}
		Since $[d_{\varepsilon(t-s)}^{\zeta_{x_0'},\kappa}(z),\psi]$ has the continuity property from Lemma~\ref{lem_comuutator_d_eps} and $f \in \mathcal{B}^0(\R^{\theta-1})$, \cite[Proposition~1.1.28]{HNVW16}  implies that the function
		\begin{equation*}
			(-1,1)\times(-1,1) \ni (t,s) \mapsto [d_{\varepsilon(t-s)}^{\zeta_{x_0'},\kappa}(z),\psi]f(s) \in H^1(\R^{\theta-1};\C^N)
		\end{equation*}
		is measurable. According to Lemma~\ref{lem_comuutator_d_eps}  we have
		\begin{equation}\label{eq_small2big_1}
			\begin{aligned}
				&\int_{-1}^1\bigg(\int_{-1}^1\|[d_{\varepsilon(t-s)}^{\zeta_{x_0'},\kappa}(z),\psi]f(s)\|_{H^1(\R^{\theta-1};\C^N)} \,ds\bigg)^2 \, dt \\
				&\hspace{5 pt}\leq C \|\psi \|_{W^1_\infty(\R^{\theta-1})}^2 \int_{-1}^1 \bigg(\int_{-1}^1(1+\abs{\log\abs{\varepsilon(t-s)}}) \| f(s) \|_{L^2(\R^{\theta-1};\C^N)}\,ds \bigg)^2 \,dt.
			\end{aligned}
		\end{equation}
		This expression can be estimated with the Cauchy-Schwarz inequality and Fubini's theorem by
		\begin{equation}\label{eq_small2big_2}
			\begin{aligned}
				&C\norm{\psi}_{W^1_\infty(\R^{\theta-1})}^2 \int_{-1}^{1} \biggl(\int_{-1}^1 (1+\abs{\log\abs{\varepsilon(t-s)}}) \,ds  \\
				&\hspace{100 pt}\cdot\int_{-1}^1(1+\abs{\log\abs{\varepsilon(t-s)}} ) \norm{f(s)}_{L^2(\R^{\theta-1};\C^N)}^2 \,ds \biggr)  \,dt\\
				& \leq C \norm{\psi}_{W^1_\infty(\R^{\theta-1})}^2 \bigg(\int_{-2}^2 (1+\abs{\log\abs{\varepsilon s}}) \,ds\bigg)^2 \int_{-1}^1 \norm{f(s)}_{L^2(\R^{\theta-1};\C^N)}^2 \,ds\\
				& \leq C \big(\norm{\psi}_{W^1_\infty(\R^{\theta-1})}(1+\abs{\log(\varepsilon)})\norm{f}_0\big)^2.
			\end{aligned}
		\end{equation}
		In particular, applying the Cauchy-Schwarz inequality again gives us
		\begin{equation*}
			\begin{aligned}
				\int_{-1}^1 \int_{-1}^1& \|[d_{\varepsilon(t-s)}^{\zeta_{x_0'},\kappa}(z),\psi]f(s) \|_{H^1(\R^{\theta-1};\C^N)} \,ds \,dt \\
				&\leq \sqrt{2} 	\sqrt{\int_{-1}^1 \bigg(\int_{-1}^1 \| [d_{\varepsilon(t-s)}^{\zeta_{x_0'},\kappa}(z),\psi]f(s) \|_{H^1(\R^{\theta-1};\C^N)} \,ds\bigg)^2 \,dt} < \infty.
			\end{aligned}
		\end{equation*}
		Thus, Fubini's theorem for Bochner integrals, see \cite[Proposition~1.2.7]{HNVW16}, shows that $g(t)$ in~\eqref{eq_g} is  well-defined  and measurable as a function from $(-1,1)$ to $H^1(\R^{\theta-1};\C^N)$. Moreover, \eqref{eq_g}, \eqref{eq_small2big_1}, and \eqref{eq_small2big_2} also give us the norm estimate as
		\begin{equation*}
			\begin{aligned}
				\Vert[D_{\varepsilon}^{ \zeta_{x_0'}, \kappa}(z),\psi]f\Vert_{1}^2 
				&=\norm{g}_{1}^2\\
				 &=\int_{-1}^1 \|g(t)\|^2_{H^1(\R^{\theta-1};\C^N)} \, dt\\
				&\leq \int_{-1}^1\bigg(\int_{-1}^1\|[d_{\varepsilon(t-s)}^{\zeta_{x_0'},\kappa}(z),\psi]f(s)\|_{H^1(\R^{\theta-1};\C^N)} \,ds\bigg)^2 \, dt\\
				& \leq C \big(\norm{\psi}_{W^1_\infty(\R^{\theta-1})}(1+\abs{\log(\varepsilon)}) \norm{f}_0\big)^2.
			\end{aligned}
		\end{equation*}
			The proof of the assertion regarding $E_\varepsilon(z)$ follows along the same lines if one applies Lemma~\ref{lem_e_eps^r} instead of Lemma~\ref{lem_comuutator_d_eps}. Moreover, since    
			\begin{equation*}
				\chi_{B(x_0',3a_\varepsilon)}\big(D_{\varepsilon}^{ \zeta, \kappa}(z)Q_{\eta,\tau}^{\zeta,\kappa}- D_{\varepsilon}^{ \zeta_{x_0'}, \kappa}(z)Q_{\eta,\tau}^{\zeta,\kappa}(x_0')\big)\chi_{B(x_0',3a_\varepsilon)}
			\end{equation*}
		 is only considered as an operator in $\mathcal{B}^0(\R^{\theta-1})$, 
		 one only has to prove the norm estimate in this topology. The norm can be estimated in the same way as we estimated the norm of $E_\varepsilon(z)$ by applying Corollary~\ref{cor_diff_d} instead of Lemma~\ref{lem_comuutator_d_eps}.
	\end{proof}
	
	As the last part of our local analysis we state  a result concerning the inverse of $I+ D_{\varepsilon}^{\zeta_{x_0'},\kappa}(z)Q_{\eta,\tau}^{\zeta,\kappa}(x_0')q$ for $x_0' \in \R^{\theta-1}$. This is an important result as these operators are going to play an essential role when constructing  the inverse of $I+ D_{\varepsilon}^{\zeta,\kappa}(z)Q_{\eta,\tau}^{\zeta,\kappa}q$. Recall that $\zeta \in C^2_b(\R^{\theta-1};\R)$ and $\Sigma$ are as described in the beginning of Section~\ref{sec_loc_part}, $\zeta_{x_0'} $ is given by \eqref{eq_zeta_x_0'}, and $q$ is as in 	\eqref{eq_q2}.
	
	\begin{proposition}\label{prop_uniform_bdd_D_inv}
		Let $z \in \C \setminus \R$, $\eta,\tau \in C^1_b(\Sigma;\R)$, $d = \eta^2- \tau^2$ such that
		\begin{equation*}
			\sup_{x_\Sigma \in \Sigma} d(x_\Sigma) < \frac{\pi^2}{4},
		\end{equation*} 
		and $Q_{\eta,\tau}^{\zeta,\kappa}$ be as in \eqref{eq_Q_matrix}. Then, there exists a $\delta_4 > 0$  such that the operators $(I+D_{\varepsilon}^{ \zeta_{x_0'}, \kappa}(z)Q_{\eta,\tau}^{\zeta,\kappa}(x_0')q)^{-1}$ are uniformly bounded in $\mathcal{B}^0(\R^{\theta-1})$ with respect to $\varepsilon \in (0,\delta_4)$ and $x_0'\in\R^{\theta-1}$.
	\end{proposition}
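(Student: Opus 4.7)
The plan is to reduce the proposition to Corollary~\ref{cor_uniform_bdd_D} by exploiting that $\zeta_{x_0'}$ defined in~\eqref{eq_zeta_x_0'} is an affine function, so $\Sigma_{\zeta_{x_0'},\kappa}$ is a genuine affine hyperplane in $\R^\theta$ carrying a single intrinsic operator $\overline B_\varepsilon(z)$. Its graph parametrization via $\iota_{\zeta_{x_0'},\kappa}$ and any flat parametrization yield similar operators on $\mathcal{B}^0(\R^{\theta-1})$, with a similarity constant uniformly controlled by $\|\nabla\zeta\|_\infty$.

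More precisely, for each $x_0'\in\R^{\theta-1}$ I pick $\widetilde\kappa=\widetilde\kappa(x_0')\in\textup{SO}(\theta)$ with $\widetilde\kappa e_\theta=\kappa(-\nabla\zeta(x_0'),1)/\sqrt{1+|\nabla\zeta(x_0')|^2}$ and $\widetilde y_0=\widetilde y_0(x_0')\in\R$ so that $\Sigma_{\widetilde y_0,\widetilde\kappa}=\Sigma_{\zeta_{x_0'},\kappa}$. Since $\overline B_\varepsilon(z)$ from \eqref{def_B_eps_bar} is intrinsic to the embedded hyperplane and multiplication by the constant matrix $Q^{\zeta,\kappa}_{\eta,\tau}(x_0')=Q_{\eta_0,\tau_0}$ (with $(\eta_0,\tau_0):=(\eta(x_{\zeta,\kappa}(x_0')),\tau(x_{\zeta,\kappa}(x_0')))$ and $Q_{\eta_0,\tau_0}$ as in \eqref{eq_Q_const}) is parametrization-invariant, one obtains
\begin{equation*}
I+D^{\zeta_{x_0'},\kappa}_\varepsilon(z)Q^{\zeta,\kappa}_{\eta,\tau}(x_0')q=\bigl(\iota_{\zeta_{x_0'},\kappa}\iota_{\widetilde y_0,\widetilde\kappa}^{-1}\bigr)\bigl(I+D^{\widetilde y_0,\widetilde\kappa}_\varepsilon(z)Q_{\eta_0,\tau_0}q\bigr)\bigl(\iota_{\widetilde y_0,\widetilde\kappa}\iota_{\zeta_{x_0'},\kappa}^{-1}\bigr).
\end{equation*}
By~\eqref{eq_norm_iota}, $\iota_{\widetilde y_0,\widetilde\kappa}$ is an isometry, while $\|\iota_{\zeta_{x_0'},\kappa}^{\pm 1}\|_{0\to 0}=(1+|\nabla\zeta(x_0')|^2)^{\mp 1/4}$; both are uniformly bounded in $x_0'$ since $\zeta\in C^2_b(\R^{\theta-1};\R)$. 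Hence bounded invertibility and norm of the inverse transfer between the two sides with a uniform factor at most $(1+\|\nabla\zeta\|_\infty^2)^{1/2}$.

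Finally, I would define the compact set $S:=\overline{\{(\eta(x_\Sigma),\tau(x_\Sigma)):x_\Sigma\in\Sigma\}}\subset\R^2$, which by continuity of $a^2-b^2$ satisfies $\max_{(a,b)\in S}(a^2-b^2)=\sup_{x_\Sigma\in\Sigma}d(x_\Sigma)<\tfrac{\pi^2}{4}$. As $x_0'$ ranges over $\R^{\theta-1}$, the tuple $(\widetilde y_0,(\eta_0,\tau_0),\widetilde\kappa)$ ranges in $\R\times S\times\textup{SO}(\theta)$, so Corollary~\ref{cor_uniform_bdd_D} supplies a $\delta_2=\delta_2(S)>0$ for which $\|(I+D^{y_0,\widetilde\kappa}_\varepsilon(z) Q_{\eta_0,\tau_0}q)^{-1}\|_{0\to 0}$ is uniformly bounded over $(\varepsilon, y_0, (\eta_0,\tau_0), \widetilde\kappa)\in(0,\delta_2)\times\R\times S\times\textup{SO}(\theta)$. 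Setting $\delta_4:=\delta_2$ and combining this with the similarity bound of the previous paragraph yields the claim.

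The main (mild) obstacle is the similarity identity displayed above: it rests entirely on the fact that $\overline B_\varepsilon(z)$ is defined using only the embedded geometry of $\Sigma_{\zeta_{x_0'},\kappa}$ (independently of the chosen graph representation), which is immediate from its integral-kernel definition in~\eqref{def_B_eps_bar}, together with the trivial observation that pointwise multiplication by a constant matrix commutes with $\iota_{\zeta,\kappa}$ for any $\zeta,\kappa$.
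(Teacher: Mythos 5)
Your proposal is correct and follows essentially the same route as the paper: both identify $\Sigma_{\zeta_{x_0'},\kappa}$ with a flat-parametrized hyperplane $\Sigma_{y_0,\widetilde\kappa}$, use that $\overline B_\varepsilon(z)$ is intrinsic to the embedded hypersurface so that $D_\varepsilon^{\zeta_{x_0'},\kappa}(z)Q^{\zeta,\kappa}_{\eta,\tau}(x_0')q$ and $D_\varepsilon^{y_0,\widetilde\kappa}(z)Q_{\eta_0,\tau_0}q$ are similar via $\iota_{\zeta_{x_0'},\kappa}\iota_{y_0,\widetilde\kappa}^{-1}$, and then apply Corollary~\ref{cor_uniform_bdd_D} with $S=\overline{\ran(\eta,\tau)}$. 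The only small imprecision is the claimed similarity factor $(1+\|\nabla\zeta\|_\infty^2)^{1/2}$: since by~\eqref{eq_norm_iota} the operator $\iota_{\zeta_{x_0'},\kappa}$ and its inverse have reciprocal norms and $\iota_{y_0,\widetilde\kappa}$ is an isometry, the factor actually cancels to $1$, as in the paper's final estimate; this does not affect the validity of your argument.
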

	\begin{proof}
		Note that
		\begin{equation*}
				Q_{\eta,\tau}^{\zeta,\kappa} (x_0') 
				= \eta(x_{\zeta,\kappa}(x_0')) I_N + \tau(x_{\zeta,\kappa}(x_0')) \beta  =Q_{\eta(x_{\zeta,\kappa}(x_0')),\tau(x_{\zeta,\kappa}(x_0'))}
		\end{equation*}  
		for $x_0' \in \R^{\theta-1}$; cf. \eqref{eq_Q_matrix} and \eqref{eq_Q_const}. Moreover, for every $x_0' \in \R^{\theta-1}$ the set $\Sigma_{\zeta_{x_0'},\kappa}$ is an affine hyperplane in $\R^{\theta-1}$ and therefore  there exists a $y_0(x_0') \in \R$ and a $\widetilde{\kappa}(x_0')\in \textup{SO}(\theta)$ such that
		\begin{equation*}
			\begin{aligned}
				\Sigma_{\zeta_{x_0'},\kappa} &= \{\kappa(x',\zeta_{x_0'}(x')): x' \in \R^{\theta-1}\}  \\
				&= \widetilde{\kappa}(x_0') \big(\R^{\theta -1} \times \{y_0(x_0')\}\big) = \Sigma_{y_0(x_0'),\widetilde{\kappa}(x_0')}.
			\end{aligned}
		\end{equation*}
		Now, let $\overline{B}_{\varepsilon}^{\Sigma_{\zeta_{x_0'},\kappa}}(z) = \overline{B}_{\varepsilon}^{\Sigma_{y_0(x_0'),\widetilde{\kappa}(x_0')}}(z)$ be defined as $\overline{B}_\varepsilon(z)$ in \eqref{def_B_eps_bar} with $\Sigma$ substituted by $\Sigma_{\zeta_{x_0'},\kappa} = \Sigma_{y_0(x_0'),\widetilde{\kappa}(x_0')}$. According to Proposition~\ref{prop_uniform_bdd_D} the operator $I+D_{\varepsilon}^{y_0(x_0'), \widetilde{\kappa}(x_0')}(z)Q_{\eta(x_{\zeta,\kappa}(x_0')),\tau(x_{\zeta,\kappa}(x_0'))}q$ is continuously invertible in $\mathcal{B}^0(\R^{\theta-1})$. Moreover, we get from \eqref{def_D_operators}
		\begin{equation*}
			\begin{aligned}
			I&+D_{\varepsilon}^{ \zeta_{x_0'}, \kappa}(z)Q_{\eta,\tau}^{\zeta,\kappa} (x_0')q = \iota_{ \zeta_{x_0'}, \kappa} \bigl(I+\overline{B}_{\varepsilon}^{\Sigma_{\zeta_{x_0'},\kappa}}(z)Q_{\eta(x_{\zeta,\kappa}(x_0')),\tau(x_{\zeta,\kappa}(x_0'))}q\bigr)\iota_{ \zeta_{x_0'}, \kappa }^{-1}\\
			&= \iota_{ \zeta_{x_0'}, \kappa} \bigl(I+\overline{B}_{\varepsilon}^{\Sigma_{y_0(x_0'),\widetilde{\kappa}(x_0')}}(z)Q_{\eta(x_{\zeta,\kappa}(x_0')),\tau(x_{\zeta,\kappa}(x_0'))}q\bigr)\iota_{ \zeta_{x_0'}, \kappa }^{-1}\\
			&=\iota_{ \zeta_{x_0'}, \kappa} \iota_{ y_0(x_0'),\widetilde{\kappa}(x_0')}^{-1} \bigl(I+D_{\varepsilon}^{y_0(x_0'), \widetilde{\kappa}(x_0')}(z)Q_{\eta(x_{\zeta,\kappa}(x_0')),\tau(x_{\zeta,\kappa}(x_0'))}q\bigr)\iota_{y_0(x_0'),\widetilde{\kappa}(x_0') } \iota_{ \zeta_{x_0'}, \kappa}^{-1}
			\end{aligned}
		\end{equation*}
		and from \eqref{eq_norm_iota} 
		\begin{equation*}
			\begin{aligned}
			&\norm{\iota_{\zeta_{x_0'},\kappa}}_{L^2(\Sigma_{\zeta_{x_0'},\kappa};\C^N) \to L^2(\R^{\theta-1};\C^N)} =  \norm{\iota_{\zeta_{x_0'},\kappa}^{-1}}_{L^2(\R^{\theta-1};\C^N) \to L^2(\Sigma_{\zeta_{x_0'},\kappa};\C^N)}^{-1}\\
			 &\hspace{143 pt}= (1+\abs{\nabla\zeta(x_0')}^2)^{-1/4},\\
		&\norm{\iota_{y_0(x_0'),\widetilde{\kappa}(x_0') }}_{L^2( \Sigma_{y_0(x_0'),\widetilde{\kappa}(x_0')};\C^N) \to L^2(\R^{\theta-1};\C^N)}	 	\\
		&\hspace{100 pt}=\norm{\iota_{y_0(x_0'),\widetilde{\kappa}(x_0') }^{-1}}_{L^2( \Sigma_{y_0(x_0'),\widetilde{\kappa}(x_0')};\C^N) \to L^2(\R^{\theta-1};\C^N)}^{-1} \\
		&\hspace{100 pt}= 1.
			\end{aligned}
		\end{equation*}
		These considerations show that $I+D_{\varepsilon}^{ \zeta_{x_0'}, \kappa}(z)Q_{\eta,\tau}^{\zeta,\kappa} (x_0')q$ is also continuously invertible in $\mathcal{B}^0(\R^{\theta-1})$ and  
		\begin{equation*} 
			\begin{aligned}
				&\big\|(I+D_{\varepsilon}^{ \zeta_{x_0'}, \kappa}(z)Q_{\eta,\tau}^{\zeta,\kappa} (x_0')q)^{-1}\big\|_{0 \to 0} \\
				&\hspace{70 pt}\leq\big\|(I+D_{\varepsilon}^{y_0(x_0'), \widetilde{\kappa}(x_0')}(z)Q_{\eta(x_{\zeta,\kappa}(x_0')),\tau(x_{\zeta,\kappa}(x_0'))}q)^{-1}\big\|_{0 \to 0}.
			\end{aligned}
		\end{equation*}
		Now, the result follows from applying Corollary~\ref{cor_uniform_bdd_D} (for $S = \overline{\ran(\eta,\tau)}$) if one chooses $\delta_4 = \delta_2(\overline{\ran(\eta,\tau)}) >0$, where $\delta_2$ was introduced in Corollary~\ref{cor_uniform_bdd_D}.
	\end{proof}

	Inspired by the local principle in \cite[Proposition~5]{R22a}, see also \cite{R20,R21}, we are going to construct partitions of unity which allow us to globalize the established local  results.  We start by choosing a partition of unity $(\phi_{n'})_{n' \in \Z^{\theta-1}}$ for $\R^{\theta-1}$ with uniformly bounded derivatives which satisfies  $\supp \phi_{n'} \subset B(n',1)$ for $n'\in\Z^{\theta-1}$.  Moreover, let $(\vartheta_{n'})_{n' \in \Z^{\theta-1}}$ be a sequence of functions with uniformly bounded derivatives which fulfils  $0\leq\vartheta_{n'} \leq 1$, $\vartheta_{n'} =1$ on $B(n',2)$ and $\supp \vartheta_{n'} \subset B(n',3)$ for $n' \in \Z^{\theta-1}$. According to Proposition~\ref{prop_part_unity} such sequences exist.
	By defining for $a \in (0,\varepsilon_2^{1/6})$ and $n' \in \Z^{\theta-1}$ the functions $\phi_{n'}^a(\cdot) = \phi_{n'}(\cdot/a)$ and $\vartheta_{n'}^a(\cdot) = \vartheta_{n'}(\cdot/a)$ we obtain similar sequences with scaled supports; in particular $(\phi_{n'}^a)_{n'\in \Z^{\theta-1}}$ is a partition of unity for $\R^{\theta-1}$. Furthermore, there exists a $C >0$ which does not depend on $a$ such that
	\begin{equation}\label{eq_part_of_unity_uniform_est}
		\sup_{n' \in \Z^{\theta-1}} \max\{\|\phi_{n'}^a\|_{W^1_\infty(\R^{\theta-1})},\|\vartheta_{n'}^a\|_{W^1_\infty(\R^{\theta-1})}\} < \frac{C}{a}.
	\end{equation}
	
	Before we can construct the inverse of $I + D_\varepsilon(z)Q_{\eta,\tau}^{\zeta,\kappa} q$ we have to deal with series of operators. Let $(A_{n'})_{n' \in \Z^{\theta-1}}$ be a family of bounded operators mapping from a Hilbert space $\mathcal{H}$ to a Hilbert space $\mathcal{G}$. 
	If the sequence of partial sums $S_n = \sum_{n ' \in \Z^{\theta-1}, \abs{n'} \leq n} A_{n'}$ converges in the strong sense to an operator, then we define
	\begin{equation*}
		\sum_{n' \in \Z^{\theta-1}}^{\textup{st.}} A_{n'} := \underset{n \to \infty}{\textup{s-lim}} \, S_n.
	\end{equation*}
	According to the Banach-Steinhaus theorem, $\sum_{n' \in \Z^{\theta-1}}^{\textup{st.}} A_{n'} $ is again a bounded operator mapping from $\mathcal{H}$ to $\mathcal{G}$.
	Moreover, the definition of the series implies that if $\mathcal{H}'$ and $\mathcal{G}'$ are Hilbert spaces, and $U:\mathcal{H}' \to \mathcal{H}$ and $V: \mathcal{G} \to \mathcal{G}'$ are bounded operators, then 
	\begin{equation*}
		\sum_{n' \in \Z^{\theta-1}}^{\textup{st.}} VA_{n'}U : \mathcal{H}' \to \mathcal{G}'
	\end{equation*}
	is a well-defined bounded operator and 
	\begin{equation*}
		V \Big(\sum_{n' \in \Z^{\theta-1}}^{\textup{st.}} A_{n'}\Big)U  =	\sum_{n' \in \Z^{\theta-1}}^{\textup{st.}} V A_{n'} U
	\end{equation*} 
	holds.
	If $(\mathcal{A}_{n'})_{n' \in \Z^{\theta-1}}$ is a uniformly bounded sequence of operators in $\mathcal{B}^0(\R^{\theta-1})$, then according to Proposition~\ref{prop_part_series}  the series $\sum_{n' \in \Z^{\theta-1}}^\textup{st.} \vartheta^a_{n'} \mathcal{A}_{n'}\vartheta^a_{n'}$
	converges in the strong sense in $\mathcal{B}^0(\R^{\theta-1})$ and we have
	\begin{equation}\label{eq_series_est_020}
		\Big\| \sum_{n' \in \Z^{\theta-1}}^\textup{st.} \vartheta^a_{n'} \mathcal{A}_{n'}\vartheta^a_{n'} \Big\|_{0\to0} \leq 11^{\theta-1}\sup_{n' \in \Z^{\theta-1}} \| \mathcal{A}_{n'}\|_{0 \to 0}.
	\end{equation}  
	Moreover, if $(\mathcal{A}_{n'})_{n' \in \mathbb{Z}^{\theta-1}}$ is also uniformly bounded as a sequence of operators from $\mathcal{B}^0(\R^{\theta-1})$ to $\mathcal{B}^1(\R^{\theta-1})$, then $\sum_{n' \in \Z^{\theta-1}}^\textup{st.} \vartheta^a_{n'} \mathcal{A}_{n'}\vartheta^a_{n'}$ acts also as  a bounded operator from $\mathcal{B}^0(\R^{\theta-1})$ to $\mathcal{B}^1(\R^{\theta-1})$ and
	\begin{equation}\label{eq_series_est_021}
		\Big\| \sum_{n' \in \Z^{\theta-1}}^\textup{st.} \vartheta^a_{n'} \mathcal{A}_{n'}\vartheta^a_{n'} \Big\|_{0\to1} \leq \frac{C}{a} \sup_{n' \in \Z^{\theta-1}}\norm{\mathcal{A}_{n'}}_{0 \to 1}
	\end{equation}
	holds, with $C>0$ independent of $a$, see also Proposition~\ref{prop_part_series}. Before we use these essential observations in the proof of Proposition~\ref{prop_right_inverse}, we state a helpful preliminary lemma.
	
	\begin{lemma}\label{lem_right_inverse}
		Let $z \in \rho(H)$, $\varepsilon \in (0,\varepsilon_2)$, $a_\varepsilon = \varepsilon^{1/6} $, $D_{\varepsilon}^{\zeta,\kappa}(z)$ be as in \eqref{def_D_operators}, and $E_{\varepsilon}(z)$ be as in \eqref{def_E_eps}. Then, for any fixed $n' \in \mathbb{Z}^{\theta-1}$ 
		\begin{equation*}
			(1-\vartheta_{n'}^{a_\varepsilon})E_\varepsilon(z) \phi_{n'}^{a_\varepsilon} = (1-\vartheta_{n'}^{a_\varepsilon}) D_{\varepsilon}^{ \zeta, \kappa}(z) \phi_{n'}^{a_\varepsilon}.
		\end{equation*}
	\end{lemma}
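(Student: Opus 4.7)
My plan is to exploit the fact that $E_\varepsilon(z)$ and $D_\varepsilon^{\zeta,\kappa}(z)$ differ only by the extra factor $\omega((x'-y')/a_\varepsilon)$ in the spatial integral kernel; everything else, including the $t$- and $s$-dependence, is identical. Since $\omega = 1$ outside $B(0,1)$, the difference $E_\varepsilon(z) - D_\varepsilon^{\zeta,\kappa}(z)$ is an integral operator whose kernel vanishes whenever $|x'-y'| \geq a_\varepsilon$. Thus it suffices to show that in the composition $(1-\vartheta_{n'}^{a_\varepsilon})(E_\varepsilon(z) - D_\varepsilon^{\zeta,\kappa}(z))\phi_{n'}^{a_\varepsilon}$ the relevant supports are disjoint.

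I would carry this out as a direct support argument. From the construction of the cut-offs, $\phi_{n'}^{a_\varepsilon}(y') \neq 0$ forces $y' \in B(a_\varepsilon n',a_\varepsilon)$, while $1-\vartheta_{n'}^{a_\varepsilon}(x') \neq 0$ forces $x' \notin B(a_\varepsilon n', 2 a_\varepsilon)$, i.e.\ $|x' - a_\varepsilon n'| \geq 2 a_\varepsilon$. For the integrand produced by the kernel of $E_\varepsilon(z) - D_\varepsilon^{\zeta,\kappa}(z)$ to contribute, we additionally need $|x'-y'| < a_\varepsilon$. The triangle inequality
\begin{equation*}
|x' - a_\varepsilon n'| \leq |x'-y'| + |y' - a_\varepsilon n'| < a_\varepsilon + a_\varepsilon = 2 a_\varepsilon
\end{equation*}
then contradicts $|x' - a_\varepsilon n'| \geq 2 a_\varepsilon$, so the three conditions cannot hold simultaneously. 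Consequently the kernel of $(1-\vartheta_{n'}^{a_\varepsilon})(E_\varepsilon(z) - D_\varepsilon^{\zeta,\kappa}(z))\phi_{n'}^{a_\varepsilon}$ is identically zero, which is the claimed identity after rearranging.

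There is no real analytical obstacle here; the proof is a purely geometric computation with supports. The only point that requires attention is checking that the scaling parameter $a_\varepsilon$ in the cut-off families $\phi_{n'}^{a_\varepsilon}$, $\vartheta_{n'}^{a_\varepsilon}$ precisely matches the scaling of $\omega((x'-y')/a_\varepsilon)$ inside the kernel of $E_\varepsilon(z)$, so that the buffer region between $\supp \phi_{n'}^{a_\varepsilon}$ and $\{1-\vartheta_{n'}^{a_\varepsilon} \neq 0\}$ is strictly wider than the diameter $a_\varepsilon$ on which the kernel of $E_\varepsilon(z)-D_\varepsilon^{\zeta,\kappa}(z)$ is supported; this is exactly guaranteed by the choice $\vartheta_{n'} = 1$ on $B(n',2)$ together with $\supp \phi_{n'} \subset B(n',1)$.
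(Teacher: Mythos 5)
Your proposal is correct and takes essentially the same approach as the paper: both proofs show that the integral kernel of the difference $(1-\vartheta_{n'}^{a_\varepsilon})(E_\varepsilon(z) - D_\varepsilon^{\zeta,\kappa}(z))\phi_{n'}^{a_\varepsilon}$ vanishes identically by analysing the supports of $\phi_{n'}^{a_\varepsilon}$, $1-\vartheta_{n'}^{a_\varepsilon}$, and $\omega(\cdot/a_\varepsilon)-1$, and observing via the triangle inequality that the three constraints $y' \in B(a_\varepsilon n', a_\varepsilon)$, $x' \notin B(a_\varepsilon n', 2a_\varepsilon)$, and $|x'-y'| < a_\varepsilon$ cannot all hold.
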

	\begin{proof}
		We prove this by showing that the difference of the integral kernels of the operators $(1-\vartheta_{n'}^{a_\varepsilon})E_\varepsilon(z) \phi_{n'}^{a_\varepsilon}$ and $ (1-\vartheta_{n'}^{a_\varepsilon}) D_{\varepsilon}^{ \zeta, \kappa}(z) \phi_{n'}^{a_\varepsilon}$ is zero. By \eqref{eq_D_eps_int_rep_0}, \eqref{def_e_eps}, and \eqref{def_E_eps}  this  difference is given by
		\begin{equation}\label{eq_lem_right_inv_kernel}
			\begin{aligned}
				&(1-\vartheta_{n'}^{a_\varepsilon}(x'))\bigl(\omega\bigl(\tfrac{x'-y'}{a_\varepsilon}\bigr) -1\bigr)\phi_{n'}^{a_\varepsilon}(y') \\
				&\hspace{50 pt}\cdot G_z(x_{\zeta,\kappa}(x') -x_{\zeta,\kappa}(y') +\varepsilon(t-s)\nu_{\zeta,\kappa}(x')) 
				\sqrt{1+ \abs{\nabla \zeta(y')}^2}
			\end{aligned}
		\end{equation}
		for all $x',y' \in \R^{\theta-1}$ and $t,s \in (-1,1)$. If $y' \notin B(a_\varepsilon n',a_\varepsilon)$, then $\tfrac{y'}{a_\varepsilon} \notin B(n',1) \supset \supp \phi_{n'}$ and therefore 
		\begin{equation*}
			\phi_{n'}^{a_\varepsilon}(y') = \phi_{n'}\bigl(\tfrac{y'}{a_\varepsilon}\bigr)=0.
		\end{equation*}
		Furthermore, if $x' \in B(a_\varepsilon n', 2a_\varepsilon)$, then $\tfrac{x'}{a_\varepsilon} \in B(n',2)$ and  hence as $\vartheta_{n'} =1 $ on $B(n',2)$ we have 
		\begin{equation*}
			1-\vartheta_{n'}^{a_\varepsilon}(x') = 1-\vartheta_{n'}\bigl(\tfrac{x'}{a_\varepsilon}\bigr)=0.
		\end{equation*}
		These two observations show that if $x' \in  B(a_\varepsilon n', 2a_\varepsilon)$ or $y' \notin B(a_\varepsilon n',a_\varepsilon)$, then \eqref{eq_lem_right_inv_kernel} vanishes. Thus, it remains to consider the case $x' \notin B(a_\varepsilon n', 2 a_\varepsilon)$ and $y' \in B(a_\varepsilon n', a_\varepsilon)$. However, this implies $|x'-y'| > a_\varepsilon$. In this case we use $\omega = 1$ on $\R^{\theta-1} \setminus B(0,1)$, see the text above \eqref{def_E_eps}, to obtain 
		\begin{equation*}
			\omega\bigl(\tfrac{x'-y'}{a_\varepsilon}\bigr) -1 =0. 
		\end{equation*}
		This shows that \eqref{eq_lem_right_inv_kernel} vanishes for all $x',y'\in \R^{\theta-1}$ and $t,s \in(-1,1)$.
	\end{proof}
	
	\begin{proposition}\label{prop_right_inverse}
		Let $z \in \C \setminus \R$, $\zeta \in C^2_b(\R^{\theta-1};\R)$ and $\Sigma$ be as described in the beginning of Section~\ref{sec_loc_part}, $\eta,\tau \in C^1_b(\Sigma;\R)$,  $d = \eta^2- \tau^2$ satisfy
		\begin{equation*}
			\sup_{x_\Sigma \in \Sigma} d(x_\Sigma) < \frac{\pi^2}{4},
		\end{equation*}
		$Q_{\eta,\tau}^{\zeta,\kappa}$ be as in \eqref{eq_Q_matrix} and $q$ be as in \eqref{eq_q2}. Then, there exists a $\delta_5 \in (0,\varepsilon_2)$, with $\varepsilon_2>0$ from Proposition~\ref{prop_conv_res},  such   that  $I+D^{\zeta,\kappa}_{{\varepsilon}}(z)Q_{\eta,\tau}^{\zeta,\kappa}q$ has a bounded right inverse which is uniformly bounded in $\mathcal{B}^0(\R^{\theta-1})$ with respect to $\varepsilon \in (0,\delta_5)$. 
	\end{proposition}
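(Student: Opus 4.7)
The strategy is to build a right inverse by patching together the local inverses furnished by Proposition~\ref{prop_uniform_bdd_D_inv}. Write $A_\varepsilon := I + D^{\zeta,\kappa}_\varepsilon(z) Q^{\zeta,\kappa}_{\eta,\tau} q$ and, for $n' \in \Z^{\theta-1}$, set $x_{n'} := a_\varepsilon n'$ and
\begin{equation*}
A^{n'}_\varepsilon := I + D^{\zeta_{x_{n'}}, \kappa}_\varepsilon(z)\, Q^{\zeta,\kappa}_{\eta,\tau}(x_{n'})\, q,
\end{equation*}
so that, by Proposition~\ref{prop_uniform_bdd_D_inv}, the inverses $(A^{n'}_\varepsilon)^{-1}$ are uniformly bounded in $\mathcal{B}^0(\R^{\theta-1})$ for all $\varepsilon \in (0,\delta_4)$ and all $n' \in \Z^{\theta-1}$. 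My candidate is
\begin{equation*}
R_\varepsilon := \sum_{n' \in \Z^{\theta-1}}^{\textup{st.}} \vartheta^{a_\varepsilon}_{n'} (A^{n'}_\varepsilon)^{-1} \phi^{a_\varepsilon}_{n'},
\end{equation*}
whose uniform boundedness in $\mathcal{B}^0(\R^{\theta-1})$ will follow from the series estimate \eqref{eq_series_est_020} after recasting $R_\varepsilon$ into the form $\sum^{\textup{st.}} \vartheta^{a_\varepsilon}_{n'} \mathcal{A}_{n'} \vartheta^{a_\varepsilon}_{n'}$ via the identity $\phi^{a_\varepsilon}_{n'} = \phi^{a_\varepsilon}_{n'} \vartheta^{a_\varepsilon}_{n'}$.

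Using $\sum^{\textup{st.}} \phi^{a_\varepsilon}_{n'} = I$, $\vartheta^{a_\varepsilon}_{n'}\phi^{a_\varepsilon}_{n'} = \phi^{a_\varepsilon}_{n'}$ and $A^{n'}_\varepsilon(A^{n'}_\varepsilon)^{-1} = I$, a short computation yields the remainder formula
\begin{equation*}
A_\varepsilon R_\varepsilon - I \;=\; \sum_{n' \in \Z^{\theta-1}}^{\textup{st.}} \Bigl([A_\varepsilon, \vartheta^{a_\varepsilon}_{n'}] \;+\; \vartheta^{a_\varepsilon}_{n'}(A_\varepsilon - A^{n'}_\varepsilon)\Bigr)(A^{n'}_\varepsilon)^{-1}\phi^{a_\varepsilon}_{n'},
\end{equation*}
and the goal is to control this remainder by $O(a_\varepsilon(1+|\log\varepsilon|))$ in $\mathcal{B}^0(\R^{\theta-1})$. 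For the commutator piece, I would use that $Q^{\zeta,\kappa}_{\eta,\tau}$ and $q$ commute with multiplication by $\vartheta^{a_\varepsilon}_{n'}$, whence $[A_\varepsilon,\vartheta^{a_\varepsilon}_{n'}] = [D^{\zeta,\kappa}_\varepsilon(z),\vartheta^{a_\varepsilon}_{n'}] Q^{\zeta,\kappa}_{\eta,\tau} q$, and split
\begin{equation*}
[D^{\zeta,\kappa}_\varepsilon(z),\vartheta^{a_\varepsilon}_{n'}] = [D^{\zeta_{x_{n'}},\kappa}_\varepsilon(z),\vartheta^{a_\varepsilon}_{n'}] + [D^{\zeta,\kappa}_\varepsilon(z) - D^{\zeta_{x_{n'}},\kappa}_\varepsilon(z),\vartheta^{a_\varepsilon}_{n'}],
\end{equation*}
estimating the first piece by Proposition~\ref{prop_local_prop}~(part~2) and the second by part~3 (noting $\vartheta^{a_\varepsilon}_{n'}\le \chi_{B(x_{n'},3a_\varepsilon)}$ supplies both characteristic functions needed). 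For the frozen-coefficient piece, the same inequality $\vartheta^{a_\varepsilon}_{n'}\le \chi_{B(x_{n'},3a_\varepsilon)}$ provides the left cutoff required by Proposition~\ref{prop_local_prop} part~3.

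The hard part will be inserting the \emph{right-hand} cutoff $\chi_{B(x_{n'},3a_\varepsilon)}$ on the far side of $A_\varepsilon - A^{n'}_\varepsilon$: while $\phi^{a_\varepsilon}_{n'}$ localises the input, the nonlocal $(A^{n'}_\varepsilon)^{-1}$ subsequently spreads support throughout $\R^{\theta-1}$. To circumvent this, I plan to decompose $D^{\zeta,\kappa}_\varepsilon$ as $E_\varepsilon + (D^{\zeta,\kappa}_\varepsilon - E_\varepsilon)$: the kernel of $D^{\zeta,\kappa}_\varepsilon - E_\varepsilon$ is supported in $\{|x'-y'|\le a_\varepsilon\}$, cf.\ Lemma~\ref{lem_right_inverse}, so this near-diagonal contribution is genuinely local and fits within two $\chi_{B(x_{n'},3a_\varepsilon)}$ cutoffs, while the smoothing $E_\varepsilon$ term is controlled via its improved $0\to 1$ bound from Proposition~\ref{prop_local_prop} part~1, combined with one Neumann unfolding $(A^{n'}_\varepsilon)^{-1}\phi^{a_\varepsilon}_{n'} = \phi^{a_\varepsilon}_{n'} - B^{n'}_\varepsilon(A^{n'}_\varepsilon)^{-1}\phi^{a_\varepsilon}_{n'}$ (with $B^{n'}_\varepsilon := A^{n'}_\varepsilon - I$) to return the estimate to the sandwiched situation of Proposition~\ref{prop_local_prop} part~3. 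Once $\|A_\varepsilon R_\varepsilon - I\|_{0\to 0} \le 1/2$ for $\varepsilon \in (0,\delta_5)$ with $\delta_5$ small, $A_\varepsilon R_\varepsilon = I + K_\varepsilon$ is invertible by Neumann series, and $R_\varepsilon(I+K_\varepsilon)^{-1}$ is the required right inverse of $A_\varepsilon$, uniformly bounded in $\mathcal{B}^0(\R^{\theta-1})$.
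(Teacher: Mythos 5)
Your overall framework—patching local frozen-coefficient inverses with the scaled partition of unity, computing the remainder $A_\varepsilon R_\varepsilon - I$, and inverting by a Neumann series—is the same as the paper's, and your remainder identity is algebraically correct. However, there is a genuine gap in the estimate of the remainder. You aim to show $\|A_\varepsilon R_\varepsilon - I\|_{0\to 0} = O(a_\varepsilon(1+|\log\varepsilon|))$, but the commutator piece $[D^{\zeta,\kappa}_\varepsilon,\vartheta^{a_\varepsilon}_{n'}]$ is only controlled by Proposition~\ref{prop_local_prop}~(2) with a $\mathcal{B}^0\to\mathcal{B}^1$ bound of order $\|\vartheta^{a_\varepsilon}_{n'}\|_{W^1_\infty}(1+|\log\varepsilon|) \sim a_\varepsilon^{-1}(1+|\log\varepsilon|)$, and the same is true for the $E_\varepsilon$ contribution, whose $\mathcal{B}^0\to\mathcal{B}^1$ norm is $\sim a_\varepsilon^{-1}(1+|\log\varepsilon|)$. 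These are divergent, not small, and the embedding $\mathcal{B}^1\hookrightarrow\mathcal{B}^0$ does not improve the size. So the remainder is \emph{not} small in $\mathcal{B}^0\to\mathcal{B}^0$, and a direct Neumann series on $I+K_\varepsilon$ fails. Your "one Neumann unfolding" of $(A^{n'}_\varepsilon)^{-1}$ does not resolve this either: it replaces $(A^{n'}_\varepsilon)^{-1}\phi^{a_\varepsilon}_{n'}$ by $\phi^{a_\varepsilon}_{n'}-D^{\zeta_{n'}}_\varepsilon Q q(A^{n'}_\varepsilon)^{-1}\phi^{a_\varepsilon}_{n'}$ and simply transfers the unbounded spreading to the second term.

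What is missing is the paper's corrective step. The paper splits $A_\varepsilon R_\varepsilon - I$ into a piece $L_\varepsilon$ with $\|L_\varepsilon\|_{0\to 0}\lesssim a_\varepsilon(1+|\log\varepsilon|)$ (the frozen-coefficient difference, genuinely small) and a piece $K_\varepsilon$ that is \emph{large} but \emph{smoothing}, $\|K_\varepsilon\|_{0\to 1}\lesssim a_\varepsilon^{-2}(1+|\log\varepsilon|)$, then defines the corrected approximate inverse $\widetilde R_\varepsilon = R_\varepsilon - (I+D_0^{\zeta,\kappa}(z)Q^{\zeta,\kappa}_{\eta,\tau}q)^{-1}(K_\varepsilon+L_\varepsilon)$. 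Applying $A_\varepsilon$ to $\widetilde R_\varepsilon$ produces a remainder $(D_0^{\zeta,\kappa}-D_\varepsilon^{\zeta,\kappa})Q^{\zeta,\kappa}_{\eta,\tau}q(I+D_0 Qq)^{-1}(K_\varepsilon+L_\varepsilon)$, and the crucial cancellation comes from the \emph{improved} convergence $\|D_0^{\zeta,\kappa}-D_\varepsilon^{\zeta,\kappa}\|_{1/2\to 0}\lesssim\varepsilon^{1/2-r}$ of \eqref{eq_D_eps_conv} hitting the smoothing piece $K_\varepsilon$; with $a_\varepsilon=\varepsilon^{1/6}$ this yields the small bound $\varepsilon^{1/6-r}(1+|\log\varepsilon|)$. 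Without this step the argument cannot close. Separately, note that the paper's ansatz $R_\varepsilon=\sum\phi^{a_\varepsilon}_{n'}R_{n',\varepsilon}\vartheta^{a_\varepsilon}_{n'}$ puts $\phi$ next to $A_\varepsilon$, so Lemma~\ref{lem_right_inverse} in the form $(1-\vartheta)D_\varepsilon\phi=(1-\vartheta)E_\varepsilon\phi$ can be used \emph{before} the nonlocal $R_{n',\varepsilon}$ destroys locality; your reversed ordering $\vartheta R_{n',\varepsilon}\phi$ forfeits this advantage when constructing a right inverse (it is the natural ordering for a left inverse instead), which is why your "hard part" arises at all.
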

	\begin{proof}
		The proof is split into four steps. In \textit{Step~1} we define $R_\varepsilon$ which will turn out to be a first approximation for the right inverse of $I+D^{\zeta,\kappa}_{{\varepsilon}}(z)Q_{\eta,\tau}^{\zeta,\kappa}q$. Moreover, in this step we also show that $R_\varepsilon$ is uniformly bounded in $\mathcal{B}^0(\R^{\theta-1})$  with respect to $\varepsilon$. Then, in \textit{Step~2} we calculate  $(I+D^{\zeta,\kappa}_{{\varepsilon}}(z)Q_{\eta,\tau}^{\zeta,\kappa}q) R_\varepsilon$. Afterwards, we find in \textit{Step~3} that  $(I+D^{\zeta,\kappa}_{{\varepsilon}}(z)Q_{\eta,\tau}^{\zeta,\kappa}q) R_\varepsilon $ equals $I + K_\varepsilon + L_\varepsilon$, where $K_\varepsilon$ and $L_\varepsilon$ fulfil the inequalities
		\begin{equation}\label{eq_est_K_eps_L_eps}
			\|K_{\varepsilon}\|_{0 \to 1}\leq  C \frac{  1+\abs{\log(\varepsilon)}}{a_\varepsilon^2} \\
			\quad \textup{and} \quad  \|L_{\varepsilon}\|_{0\to0} \leq C a_\varepsilon (1+\abs{\log (\varepsilon)}).
		\end{equation}
		Based on these observations we define in \textit{Step~4} an operator $\widetilde{R}_\varepsilon$  which is uniformly bounded in $\mathcal{B}^0(\R^{\theta-1})$ with respect to $\varepsilon$ and fulfils $(I+D^{\zeta,\kappa}_{{\varepsilon}}(z)Q_{\eta,\tau}^{\zeta,\kappa}q) \widetilde{R}_\varepsilon = I+ \widetilde{L}_\varepsilon$, where $\|\widetilde{L}_\varepsilon\|_{0\to0}$ can be estimated by $C(1+|\log(\varepsilon)|) \varepsilon^{1/6-r}$ for an $r \in (0, \tfrac{1}{6})$. In particular, this shows that for sufficiently small $\varepsilon>0$  the right inverse of $I+D^{\zeta,\kappa}_{{\varepsilon}}(z)Q_{\eta,\tau}^{\zeta,\kappa}q$ is given by the operator $\widetilde{R}_\varepsilon(I+\widetilde{L}_\varepsilon)^{-1}$ and  uniformly bounded in $\mathcal{B}^0(\R^{\theta-1})$ with respect to $\varepsilon$.
		
		\textit{Step~1.}
		We define for $\varepsilon \in (0,\min\{\varepsilon_2,\delta_4\})$, where  $\varepsilon_2$ and $\delta_4$  are chosen as in Proposition~\ref{prop_conv_res} and  Proposition~\ref{prop_uniform_bdd_D_inv}, respectively,
		\begin{equation*}
			\begin{aligned}
				R_\varepsilon &: \mathcal{B}^0(\R^{\theta-1}) \to \mathcal{B}^0(\R^{\theta-1}), \\
				R_\varepsilon &:= \sum^{\textup{st.}}_{n' \in \Z^{\theta-1}} \phi_{n'}^{a_\varepsilon} R_{n',\varepsilon}\vartheta_{n'}^{a_\varepsilon}. 
			\end{aligned}
		\end{equation*}
		Here, $a_\varepsilon = \varepsilon^{1/6}$ and $R_{n',\varepsilon}:=(I+D^{\zeta_{a_\varepsilon n'},\kappa}_\varepsilon(z)  Q_{\eta,\tau}^{\zeta,\kappa}(a_\varepsilon n') q)^{-1}$ with $\zeta_{a_\varepsilon n'} = \zeta_{x_0'}$ as in \eqref{eq_zeta_x_0'} for $x_0' = a_\varepsilon n'$. The equality $\vartheta_{n'}^{a_\varepsilon} \phi_{n'}^{a_\varepsilon}=\phi_{n'}^{a_\varepsilon}$ implies
		\begin{equation*}
			R_\varepsilon = \sum^{\textup{st.}}_{n' \in \Z^{\theta-1}} \vartheta_{n'}^{a_\varepsilon} \phi_n^{a_\varepsilon} R_{n',\varepsilon}\vartheta_{n'}^{a_\varepsilon} 
		\end{equation*} 
		and therefore Proposition~\ref{prop_uniform_bdd_D_inv} and \eqref{eq_series_est_020} show that $R_\varepsilon$ is well-defined and uniformly bounded by
		\begin{equation}\label{eq_R_eps_uniform_bdd}
			\norm{R_\varepsilon}_{0 \to 0} \leq 11^{\theta-1}\sup_{n' \in \Z^{\theta-1}} \norm{ (I+D^{\zeta_{a_\varepsilon n'},\kappa}_\varepsilon(z) Q_{\eta,\tau}^{\zeta,\kappa}(a_\varepsilon n')  q)^{-1}}_{0\to0}  \leq  C ,
		\end{equation}
		where $C>0$ does not depend on  $\varepsilon $.
		
		\textit{Step~2.}
		Applying $I+D_{{\varepsilon}}^{\zeta,\kappa}(z)Q^{\zeta,\kappa}_{\eta,\tau}q$ to $R_\varepsilon$ yields
		\begin{equation*}
			\begin{aligned}
				(I+D_{{\varepsilon}}^{\zeta,\kappa}(z)Q^{\zeta,\kappa}_{\eta,\tau}q)R_\varepsilon 
				&= \sum^{\textup{st.}}_{n' \in \Z^{\theta-1}}(I+D_{{\varepsilon}}^{\zeta,\kappa}(z)Q^{\zeta,\kappa}_{\eta,\tau}q)\phi_{n'}^{a_\varepsilon} R_{n',\varepsilon}  \vartheta_{n'}^{a_\varepsilon}  \\
				&=\sum^{\textup{st.}}_{n' \in \Z^{\theta-1}}  \vartheta_{n'}^{a_\varepsilon}  (I+D_{{\varepsilon}}^{\zeta,\kappa}(z)Q^{\zeta,\kappa}_{\eta,\tau}q)\phi_{n'}^{a_\varepsilon} R_{n',\varepsilon}  \vartheta_{n'}^{a_\varepsilon}\\
				&\hspace{10 pt}+ \sum^{\textup{st.}}_{n' \in \Z^{\theta-1}}(1-\vartheta_{n'}^{a_\varepsilon})D_{{\varepsilon}}^{\zeta,\kappa}(z)Q^{\zeta,\kappa}_{\eta,\tau}q\phi_{n'}^{a_\varepsilon} R_{n',\varepsilon}  \vartheta_{n'}^{a_\varepsilon}. 
			\end{aligned}
		\end{equation*}
		Moreover, using Lemma~\ref{lem_right_inverse} gives us
		\begin{equation*}
			\begin{aligned}
				(I+&D_{{\varepsilon}}^{\zeta,\kappa}(z)Q^{\zeta,\kappa}_{\eta,\tau}q)	R_\varepsilon 
				=\sum^{\textup{st.}}_{n' \in \Z^{\theta-1}}  \vartheta_{n'}^{a_\varepsilon}  (I+D_{{\varepsilon}}^{\zeta,\kappa}(z)Q^{\zeta,\kappa}_{\eta,\tau}q)\phi_{n'}^{a_\varepsilon} R_{n',\varepsilon}  \vartheta_{n'}^{a_\varepsilon}\\
				&\hspace{83 pt} + \sum^{\textup{st.}}_{n' \in \Z^{\theta-1}}(1-\vartheta_{n'}^{a_\varepsilon})E_\varepsilon(z)Q^{\zeta,\kappa}_{\eta,\tau}q\phi_{n'}^{a_\varepsilon} R_{n',\varepsilon}  \vartheta_{n'}^{a_\varepsilon}  \\
				&=\sum^{\textup{st.}}_{n' \in \Z^{\theta-1}}  \vartheta_{n'}^{a_\varepsilon}  (I+D_{{\varepsilon}}^{\zeta,\kappa}(z)Q^{\zeta,\kappa}_{\eta,\tau}q - E_{{\varepsilon}}(z)Q^{\zeta,\kappa}_{\eta,\tau}q)\phi_{n'}^{a_\varepsilon} R_{n',\varepsilon}  \vartheta_{n'}^{a_\varepsilon}  \\
				&\hspace{10 pt}+  E_{{\varepsilon}}(z)Q^{\zeta,\kappa}_{\eta,\tau}qR_\varepsilon.
			\end{aligned}
		\end{equation*}
		Writing $D_{{\varepsilon}}^{\zeta,\kappa}(z)Q^{\zeta,\kappa}_{\eta,\tau}q \phi_{n'}^{a_\varepsilon}$ as 
		\begin{equation*}
			\begin{aligned}
				&	 D_{{\varepsilon}}^{\zeta_{a_\varepsilon n'},\kappa}(z)Q^{\zeta,\kappa}_{\eta,\tau}(a_\varepsilon n')q \phi_{n'}^{a_\varepsilon} +  \big(D_{{\varepsilon}}^{\zeta,\kappa}(z)Q^{\zeta,\kappa}_{\eta,\tau} -  D_{{\varepsilon}}^{\zeta_{a_\varepsilon n'},\kappa}(z)Q^{\zeta,\kappa}_{\eta,\tau}(a_\varepsilon n')\big)q\phi_{n'}^{a_\varepsilon} \\	
				& \qquad = 	 \phi_{n'}^{a_\varepsilon}D_{{\varepsilon}}^{\zeta_{a_\varepsilon n'},\kappa}(z)Q^{\zeta,\kappa}_{\eta,\tau}(a_\varepsilon n')q +  [D_{{\varepsilon}}^{\zeta_{a_\varepsilon n'},\kappa}(z), \phi_{n'}^{a_\varepsilon}]Q^{\zeta,\kappa}_{\eta,\tau}(a_\varepsilon n')q\\
				&\qquad \hphantom{=}+ \big(D_{{\varepsilon}}^{\zeta,\kappa}(z)Q^{\zeta,\kappa}_{\eta,\tau} -  D_{{\varepsilon}}^{\zeta_{a_\varepsilon n'},\kappa}(z)Q^{\zeta,\kappa}_{\eta,\tau}(a_\varepsilon n')\big)q\phi_{n'}^{a_\varepsilon} \
			\end{aligned}
		\end{equation*}
		and introducing the operators $L_{n',\varepsilon}:= \big(D_{{\varepsilon}}^{\zeta,\kappa}(z)Q^{\zeta,\kappa}_{\eta,\tau} -  D_{{\varepsilon}}^{\zeta_{a_\varepsilon n'},\kappa}(z)Q^{\zeta,\kappa}_{\eta,\tau}(a_\varepsilon n')\big)q\phi_{n'}^{a_\varepsilon}$ and  $K_{n',\varepsilon} := [D_{{\varepsilon}}^{\zeta_{a_\varepsilon n'},\kappa}(z), \phi_{n'}^{a_\varepsilon}]Q^{\zeta,\kappa}_{\eta,\tau}(a_\varepsilon n')q - E_{\varepsilon}(z)Q^{\zeta,\kappa}_{\eta,\tau}q\phi_{n'}^{a_\varepsilon}$  yields
		\begin{equation}\label{eq_prop_right_inv_step1}
			\begin{aligned}
				(I+D_{{\varepsilon}}^{\zeta,\kappa}(z)&Q^{\zeta,\kappa}_{\eta,\tau}q)R_\varepsilon \\
				&=\sum^{\textup{st.}}_{n' \in \Z^{\theta-1}}  \vartheta_{n'}^{a_\varepsilon} \phi_{n'}^{a_\varepsilon} (I+D_{{\varepsilon}}^{\zeta_{a_\varepsilon n'},\kappa}(z)Q_{\eta,\tau}^{\zeta,\kappa}(a_\varepsilon n')q) R_{n',\varepsilon}  \vartheta_{n'}^{a_\varepsilon} \\
				&\hphantom{=}+ \sum^{\textup{st.}}_{n' \in \Z^{\theta-1}}  \vartheta_{n'}^{a_\varepsilon}(K_{n',\varepsilon}  +L_{n',\varepsilon} ) R_{n',\varepsilon}  \vartheta_{n'}^{a_\varepsilon} 
				+E_{{\varepsilon}}(z)Q^{\zeta,\kappa}_{\eta,\tau}qR_\varepsilon \\
				&=I +  \sum^{\textup{st.}}_{n' \in \Z^{\theta-1}}  \vartheta_{n'}^{a_\varepsilon}(K_{n',\varepsilon}  +L_{n',\varepsilon} ) R_{n',\varepsilon}  \vartheta_{n'}^{a_\varepsilon} 
				+E_{{\varepsilon}}(z)Q^{\zeta,\kappa}_{\eta,\tau}qR_\varepsilon,
			\end{aligned}
		\end{equation}
		where 
		\begin{equation*}
			(I+D_{{\varepsilon}}^{\zeta_{a_\varepsilon n'},\kappa}(z)Q_{\eta,\tau}^{\zeta,\kappa}(a_\varepsilon n')q) R_{n',\varepsilon} = I  \quad \textup{and} \quad \sum_{n' \in \Z^{\theta-1}} \vartheta_{n'}^{a_\varepsilon} \phi_{n'}^{a_\varepsilon} \vartheta_{n'}^{a_\varepsilon} = \sum_{n' \in \Z^{\theta-1}} \phi_{n'}^{a_\varepsilon} =1
		\end{equation*}
		were used. 
		
		\textit{Step~3.}
		We start this step by setting
		\begin{equation*}
				K_{\varepsilon} := \sum^{\textup{st.}}_{n' \in \Z^{\theta-1}}\vartheta_{n'}^{a_\varepsilon} K_{n',\varepsilon} R_{n',\varepsilon}  \vartheta_{n'}^{a_\varepsilon} +E_{{\varepsilon}}(z)Q^{\zeta,\kappa}_{\eta,\tau}qR_\varepsilon
        \end{equation*}
        and 
        \begin{equation*}
            L_{\varepsilon} 	:=  \sum^{\textup{st.}}_{n' \in \Z^{\theta-1}} \vartheta_{n'}^{a_\varepsilon} L_{n',\varepsilon}  R_{n',\varepsilon} \vartheta_{n'}^{a_\varepsilon}.
		\end{equation*}
		Then, \eqref{eq_prop_right_inv_step1} shows 
		\begin{equation}\label{eq_I+D_approx_inv}
			(I+D_{{\varepsilon}}^{\zeta,\kappa}(z)Q_{\eta,\tau}^{\zeta,\kappa}q)R_\varepsilon  = I + K_{\varepsilon} +L_{\varepsilon}.
		\end{equation} 
	 Since $R_\varepsilon$ and $D_{{\varepsilon}}^{\zeta,\kappa}(z)$ are uniformly bounded in $\mathcal{B}^0(\R^{\theta-1})$, see \textit{Step~1} and the text above \eqref{eq_D_eps_conv}, respectively, this implies that also $K_{\varepsilon} +L_{\varepsilon}$ is uniformly bounded in $\mathcal{B}^0(\R^{\theta-1})$. Moreover,  $Q_{\eta,\tau}^{\zeta,\kappa} \in C^1_b(\R^{\theta-1};\C^{N \times N})$, Proposition~\ref{prop_local_prop}, and~\eqref{eq_part_of_unity_uniform_est} imply
	 	\begin{equation*}
	 	\begin{aligned}
	 		\norm{K_{n',\varepsilon}}_{0 \to 1} &\leq  \norm{[D_{{\varepsilon}}^{\zeta_{a_\varepsilon n'},\kappa}(z), \phi_{n'}^{a_\varepsilon}]Q^{\zeta,\kappa}_{\eta,\tau}(a_\varepsilon n')q}_{0 \to 1} + \norm{E_{\varepsilon}(z)Q^{\zeta,\kappa}_{\eta,\tau}q\phi_{n'}^{a_\varepsilon}}_{0 \to 1}\\
	 		& \leq C  \bigl(\norm{[D_{{\varepsilon}}^{\zeta_{a_\varepsilon n'},\kappa}(z), \phi_{n'}^{a_\varepsilon}]}_{0 \to 1} + \norm{E_{\varepsilon}(z)}_{0 \to 1} \bigr)\\
	 		&\leq C \Bigl( \norm{\phi_{n'}^{a_\varepsilon}}_{W^1_\infty(\R^{\theta-1})}(1+ \abs{\log (\varepsilon)}) + \frac{1+ \abs{\log (\varepsilon)}}{a_\varepsilon}\Bigr)\\
	 		&\leq C \frac{1+ \abs{\log (\varepsilon)}}{a_\varepsilon}, \qquad  n' \in \Z^{\theta-1}.
	 	\end{aligned}
	 \end{equation*}
  In turn, with \eqref{eq_series_est_021}, \eqref{eq_R_eps_uniform_bdd}, $Q_{\eta,\tau}^{\zeta,\kappa} \in C^1_b(\R^{\theta-1};\C^{N \times N})$,  Proposition~\ref{prop_local_prop}, and Proposition \ref{prop_uniform_bdd_D_inv} we get 
		\begin{equation*}
			\begin{aligned}
				\norm{K_{\varepsilon}}_{0 \to 1} &\leq \Bigl\| \sum^{\textup{st.}}_{n' \in \Z^{\theta-1}}\vartheta_{n'}^{a_\varepsilon} K_{n',\varepsilon} R_{n',\varepsilon}  \vartheta_{n'}^{a_\varepsilon}\Bigr\|_{0 \to 1} + \|E_{{\varepsilon}}(z)Q^{\zeta,\kappa}_{\eta,\tau}qR_\varepsilon\|_{0 \to 1}\\
				& \leq \Bigl(\frac{C}{a_\varepsilon}\sup_{n' \in \Z^{\theta-1}} \|K_{n',\varepsilon}R_{n',\varepsilon}\|_{0\to1} +  \|E_{{\varepsilon}}(z)Q^{\zeta,\kappa}_{\eta,\tau}qR_\varepsilon\|_{0 \to 1}\Bigr)\\
				& \leq \Bigl(\frac{C}{a_\varepsilon}\sup_{n' \in \Z^{\theta-1}} \|K_{n',\varepsilon}\|_{0\to1} +  \|E_{{\varepsilon}}(z)\|_{0 \to 1}\Big)\\
				& \leq C \Bigl(\frac{1}{a_\varepsilon}\cdot\frac{1+ \abs{\log (\varepsilon)}}{a_\varepsilon} + \frac{1+ \abs{\log (\varepsilon)}}{a_\varepsilon}\Bigr)\\
				&\leq C \frac{1+\abs{\log(\varepsilon)}}{a_\varepsilon^2}.
			\end{aligned}
		\end{equation*}
		Similar we estimate $L_\varepsilon$ with \eqref{eq_series_est_020}, Proposition~\ref{prop_uniform_bdd_D_inv}, and Proposition~\ref{prop_local_prop}  by
		\begin{equation*}
			\begin{aligned}
				\|L_{\varepsilon}\|_{0 \to 0} &= \Big\| \sum^{\textup{st.}}_{n' \in \Z^{\theta-1}}\vartheta_{n'}^{a_\varepsilon} L_{n',\varepsilon} R_{n',\varepsilon}  \vartheta_{n'}^{a_\varepsilon}\Big\|_{0 \to 0} \\
				&= \Big\| \sum^{\textup{st.}}_{n' \in \Z^{\theta-1}}\vartheta_{n'}^{a_\varepsilon} \chi_{B(a_\varepsilon n',3 a_\varepsilon)} L_{n',\varepsilon} R_{n',\varepsilon}  \vartheta_{n'}^{a_\varepsilon}\Big\|_{0 \to 0}\\ 
				& \leq C \sup_{n' \in \Z^{\theta-1}} \|\chi_{B(a_\varepsilon n',3 a_\varepsilon)}L_{n',\varepsilon}\|_{0\to 0}\\
				&= C \sup_{n' \in \Z^{\theta-1}} \|\chi_{B(a_\varepsilon n',3 a_\varepsilon)}\big(D_{{\varepsilon}}^{\zeta,\kappa}(z)Q^{\zeta,\kappa}_{\eta,\tau} -  D_{{\varepsilon}}^{\zeta_{a_\varepsilon n'},\kappa}(z)Q^{\zeta,\kappa}_{\eta,\tau}(a_\varepsilon n')\big)q\phi_{n'}^{a_\varepsilon}\|_{0\to 0} \\
				&= C \sup_{n' \in \Z^{\theta-1}} \|\chi_{B(a_\varepsilon n',3 a_\varepsilon)} \\
				&\hspace{30 pt}\cdot\big(D_{{\varepsilon}}^{\zeta,\kappa}(z)Q^{\zeta,\kappa}_{\eta,\tau} -  D_{{\varepsilon}}^{\zeta_{a_\varepsilon n'},\kappa}(z)Q^{\zeta,\kappa}_{\eta,\tau}(a_\varepsilon n')\big)q\chi_{B(a_\varepsilon n',3 a_\varepsilon)}\phi_{n'}^{a_\varepsilon}\|_{0\to 0} \\
				&\leq C a_\varepsilon (1+\abs{\log(\varepsilon)}).
			\end{aligned}
		\end{equation*}
		This shows that \eqref{eq_est_K_eps_L_eps} is valid and hence completes \textit{Step~3}.
		
		\textit{Step~4.}
		We note that Proposition~\ref{prop_inv_I+BVq}, \eqref{def_iota},  \eqref{def_D_operators} and \eqref{eq_Q_matrix} imply that $I+D_0^{\zeta,\kappa}(z)Q^{\zeta,\kappa}_{\eta,\tau}q$ is continuously invertible in $\mathcal{B}^0(\R^{\theta-1})$ and $\mathcal{B}^{1/2}(\R^{\theta-1})$.
		Furthermore, since $R_\varepsilon$ and $K_\varepsilon + L_\varepsilon$ are  uniformly bounded operators in $\mathcal{B}^0(\R^{\theta-1})$, the operator 
		\begin{equation*}
			\widetilde{R}_\varepsilon := R_\varepsilon -(I+D_0^{\zeta,\kappa}(z)Q^{\zeta,\kappa}_{\eta,\tau}q)^{-1}( K_{\varepsilon} +L_{\varepsilon})
		\end{equation*}
		is also uniformly bounded in $\mathcal{B}^0(\R^{\theta-1})$. Moreover,
		applying $I+D_\varepsilon^{\zeta,\kappa}(z)Q^{\zeta,\kappa}_{\eta,\tau}q$ to $\widetilde{R}_\varepsilon$ and \eqref{eq_I+D_approx_inv} give us 
		\begin{equation}\label{eq_right_inverse_R_eps_tilde}
			\begin{aligned}
				(I&+D_{{\varepsilon}}^{\zeta,\kappa}(z)Q^{\zeta,\kappa}_{\eta,\tau}q) \widetilde{R}_\varepsilon  \\
				&= I + K_\varepsilon + L_\varepsilon  - (I+D_\varepsilon^{\zeta,\kappa}(z)Q^{\zeta,\kappa}_{\eta,\tau}q)(I+D_0^{\zeta,\kappa}(z)Q^{\zeta,\kappa}_{\eta,\tau}q)^{-1}( K_{\varepsilon} +L_{\varepsilon})\\
				& = I +  (D_0^{\zeta,\kappa}(z)-D_\varepsilon^{\zeta,\kappa}(z))Q^{\zeta,\kappa}_{\eta,\tau}q(I+D_0^{\zeta,\kappa}(z)Q^{\zeta,\kappa}_{\eta,\tau}q)^{-1}( K_{\varepsilon} +L_{\varepsilon}) \\
				&= I + \widetilde{L}_\varepsilon
			\end{aligned}
		\end{equation}
		with
		\begin{equation*}
			\widetilde{L}_\varepsilon := (D_0^{\zeta,\kappa}(z)-D_\varepsilon^{\zeta,\kappa}(z))Q^{\zeta,\kappa}_{\eta,\tau}q(I+D_0^{\zeta,\kappa}(z)Q^{\zeta,\kappa}_{\eta,\tau}q)^{-1}( K_{\varepsilon} +L_{\varepsilon}).
		\end{equation*}
		Thus, by using the estimates for $L_\varepsilon$ and $K_\varepsilon$ from \textit{Step~3} and \eqref{eq_D_eps_conv} for a fixed $r \in (0,\tfrac{1}{6})$ we obtain
		\begin{equation*}
			\begin{aligned}
				\| \widetilde{L}_\varepsilon \|_{0\to0}  
				&\leq  \|(D_{{0}}^{\zeta,\kappa}(z) -D_{\varepsilon}^{\zeta,\kappa}(z)) Q^{\zeta,\kappa}_{\eta,\tau}q(I+D_{{0}}^{\zeta,\kappa}(z)Q^{\zeta,\kappa}_{\eta,\tau}q)^{-1}K_{\varepsilon}\|_{0\to0} \\
				&\hspace{10 pt}+\|(D_{{0}}^{\zeta,\kappa}(z) -D_{\varepsilon}^{\zeta,\kappa}(z)) Q^{\zeta,\kappa}_{\eta,\tau}q(I+D_{{0}}^{\zeta,\kappa}(z)Q^{\zeta,\kappa}_{\eta,\tau}q)^{-1}L_{\varepsilon}\|_{0\to0} \\
				&\leq   \|(D_{{0}}^{\zeta,\kappa}(z) -D_{\varepsilon}^{\zeta,\kappa}(z)) Q^{\zeta,\kappa}_{\eta,\tau}q \|_{1/2 \to 0} \\
				&\hspace{70 pt} \cdot\|(I+D_{{0}}^{\zeta,\kappa}(z)Q^{\zeta,\kappa}_{\eta,\tau}q)^{-1}\|_{1/2 \to 1/2}\|K_\varepsilon\|_{0\to 1/2} \\
				&\hspace{10 pt}+\|(D_{{0}}^{\zeta,\kappa}(z) -D_{\varepsilon}^{\zeta,\kappa}(z)) Q^{\zeta,\kappa}_{\eta,\tau}q \|_{0\to0}\\
				&\hspace{70 pt} \cdot \|(I+D_{{0}}^{\zeta,\kappa}(z)Q^{\zeta,\kappa}_{\eta,\tau}q)^{-1}\|_{0\to0} \|L_\varepsilon\|_{0\to0}\\
				&\leq   \|(D_{{0}}^{\zeta,\kappa}(z) -D_{\varepsilon}^{\zeta,\kappa}(z)) Q^{\zeta,\kappa}_{\eta,\tau}q \|_{1/2 \to 0} \\
				&\hspace{70 pt} \cdot\|(I+D_{{0}}^{\zeta,\kappa}(z)Q^{\zeta,\kappa}_{\eta,\tau}q)^{-1}\|_{1/2 \to 1/2}\|K_\varepsilon\|_{0\to 1} \\
				&\hspace{10 pt}+\|(D_{{0}}^{\zeta,\kappa}(z) -D_{\varepsilon}^{\zeta,\kappa}(z)) Q^{\zeta,\kappa}_{\eta,\tau}q \|_{0\to0} \\
				&\hspace{70 pt}\cdot \|(I+D_{{0}}^{\zeta,\kappa}(z)Q^{\zeta,\kappa}_{\eta,\tau}q)^{-1}\|_{0\to0} \|L_\varepsilon\|_{0\to0}\\
				& \leq C \bigg( \frac{\varepsilon^{1/2-r} (1+\abs{\log(\varepsilon)})}{a_\varepsilon^{2}}  + a_\varepsilon (1+\abs{\log(\varepsilon)}) \bigg) \\
				&= C (1+\abs{\log(\varepsilon)}) (\varepsilon^{1/6-r} + \varepsilon^{1/6}) \\
				&\leq C (1+\abs{\log(\varepsilon)}) \varepsilon^{1/6-r}.
			\end{aligned}
		\end{equation*}
		This shows that if we choose $\delta_5>0$ sufficiently small, then $\| \widetilde{L}_\varepsilon\|_{0 \to 0} < \tfrac{1}{2}$ for all $\varepsilon \in (0,\delta_5)$ and $(I + \widetilde{L}_\varepsilon)^{-1}$ is uniformly bounded with respect to $\varepsilon \in (0,\delta_5)$. Thus,  as $\widetilde{R}_\varepsilon$ is also uniformly bounded in $\mathcal{B}^0(\R^{\theta-1})$,  $\widetilde{R}_\varepsilon(I+\widetilde{L}_\varepsilon)^{-1}$ is uniformly bounded in $\mathcal{B}^0(\R^{\theta-1})$ with respect to $\varepsilon \in (0,\delta_5)$ and by \eqref{eq_right_inverse_R_eps_tilde} it is also the right inverse of $I+D_{{\varepsilon}}^{\zeta,\kappa}(z)Q^{\zeta,\kappa}_{\eta,\tau}q$.
	\end{proof}
	
	\begin{proposition}\label{prop_uniform_bdd_inv_B_eps}
		Let $\Sigma$ be a rotated graph as described in the beginning of Section~\ref{sec_loc_part}, $z \in \C\setminus\R$, $q$  be as in \eqref{eq_q2}, $V = \eta I_N + \tau \beta$ with $\eta,\tau \in C^1_b(\Sigma;\R)$, and $ d= \eta^2-\tau^2$ such that
		\begin{equation*}
			\sup_{x_\Sigma \in \Sigma} d(x_\Sigma) < \frac{\pi^2}{4}.
		\end{equation*} 
		 Then, there exists $\varepsilon_3 \in (0,\varepsilon_2) $, with $\varepsilon_2>0$ from Proposition~\ref{prop_conv_res}, such that  $I+B_\varepsilon(z)Vq$ has a bounded  inverse which is uniformly bounded in $\mathcal{B}^0(\Sigma)$ with respect to $\varepsilon \in (0,\varepsilon_3)$.
	\end{proposition}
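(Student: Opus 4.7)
The plan is to pull back Proposition~\ref{prop_right_inverse} to the $\mathcal{B}^0(\Sigma)$ setting via the isomorphism $\iota_{\zeta,\kappa}$, exchange $\overline{B}_\varepsilon(z)$ for $B_\varepsilon(z)$ using~\eqref{eq_B_diff} and a Neumann series, and finally upgrade right-invertibility to two-sided invertibility via an injectivity argument.

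First, I would apply Proposition~\ref{prop_right_inverse} to obtain, for some $\delta_5\in(0,\varepsilon_2)$, a family of operators $\mathcal{R}^D_\varepsilon$ on $\mathcal{B}^0(\R^{\theta-1})$, uniformly bounded in $\varepsilon\in(0,\delta_5)$, with $(I+D^{\zeta,\kappa}_\varepsilon(z)Q^{\zeta,\kappa}_{\eta,\tau}q)\mathcal{R}^D_\varepsilon=I$. Because $D^{\zeta,\kappa}_\varepsilon(z)=\iota_{\zeta,\kappa}\overline{B}_\varepsilon(z)\iota_{\zeta,\kappa}^{-1}$ by~\eqref{def_D_operators}, $Q^{\zeta,\kappa}_{\eta,\tau}=\iota_{\zeta,\kappa}V\iota_{\zeta,\kappa}^{-1}$ by~\eqref{eq_Q_matrix}, and the norms of $\iota_{\zeta,\kappa}$ and $\iota_{\zeta,\kappa}^{-1}$ are controlled by~\eqref{eq_norm_iota}, the pullback $\mathcal{R}_\varepsilon:=\iota_{\zeta,\kappa}^{-1}\mathcal{R}^D_\varepsilon\iota_{\zeta,\kappa}$ is then a uniformly bounded right inverse of $I+\overline{B}_\varepsilon(z)Vq$ in $\mathcal{B}^0(\Sigma)$.

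Second, I would replace $\overline{B}_\varepsilon(z)$ by $B_\varepsilon(z)$. A direct computation using $(I+\overline{B}_\varepsilon(z)Vq)\mathcal{R}_\varepsilon=I$ gives
\begin{equation*}
(I+B_\varepsilon(z)Vq)\mathcal{R}_\varepsilon = I + \bigl(B_\varepsilon(z)-\overline{B}_\varepsilon(z)\bigr)Vq\,\mathcal{R}_\varepsilon,
\end{equation*}
and by~\eqref{eq_B_diff}, the uniform $\mathcal{B}^0$-bound on $\mathcal{R}_\varepsilon$, and the boundedness of $Vq$ on $\mathcal{B}^0(\Sigma)$ (see Section~\ref{sec_not}~\eqref{it_Bochner}), the error has norm at most $C\varepsilon^{1/2}(1+|\log\varepsilon|)^{1/2}$. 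Choosing $\varepsilon_3\in(0,\delta_5)$ so that this quantity is smaller than $1/2$ for all $\varepsilon\in(0,\varepsilon_3)$, the right-hand side above is invertible by Neumann series with norm bounded by $2$, and $\mathcal{R}_\varepsilon$ composed with that inverse furnishes a uniformly bounded right inverse of $I+B_\varepsilon(z)Vq$ on $\mathcal{B}^0(\Sigma)$.

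Third, I would upgrade right-invertibility to two-sided invertibility. Suppose $f\in\mathcal{B}^0(\Sigma)$ satisfies $(I+B_\varepsilon(z)Vq)f=0$. Setting $u:=-A_\varepsilon(z)Vqf\in L^2(\R^\theta;\C^N)$ and using the integral representations~\eqref{def_A_eps}--\eqref{def_C_eps} together with the free-Dirac identity $(H-z)R_z=I$, one verifies $u\in H^1(\R^\theta;\C^N)=\dom H_{V_\varepsilon}$ and $(H_{V_\varepsilon}-z)u=0$; since $H_{V_\varepsilon}$ is self-adjoint and $z\in\C\setminus\R$, this forces $u=0$, and tracing back through $f=-B_\varepsilon(z)Vqf$ and $C_\varepsilon(z)u=0$ yields $f=0$. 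A bounded operator on a Hilbert space that is injective and admits a bounded right inverse is invertible with that right inverse as its two-sided inverse, so the uniform norm bound from step two transfers to $(I+B_\varepsilon(z)Vq)^{-1}$. The main technical obstacle is the third step, namely the \emph{hard} direction of the Birman--Schwinger correspondence: verifying that a null vector $f$ of $I+B_\varepsilon(z)Vq$ really produces an $H^1$-eigenfunction of $H_{V_\varepsilon}$ at $z$. This bookkeeping in tubular coordinates is implicit in the factorization from~\cite{BHS23} but must be assembled explicitly here; the first two steps are routine given the machinery already developed.
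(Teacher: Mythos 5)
Your proposal takes a genuinely different route for the final step, and the comparison is worth spelling out. Steps one and two (pull back Proposition~\ref{prop_right_inverse} through $\iota_{\zeta,\kappa}$, then perturb $\overline{B}_\varepsilon(z)$ to $B_\varepsilon(z)$ via~\eqref{eq_B_diff} and a Neumann series) coincide with what the paper does. Where you diverge is in upgrading right-invertibility to two-sided invertibility. You propose a Birman--Schwinger injectivity argument: a null vector $f$ of $I+B_\varepsilon(z)Vq$ would yield, via $u:=-A_\varepsilon(z)Vqf$, an $H^1$-eigenfunction of the self-adjoint $H_{V_\varepsilon}$ at $z\in\C\setminus\R$, which is impossible, and then one must argue back from $u=0$ to $f=0$. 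The paper instead constructs a \emph{left} inverse by pure operator algebra: from the right inverse $\mathcal{R}_\varepsilon(\bar z)$ of $I+B_\varepsilon(\bar z)Vq$ one gets a right inverse $I-Vq\mathcal{R}_\varepsilon(\bar z)B_\varepsilon(\bar z)$ of $I+VqB_\varepsilon(\bar z)$; taking adjoints and using that $(Vq)^*=Vq$ gives a uniformly bounded left inverse of $I+(B_\varepsilon(\bar z))^*Vq$; the almost-self-adjointness estimate $\|B_\varepsilon(z)-(B_\varepsilon(\bar z))^*\|_{0\to0}\le C\varepsilon$ from \cite[Remark~3.11]{BHS23} then transfers this to a left inverse of $I+B_\varepsilon(z)Vq$ for $\varepsilon$ small, and a left plus a right inverse forces two-sidedness with $\mathcal{R}_\varepsilon(z)$ as the inverse.

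Both routes are correct, but they buy different things. The paper's argument is self-contained at the level of the operators $B_\varepsilon$: it never reopens the Birman--Schwinger correspondence, never invokes self-adjointness of $H_{V_\varepsilon}$, and needs only the single estimate on $B_\varepsilon(z)-(B_\varepsilon(\bar z))^*$ already recorded in \cite{BHS23}. Your argument is more physical (it uses that a self-adjoint operator has no non-real eigenvalues), but, as you yourself flag, it requires assembling several tubular-coordinate identities that are only implicit in \cite{BHS23}: that $A_\varepsilon(z)$ maps $\mathcal{B}^0(\Sigma)$ into $H^1(\R^\theta;\C^N)$ (not just $L^2$), that $(H_{V_\varepsilon}-z)u=0$ holds rigorously for $u=-A_\varepsilon(z)Vqf$, and, most delicately, that $u=0$ actually forces $f=0$. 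For the last point, the needed identity is the factorization $B_\varepsilon(z)=\mathcal{J}_\varepsilon A_\varepsilon(z)$, where $\mathcal{J}_\varepsilon$ is the shifted-trace map $(\mathcal{J}_\varepsilon u)(t)(x_\Sigma)=u(x_\Sigma+\varepsilon t\nu(x_\Sigma))$; with it, $f=-B_\varepsilon(z)Vqf=\mathcal{J}_\varepsilon u$, which does vanish when $u=0$. Your statement ``tracing back through $f=-B_\varepsilon(z)Vqf$ and $C_\varepsilon(z)u=0$ yields $f=0$'' is not quite right as written, because $B_\varepsilon(z)\neq C_\varepsilon(z)A_\varepsilon(z)$ (the composition $C_\varepsilon(z)A_\varepsilon(z)$ produces an iterated resolvent, not $B_\varepsilon(z)$); it is $\mathcal{J}_\varepsilon$, not $C_\varepsilon(z)$, that closes the loop. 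So the proposal is sound in spirit, and the gap you acknowledge is genuinely the only outstanding work, but the left-inverse/adjoint argument in the paper avoids it entirely and is the more economical choice here.
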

	\begin{proof}
		We  directly get from Proposition~\ref{prop_right_inverse},  \eqref{eq_norm_iota}, \eqref{def_D_operators}, and \eqref{eq_Q_matrix}  that $I+\overline{B}_{{\varepsilon}}(z)Vq$ has a right inverse  which is uniformly bounded with respect to $\varepsilon \in (0,\delta_5)$ with $\delta_5$ from the previous proposition. Using \eqref{eq_B_diff} shows that then $I+B_{{\varepsilon}}(z)Vq$ has a right inverse which is uniformly bounded for $\varepsilon \in (0,\varepsilon_3)$ if $\varepsilon_3>0$ is chosen small enough. We denote this right inverse  $\mathcal{R}_\varepsilon(z)$. Then, the right inverse of $I+VqB_{{\varepsilon}}(\overline{z})$ is given by $I - Vq \mathcal{R}_\varepsilon(\overline{z}) B_\varepsilon(\overline{z})$. Hence, $(I+VqB_{{\varepsilon}}(\overline{z}))^* = I + (B_{{\varepsilon}}(\overline{z}))^*V q$ has the uniformly bounded left-inverse $\mathcal{L}_\varepsilon(z) := I - (Vq \mathcal{R}_\varepsilon(\overline{z}) B_\varepsilon(\overline{z}))^*$. Moreover, by \cite[Remark~3.11]{BHS23} the estimate
		\begin{equation*}
			\big\| B_\varepsilon(z) - (B_\varepsilon(\overline{z}))^*\big\|_{0 \to 0} \leq C \varepsilon, \quad  \varepsilon \in (0,\varepsilon_2)
		\end{equation*}
		holds.
		Thus,
		\begin{equation}\label{eq_prop_uniform_bdd_inv_B_eps}
			\mathcal{L}_\varepsilon(z) (I +B_\varepsilon(z)Vq) = I + \mathcal{L}_\varepsilon(z)(B_\varepsilon(z)- (B_\varepsilon(\overline{z}))^*)Vq
		\end{equation}
		converges in $\mathcal{B}^0(\R^{\theta-1})$ for $\varepsilon \to 0$ to  the identity operator $I$. In particular, if $\varepsilon_3>0$ is chosen small enough, then the right hand side of \eqref{eq_prop_uniform_bdd_inv_B_eps} is  invertible in $\mathcal{B}^0(\R^{\theta-1})$ for all $\varepsilon \in (0,\varepsilon_3)$, showing that $I + B_\varepsilon(z) Vq$ is  invertible in $\mathcal{B}^0(\R^{\theta-1})$, i.e.  
		\begin{equation*}
			\mathcal{R}_\varepsilon(z) = (I + B_\varepsilon(z)Vq)^{-1}.
		\end{equation*}
		This concludes the proof since we already know that $\mathcal{R}_\varepsilon(z)$ is uniformly bounded in $\mathcal{B}^0(\R^{\theta-1})$. 
	\end{proof}

	\appendix
	\section{Additional results for Section~\ref{sec_loc_part}}
	
	In this section we provide results which are used in Section~\ref{sec_loc_part}. We begin by stating a convenient version of the Schur test.
	\begin{lemma}\label{lem_Schur_test}
		Let $k $ be a measurable function in $\R^{\theta-1}\times \R^{\theta-1} $ with values in $\C^{N \times N}$ and $\widetilde{k} \in L^1(\R^{\theta-1})$ such that 
		\begin{equation*}
			\abs{k(x',y')} \leq \widetilde{k}(x'-y') \quad \textup{for a.e. } x',y' \in \R^{\theta-1}.
		\end{equation*}
		Then, the operator $K : L^2(\R^{\theta-1};\C^N) \to L^2(\R^{\theta-1};\C^N)$ acting as
		\begin{equation*}
		  Kf(x') = \int_{\mathbb{R}^{\theta-1}} k(x',y') f(y') dy'
		\end{equation*}
		is well-defined and bounded and $\| K \|_{L^2(\R^{\theta-1};\C^N) \to L^2(\R^{\theta-1};\C^N)} \leq \|\widetilde{k}\|_{L^1(\R^{\theta-1})}$. Moreover, if additionally $k \in C^1(\R^{\theta-1}\times \R^{\theta-1} ;\C^{N \times N})$  and
		\begin{equation*}
			\sum_{l=1}^{\theta-1} \Bigl|\frac{d}{d x'_l}k(x',y')\Bigr| \leq \widetilde{k}(x'-y') \quad \textup{for a.e. } x',y' \in \R^{\theta-1},
		\end{equation*}
		then $K$ also acts as bounded operator from  $L^2(\R^{\theta-1};\C^N)$ to $H^1(\R^{\theta-1};\C^N)$ and $\| K \|_{L^2(\R^{\theta-1};\C^N) \to H^1(\R^{\theta-1};\C^N)} \leq C\|\widetilde{k}\|_{L^1(\R^{\theta-1})}$.
	\end{lemma}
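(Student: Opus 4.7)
The plan is to reduce both assertions to Young's convolution inequality by exploiting the translation-invariant domination of the kernel $k$ by $\widetilde{k}(x'-y')$.

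For the first claim, I would first observe the pointwise estimate
\begin{equation*}
|Kf(x')| \leq \int_{\R^{\theta-1}} |k(x',y')|\, |f(y')|\, dy' \leq \int_{\R^{\theta-1}} \widetilde{k}(x'-y')\, |f(y')|\, dy' = (\widetilde{k}\ast |f|)(x').
\end{equation*}
Here $|f|$ denotes the pointwise Euclidean norm of the $\C^N$-valued function. Since $\widetilde{k} \in L^1(\R^{\theta-1})$ and $|f| \in L^2(\R^{\theta-1})$, Young's convolution inequality gives $\|\widetilde{k}\ast|f|\|_{L^2(\R^{\theta-1})} \leq \|\widetilde{k}\|_{L^1(\R^{\theta-1})} \|f\|_{L^2(\R^{\theta-1};\C^N)}$. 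Integrating the squared pointwise inequality and taking square roots yields $\|Kf\|_{L^2(\R^{\theta-1};\C^N)} \leq \|\widetilde{k}\|_{L^1(\R^{\theta-1})} \|f\|_{L^2(\R^{\theta-1};\C^N)}$, which is the desired bound. The well-definedness for a.e. $x'$ follows from Fubini applied to the integrable majorant.

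For the second claim, the additional smoothness hypothesis on $k$ together with the uniform $L^1$-domination of $\partial_{x_l'} k(x',y')$ allows differentiation under the integral sign, so that $\partial_l(Kf)(x') = \int_{\R^{\theta-1}} \partial_{x_l'} k(x',y') f(y')\, dy'$ holds as an $L^2$-function for $l \in \{1,\dots,\theta-1\}$. I would justify this by a standard difference-quotient argument combined with the dominated convergence theorem, using the pointwise bound $|\partial_{x_l'}k(x',y')| \leq \widetilde{k}(x'-y')$ and the fact that $\widetilde{k}\ast|f| \in L^2$ to produce an integrable majorant on compact sets in $x'$. Applying the first part of the lemma to each of the operators with kernels $\partial_{x_l'} k$ yields $\|\partial_l(Kf)\|_{L^2(\R^{\theta-1};\C^N)} \leq \|\widetilde{k}\|_{L^1(\R^{\theta-1})}\|f\|_{L^2(\R^{\theta-1};\C^N)}$. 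Summing the squared $L^2$-norms of $Kf$ and its first partial derivatives gives
\begin{equation*}
\|Kf\|_{H^1(\R^{\theta-1};\C^N)}^2 \leq \theta\, \|\widetilde{k}\|_{L^1(\R^{\theta-1})}^2 \|f\|_{L^2(\R^{\theta-1};\C^N)}^2,
\end{equation*}
which is the desired estimate with $C = \sqrt{\theta}$.

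The only subtle point is the rigorous justification of differentiation under the integral for vector/matrix-valued integrands, but this is routine once the $L^1$ majorant $\widetilde{k}$ is in hand. No genuine obstacle arises; the argument is essentially the classical Schur/Young test adapted to the $H^1$-valued setting.
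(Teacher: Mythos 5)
Your treatment of the first assertion via Young's inequality is fine and is essentially the same computation that underlies the Schur test the paper invokes.

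Your treatment of the second assertion has a genuine gap in the step where you differentiate under the integral for a general $f\in L^2$. The dominated convergence argument you sketch needs, for each fixed $x_0'$, a $y'$-integrable majorant of $|\partial_{x_l'}k(x',y')f(y')|$ that is \emph{uniform} for $x'$ in a neighbourhood of $x_0'$. Your proposed majorant $\widetilde{k}(x'-y')\,|f(y')|$ depends on $x'$, and the fact that $\widetilde{k}\ast|f|\in L^2$ only tells you that $\int\widetilde{k}(x'-y')|f(y')|\,dy'$ is finite for almost every $x'$ — it does not supply the required uniform-in-$x'$ integrable majorant (replacing $\widetilde{k}$ by $\sup_{|z|\leq\delta}\widetilde{k}(z+\cdot)$ need not be in $L^1$). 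The same problem recurs if you phrase it through difference quotients, since $k$ is only $C^1$, not $C^2$, so one cannot get uniform $O(h)$ control on $\frac{1}{h}(k(x'+he_l,y')-k(x',y'))-\partial_{x_l'}k(x',y')$.

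The paper avoids this by first proving the identity $\partial_l(Kg)=\int\partial_{x_l'}k(\cdot,y')g(y')\,dy'$ only for $g\in\mathcal{D}(\R^{\theta-1};\C^N)$. There the compact support of $g$ together with the \emph{continuity} of $\partial_{x_l'}k$ on compact subsets of $\R^{\theta-1}\times\R^{\theta-1}$ (not the $L^1$-bound by $\widetilde{k}$) gives a bona fide uniform majorant, so differentiation under the integral is legitimate. The Schur/Young bound is then applied to the kernel $\partial_{x_l'}k$ to obtain the $H^1$-estimate on the dense subspace, and one passes to all of $L^2$ by density together with the $L^2\to L^2$ boundedness of $K$ and the completeness of $H^1$. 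Alternatively, you could reach the same conclusion by computing the \emph{weak} derivative directly: test against $\varphi\in\mathcal{D}$, justify the exchange of integrals by Cauchy--Schwarz using $\widetilde{k}\ast|f|\in L^2$, and integrate by parts in $x'$; this works for general $f\in L^2$ at once but is a different argument than the pointwise difference-quotient route you outline.
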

	\begin{proof}
		The first assertion is an immediate consequence of the Schur test, see for instance \cite[Chapter~III, Example~2.4]{kato}. Next, let us prove the second assertion. We start by choosing $g \in \mathcal{D}(\R^{\theta-1};\C^N)$. Since $k \in C^1(\R^{\theta-1}\times \R^{\theta-1} ;\C^{N \times N})$ and $g$ is compactly supported, dominated convergence shows that  $Kg$  is differentiable and 
		\begin{equation*}
			\partial_{l} (K g)(x') = \int_{\R^{\theta-1}} \frac{d}{d x'_l} k(x',y') g(y') \, dy'.
		\end{equation*}
		Hence, applying the Schur test shows 
		\begin{equation*}
			\norm{K g}_{H^1(\R^{\theta-1};\C^N)} \leq C \|\widetilde{k}\|_{L^1(\R^{\theta-1})} \norm{g}_{L^2(\R^{\theta-1};\C^N)}.
		\end{equation*}
		The rest follows from the fact that $\mathcal{D}(\R^{\theta-1};\C^N) $ is dense in $L^2(\R^{\theta-1};\C^N)$, the completeness of $H^1(\R^{\theta-1};\C^N)$,  and the continuity of $K$ in $L^2(\R^{\theta-1};\C^N)$.
	\end{proof}
	
	In the upcoming proposition we construct a partition of unity for $\R^{\theta-1}$ such that the functions have uniformly bounded derivatives.	
	\begin{proposition}\label{prop_part_unity}
		There exists a partition of unity $(\phi_{n'})_{n' \in \Z^{\theta-1}} $ for $\R^{\theta-1}$ subordinate to $(B(n',1))_{n' \in \Z^{\theta-1}}$  such that $\max_{n' \in\Z^{\theta-1}} \norm{\phi_{n'}}_{W^1_\infty(\R^{\theta-1})} < \infty$. Moreover, there exists a sequence of functions $(\vartheta_{n'})_{n' \in \Z^{\theta-1}}$ with $\supp \vartheta_{n'} \subset B(n',3)$, $0 \leq \vartheta_{n'}  \leq 1$,  $\vartheta_{n'} = 1 $ on $B(n',2)$ for all $n' \in \Z^{\theta-1}$ and $\max_{n'\in\Z^{\theta-1}} \norm{\vartheta_{n'}}_{W^1_\infty(\R^{\theta-1})} < \infty$. 
	\end{proposition}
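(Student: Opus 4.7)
The plan is to produce both families by translating a single fixed bump function along the lattice $\Z^{\theta-1}$, so that translation invariance automatically yields the uniform $W^1_\infty$-bounds.

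First I would fix a nonnegative function $\psi \in C_c^\infty(\R^{\theta-1})$ with $\supp \psi \subset B(0,1)$ and $\psi(x) > 0$ for all $x$ in a slightly smaller ball $B(0,r)$, where $r \in (\tfrac{\sqrt{\theta-1}}{2},1)$; such a $\psi$ is obtained by standard mollification. Because $\theta-1 \in \{1,2\}$, every point of $\R^{\theta-1}$ lies within distance $\tfrac{\sqrt{\theta-1}}{2} < r$ of some lattice point $n' \in \Z^{\theta-1}$, so the translated bumps $\psi(\cdot - n')$ cover $\R^{\theta-1}$. Consequently
\begin{equation*}
  S(x) := \sum_{n' \in \Z^{\theta-1}} \psi(x - n'), \qquad x \in \R^{\theta-1},
\end{equation*}
is a well-defined, strictly positive, $C^\infty$ function (the sum is locally finite since $\supp \psi \subset B(0,1)$), and $S$ is $\Z^{\theta-1}$-periodic by construction. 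By periodicity together with compactness of the fundamental domain there exist $c_0>0$ and $C_0>0$ with $S \geq c_0$ and $|\nabla S| \leq C_0$ everywhere. Defining
\begin{equation*}
  \phi_{n'}(x) := \frac{\psi(x-n')}{S(x)}, \qquad n' \in \Z^{\theta-1},
\end{equation*}
one checks immediately that $\sum_{n'} \phi_{n'} \equiv 1$, $\supp \phi_{n'} \subset B(n',1)$, and the quotient rule gives
\begin{equation*}
  \|\phi_{n'}\|_{W^1_\infty(\R^{\theta-1})} \leq \frac{\|\psi\|_{L^\infty}}{c_0} + \frac{\|\nabla\psi\|_{L^\infty}}{c_0} + \frac{\|\psi\|_{L^\infty} C_0}{c_0^{\,2}},
\end{equation*}
which is independent of $n'$.

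For the second family, I would fix once and for all a single function $\vartheta \in C_c^\infty(\R^{\theta-1})$ with $0\le \vartheta \le 1$, $\vartheta = 1$ on $B(0,2)$, and $\supp \vartheta \subset B(0,3)$; such a $\vartheta$ is obtained, e.g., by mollifying the indicator function of $B(0,5/2)$. Setting $\vartheta_{n'}(x) := \vartheta(x-n')$ yields the listed support, bound and equality-to-one properties, and the $W^1_\infty$-norm is translation invariant, so $\max_{n'} \|\vartheta_{n'}\|_{W^1_\infty(\R^{\theta-1})} = \|\vartheta\|_{W^1_\infty(\R^{\theta-1})} < \infty$.

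There is no real obstacle here; the only point that requires attention is verifying that the unit balls centred at lattice points actually cover $\R^{\theta-1}$, which holds only because the ambient dimension satisfies $\theta-1 \le 2$ and hence $\tfrac{\sqrt{\theta-1}}{2}<1$. This is why the size of $\supp\psi$ can be chosen to match the radius appearing in the statement.
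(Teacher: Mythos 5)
Your proposal is correct, but it takes a different route from the paper. You construct the partition of unity by the standard normalization device: translate a single bump $\psi$ along the lattice, form the locally finite sum $S=\sum_{n'}\psi(\cdot-n')$, observe that $S$ is smooth, $\Z^{\theta-1}$-periodic and bounded below, and set $\phi_{n'}=\psi(\cdot-n')/S$. The uniform $W^1_\infty$-bound then drops out of the quotient rule together with periodicity. The paper instead fixes a bijection $\mathcal{Z}:\N\to\Z^{\theta-1}$ and builds the partition of unity via the telescoping product $\phi_{\mathcal{Z}(j)}=(1-\widetilde{\phi}_{\mathcal{Z}(1)})\cdots(1-\widetilde{\phi}_{\mathcal{Z}(j-1)})\widetilde{\phi}_{\mathcal{Z}(j)}$ (with $\widetilde{\phi}_{n'}=\phi(\cdot-n')$), verifying $\sum_{k\le j}\phi_{\mathcal{Z}(k)}=1-\prod_{k\le j}(1-\widetilde{\phi}_{\mathcal{Z}(k)})$ by induction and controlling derivatives by the fact that each point lies in at most $2^{\theta-1}$ of the balls $B(n',1)$. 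Both rely on the same geometric input (that balls of some radius $r\in(\sqrt{\theta-1}/2,1)$ centred at lattice points cover $\R^{\theta-1}$, which uses $\theta-1\le 2$), and both give uniform bounds; yours is a bit cleaner because periodicity of $S$ makes the uniformity manifest, while the paper's telescoping route avoids dividing and instead bounds derivatives by counting overlapping balls. The construction of the cutoffs $\vartheta_{n'}$ is identical in both.
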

	\begin{proof}
		Note that since $\theta \in \{2,3\}$, the family $(B(n',3/4))_{n'\in \Z^{\theta-1}}$ is also an open cover of $\R^{\theta-1}$. We start by choosing a function $\phi \in C^\infty(\R^{\theta-1})$ such that $0\leq \phi \leq 1$, $\phi = 1$ on $B(0,3/4)$ and $\supp \phi \subset B(0,1)$. Furthermore, we set ${\widetilde \phi}_{n'}: = \phi(\cdot - n')$	for $n' \in \Z^{\theta-1}$. Then, $0\leq{\widetilde \phi}_{n'} \leq 1$, ${\widetilde \phi}_{n'}= 1$ on $B(n',3/4)$ and $\supp {\widetilde \phi}_{n'}  \subset B(n',1)$.  Next, we fix a bijection $\mathcal{Z} : \N \to \Z^{\theta-1}$ and  set $\phi_{\mathcal{Z}(1)} := \widetilde{\phi}_{\mathcal{Z}(1)}$ and 
		\begin{equation*}
		  \phi_{\mathcal{Z}(j)}= (1-\widetilde\phi_{\mathcal{Z}(1)})\cdot\dots \cdot (1- \widetilde\phi_{\mathcal{Z}(j-1)})\widetilde\phi_{\mathcal{Z}(j)}, \quad j \in \N\setminus\{1\}.
        \end{equation*}
        Then, $\supp \phi_{n'} \subset \supp \widetilde{\phi}_{n'}$, $0 \leq \phi_{n'} \leq 1$ for $n' \in \Z^{\theta-1}$ and on gets via induction for $j \in \N$
		\begin{equation*}
			\sum_{k =1}^{j} \phi_{\mathcal{Z}(k)} = 1- \prod_{k=1}^{j}(1-\widetilde{\phi}_{\mathcal{Z}(k)}).
		\end{equation*}
		This implies $\sum_{n' \in \Z^{\theta-1}} \phi_{n'}(x') = \sum_{j=1}^{\infty} \phi_{\mathcal{Z}(j)}(x')=1$ for $x' \in \R^{\theta-1}$. Furthermore, let $j \in \N$, $l \in \{1,\dots,\theta-1\}$, and $x' \in \R^{\theta-1}$. We estimate
		\begin{equation*}
			\begin{aligned}
				|\partial_l\phi_{\mathcal{Z}(j)}(x')|=& \Big| \partial_l (\widetilde{\phi}_{\mathcal{Z}(j)}(x'))  \prod_{k=1}^{j-1}(1-\widetilde{\phi}_{\mathcal{Z}(k)}(x'))  \\
				&- \sum_{k=1}^{j-1} \widetilde{\phi}_{\mathcal{Z}(j)}(x')  (\partial_l \widetilde{\phi}_{\mathcal{Z}(k)}(x')) \prod_{r=1, r\neq k}^{j-1}(1-\widetilde{\phi}_{\mathcal{Z}(r)}(x')) \Big| \\
				\leq& \sum_{k =1}^j \big| \partial_l \widetilde{\phi}_{\mathcal{Z}(k)}(x')\big| = \sum_{k=1, x' \in B(\mathcal{Z}(k),1)}^j\big|\partial_l\widetilde{\phi}_{\mathcal{Z}(k)}(x')\big|  \\
				\leq& 2^{\theta-1}\norm{\partial _l\phi}_{L^\infty(\R^{\theta-1})},
			\end{aligned}
		\end{equation*}
		where we used that $x' \in \R^{\theta-1}$ can be in at most $2^{\theta-1}$ balls of the form $B(n',1)$ with $n' \in \Z^{\theta-1}$.  This shows that the derivatives of the $\phi_{n'}$'s are uniformly bounded by $2^{\theta-1}\norm{\phi}_{W^1_\infty(\R^{\theta-1};\R)}$. Next we construct the sequence $(\vartheta_{n'})_{n' \in \Z^{\theta-1}}$. To do so, we choose $\vartheta  \in C^\infty(\R^{\theta-1})$ such that $0\leq \vartheta \leq 1$, $\theta = 1$ on $B(0,2)$ and $\supp \vartheta \subset B(0,3)$. Then, we define $\vartheta_{n'}:= \vartheta(\cdot -n')$. The constructed sequence has the claimed properties.
	\end{proof}
	
	Our next goal is to use the functions $\vartheta_{n'}$, $n' \in \mathbb{Z}$, from Proposition~\ref{prop_part_unity} to construct operators based on a uniformly bounded sequence of operators. We start by providing an useful variant of the Cotlar-Stein lemma.
	
	\begin{lemma}\label{lem_cotlar_stein}
		Let $\mathcal{H}$ and $\mathcal{G}$ be Hilbert spaces, let $(\mathcal{A}_{n'})_{n' \in \Z^{\theta-1}}$ be a family of uniformly bounded operators acting from $\mathcal{H}$ to $\mathcal{G}$. Moreover, assume that there exists a number $N \in \N$ such that  for every $n' \in \Z^{\theta-1}$ exist at most $N$ indices $m'\in \Z^{\theta-1}$ such $\mathcal{A}_{n'}^* \mathcal{A}_{m'} $ and $\mathcal{A}_{n'} \mathcal{A}_{m'}^*$ are nonzero operators. Then, the sum $\sum_{n' \in \Z^{\theta-1}, |n'|<n} \mathcal{A}_{n'}$ converges for $n \to \infty$ in the strong sense to a bounded  operator $\mathcal{A}$ (which is also denoted by $\sum_{n' \in \Z^{\theta-1}}^{\textup{st.}} \mathcal{A}_{n'}$). Moreover, its norm can be estimated by
		\begin{equation*}
			\| \mathcal{A}\|_{\mathcal{H} \to \mathcal{G}} \leq N \sup_{n' \in \Z^{\theta-1}} \| \mathcal{A}_{n'}\|_{\mathcal{H} \to \mathcal{G}}.
		\end{equation*}
	\end{lemma}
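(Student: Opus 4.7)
The plan is to reduce the statement to the classical Cotlar-Stein lemma, which asserts that if $(T_j)_{j \in J}$ is a countable family of bounded operators between two Hilbert spaces satisfying
\[
M := \max\Bigl\{\sup_j \sum_k \|T_j^* T_k\|^{1/2},\; \sup_j \sum_k \|T_j T_k^*\|^{1/2}\Bigr\} < \infty,
\]
then every finite partial sum $\sum_{j \in F} T_j$ is bounded in operator norm by $M$. I would first set $K := \sup_{n' \in \Z^{\theta-1}} \|\mathcal{A}_{n'}\|_{\mathcal{H}\to\mathcal{G}}$, which is finite by assumption.

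The Cotlar-Stein hypothesis is then immediate from the finite-overlap condition. For each fixed $n' \in \Z^{\theta-1}$ the assumption forces $\mathcal{A}_{n'}^* \mathcal{A}_{m'} = 0$ and $\mathcal{A}_{n'} \mathcal{A}_{m'}^* = 0$ for all but at most $N$ indices $m'$; on the nonvanishing contributions submultiplicativity of the operator norm gives $\|\mathcal{A}_{n'}^* \mathcal{A}_{m'}\|^{1/2} \leq K$ and $\|\mathcal{A}_{n'} \mathcal{A}_{m'}^*\|^{1/2} \leq K$. Hence both suprema in the Cotlar-Stein condition are bounded by $NK$, and the classical lemma yields $\bigl\|\sum_{n' \in F} \mathcal{A}_{n'}\bigr\|_{\mathcal{H} \to \mathcal{G}} \leq NK$ for every finite subset $F \subset \Z^{\theta-1}$.

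To upgrade this uniform norm bound to strong convergence of the specific partial sums $S_n := \sum_{|n'|<n} \mathcal{A}_{n'}$, I would argue by density. Let $D$ denote the algebraic sum of $\operatorname{span}\{\ran \mathcal{A}_{m'}^* : m' \in \Z^{\theta-1}\}$ and $\bigcap_{m'} \ker \mathcal{A}_{m'}$; since the second summand is precisely the orthogonal complement of the first in $\mathcal{H}$, $D$ is dense in $\mathcal{H}$. For $f = \mathcal{A}_{m'}^* g$ the finite-overlap hypothesis yields $\mathcal{A}_{n'} f = \mathcal{A}_{n'} \mathcal{A}_{m'}^* g = 0$ for all but at most $N$ indices $n'$, so $S_n f$ stabilizes as a finite sum for $n$ large; for $f \in \bigcap_{m'} \ker \mathcal{A}_{m'}$ one has $S_n f = 0$ trivially. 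Combining pointwise convergence on $D$ with the uniform bound $\|S_n\|_{\mathcal{H}\to\mathcal{G}} \leq NK$ via a standard three-$\varepsilon$ argument then yields strong convergence of $(S_n)$ to a bounded operator $\mathcal{A}$ satisfying $\|\mathcal{A}\|_{\mathcal{H} \to \mathcal{G}} \leq NK$.

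I do not expect any substantial obstacle here: the finite-overlap hypothesis is strictly stronger than the summability condition of the Cotlar-Stein lemma, and the strong convergence is essentially automatic once one observes that only finitely many of the $\mathcal{A}_{n'}$ act nontrivially on vectors from the dense subspace $D$.
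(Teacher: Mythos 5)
Your proof is correct and follows the same overall strategy as the paper: verify the Cotlar--Stein almost-orthogonality hypothesis directly from the finite-overlap assumption (each row sum $\sum_{m'}\|\mathcal{A}_{n'}^*\mathcal{A}_{m'}\|^{1/2}$ and $\sum_{m'}\|\mathcal{A}_{n'}\mathcal{A}_{m'}^*\|^{1/2}$ has at most $N$ nonzero terms, each bounded by $\sup_{n'}\|\mathcal{A}_{n'}\|$), and then invoke the Cotlar--Stein lemma. The one difference is that the paper cites H\"ormander's formulation of the lemma (Lemma~18.6.5 in \cite{H94}), whose conclusion \emph{already} contains the strong convergence of the partial sums together with the norm bound; the paper therefore needs no further argument. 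You instead use the weaker ``uniform norm bound on finite partial sums'' version of Cotlar--Stein and supply your own density argument for the strong convergence, using the dense subspace spanned by $\bigcup_{m'}\ran\mathcal{A}_{m'}^*$ and $\bigcap_{m'}\ker\mathcal{A}_{m'}$, on which $S_nf$ eventually stabilizes thanks to the symmetric finite-overlap property (at most $N$ indices $n'$ with $\mathcal{A}_{n'}\mathcal{A}_{m'}^*\neq 0$, obtained by taking adjoints). That argument is sound and self-contained; it buys a more elementary reduction at the cost of redoing work that the cited reference already packages. Both proofs land on the same bound $\|\mathcal{A}\|\leq N\sup_{n'}\|\mathcal{A}_{n'}\|$.
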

	\begin{proof}
		Our assumptions guarantee 
		\begin{equation*}
				\sup_{n' \in \Z^{\theta-1}}\sum_{m' \in \Z^{\theta-1}} \| \mathcal{A}_{n'} \mathcal{A}_{m'}^*\|_{\mathcal{G} \to \mathcal{G}}^{1/2} \leq N \sup_{n' \in \Z^{\theta-1}} \|\mathcal{A}_{n'}\|_{\mathcal{H} \to \mathcal{G}}
        \end{equation*}
        and
        \begin{equation*}
				\sup_{n' \in \Z^{\theta-1}}\sum_{m' \in \Z^{\theta-1}} \| \mathcal{A}_{n'}^* \mathcal{A}_{m'}\|_{\mathcal{H} \to \mathcal{H}}^{1/2} \leq N \sup_{n' \in \Z^{\theta-1}} \|\mathcal{A}_{n'}\|_{\mathcal{H} \to \mathcal{G}}.
		\end{equation*}
		Hence, the assertions follow  form the Cotlar-Stein lemma, see \cite[Lemma~18.6.5]{H94}.
	\end{proof}
	\begin{proposition}\label{prop_part_series}
		Let $a \in (0,b)$ for a $b>0$, $(\vartheta_{n'})_{n'\in\Z^{\theta-1}}$ be the sequence from Proposition~\ref{prop_part_unity}, $\vartheta_{n'}^a := \vartheta_{n'}\bigl(\tfrac{\cdot}{a}\bigr)$ for $n'\in \Z^{\theta-1}$, and $(A_{n'})_{n' \in \Z^{\theta-1}}$ be a sequence of uniformly bounded operators in  $\mathcal{B}^0(\R^{\theta-1})$. Then, 
		\begin{equation*}
			A = \sum_{n' \in \Z^{\theta-1}}^{\textup{st.}} \vartheta_{n'}^a A_{n'} \vartheta_{n'}^a   
		\end{equation*}
		is  a well-defined operator in $\mathcal{B}^0(\R^{\theta-1})$ with
        $\| A \|_{0 \rightarrow 0} \leq 11^{\theta-1} \sup_{n' \in \Z^{\theta-1}}\norm{A_{n'}}_{0 \to 0}$.
        Moreover,  if $(A_{n'})_{n' \in \Z^{\theta-1}}$ is also a family of uniformly bounded operators acting from $\mathcal{B}^0(\R^{\theta-1})$ to $\mathcal{B}^1(\R^{\theta-1})$, then $A$ acts also as a bounded operator from  $\mathcal{B}^0(\R^{\theta-1})$ to $\mathcal{B}^1(\R^{\theta-1})$ and $\norm{A}_{0\to1} \leq \tfrac{C}{a} \sup_{n' \in \Z^{\theta-1}}\norm{A_{n'}}_{0 \to 1}$, where $C>0$ does not depend on $a \in (0,b)$.
	\end{proposition}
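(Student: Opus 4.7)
The plan is to apply the Cotlar--Stein lemma (Lemma~\ref{lem_cotlar_stein}) with $\mathcal{A}_{n'} := \vartheta_{n'}^a A_{n'} \vartheta_{n'}^a$ for the first assertion and to use a direct almost-orthogonality estimate for the second. Throughout, the geometric input is $\supp \vartheta_{n'}^a \subset B(an',3a)$, so that for fixed $n'$ only those $m' \in \Z^{\theta-1}$ with $|n'-m'|_2 < 6$ yield $\vartheta_{n'}^a \vartheta_{m'}^a \not\equiv 0$. Since $|n'-m'|_2 < 6$ forces $|n'-m'|_\infty \le 5$, the number of such $m'$ is at most $11^{\theta-1}$.

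For the first claim, I would view each $\mathcal{A}_{n'}$ as an operator in $\mathcal{B}^0(\R^{\theta-1})$ and note that $\|\mathcal{A}_{n'}\|_{0\to 0} \leq \sup_{n' \in \Z^{\theta-1}} \|A_{n'}\|_{0\to 0}$, since $0 \leq \vartheta_{n'}^a \leq 1$. Both products $\mathcal{A}_{n'}^*\mathcal{A}_{m'}$ and $\mathcal{A}_{n'}\mathcal{A}_{m'}^*$ contain the inner factor $\vartheta_{n'}^a \vartheta_{m'}^a$, which vanishes unless the supports intersect; by the count above, for each $n'$ at most $11^{\theta-1}$ indices $m'$ produce a nonzero product. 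Lemma~\ref{lem_cotlar_stein} with $N = 11^{\theta-1}$ then yields both the existence of $A = \sum^{\textup{st.}}_{n' \in \Z^{\theta-1}} \mathcal{A}_{n'}$ as a strong limit and the norm bound $\|A\|_{0\to 0} \leq 11^{\theta-1} \sup_{n'} \|A_{n'}\|_{0\to 0}$.

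For the second assertion, I would estimate $\|Af\|_1$ directly rather than set up a Cotlar--Stein argument between the distinct spaces $\mathcal{B}^0$ and $\mathcal{B}^1$, where the adjoints $\mathcal{A}_{n'}^*$ are not as convenient. For each fixed $t \in (-1,1)$ the function $(\mathcal{A}_{n'} f)(t)$ is supported in $B(an',3a)$ and at most $11^{\theta-1}$ such supports contain any single point of $\R^{\theta-1}$. A pointwise Cauchy--Schwarz applied to $(\mathcal{A}_{n'} f)(t)$ and to each of its spatial derivatives gives
\begin{equation*}
\Bigl\|\sum_{n'} (\mathcal{A}_{n'} f)(t) \Bigr\|_{H^1(\R^{\theta-1};\C^N)}^2 \leq 11^{\theta-1} \sum_{n'} \|(\mathcal{A}_{n'} f)(t)\|_{H^1(\R^{\theta-1};\C^N)}^2,
\end{equation*}
and integrating in $t$ produces $\|Af\|_1^2 \leq 11^{\theta-1} \sum_{n'} \|\mathcal{A}_{n'} f\|_1^2$. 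Since $\vartheta_{n'}^a$ is a bounded multiplication operator in $H^1(\R^{\theta-1};\C^N)$ with norm controlled by $C\|\vartheta_{n'}^a\|_{W^1_\infty(\R^{\theta-1})} \leq C/a$ uniformly in $n'$ and $a \in (0,b)$ (via the chain rule applied to $\vartheta_{n'}^a(x) = \vartheta_{n'}(x/a)$ together with the uniform bound from Proposition~\ref{prop_part_unity}), I obtain $\|\mathcal{A}_{n'} f\|_1 \leq (C/a)\|A_{n'}\|_{0\to 1}\,\|\vartheta_{n'}^a f\|_0$. The analogous pointwise count $\sum_{n'} |\vartheta_{n'}^a(x)|^2 \leq 11^{\theta-1}$ yields $\sum_{n'} \|\vartheta_{n'}^a f\|_0^2 \leq 11^{\theta-1}\|f\|_0^2$, and combining these three estimates gives $\|A\|_{0\to 1} \leq (C'/a)\sup_{n'}\|A_{n'}\|_{0\to 1}$ with a constant $C' > 0$ independent of $a \in (0,b)$.

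There is no serious obstacle here; the argument is a standard partition-of-unity / almost-orthogonality exercise. The only point requiring care is to keep track of both the pairwise overlap count (needed for Cotlar--Stein) and the pointwise multiplicity (needed for the direct $H^1$-estimate), and to verify that the $1/a$-scaling of $\|\vartheta_{n'}^a\|_{W^1_\infty(\R^{\theta-1})}$ is uniform in $n'$, both of which are immediate from the rescaling $\vartheta_{n'}^a = \vartheta_{n'}(\cdot/a)$ and the properties of the sequence $(\vartheta_{n'})_{n' \in \Z^{\theta-1}}$ from Proposition~\ref{prop_part_unity}.
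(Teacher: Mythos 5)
Your proof of the first assertion is identical in approach to the paper's: both apply the Cotlar--Stein lemma (Lemma~\ref{lem_cotlar_stein}) to $\mathcal{A}_{n'} = \vartheta_{n'}^a A_{n'}\vartheta_{n'}^a$ with the overlap count $N = 11^{\theta-1}$, and the argument matches step for step.

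For the second assertion you diverge from the paper in a meaningful way. The paper reuses Cotlar--Stein, this time viewing the $\mathcal{A}_{n'}$ as operators from $\mathcal{B}^0$ to $\mathcal{B}^1$; this forces it to deal with the adjoint $\mathcal{A}_{n'}^{0*1}: \mathcal{B}^1 \to \mathcal{B}^0$, and the verification that $\mathcal{A}_{n'}^{0*1}\mathcal{A}_{m'}$ and $\mathcal{A}_{n'}\mathcal{A}_{m'}^{0*1}$ vanish for non-overlapping supports requires the observation (not spelled out in the paper) that $\mathcal{A}_{m'}^{0*1}h$ is $\mathcal{B}^0$-supported in $\overline{B(am',3a)}$ even though the adjoint in the mixed $H^1$--$L^2$ pairing is no longer a multiplication operator. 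Your route sidesteps this entirely: you exploit the fact that the summands $(\mathcal{A}_{n'}f)(t)$ and all their spatial derivatives have finitely overlapping supports, apply a pointwise Cauchy--Schwarz, and integrate. This is more elementary and avoids the slightly delicate adjoint bookkeeping, at the modest cost that your constant $11^{\theta-1}\cdot C$ is not optimized (the pointwise multiplicity is actually $\le 7^{\theta-1}$, but your $11^{\theta-1}$ is a valid upper bound) and that the paper's method additionally delivers strong convergence of the partial sums in $\mathcal{B}^1$ as a byproduct, which yours does not. The one step you should make explicit is the passage from the uniform $\mathcal{B}^1$-bound on the partial sums to the conclusion that $Af \in \mathcal{B}^1$ with the same bound: this follows because the partial sums converge to $Af$ in $\mathcal{B}^0$ and form a bounded sequence in the Hilbert space $\mathcal{B}^1$, so a subsequence converges weakly in $\mathcal{B}^1$ to a limit that must coincide with $Af$, and the norm is weakly lower semicontinuous. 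With that remark added the proof is complete.
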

	\begin{proof}
		Let us start by proving the assertion where we consider $A$ and $A_{n'}$, $n' \in \Z^{\theta-1}$, as operators acting from $\mathcal{B}^0(\R^{\theta-1})$ to $\mathcal{B}^0(\R^{\theta-1})$.  We set $\mathcal{A}_{n'} := \vartheta_{n'}^a A_{n'} \vartheta_{n'}^a$.  Since a fixed ball $B(an',3a)$ overlaps with at most $11^{\theta-1}$ balls of the type $B(am',3a)$, $m'\in \Z^{\theta-1}$, there exist for every $n' \in \Z^{\theta-1}$ at most $N= 11^{\theta-1}$ indices $m' \in \Z^{\theta-1}$ such that $ \mathcal{A}_{n'} \mathcal{A}_{m'}^* \neq 0$ and $\mathcal{A}_{n'}^* \mathcal{A}_{m'} \neq 0$. Moreover, we have for all $n' \in \Z^{\theta-1}$ 
		\begin{equation*}
			\|\mathcal{A}_{n'}\|_{0\to 0} \leq \|\vartheta_{n'}^a\|_{0\to 0}  \|A_{n'}\|_{0\to 0} \|\vartheta_{n'}^a\|_{0\to 0} \leq \|A_{n'}\|_{0\to 0}.
		\end{equation*}
		Thus, by  Lemma~\ref{lem_cotlar_stein} we conclude $\| A \|_{0 \rightarrow 0} \leq 11^{\theta-1} \sup_{n' \in \Z^{\theta-1}}\norm{A_{n'}}_{0 \to 0}$.
		
		Next, we assume that $A_{n'}$, $n' \in \Z^{\theta-1},$ act as  uniformly bounded operators from $\mathcal{B}^0(\R^{\theta-1})$ to $\mathcal{B}^1(\R^{\theta-1})$. Using again the fact that a fixed ball $B(an',3a)$ overlaps with at most $11^{\theta-1}$ balls of the type $B(am',3a)$, $m'\in \Z^{\theta-1}$, shows that  there exist for every $n' \in \Z^{\theta-1}$ at most $N= 11^{\theta-1}$ indices $m' \in \Z^{\theta-1}$ such that $ \mathcal{A}_{n'} \mathcal{A}_{m'}^{0*1} \neq 0$ and $\mathcal{A}_{n'}^{0*1}\mathcal{A}_{m'} \neq 0$, where the  expressions $ {\mathcal{A}_{n'}}^{0*1}$   and  ${\mathcal{A}_{m'}}^{0*1}$ denote the adjoint  operators of $\mathcal{A}_{n'}$ and $\mathcal{A}_{m'}$, respectively, considered as operators mapping from $\mathcal{B}^0(\R^{\theta-1})$ to $\mathcal{B}^1(\R^{\theta-1})$.  Furthermore, for all $n' \in \Z^{\theta-1}$ the inequality
		\begin{equation*}
			\begin{aligned}
				\|\mathcal{A}_{n'}\|_{0\to 1} &\leq \|\vartheta_{n'}^a\|_{1\to 1}  \|A_{n'}\|_{0\to 1} \|\vartheta_{n'}^a\|_{0\to 0} \\
				&\leq \|\vartheta_{n'}^a\|_{1\to 1} \|A_{n'}\|_{0\to1} \\
				&\leq C \|\vartheta_{n'}^a\|_{W^1_\infty(\R^{\theta-1})} \|A_{n'}\|_{0\to 1} \\
				&\leq \frac{C}{a} \|\vartheta_{n'}\|_{W^1_\infty(\R^{\theta-1})} \|A_{n'}\|_{0\to 1}\\
				&\leq \frac{C}{a} \|A_{n'}\|_{0\to 1}
			\end{aligned}
		\end{equation*}
		is valid.
		Thus, Lemma~\ref{lem_cotlar_stein} yields $\norm{A}_{0\to1} \leq \tfrac{C}{a} \sup_{n' \in \Z^{\theta-1}}\norm{A_{n'}}_{0 \to 1}$.
	\end{proof}
	
	\bibliographystyle{abbrv}

\end{document}